\providecommand\given{} \newcommand\SetSymbol[1][]{
   \mathrel{}\mathclose{}#1|\allowbreak\mathopen{}\mathrel{}}
\DeclarePairedDelimiterX\Set[1]{\lbrace}{\rbrace}{ \renewcommand\given{\SetSymbol[\delimsize]} #1 }
\DeclarePairedDelimiter\abs{\lvert}{\rvert}\DeclarePairedDelimiter\norm{\lVert}{\rVert}
\let\oldabs\abs
\def\abs{\@ifstar{\oldabs}{\oldabs*}}
\let\oldnorm\norm
\def\norm{\@ifstar{\oldnorm}{\oldnorm*}}
\let\oldSet\Set
\def\Set{\@ifstar{\oldSet}{\oldSet*}}
\newcommand{\MoveEqLeftR}{\MoveEqLeft[3.3]}
\theoremstyle{plain}
\newtheorem{prop}[equation]{Proposition}
\newtheorem*{thm*}{Theorem}
\newtheorem*{prop*}{Proposition}
\newtheorem*{principle*}{Principle}
\theoremstyle{definition}
\newtheorem{lemma}[equation]{Lemma}
\newtheorem{defn}[equation]{Definition}
\newtheorem*{cor*}{Corollary}
\newtheorem*{lemma*}{Lemma}
\newtheorem*{defn*}{Definition}
\theoremstyle{remark}
\newtheorem{rem}[equation]{Remark}
\newtheorem{ex}[equation]{Example}
\newtheorem*{rem*}{Remark}
\newtheorem*{ex*}{Example} 
\theoremstyle{definition}
\newtheorem{axiom}[equation]{Axiom}
\newtheorem{conjecture}[equation]{Conjecture}
\tikzset{
	pf/.style={commutative diagrams/.cd, every arrow, every label},
	surj/.style=commutative diagrams/two heads,
	inj/.style=commutative diagrams/hook,
	gl/.style=commutative diagrams/equal,
	mat/.style={matrix of math nodes, commutative diagrams/.cd, every cell},
	dr/.style={matrix of math nodes, commutative diagrams/.cd, every cell, column sep=small},
	seq/.style={matrix of math nodes, commutative diagrams/.cd, every cell, column sep=small}
	}
\newenvironment{diag*}{\[\begin{tikzpicture}[commutative diagrams/.cd, every diagram, baseline=(current bounding box.center)]}{\end{tikzpicture}\]\ignorespacesafterend}
\newenvironment{diag}{\begin{equation}\begin{tikzpicture}[commutative diagrams/.cd, every diagram, baseline=(current bounding box.center)]}{\end{tikzpicture}\end{equation}\ignorespacesafterend}
 \newenvironment{Tgraph}{\begin{equation}\begin{tikzpicture}[node distance={20mm}, vertex/.style = {draw, circle}]}{\end{tikzpicture}\end{equation}\ignorespacesafterend}
\author{\texorpdfstring{Enno Keßler \and Artan Sheshmani \and Shing-Tung Yau}{
Enno Keßler, Artan Sheshmani, Shing-Tung Yau
}
}
\title{Super Gromov--Witten Invariants via torus localization}
\date{}
\newcommand{\Dirac}{\slashed{D}}
\setlist[enumerate,1]{label = (\roman*)}
\DeclareMathOperator{\ACI}{I}
\DeclareMathOperator{\Aut}{Aut}
\DeclareMathOperator{\Hom}{Hom}
\DeclareMathOperator{\im}{im}
\DeclareMathOperator{\id}{id}
\DeclareMathOperator{\Spec}{Spec}
\DeclareMathOperator{\SGL}{SL}
\newcommand{\dual}[1]{{#1}^{\vee}}
\newcommand{\cat}[1]{\mathsf{#1}}
\renewcommand{\d}{\mathop{}\!d}
\newcommand{\pt}{pt}
\newcommand{\VSec}[2][]{\Gamma_{#1}\left(#2\right)}
\newcommand{\tangent}[2][]{T_{#1}#2} \newcommand{\differential}[1]{\d{#1}} \newcommand{\cotangent}[1]{\dual{T}#1}
\newcommand{\Integers}{\mathbb{Z}}
\newcommand{\Z}{\Integers}
\newcommand{\RationalNumbers}{\mathbb{Q}}
\newcommand{\RealNumbers}{\mathbb{R}}
\newcommand{\R}{\RealNumbers}
\newcommand{\ComplexNumbers}{\mathbb{C}}
\newcommand{\C}{\ComplexNumbers}
\newcommand{\ProjectiveSpace}[2][]{\mathbb{P}_{#1}^{#2}}
\newcommand{\cD}{\mathcal{D}}
\newcommand{\cO}{\mathcal{O}}
\newcommand{\targetACI}{J}
\newcommand{\DJBar}{\overline{D}_\targetACI}
\newcommand{\DelJBar}{\overline{\partial}_\targetACI}
\DeclareMathOperator{\rk}{rk}
\newcommand{\ev}{\mathrm{ev}}
\newcommand{\Ev}{\mathrm{Ev}}
\newcommand{\SymG}[1]{\operatorname{Sym}(#1)}
\newcommand{\ModuliStackCurves}[1]{\overline{M}_{#1}}
\newcommand{\ModuliSpaceCurves}[1]{\overline{M}^{c}_{#1}}
\newcommand{\SmModuliSpaceCurves}[1]{M^{sm}_{#1}}
\newcommand{\UniversalCurve}[1]{C_{#1}}
\newcommand{\ModuliStackMaps}[2]{\overline{M}_{#1}\left(#2\right)}
\newcommand{\SmModuliSpaceMaps}[2]{M^{sm}_{#1}\left(#2\right)}
\newcommand{\SmCModuliSpaceMaps}[2]{\overline{M}^{sm}_{#1}\left(#2\right)}
\newcommand{\SmModuliSpaceJMaps}[1]{M^{sm}\left(#1\right)}
\newcommand{\UniversalMap}[2]{C_{#1}\left(#2\right)}
\newcommand{\ModuliStackSpinCurves}[1]{\overline{M}^{spin}_{#1}}
\newcommand{\UniversalSpinCurve}[1]{C^{spin}_{#1}}
\newcommand{\UniversalSpinorBundleC}{\mathcal{S}}
\newcommand{\ModuliStackSpinMaps}[2]{\overline{M}^{spin}_{#1}\left(#2\right)}
\newcommand{\UniversalSpinMap}[2]{C^{spin}_{#1}\left(#2\right)}
\newcommand{\UniversalSpinorBundleM}{\mathfrak{S}}
\newcommand{\ModuliStackSuperCurves}[1]{\overline{\mathcal{M}}_{#1}}
\newcommand{\SmModuliSpaceSuperCurves}[1]{\mathcal{M}^{sm}_{#1}}
\newcommand{\ModuliStackSuperMaps}[2]{\overline{\mathcal{M}}_{#1}\left(#2\right)}
\newcommand{\SmModuliSpaceSuperMaps}[2]{\mathcal{M}^{sm}_{#1}\left(#2\right)}
\newcommand{\SmModuliSpaceSuperJMaps}[1]{\mathcal{M}^{sm}\left(#1\right)}
\newcommand{\SmCModuliSpaceSuperMaps}[2]{\overline{\mathcal{M}}^{sm}_{#1}\left(#2\right)}
\begin{document}
\maketitle

\begin{abstract}
	In this article we propose a definition of super Gromov--Witten invariants by postulating a torus localization property for the odd directions of the moduli spaces of super stable maps and super stable curves of genus zero.
	That is, we define super Gromov--Witten invariants as the integral over the pullback of homology classes along the evaluation maps divided by the equivariant Euler class of the normal bundle of the embedding of the moduli space of stable spin maps into the moduli space of super stable maps.
	This definition sidesteps the difficulties of defining a supergeometric intersection theory and works with classical intersection theory only.
	The properties of the normal bundles, known from the differential geometric construction of the moduli space of super stable maps, imply that super Gromov--Witten invariants satisfy a generalization of Kontsevich--Manin axioms and allow for the construction of a super small quantum cohomology ring.
	We describe a method to calculate super Gromov--Witten invariants of \(\ProjectiveSpace{n}\) of genus zero by a further geometric torus localization and give explicit numbers in degree one when dimension and number of marked points are small.
\end{abstract}

\counterwithin{equation}{section}
\section{Introduction}
In this article we give an algebro-geometric proposal for a supergeometric generalization of Gromov--Witten invariants of genus zero based on our earlier differential geometric work on moduli spaces of super stable maps of genus zero in~\cites{KSY-SJC}{KSY-SQCI}{KSY-TAMSSSMGZ}.

Classically Gromov--Witten invariants can roughly be described as the count of stable maps from surfaces to a fixed almost Kähler manifold satisfying a list of topological constraints.
The idea was introduced as a tool to study symplectic manifolds in~\cite{G-PHCSM} and with motivation in theoretical physics in~\cite{W-TDGISTMC}.
For an introduction to the Gromov--Witten theory from the perspective of symplectic geometry we refer to~\cite{McDS-JHCST}.
In the algebro-geometric interpretation, Gromov--Witten invariants associate to homology classes of the target variety a number via intersection theory on the moduli stacks of stable maps.
Gromov--Witten invariants satisfy a list of axioms spelled out for the first time in~\cite{KM-GWCQCEG}.
Those axioms, which we will call Kontsevich--Manin axioms have turned out to be a powerful tool for the calculation of Gromov--Witten invariants, most importantly the gluing or splitting axiom that allows to reduce to situations of fewer marked points or lower genus.
More generally, the Kontsevich--Manin axioms turn the Gromov--Witten invariants into the prime example of a cohomological field theory and allow for the construction of the quantum product on the homology of the target.
For an introduction to the algebro-geometric perspective of stable maps and  Gromov--Witten invariants we refer to~\cites{M-FMQCMS}{CK-MSAG}.

Here, we propose super Gromov--Witten invariants of genus zero which satisfy generalized Kontsevich--Manin axioms, allow for the construction of super quantum cohomology that extends quantum cohomology and are motivated by super stable maps.
Super stable maps are maps from prestable marked super curves into a target manifold or variety.
That is, the domain curve has one classical even function and one anti-commuting function as local coordinates which satisfy a non-integrability or supersymmetry condition and at most nodal singularities.
Smooth super curves are also known as super Riemann surfaces and have been studied from many different perspectives and motivated from super string theory, see~\cite{EK-SGSRSSCA} for references.
We have studied the moduli spaces of super stable maps of genus zero from the perspective of super differential geometry and for almost Kähler manifolds as target in~\cites{KSY-SQCI}{KSY-TAMSSSMGZ}.

Ideally, one would like to construct algebro-geometric super Gromov--Witten invariants by constructing the super moduli stack of super stable maps as well as a suitable intersection theory on super stacks and apply the classical formulas from Gromov--Witten theory.
We believe, that the recent progress in the understanding of moduli spaces of stable super curves in~\cites{FKP-MSSCCLB}{MZ-EHSTIISFTV}{OV-SMSGZSUSYCRP}{BR-SMSCRP} can, in principle, be generalized to construct super moduli stacks of super stable maps.
However, unfortunately, it is not clear to us what the supergeometric intersection theory or even Chow groups or cohomology theories for super schemes will look like.
We list some attempts for supergeometric intersection theory or cohomology theories in Section~\ref{SSec:GWMotivation}.
Our proposal for super Gromov--Witten invariants sidesteps the question of constructing the super moduli stacks as well as the supergeometric intersection theory and is motivated by the following two assumptions:
\begin{itemize}
	\item
		There is a torus action on the super moduli stacks of super stable maps that leaves the embedded moduli stack of classical stable spin maps invariant.
	\item
		A well defined supergeometric intersection theory would have a torus localization theorem.
\end{itemize}
We use the resulting formula from the assumed supergeometric torus localization theorem as an ad-hoc definition of super Gromov--Witten invariants:
Super Gromov--Witten invariants are integrals over the pullback of homology classes from the target along the evaluation maps divided by the equivariant Euler class of the normal bundle to the embedding of the moduli space of stable spin maps into the super moduli space of super stable maps.
Hereby, integrals are taken over the classical moduli spaces of stable spin maps and the normal bundles are obtained by transferring the differential geometric description of the normal bundles to algebraic geometry.
Even though we have shown that appropriate torus actions exist on moduli spaces of super stable maps of genus zero in certain cases in~\cite{KSY-TAMSSSMGZ}, we are aware that the above assumptions are too strong to be correct in full generality.
In any case, our proposal for super Gromov--Witten invariants yields a generalization of classical Gromov--Witten invariants that satisfies interesting generalizations of the Kontsevich--Manin axioms and take some geometric information of the super moduli space of super stable maps into account.

To be more precise, let us first describe the SUSY normal bundles in more detail:
Let \(c\colon C=\ProjectiveSpace[\C]{1}\times B\to B\) be the trivial family of smooth curves of genus zero with sections \(p_i\colon B\to C\) for \(i=1,\dotsc, k\) and a map \(\phi\colon C\to X\) to a convex target scheme \(X\).
Denote by \(S_C\) the extension of the spinor bundle \(S=\cO(1)\) to \(C\).
Then the SUSY normal bundle~\(N_{(C, \phi)}\) is a locally free sheaf on \(B\) given by the short exact sequence
\begin{diag}
	\matrix[mat](m) {
		0 & c_*S_C & {\displaystyle\bigoplus_{i=1}^k p_i^*S_{C} \oplus c_*\left(\dual{S}_C\otimes \phi_*\tangent{X}\right)} & N_{(C, \phi)} & 0.\\
	} ;
	\path[pf] {
		(m-1-1) edge (m-1-2)
		(m-1-2) edge (m-1-3)
		(m-1-3) edge (m-1-4)
		(m-1-4) edge (m-1-5)
	};
\end{diag}
This short exact sequence is what we obtained in~\cite{KSY-TAMSSSMGZ} for the differential geometric description of the normal bundle of the inclusion \(\SmModuliSpaceMaps{0,k}{X, \beta} \to \SmModuliSpaceSuperMaps{0,k}{X, \beta}\) of superorbifolds from the moduli space of stable maps of genus zero with \(k\) marked points into its super analog.
The construction of \(N_{(C, \phi)}\) is compatible with standard operations from the theory of algebraic curves such as gluing or forgetting marked points.
Consequently, we obtain a description of SUSY normal bundles for the moduli spaces of stable maps of fixed tree type.
Every closed point of the moduli stack of stable spin maps~\(\ModuliStackSpinMaps{0,k}{X,\beta}\) describes a stable spin map of fixed tree type and hence we expect the SUSY normal bundle of the universal curve over \(M^{spin}_{0,k}(X, \beta)\) to extend to a vector bundle \(\overline{N}_{k, \beta}\) on \(\ModuliStackSpinMaps{0,k}{X,\beta}\).

We now define genus zero \(k\)-point super Gromov--Witten invariants of \(X\) as
\begin{equation}\label{eq:defnSGWIntro}
	\left<SGW_{0,k}^{X,\beta}\right>(\alpha_1, \dotsc, \alpha_k)
	= \int_{\ModuliStackSpinMaps{0,k}{X,\beta}}\frac{\ev_1^*\alpha_1\cup\dotsm\cup \ev_k^*\alpha_k}{2e^K(\overline{N}_{k, \beta})}.
\end{equation}
The classes \(\ev_i^*\alpha_i\) are the pullback along the evaluation maps \(\ev_i\colon \ModuliStackMaps{0,k}{X,\beta}\to X\) of cohomology classes \(\alpha_i\in H^*(X)\).
The equivariant Euler class \(e^K(\overline{N}_{k, \beta})\) is an invertible element of \(H^*(\ModuliStackMaps{0,k}{X,\beta})\otimes \C(\kappa)\) where \(\kappa\) is the character of the torus \(K=\C^*\).
Note that the forgetful map \(F\colon \ModuliStackSpinMaps{0,k}{X,\beta}\to \ModuliStackMaps{0,k}{X,\beta}\) induces an isomorphism on rational cohomology.
The term of cohomological degree zero of \({\left(e^K(\overline{N}_{k, \beta})\right)}^{-1}\) is \(\kappa^{-\rk \overline{N}_{k, \beta}}\) and consequently if \(\sum_i \deg \alpha_i = 2\dim \ModuliStackMaps{0,k}{X,\beta}\) the super Gromov--Witten invariants reproduce classical Gromov--Witten invariants up to the polynomial prefactor.
New invariants arise when \(c=2\dim \ModuliStackMaps{0,k}{X,\beta} - \sum_i\deg \alpha_i\) is strictly positive and the component of \({\left(e^K(\overline{N}_{k, \beta})\right)}^{-1}\) of cohomological degree \(c\) contributes characteristic classes of \(\overline{N}_{k,\beta}\).
Some but not all characteristic classes of \(\overline{N}_{k,\beta}\) can be expressed as descendant classes and hence we do expect that super Gromov--Witten invariants are new invariants and cannot be directly reduced to known ones.

We point out that even though Equation~\eqref{eq:defnSGWIntro} is clearly motivated by torus localization the invariants only depend on the existence and properties of the normal bundles \(\overline{N}_{k,\beta}\) on \(\ModuliStackSpinMaps{0,k}{X,\beta}\).
Super Gromov--Witten classes can be defined in the same spirit and are treated in this article.
Further generalizations motivated by the geometry of super stable maps and super stable curves are briefly discussed in Section~\ref{SSec:Generalizations}.

Super Gromov--Witten invariants and super Gromov--Witten classes satisfy generalizations of the Kontsevich--Manin axioms because the vector bundles \(\overline{N}_{k, \beta}\) are compatible with forgetful and gluing maps.
For example, a simple version of the splitting axiom for super Gromov--Witten classes states that the integral of the super Gromov--Witten class \(SGW_{0,4}^{X, \beta_1+\beta_2}(\alpha_1, \alpha_2, \alpha_3, \alpha_4)\) over the boundary divisor \(D=[\ModuliStackMaps{0,3}{X, \beta_1}\times_X \ModuliStackMaps{0,3}{X,\beta_2}]\) can be be calculated by summing over products of three point Gromov--Witten invariants:
\begin{equation}\label{eq:GluingAxiomIntro}
	\begin{split}
		\MoveEqLeft
		2\int_D SGW_{0,4}^{X, \beta_1+\beta_2}(\alpha_1, \alpha_2, \alpha_3, \alpha_4) \\
		&=\sum_{a,b} g^{ab}\left<SGW_{0,3}^{X, \beta_1}\right>(\alpha_1, \alpha_2, T_a)\left<SGW_{0,3}^{X, \beta_2}\right>(T_b, \alpha_3, \alpha_4).
	\end{split}
\end{equation}
Here, \(T_a\) is a homogeneous basis of the cohomology of \(X\) and \(g_{ab}=\int_X T_a\cup T_b\).
The identity~\eqref{eq:GluingAxiomIntro} is central to the proof that three point super Gromov--Witten invariants allow to define a supergeometric generalization of the small quantum product in cohomology.
The proof of the splitting axiom for super Gromov--Witten invariants reduces to the proof of the splitting axiom for the classical Gromov--Witten invariants together with \(gl_X^*\overline{N}_{k_1+k_2,\beta_1+\beta_2} = \overline{N}_{k_1+1,\beta_1}\oplus \overline{N}_{k_2+1, \beta_2}\) for the gluing map
\begin{equation}
	gl_X\colon \ModuliStackSpinMaps{0,k_1+1}{X, \beta_1}\times_X \ModuliStackSpinMaps{0,k_2+1}{X, \beta_2}\to \ModuliStackSpinMaps{0,k_1+k_2}{X, \beta_1+\beta_2}.
\end{equation}

The Point-Mapping-Axiom states that for \(\beta=0\) and \(k\geq 3\)
\begin{equation}
	\left<SGW^{X, 0}_{0,k}\right>(\alpha_1, \dotsc, \alpha_k)
	= \left<SGW^{\pt}_{0,k}\right> \int_X \alpha_1 \cup \dotsm \cup \alpha_k.
\end{equation}
Here, the super Gromov--Witten invariants of a point \(\left<SGW^{\pt}_{0,k}\right>\) are monomials in \(\C(\kappa)\), independent of \(X\) which can be expressed in terms of Chern classes of Hodge bundles on \(\ModuliStackCurves{0,k}\).
The supergeometric divisor and fundamental class axiom acquire a correction term encoding the first Chern class of the kernel of \(\overline{N}_{k+1, \beta}\to \overline{N}_{k, \beta}\).

For the target \(X=\ProjectiveSpace{n}\) we sketch a general approach to evaluate all super Gromov--Witten invariants via an additional localization with respect to the torus action on the moduli of stable maps induced from the \({\left(\C^*\right)}^{n+1}\)-action on \(\ProjectiveSpace{n}\).
The calculation is based on~\cite{K-ERCVTA} where the stable maps invariant under the torus action where classified, Euler classes of normal bundles calculated and classical Gromov--Witten invariants of \(\ProjectiveSpace{n}\) obtained.
We extend Kontsevich's method by computing the torus action on \(\overline{N}_{k,\beta}\) induced from the torus action on \(\ProjectiveSpace{n}\) for particular fixed points.
This leads to formulas for super Gromov--Witten invariants \(\left<SGW_{0,k}^{\ProjectiveSpace{n}, d}\right>(\alpha_1, \dotsc, \alpha_k)\) of degree \(d=1\) which can be evaluated for \(n=1\) and \(k=1,2,3\) directly.
Certain cases for \(n=2,3,4,5\) are obtained with the help of computer algebra.
Cases with higher degree and higher number of marked points can then, in principle be calculated using the axioms and the recursive methods from~\cite{K-ERCVTA}.

The outline of the paper is as follows:
In Section~\ref{Sec:SUSYNormalBundles}, we define and motivate the SUSY normal bundles on the moduli spaces of stable curves and stable maps.
Motivated by torus localization we define super Gromov--Witten invariants dividing by the equivariant Euler class of the SUSY normal bundles in Section~\ref{Sec:DefnSGW}.
We study their basic properties, in particular that they satisfy a generalization of the Kontsevich--Manin axioms which allows to construct a super quantum product in Section~\ref{Sec:SuperSmallQuantumCohomology}.
As an example, we calculate the  super Gromov--Witten invariants of degree one in \(\ProjectiveSpace{n}\) in Section~\ref{Sec:SGWOfPn} when the number of dimensions and marked points is low.

\subsection*{Acknowledgments}
We thank
Nitin Chidambaram,
Kai Cieliebak,
Olivia Dumitrescu,
Sam Grushevsky,
Wei Gu,
Sheldon Katz,
Reinier Kramer,
Motohico Mulase,
Sasha Polishchuk,
Maxim Smirnov,
Tom Wennink, and
Dimitry Zvonkine
for helpful discussions on various aspects of this work.
Enno Keßler thanks the Max-Planck-Institut für Mathematik, Bonn as well as the Max-Planck-Institut für Mathematik in den Naturwissenschaften, Leipzig for good working conditions and financial support during this work.
Enno Keßler and Artan Sheshmani thank the Simons Center for Geometry and Physics at Stony Brook for the invitation to the program on SuperGeometry and SuperModuli where this work was presented.
Enno Keßler also thanks the Galileo Galilei Institute for Theoretical Physics in Arcetri, Florence for hospitality during the workshop on Emergent Geometries from Strings and Quantum Fields.
 \counterwithin{equation}{subsection}

\section{SUSY normal bundles}\label{Sec:SUSYNormalBundles}
In this section we describe the SUSY normal bundles for stable curves and stable maps.
Intuitively, those bundles should be thought of as follows:
Let \(\overline{N}_k\) be the normal bundle of the embedding \(\ModuliStackSpinCurves{0,k}\to \ModuliStackSuperCurves{0,k}\) of the moduli stack of stable curves of genus zero and with \(k\)-marked points into the moduli stack of stable SUSY curves of genus zero with \(k\) Neveu--Schwarz punctures.
Every family \(C\to B\) of such stable spin curves is given by a map \(b\colon B\to \ModuliStackSpinCurves{0,k}\) and we obtain the SUSY normal bundle \(N_C=b^*\overline{N}_k\).
Similarly, let \(\overline{N}_{k,\beta}\) be the normal bundle of the embedding \(\ModuliStackSpinMaps{0,k}{X,\beta}\to \ModuliStackSuperMaps{0,k}{X, \beta}\) of the moduli space of stable spin maps of genus zero, \(k\) marked points and homology class \(\beta\in H_2^+(X)\) into its supergeometric analogue.
Every such family \(\phi\colon C\to X\) of stable spin maps over \(B\) is given by a map \(b\colon B\to \ModuliStackSpinMaps{0,k}{X,\beta}\) and set the SUSY normal bundle to be \(N_{(C, \phi)}=b^*\overline{N}_{k,\beta}\).
As the moduli space of super stable maps is not yet sufficiently developed, we do not construct \(\overline{N}_k\) and \(\overline{N}_{k,\beta}\) directly.
Rather we give a definition of \(N_C\) and \(N_{(C,\phi)}\) for the case of constant tree type using only the language of classical algebraic geometry and the motivation from~\cite{KSY-TAMSSSMGZ}.
We leave the extension of the SUSY normal bundles to the moduli stacks of stable spin curves and stable spin maps as a conjecture.

In Section~\ref{SSec:SUSYNormalBundlesForStableCurves}, we first recall the notions of \(k\)-pointed prestable curves and stable curves of genus zero, the notion of dual tree and the spinor bundles on prestable curves.
Then we define the SUSY normal bundles for stable \(k\)-pointed curves of genus zero of fixed tree type and derive some basic properties.
In Section~\ref{SSec:ConjectureSUSYNormalBundleStableSpinCurves} we formulate the conjecture that the SUSY normal bundles form a vector bundle on the moduli stack of stable spin curves.
SUSY normal bundles for stable maps are then treated analogously in Section~\ref{SSec:SUSYNormalBundlesForStableMaps} and Section~\ref{SSec:ConjectureSUSYNormalBundleStableSpinMaps}.
In Section~\ref{SSec:SNMotivation} we give background information and from the differential geometric treatment of moduli spaces of super stable maps in~\cite{KSY-TAMSSSMGZ}.

\subsection{SUSY normal bundles for stable spin curves}\label{SSec:SUSYNormalBundlesForStableCurves}
Before studying SUSY normal bundles for stable spin curves we recall some basic properties of prestable curves of genus zero:
A prestable curve over the base \(B\) is a flat, proper morphism \(c\colon C\to B\) such that each geometric fiber is a reduced one-dimensional scheme with at most nodal singularities, see~\cite[III Definition~2.1]{M-FMQCMS}.
We assume in this article that all curves are of genus zero, that is all geometric fibers of the family \(C\to B\) have arithmetic genus zero.
Any smooth prestable curve of genus zero (over a point) is isomorphic to \(\ProjectiveSpace{1}\) together with marked points \(p_i\in\ProjectiveSpace{1}\) for \(i=1,\dotsc,k\).
The normalization of any prestable curve over \(B=\Spec\C\) with \(n\) nodes is isomorphic to \(n+1\) copies of \(\ProjectiveSpace{1}\).

Let \(J\) be a finite set of cardinality \(k\).
Often we will use \(J=\Set{1,\dots,k}\).
A \(J\)-marked prestable curve is a prestable curve together with sections \(p_j\colon B\to C\) for \(j\in J\) such that in each geometric fiber the images of the sections are distinct points lying in the smooth locus.
A \(J\)-marked prestable curve of genus zero is called a stable curve of genus zero if every irreducible component of \(C\) contains at least three special, that is marked or singular points.

The dual tree of a prestable curve \(C\to \Spec \C\) of genus zero is a graph \(\tau\) with set of vertices~\(V_\tau\) given by irreducible components of \(C\), the set of edges~\(E_\tau\) representing nodes and the set of tails~\(T_\tau\) representing marked points of the curve, see~\cite[III Definition~2.5]{M-FMQCMS}.
We say that a \(J\)-marked prestable curve \(C\to B\) is of constant tree type \(\tau\) if every geometric fiber has dual tree \(\tau\).
In this case, we also have nodal sections \(p_e\colon B\to C\) for every edge \(e \in E_\tau\) of \(\tau\) which map in every geometric fiber to the nodal point represented by \(e\).

For a prestable curve \(c\colon C\to B\) we denote the relative dualizing sheaf by \(\omega_{C}\).
A spinor bundle \(S_C\) on the prestable curve \(c\colon C\to B\) is a coherent sheaf \(S_C\) on \(C\), generically locally free of rank one such that
\begin{equation}
	S_{C}=\Hom(S_{C}, \dual{\omega}_C).
\end{equation}
Similar bundles have been considered in~\cites{C-MCTC}{J-GMHSC} as well as~\cites{D-LaM}{FKP-MSSCCLB} in the context of supergeometry.
We call a prestable curve together with a choice of spinor bundle a prestable spin curve.
In genus zero, there is a unique spinor bundle on every prestable curve \(C\).
If \(C=\ProjectiveSpace{1}\) is a smooth curve \(S_C=\cO(1)\) and if \(C\) is a prestable curve over \(\Spec\C\) and \(\tilde{c}\colon \tilde{C}\to C\) its normalization then \(S_C = \tilde{c}_*S_{\tilde{C}}\).
Note that \(S_C\) is locally free of rank one on the smooth locus of \(C\) but not locally free and of rank two at nodes of \(C\).
In addition over the smooth locus the spinor bundle satisfies \(S_C\otimes S_C= \tangent{C}\).

A morphism
\begin{equation}
	\left(c\colon C\to B, \Set{p_j}_{j\in J}, S_C\right)
	\to
	\left(c'\colon C'\to B, \Set{p'_{j'}}_{j'\in J'}, S_{C'}\right)
\end{equation}
of prestable spin curves over \(B\) consists of a bijective map \(l\colon J\to J'\), a morphism \(g\colon C\to C'\) over \(B\) and a fiberwise linear map \(s\colon S_C\to g^*S_{C'}\) such that \(g(p_j) = p'_{l(j)}\) and \(s\otimes s = \differential{g}\) on the smooth locus.

The group of spin automorphisms of \(\ProjectiveSpace{1}\) with no marked points is the group \(\SGL(2)\) as we have discussed in~\cites[Section~3.1]{KSY-SQCI}[Section~3]{KSY-TAMSSSMGZ} in detail.
Matrices \(l=\begin{psmallmatrix}a & c\\ b&d\\\end{psmallmatrix}\) in \(\SGL(2)\) act on projective coordinates by left multiplication
\begin{equation}
	\left[ X^1 : X^2\right]
	\begin{pmatrix}
		a & c\\
		b & d\\
	\end{pmatrix}
	=
	\left[a X^1 + b X^2 : c X^1 + d X^2\right]
\end{equation}
and on sections of \(\cO(1)\) as follows
\begin{equation}
	t_1 X^1 + t_2 X^2 \mapsto (-a t_1 + c t_2) X^1 + (b t_1 - d t_2) X^2.
\end{equation}
Note that while \(\pm l\) induce the same action on \(\ProjectiveSpace{1}\) their action on sections of \(\cO(1)\) differs by a sign.
If \(C\) is a smooth stable spin curve, that is it has at least three marked points, the automorphism group of the smooth stable spin curve \(C\) is \(\Z_2\) which acts only on the spinor bundle by flipping a sign.
The automorphisms group of stable nodal spin curves of genus zero and fixed tree type \(\tau\) is \(\Z_2^{\#V_\tau}\) where \(\#V_\tau\) is the number of irreducible components.

\begin{defn}\label{defn:NC}
	Let \(c\colon C\to B\) be a stable spin curve of constant tree type \(\tau\).
	The SUSY normal bundle is the coherent sheaf \(N_{C}\) on \(B\) given by
	\begin{diag}\label{SES:DefnNCB}
		\matrix[mat](m) {
			0 & c_*S_C & {\displaystyle \bigoplus_{t\in T_\tau} p_t^*S_C \bigoplus_{e\in E_\tau} p_e^*S_C} & N_C & 0 \\
				& s & \bigoplus p_t^*s \bigoplus p_e^*s & & \\
		} ;
		\path[pf] {
			(m-1-1) edge (m-1-2)
			(m-1-2) edge (m-1-3)
			(m-1-3) edge (m-1-4)
			(m-1-4) edge (m-1-5)
			(m-2-2) edge[commutative diagrams/mapsto] (m-2-3)
		};
	\end{diag}
\end{defn}
Any isomorphism of stable spin curves yields an isomorphism of SUSY normal bundles because the restriction map behaves equivariantly with respect to the identification of spinor bundles.

\begin{lemma}\label{lemma:NCBLocallyFree}
	The sheaf \(N_C\) is locally free of rank \(r_k=k-2\).
\end{lemma}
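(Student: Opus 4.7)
The plan is to show that the defining sequence for \(N_C\) is a short exact sequence of coherent sheaves on \(B\) in which the two leftmost nonzero terms are locally free of the expected ranks and the map between them is fiberwise injective, so that the cokernel \(N_C\) is automatically locally free of the predicted rank.

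First I would verify that \(c_*S_C\) is locally free of rank \(2\#V_\tau\). Since the tree type is constant along \(B\), the normalization \(\tilde c\colon \tilde C\to C\) is a finite morphism of families, and on each geometric fiber the description \(S_{C_b}=\tilde c_*S_{\tilde C_b}\) from Section~\ref{SSec:SUSYNormalBundlesForStableCurves} presents \(\tilde C_b\) as a disjoint union of \(\#V_\tau\) copies of \(\ProjectiveSpace{1}\), each carrying \(\cO(1)\). Exactness of finite pushforward combined with \(h^0(\ProjectiveSpace{1},\cO(1))=2\) and \(h^1(\ProjectiveSpace{1},\cO(1))=0\) then yields \(h^0(C_b,S_{C_b})=2\#V_\tau\) and \(h^1(C_b,S_{C_b})=0\) on every geometric fiber, so the local freeness with the stated rank follows from cohomology and base change. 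The middle term \(\bigoplus p_t^*S_C\oplus\bigoplus p_e^*S_C\) is locally free of rank \(\#T_\tau+2\#E_\tau\): each \(p_t\) lands in the smooth locus where \(S_C\) is invertible, and each \(p_e\) lands at a node where, by the normalization description, \(S_C\) has stalk of rank two.

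The heart of the argument is fiberwise injectivity of the evaluation map. Over a geometric point a section of \(c_*S_C\) corresponds to a collection \((\sigma_v)_{v\in V_\tau}\) with \(\sigma_v\in H^0(\ProjectiveSpace{1},\cO(1))\), and the map records the values of each \(\sigma_v\) at every marked or nodal special point lying on the \(v\)-th component. Vanishing of the image forces each \(\sigma_v\) to vanish at every special point of its component; stability provides at least three such points on every component, while a nonzero section of \(\cO(1)\) on \(\ProjectiveSpace{1}\) has at most one zero, so \(\sigma_v=0\) throughout. A fiberwise injective map of locally free sheaves has cokernel of constant fiberwise dimension and is therefore locally free, and the rank count gives \(\#T_\tau+2\#E_\tau-2\#V_\tau=k+2\#E_\tau-2(\#E_\tau+1)=k-2\), using \(\#V_\tau=\#E_\tau+1\) for a tree.

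The subtlety I expect to require the most care is the behavior at the nodes, where \(S_C\) fails to be locally free: one has to use the stalk description coming from the normalization to see that the rank-two fiber of \(p_e^*S_C\) is naturally the direct sum of the values of \(\cO(1)\) on the two branches meeting at the node, and that the evaluation map in the defining sequence lands in this sum in a way that makes the component-by-component vanishing argument above valid. Once this identification is pinned down, the remainder is a routine combination of cohomology and base change with the elementary fact that a nonzero section of \(\cO(1)\) on \(\ProjectiveSpace{1}\) has at most one zero.
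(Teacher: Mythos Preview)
Your proposal is correct and follows essentially the same approach as the paper: compute the ranks of the first two terms in the defining sequence, observe that the cokernel therefore has constant rank \(k-2\) on closed points, and conclude local freeness. The paper's proof is terser---it invokes Nakayama's lemma directly on the coherent cokernel without spelling out the fiberwise injectivity of the restriction map---whereas you supply this step explicitly via the stability condition and the fact that a nonzero section of \(\cO(1)\) has at most one zero; your version is thus a more complete rendition of the same argument.
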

\begin{proof}
	By construction the sheaf \(N_C\) is coherent.
	Assume that the tree \(\tau\) has \(\#E_\tau\) edges and \(\#E_\tau+1\) vertices.
	At every closed point \(b\in B\) the geometric fiber of \(c\) has \(\#E_\tau+1\) irreducible components and \(\#E_\tau\) nodes.
	Then \(c_*S_{C}\) has rank \(2\left(\#E_\tau + 1\right)\) at \(b\) and the middle term of~\eqref{SES:DefnNCB} has rank \(2\#E_\tau+k\).
	Consequently, \(N_{C}\) has rank \(k-2\) at all closed points.
	As closed points are dense in \(B\) and the rank is upper semicontinuous, the rank of \(N_{C}\) is \(k-2\) at all points.
	It follows from Nakayama's Lemma that any coherent sheaf with constant rank is locally free, see, for example~\cite[Exercise~14.4.L(a)]{V-RSFAG}.
\end{proof}

\begin{ex}\label{ex:Nk}
	Let \(c\colon C=\ProjectiveSpace{1}\times B\to B\) and \(p_i\colon B\to \ProjectiveSpace{1}\) for \(i=1,\dotsc, k\) marked points.
	In this situation \(S_C = \cO(1)\) and we can assume without loss of generality that \(p_1=0\), \(p_2=\infty\) and \(p_3=1\).
	Hence \(p_1^*S_C\), \(p_2^*S_C\) and \(p_3^*S_C\) are trivial line bundles over \(B\).
	Sections of \(S_C\) are given by \(s=a_0X^0+a_1X^1\) where \(a_0,a_1\) are sections of \(\cO_B\) and \([X^0:X^1]\) are projective coordinates of \(\ProjectiveSpace{1}\).
	Restricting \(s\) to \(p_1=0=[0:1]\) yields \(a_0\) and the restriction of \(s\) to \(p_2=\infty=[1:0]\) yields \(a_1\).
	Consequently,
	\begin{equation}
		N_C
		= p_3^*S_C\oplus p_4^*S_C\dotsb\oplus p_k^*S_C
		= \C \oplus p_4^*S_C\dotsb\oplus p_k^*S_C.
	\end{equation}
\end{ex}

\begin{lemma}
	Let \(C\) be a smooth stable spin curve over a point with \(k\geq 3\) marked points \(p_1, \dotsc, p_k\).
	Furthermore, let
	\begin{equation}
		S_0 = S_C\left(-\sum_{j=1}^k p_j\right)
	\end{equation}
	the sheaf of sections of \(S\) vanishing at the marked points.
	Then \(N_C = H^1(S_0)\).
\end{lemma}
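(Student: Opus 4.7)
The plan is to identify the defining sequence for $N_C$ (specialized to the smooth, over-a-point case) with the long exact cohomology sequence associated to the natural inclusion $S_0 \hookrightarrow S_C$.

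First I would simplify the defining short exact sequence in this setting. Since $C$ is smooth, the tree $\tau$ has a single vertex and no edges, so $E_\tau = \emptyset$ and $T_\tau = \{1,\dotsc,k\}$. With $B$ a point, $c_*S_C = H^0(C,S_C)$ and each $p_j^*S_C$ is the fiber $S_C\lvert_{p_j}$. So~\eqref{SES:DefnNCB} becomes
\begin{equation}
    0 \to H^0(C,S_C) \to \bigoplus_{j=1}^k S_C\lvert_{p_j} \to N_C \to 0.
\end{equation}

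Next I would write down the standard short exact sequence of sheaves on $C$ associated with vanishing at the marked points,
\begin{equation}
    0 \to S_0 \to S_C \to \bigoplus_{j=1}^k S_C\lvert_{p_j} \to 0,
\end{equation}
where the last map is evaluation at the $p_j$ and the kernel is $S_0 = S_C\left(-\sum_j p_j\right)$ by definition. Taking the long exact cohomology sequence yields
\begin{equation}
    0 \to H^0(S_0) \to H^0(S_C) \to \bigoplus_{j=1}^k S_C\lvert_{p_j} \to H^1(S_0) \to H^1(S_C) \to 0.
\end{equation}

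The final step is a degree count on $\ProjectiveSpace{1}$. Since $S_C = \cO(1)$ one has $H^1(S_C) = H^1(\cO(1)) = 0$, and since $S_0 = \cO(1-k)$ with $k\geq 3$ one has $H^0(S_0) = H^0(\cO(1-k)) = 0$. Thus the long exact sequence collapses to
\begin{equation}
    0 \to H^0(C,S_C) \to \bigoplus_{j=1}^k S_C\lvert_{p_j} \to H^1(S_0) \to 0,
\end{equation}
which coincides, term-by-term and map-by-map, with the simplified defining sequence for $N_C$. Comparing cokernels yields $N_C \cong H^1(S_0)$. There is no real obstacle here: the identification is immediate once one recognizes that the middle map in the definition of $N_C$ is precisely the evaluation map whose kernel is $H^0(S_0)$ and whose cokernel is $H^1(S_0)$ by Serre vanishing at the relevant degrees.
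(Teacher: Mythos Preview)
Your proof is correct and follows essentially the same approach as the paper: both use the long exact cohomology sequence of \(0 \to S_0 \to S_C \to \bigoplus_j S_C|_{p_j} \to 0\), invoke the vanishing of \(H^0(S_0)\) and \(H^1(S_C)\) for degree reasons, and identify the resulting four-term sequence with the defining sequence~\eqref{SES:DefnNCB} for \(N_C\). Your version is slightly more explicit in spelling out \(S_0 = \cO(1-k)\), but the argument is the same.
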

\begin{proof}
	The sheaves \(S_0\) and \(S_C\) fit into the following short exact sequence
	\begin{diag}
		\matrix[mat](m) {
			0 & S_0 & S_C & \bigoplus_{j=1}^k \cO_{p_j} & 0 \\
		} ;
		\path[pf] {
			(m-1-1) edge (m-1-2)
			(m-1-2) edge (m-1-3)
			(m-1-3) edge (m-1-4)
			(m-1-4) edge (m-1-5)
		};
	\end{diag}
	where \(\cO_{p_j}\) is the skyscraper sheaf at \(p_j\).
	Its long exact cohomology sequence yields
	\begin{diag}
		\matrix[mat](m) {
			0 & H^0(S_C) & H^0\left(\bigoplus_{j=1}^k \cO_{p_j}\right) & H^1(S_0) & 0 \\
		} ;
		\path[pf] {
			(m-1-1) edge (m-1-2)
			(m-1-2) edge (m-1-3)
			(m-1-3) edge (m-1-4)
			(m-1-4) edge (m-1-5)
		};
	\end{diag}
	because \(H^0(S_0)=0\) and \(H^1(S_C)=0\) for degree reasons.
	The map on the left hand side is the restriction of sections of \(S_C\) to the marked points, that is, coincides with the map on the left of the short exact sequence~\eqref{SES:DefnNCB}.
	The result \(N_C = H^1(S_0)\) follows.
\end{proof}

\begin{lemma}\label{lemma:PullbackNC}
	Let \(b\colon \tilde{B}\to B\) be a flat morphism.
	Then \(N_{C\times_B \tilde{B}}=b^*N_{C}\).
\end{lemma}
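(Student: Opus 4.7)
The plan is to apply the pullback functor \(b^*\) to the defining short exact sequence~\eqref{SES:DefnNCB} for \(N_C\) and identify the result with the defining sequence for \(N_{\tilde{C}}\), where \(\tilde{C} = C\times_B \tilde{B}\). Since \(b\colon\tilde{B}\to B\) is flat, pulling back along \(b\) is exact, so applying \(b^*\) to~\eqref{SES:DefnNCB} yields a short exact sequence ending with \(b^*N_C\). The task is then to verify term by term that this pulled back sequence matches the defining sequence of \(N_{\tilde{C}}\).

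First I would set up the cartesian diagram
\begin{diag*}
	\matrix[mat](m) {
		\tilde{C} & C \\
		\tilde{B} & B \\
	} ;
	\path[pf] {
		(m-1-1) edge node[above] {$\pi$} (m-1-2) edge node[left] {$\tilde{c}$} (m-2-1)
		(m-1-2) edge node[right] {$c$} (m-2-2)
		(m-2-1) edge node[below] {$b$} (m-2-2)
	};
\end{diag*}
with pulled back sections \(\tilde{p}_t = p_t\times_B \mathrm{id}_{\tilde{B}}\) and nodal sections \(\tilde{p}_e\) defined analogously. Since \(b\) is flat, \(\tilde{C}\to \tilde{B}\) is again a stable spin curve of the same constant tree type \(\tau\), and the relative dualizing sheaf commutes with flat base change, so \(\omega_{\tilde{C}} = \pi^*\omega_C\). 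The uniqueness of the spinor bundle in genus zero (together with its compatibility with pullback along the smooth morphism \(\pi\) on the smooth locus, extended across nodes by the Hom characterization) gives \(S_{\tilde{C}} = \pi^* S_C\).

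Next, the key identification is \(b^* c_* S_C = \tilde{c}_* \pi^* S_C = \tilde{c}_* S_{\tilde{C}}\), which is precisely flat base change for the flat morphism \(b\). For the middle term of the sequence, the identities \(b^* p_t^* S_C = \tilde{p}_t^* \pi^* S_C = \tilde{p}_t^* S_{\tilde{C}}\) and similarly for the nodal sections follow directly from functoriality of pullback applied to the commuting squares \(p_t\circ b = \pi\circ \tilde{p}_t\). Moreover, the restriction maps \(s\mapsto p_t^*s\) pull back to the analogous restrictions \(\tilde{s}\mapsto \tilde{p}_t^*\tilde{s}\), because restriction to a section is compatible with base change.

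Putting these identifications together, applying the exact functor \(b^*\) to~\eqref{SES:DefnNCB} produces exactly the defining short exact sequence of \(N_{\tilde{C}}\), whence \(b^*N_C = N_{\tilde{C}}\). The only nonroutine step is the spinor bundle compatibility \(S_{\tilde{C}} = \pi^* S_C\), which I expect to be the main (mild) obstacle: on the smooth locus this is immediate since \(\pi\) is smooth of relative dimension zero, but at nodes one should invoke the normalization description \(S_C = \tilde{c}_* S_{\tilde{C}}\) from Section~\ref{SSec:SUSYNormalBundlesForStableCurves} together with flat base change to reduce to the smooth case.
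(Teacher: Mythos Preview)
Your proposal is correct and follows essentially the same argument as the paper: pull back the defining short exact sequence along the flat morphism \(b\), invoke flat base change for the pushforward term \(b^*c_*S_C\), and use functoriality of pullback for the restriction terms. The paper simply asserts \(f^*S_C=S_{\tilde{C}}\) (your \(\pi^*S_C=S_{\tilde{C}}\)) without further comment, so your additional justification via the dualizing sheaf and normalization description is a mild elaboration rather than a different route.
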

\begin{proof}
	We write \(\tilde{C}=C\times_B \tilde{B}\) for the total space after base change, \(\tilde{c}\colon \tilde{C}\to \tilde{B}\) for the family of curves and \(\tilde{p}_t=(p_t\circ b, \id_{\tilde{B}})\colon \tilde{B}\to \tilde{C}\) as well as \(\tilde{p}_e=(p_e\circ b, \id_{\tilde{B}})\colon \tilde{B}\to \tilde{C}\) for its sections induced by \(p_t\) for \(t\in T_\tau\) and \(p_e\) for \(e\in E_\tau\).
	The map \(f\colon \tilde{C}\to C\) satisfies \(c\circ f = b\circ \tilde{c}\) and \(f^*S_C=S_{\tilde{C}}\).

	As \(b\) is flat the pullback of~\eqref{SES:DefnNCB} yields
	\begin{diag}\label{SES:PullbackOfDefnNCB}
		\matrix[mat](m) {
			0 & b^*c_*S_C & {\displaystyle \bigoplus_{t\in T_\tau} b^*p_t^*S_C \bigoplus_{e\in E_\tau} b^*p_e^*S_C} & b^*N_C & 0 \\
		} ;
		\path[pf] {
			(m-1-1) edge (m-1-2)
			(m-1-2) edge (m-1-3)
			(m-1-3) edge (m-1-4)
			(m-1-4) edge (m-1-5)
		};
	\end{diag}
	For the first entry of the short exact sequence we have by flat base change for cohomology, see~\cite[Theorem~24.2.9]{V-RSFAG}, that \(b^*c_*S_C = \tilde{c}_*f^*S_C = \tilde{c}_*S_{\tilde{C}}\).
	The summands of the middle term of the short exact sequence~\eqref{SES:PullbackOfDefnNCB} are given by \(b^*p_t^*S_C=\tilde{p}_t^*f^*S_C=\tilde{p}_t^*S_{\tilde{C}}\) and \(b^*p_e^*S_C=\tilde{p}_e^*S_{\tilde{C}}\).
	Hence, the short exact sequence~\eqref{SES:PullbackOfDefnNCB} is isomorphic to the one defining \(N_{\tilde{C}}\) and \(b^*N_{C}=N_{\tilde{C}}\).
\end{proof}

\begin{prop}\label{prop:GluingNC}
	Let \(c_1\colon C_1\to B\) be a family of stable spin curves of constant tree type \(\tau_1\) and \(c_2\colon C_2\to B\) be a family of stable spin curves of constant tree type \(\tau_2\).
	For two tails \(t_1\in T_{\tau_1}\) and \(t_2\in T_{\tau_2}\) denote by \(\sigma\) the tree obtained from grafting the tails \(t_1\) and~\(t_2\).
	Furthermore, let \(c\colon C\to B\) be the stable curve of constant tree type \(\sigma\) obtained from gluing \(C_1\) and \(C_2\) along \(p_{t_1}\) and \(p_{t_2}\).
	Then \(N_C=N_{C_1}\oplus N_{C_2}\).
\end{prop}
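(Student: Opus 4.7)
The plan is to verify that the defining short exact sequence~\eqref{SES:DefnNCB} for \(N_C\) decomposes as the direct sum of the analogous sequences for \(N_{C_1}\) and \(N_{C_2}\), and then read off the claimed isomorphism from the rightmost term.

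First I would unpack the combinatorics of the grafted tree: \(V_\sigma = V_{\tau_1} \sqcup V_{\tau_2}\), \(T_\sigma = (T_{\tau_1}\setminus\{t_1\})\sqcup (T_{\tau_2}\setminus\{t_2\})\), and \(E_\sigma = E_{\tau_1}\sqcup E_{\tau_2}\sqcup \{e_{\mathrm{new}}\}\), where \(e_{\mathrm{new}}\) records the new node coming from identifying \(p_{t_1}\) with \(p_{t_2}\).  Let \(\iota_i\colon C_i\hookrightarrow C\) denote the inclusions and note that the normalization \(\tilde C\to C\) factors through \(C_1\sqcup C_2\to C\) as the disjoint union of the normalizations of \(C_1\) and \(C_2\).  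From \(S_{C}=\tilde c_* S_{\tilde C}\) (and likewise for each \(C_i\)) one obtains \(S_C = \iota_{1*}S_{C_1}\oplus \iota_{2*}S_{C_2}\); pushing forward to \(B\) and using \(c\circ \iota_i = c_i\) yields
\begin{equation}
	c_* S_C = c_{1*} S_{C_1}\oplus c_{2*}S_{C_2}.
\end{equation}
This is the decomposition of the left-hand term of~\eqref{SES:DefnNCB} that I need.

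Next I would decompose the middle term.  Summing over the tails of \(\sigma\) splits immediately as \(\bigoplus_{t\in T_{\tau_1}\setminus\{t_1\}}p_t^*S_{C_1}\oplus \bigoplus_{t\in T_{\tau_2}\setminus\{t_2\}}p_t^*S_{C_2}\).  For the edges of \(\sigma\) the only nontrivial point is the new nodal section \(p_{e_{\mathrm{new}}}\): it lands in a node of \(C\) where \(S_C\) is not locally free but has stalk canonically equal to the sum of the two spinor stalks on the branches, so \(p_{e_{\mathrm{new}}}^*S_C = p_{t_1}^*S_{C_1}\oplus p_{t_2}^*S_{C_2}\).  Recombining the two missing tails with the sums over \(E_{\tau_1}\) and \(E_{\tau_2}\) then gives
\begin{equation}
	\bigoplus_{t\in T_\sigma}p_t^*S_C\oplus\bigoplus_{e\in E_\sigma}p_e^*S_C
	= \left(\bigoplus_{T_{\tau_1}}p_t^*S_{C_1}\oplus\bigoplus_{E_{\tau_1}}p_e^*S_{C_1}\right)\oplus \left(\bigoplus_{T_{\tau_2}}p_t^*S_{C_2}\oplus\bigoplus_{E_{\tau_2}}p_e^*S_{C_2}\right).
\end{equation}

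Finally I would check that the restriction map in~\eqref{SES:DefnNCB} for \(C\) respects these decompositions.  A section of \(S_C\) corresponds under \(S_C = \iota_{1*}S_{C_1}\oplus \iota_{2*}S_{C_2}\) to a pair \((s_1,s_2)\); its restrictions to the \(T_{\tau_i}\setminus\{t_i\}\) and to the \(E_{\tau_i}\) are the restrictions of \(s_i\); at \(p_{e_{\mathrm{new}}}\) its restriction is, by construction of the node-stalk splitting above, the pair \((s_1|_{p_{t_1}}, s_2|_{p_{t_2}})\).  Thus the restriction map for \(N_C\) is the direct sum of the restriction maps for \(N_{C_1}\) and \(N_{C_2}\).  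An application of the snake lemma (or just exactness of direct sums) identifies the cokernel \(N_C\) with \(N_{C_1}\oplus N_{C_2}\).

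The main obstacle I anticipate is bookkeeping at the new node, specifically the canonical identification \(p_{e_{\mathrm{new}}}^*S_C=p_{t_1}^*S_{C_1}\oplus p_{t_2}^*S_{C_2}\) and the compatibility of the restriction maps with this splitting; the rest is essentially a combinatorial rearrangement of direct summands.
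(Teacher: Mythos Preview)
Your proposal is correct and follows essentially the same approach as the paper's proof: decompose \(c_*S_C\) via \(S_C=\iota_{1*}S_{C_1}\oplus\iota_{2*}S_{C_2}\), unpack the combinatorics of \(T_\sigma\) and \(E_\sigma\) to decompose the middle term (using \(p_{e_{\mathrm{new}}}^*S_C=p_{t_1}^*S_{C_1}\oplus p_{t_2}^*S_{C_2}\) at the new node), and conclude from the resulting direct-sum splitting of the short exact sequence. You are slightly more explicit than the paper in checking that the restriction map respects the splitting, but the argument is the same.
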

\begin{proof}
	Note that \(S_{C}=i^1_*S_{C_1}\oplus i^2_*S_{C_2}\), where \(i^j\colon C_j\to C\) for \(j=1,2\) are the inclusions.
	This implies \(c_*S_{C}={\left(c_1\right)}_* S_{C_1}\oplus {\left(c_2\right)}_* S_{C_2}\).

	By construction the edges of \(\sigma\) are given by \(E_\sigma=E_{\tau_1}\cup E_{\tau_2}\cup \Set{(t_1, t_2)}\) and the tails of \(\sigma\) are given by \(T_\sigma = \left(T_{\tau_1}\cup T_{\tau_2}\right)\setminus \Set{t_1, t_2}\).
	Furthermore, for \(e\in E_{\tau_i}\) we have \(p_e^*S_C=p_e^*S_{C_i}\), for \(t\in T_{\tau_i}\setminus\Set{t_i}\) we have \(p_t^*S_C = p_t^*S_{C_i}\) and \(p_{(t_1,t_2)}^*S_C = p_{t_1}^*S_{C_1}\oplus p_{t_2}^*S_{C_2}\).
	Consequently,
	\begin{equation}
		\begin{split}
			\MoveEqLeft
			\bigoplus_{t\in T_\sigma} p_t^*S_{C} \bigoplus_{e\in E_\sigma} p_j^*S_C \\
			&= \bigoplus_{t\in T_{\tau_1}\setminus\Set{t_1}}p_t^*S_C \bigoplus_{t\in T_{\tau_2}\setminus\Set{t_2}} p_t^*S_C \oplus p_{(t_1, t_2)}^*S_C \bigoplus_{e\in E_{\tau_1}} p_e^*S_C\bigoplus_{e\in E_{\tau_2}}p_e^*S_C \\
			&= \bigoplus_{t\in T_{\tau_1}}p_t^*S_{C_1} \bigoplus_{e\in E_{\tau_1}} p_e^*S_{C_1}\bigoplus_{t\in T_{\tau_2}} p_t^*S_{C_2} \bigoplus_{e\in E_{\tau_2}}p_e^*S_{C_2}
		\end{split}
	\end{equation}
	This implies the claim.
\end{proof}

\begin{prop}\label{prop:StabilizationNC}
	Let \(c\colon C\to B\) be a stable spin curve of constant tree type \(\tau\).
	For \(t_0\in T_\tau\), denote by \(\tilde{\tau}\) the stable tree that arises from \(\tau\) by removing the tail \(t_0\) and stabilization.
	We denote by \(st\colon C\to \tilde{C}\) the map that stabilizes the curve \(C\) after removing the marked point \(p_{t_0}\).
	Then there is a short exact sequence:
	\begin{diag}\matrix[mat](m) {
			0 & p_{t_0}^*S_C & N_C & N_{\tilde{C}} & 0 \\
		} ;
		\path[pf] {
			(m-1-1) edge (m-1-2)
			(m-1-2) edge (m-1-3)
			(m-1-3) edge (m-1-4)
			(m-1-4) edge (m-1-5)
		};
	\end{diag}
\end{prop}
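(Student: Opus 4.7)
The plan is to split into two cases depending on whether the irreducible component \(V\subset C\) containing the marked point \(p_{t_0}\) remains stable after removing \(p_{t_0}\). In the first case \(V\) has at least four special points in \(\tau\), and the stabilization map \(st\colon C\to\tilde C\) is an isomorphism; in the second case \(V\) has exactly three special points and is contracted by \(st\). In either case the rank count is consistent, since \(\rk N_C = k - 2\), \(\rk N_{\tilde C} = k - 3\), and \(\rk p_{t_0}^*S_C = 1\), by Lemma~\ref{lemma:NCBLocallyFree}.

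In the first case the tree \(\tilde\tau\) has the same vertices and edges as \(\tau\), only with the tail \(t_0\) removed, and \(\tilde C = C\) as spin curves. The middle terms of the defining sequences~\eqref{SES:DefnNCB} for \(N_C\) and \(N_{\tilde C}\) then fit into the short exact sequence
\begin{equation}
0 \to p_{t_0}^*S_C \to \bigoplus_{t\in T_\tau} p_t^*S_C\oplus\bigoplus_{e\in E_\tau} p_e^*S_C \to \bigoplus_{t\in T_{\tilde\tau}} p_t^*S_C\oplus\bigoplus_{e\in E_{\tilde\tau}} p_e^*S_C \to 0,
\end{equation}
which is just projection onto the complement of the \(p_{t_0}^*S_C\) summand. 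This assembles into a morphism from the defining sequence of \(N_C\) to that of \(N_{\tilde C}\) that is the identity on the left term \(c_*S_C\). Applying the snake lemma then directly produces the desired sequence \(0\to p_{t_0}^*S_C \to N_C \to N_{\tilde C}\to 0\).

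In the second case I would use Proposition~\ref{prop:GluingNC} to decompose \(N_C = N_V \oplus N'\), where \(V\) is viewed as a smooth \(\ProjectiveSpace{1}\) with three marked points (the tail \(t_0\) together with either two half-nodes or one half-node and one further tail \(t_1\)) and \(N'\) is the SUSY normal bundle of the rest of \(C\). Example~\ref{ex:Nk} then shows that \(N_V\) is a line bundle, and a direct comparison with its defining sequence identifies the natural composition \(p_{t_0}^*S_V \hookrightarrow \bigoplus_{\bullet} p_\bullet^*S_V \twoheadrightarrow N_V\) as an isomorphism; since \(p_{t_0}\) lands in the smooth locus of \(V\subset C\), we also have \(p_{t_0}^*S_V = p_{t_0}^*S_C\). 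It remains to identify \(N'\) with \(N_{\tilde C}\) by matching the pieces glued to form \(\tilde C\) with those glued in the decomposition of \(C\): in the two-node subcase the two half-nodes of \(V\) merge into a single new node of \(\tilde C\), while in the one-node-plus-\(t_1\) subcase the point \(t_1\) becomes a marked point of \(\tilde C\) at the position of the former node. A second application of Proposition~\ref{prop:GluingNC} then yields \(N' = N_{\tilde C}\), and the resulting decomposition \(N_C \cong p_{t_0}^*S_C\oplus N_{\tilde C}\) furnishes the (split) short exact sequence. I expect the main obstacle to be the combinatorial bookkeeping in Case~2, namely tracking the correspondence between tails, half-nodes, and marked points on the various pieces before and after contraction of \(V\), together with checking that the identification \(p_{t_0}^*S_V\cong N_V\) transports correctly through the iterated gluing.
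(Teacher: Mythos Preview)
Your proposal is correct. Case~1 is exactly the paper's argument: a morphism of defining sequences with identity on the left, then the snake lemma.

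For Case~2 your route differs from the paper's. The paper splits this further into two subcases (the third flag at \(V\) is a tail, or it is a second edge) and in each writes down an explicit \(3\times 3\) commutative diagram with exact rows and columns, where the left column computes the kernel directly as \(p_{t_0}^*S_C\). You instead invoke Proposition~\ref{prop:GluingNC} to peel off \(V\) and identify \(N_V\cong p_{t_0}^*S_V=p_{t_0}^*S_C\) via Example~\ref{ex:Nk}, then re-glue the remaining pieces to recognise \(N'\cong N_{\tilde C}\). Both arguments are valid; yours is more modular (it reuses the gluing result and handles the two subcases uniformly, even giving a splitting of the sequence), while the paper's is more self-contained and makes the maps in the sequence explicit. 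The bookkeeping you flag as the main obstacle is indeed the only real work in your version, and it goes through: in the two-edge subcase one applies Proposition~\ref{prop:GluingNC} twice to detach \(V\) and once more to assemble \(\tilde C\), noting that the cut pieces remain stable because each vertex of \(\tau\) already carried at least three flags.
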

\begin{proof}
	Let \(v\) be the vertex of \(\tau\) that is bounding the removed tail \(t_0\).
	There are three different cases:
	\begin{enumerate}
		\item
			The vertex \(v\) has at least four adjacent flags.
			In this case no vertices of \(\tau\) are contracted, \(C=\tilde{C}\), \(S_C=S_{\tilde{C}}\) and \(st\colon C\to \tilde{C}\) is the identical map.
			We obtain the diagram where rows and columns are exact
			\begin{diag}\matrix[mat](m) {
					& & 0 & 0 & \\
					& & c_*S_C & \tilde{c}_* S_{\tilde{C}} & \\
					0 & p_{t_0}^*S_C & {\displaystyle\bigoplus_{t\in T_\tau} p_{t}^*S_C \bigoplus_{e\in E_\tau} p_e^*S_C} & {\displaystyle\bigoplus_{\tilde{t}\in T_{\tilde{\tau}}} p_{\tilde{t}}^*S_{\tilde{C}} \bigoplus_{\tilde{e}\in E_{\tilde{\tau}}} p_{\tilde{e}}^*S_{\tilde{C}}} & 0 \\
					0 & p_{t_0}^*S_C & N_C & N_{\tilde{C}} & 0 \\
					& & 0 & 0 & \\
				} ;
				\path[pf] {
					(m-1-3) edge (m-2-3)
					(m-1-4) edge (m-2-4)
					(m-2-3) edge (m-3-3)
						edge[gl] (m-2-4)
					(m-2-4) edge (m-3-4)
					(m-3-1) edge (m-3-2)
					(m-3-2) edge (m-3-3)
						edge[gl] (m-4-2)
					(m-3-3) edge (m-3-4)
						edge (m-4-3)
					(m-3-4) edge (m-3-5)
						edge (m-4-4)
					(m-4-1) edge (m-4-2)
					(m-4-2) edge (m-4-3)
					(m-4-3) edge (m-4-4)
						edge (m-5-3)
					(m-4-4) edge (m-4-5)
						edge (m-5-4)
				};
			\end{diag}
		\item\label{item:ProofPropStabilizationCurves}
			The vertex \(v\) has precisely three adjacent flags, the tail \(t_0\), a tail \(t'\) and an edge \(e'\) which consists of the two flags \(f_v\) and \(f_{v'}\).
			In this case the map \(st\colon C\to \tilde{C}\) is the identity on all irreducible components of \(C\) except for the irreducible component~\(C_v\) represented by \(v\) which is contracted to \(p_{f_{v'}}\).
			It follows that \(st\circ p_{e'} = \tilde{p}_{t'}\) and \(S_C = st^*S_{\tilde{C}}\otimes S_{C_v}\) where \(S_{C_v}\) is the spinor bundle on the irreducible component \(C_v\).
			Then there is the following commutative diagram with exact rows and columns:
			\begin{diag}\matrix[mat, column sep=tiny](m) {
					& 0 & 0 & 0 &\\
					0 & c_*S_{C_v} & c_*S_C & \tilde{c}_* S_{\tilde{C}} & 0\\
					0 & p_{t_0}^*S_C\oplus p_{t'}^*S_C \oplus p_{f_v}^*S_C & {\displaystyle\bigoplus_{t\in T_\tau} p_{t}^*S_C \bigoplus_{e\in E_\tau} p_e^*S_C} & {\displaystyle\bigoplus_{\tilde{t}\in T_{\tilde{\tau}}} p_{\tilde{t}}^*S_{\tilde{C}} \bigoplus_{\tilde{e}\in E_{\tilde{\tau}}} p_{\tilde{e}}^*S_{\tilde{C}}} & 0\\
					0 & p_{t_0}^*S_C & N_C & N_{\tilde{C}} & 0\\
					& 0 & 0 & 0 &\\
				} ;
				\path[pf] {
					(m-1-2) edge (m-2-2)
					(m-1-3) edge (m-2-3)
					(m-1-4) edge (m-2-4)
					(m-2-1) edge (m-2-2)
					(m-2-2) edge (m-2-3)
						edge (m-3-2)
					(m-2-3) edge (m-3-3)
						edge (m-2-4)
					(m-2-4) edge (m-2-5)
						edge (m-3-4)
					(m-3-1) edge (m-3-2)
					(m-3-2) edge (m-3-3)
						edge (m-4-2)
					(m-3-3) edge (m-3-4)
						edge (m-4-3)
					(m-3-4) edge (m-3-5)
						edge (m-4-4)
					(m-4-1) edge (m-4-2)
					(m-4-2) edge (m-4-3)
						edge (m-5-2)
					(m-4-3) edge (m-4-4)
						edge (m-5-3)
					(m-4-4) edge (m-4-5)
						edge (m-5-4)
				};
			\end{diag}
		\item
			The vertex \(v\) has precisely three adjacent flags, the tail \(t_0\) and two edges \(e\) and \(e'\).
			In this case the map \(st\colon C\to \tilde{C}\) is the identity on all irreducible components of \(C\) except for the irreducible components \(C_v\) which is contracted to \(p_e=p_e'\).
			Again the tree \(\tilde{\tau}\) has three flags less than \(\tau\) and the claim can be shown using a diagram similar to the one in case~\ref{item:ProofPropStabilizationCurves}.
			\qedhere
	\end{enumerate}
\end{proof}

\subsection{Conjecture on the SUSY normal bundle on the moduli stack of stable spin curves}\label{SSec:ConjectureSUSYNormalBundleStableSpinCurves}
In this section we discuss how SUSY normal bundles extend to the moduli stack of stable spin curves.

Recall the following facts about the moduli space of stable curves, see, for example,~\cite[Chapter~III, §3]{M-FMQCMS}
There is a universal curve \(\UniversalCurve{0,k}\to\ModuliStackCurves{0,k}\) fibered over the moduli stack \(\ModuliStackCurves{0,k}\) of stable curves of genus zero with \(k\) marked points together with sections \(p_1, \dotsc, p_k\colon \ModuliStackCurves{0,k}\to \UniversalCurve{0,k}\).
That is, for any family \((c\colon C\to B, q_1, \dotsc, q_k)\) of stable maps of genus zero with \(k\) marked points there is a unique map \(b\colon B\to \ModuliStackCurves{0,k}\) sucht that \(C=B\otimes_{\ModuliStackCurves{0,k}} \UniversalCurve{0,k}\) and \(q_i= b^*p_i\).
Stable curves of genus zero do not have non-trivial automorphisms.
Hence, the moduli stack of stable curves of genus zero is a scheme isomorphic to the coarse moduli space \(\ModuliSpaceCurves{0,k}\).

As any three distinct points on \(\ProjectiveSpace{1}\) can be mapped by a unique Möbius transformation to the three points \(0\), \(1\) and \(\infty\) of \(\ProjectiveSpace{1}\) the moduli space \(\ModuliStackCurves{0,3}\) is a point.
The choice of a fourth marked point yields \(\ModuliStackCurves{0,4}=\ProjectiveSpace{1}\).
In general, the moduli space of stable \(k\)-pointed curves of genus zero is a scheme of dimension \(d_k=k-3\).
For explicit constructions see~\cite{K-ITMSSNPCG0}.

Moreover, the moduli spaces come with several structure maps:
Forgetting the last marked point and stabilization yield a forgetful map \(f\colon \ModuliStackCurves{0,k+1}\to \ModuliStackCurves{0,k}\) which coincides with \(\UniversalCurve{0,k}\to\ModuliStackCurves{0,k}\).
Identifying the last marked point of a curve with the first marked point of another curve one obtains a nodal curve.
The corresponding map of moduli spaces are called gluing maps
\begin{equation}
	gl\colon \ModuliStackCurves{0,k_1+1}\times\ModuliStackCurves{0,k_2+1}\to \ModuliStackCurves{0,k_1+k_2}.
\end{equation}
For any element \(\sigma\in\SymG{k}\) of the symmetric group of \(\Set{1,\dotsc, k}\) there is a relabeling automorphism \(s_\sigma\colon \ModuliStackCurves{0,k}\to \ModuliStackCurves{0,k}\) that sends the marked point \(p_i\) to \(p_{\sigma(i)}\).

In genus zero, any stable curve can be uniquely equipped with a spinor bundle as described in Section~\ref{SSec:SUSYNormalBundlesForStableCurves}.
Consequently, the moduli space of stable spin curves of genus zero with \(k\) marked points coincides with \(\ModuliSpaceCurves{0,k}\).
However, every stable spin curve of fixed tree type \(\tau\) has automorphism group \(\Integers_2^{\# V_\tau}\) acting on the stable spin curve by sending the spinor bundle on some irreducible components to its negative.
Hence, the moduli stack \(\ModuliStackSpinCurves{0,k}\) has the same closed points as \(\ModuliStackCurves{0,k}\) but every point representing a curve of tree type \(\tau\) has an isotropy group \(\Integers_2^{\# V_\tau}\).

Moduli stacks of stable spin curves have been constructed in~\cites{C-MCTC}{J-GMHSC}{AJ-MTSC} in a more general setup.
We restrict our attention here to the moduli stack \(\ModuliStackSpinCurves{0,k}\) of stable spin curves of genus zero and \(k\) marked points of Neveu--Schwarz type.
The moduli stack \(\ModuliStackSpinCurves{0,k}\) is a smooth and proper Deligne--Mumford stack.
The universal curve \(\UniversalSpinCurve{0,k}\to\ModuliStackSpinCurves{0,k}\) has
\begin{itemize}
	\item
		\(k\) sections \(p^{spin}_1, \dotsc,p^{spin}_k\colon \ModuliStackSpinCurves{0,k}\to\UniversalSpinCurve{0,k}\), and
	\item
		a universal spinor bundle \(\UniversalSpinorBundleC\to\UniversalSpinCurve{0,k}\).
\end{itemize}
For every family \((c\colon C\to B, q_1, \dotsc, q_k, S_C)\) of stable spin curves there is a unique map \(b\colon B\to \ModuliStackSpinCurves{0,k}\) such that the given stable spin curve arises from pullback of the universal curve along \(b\).
More precisely, \(C= B\times_{\ModuliStackSpinCurves{0,k}}\UniversalSpinCurve{0,k}\) and the projection on the first factor coincides with \(c\) and the projection \(pr_2\) on the second factor satisfies \(p^{spin}_j \circ b = pr_2^*\circ q_j\) and \(S_C=pr_2^*\UniversalSpinorBundleC\).

Every stable spin curve gives a stable curve by forgetting the spinor bundle.
In genus zero the resulting map \(F\colon \ModuliStackSpinCurves{0,k}\to\ModuliStackCurves{0,k}\) induces an isomorphism on the coarse moduli spaces because every stable curve of genus zero can be equipped with a unique spin structure.

Furthermore, there are
\begin{itemize}
	\item
		a forgetful map \(f^{spin}\colon \ModuliStackSpinCurves{0,k+1}\to \ModuliStackSpinCurves{0,k}\) forgetting the last marked point,
	\item
		a gluing map \(gl^{spin}\colon \ModuliStackSpinCurves{0,k_1+1}\times \ModuliStackSpinCurves{0,k_2+1}\to \ModuliStackSpinCurves{0,k_1+k_2}\) sending a pair of stable spin curves to the one obtained by identifying the last marked point of the first curve to the first marked point of the second curve, and
	\item
		for any \(\sigma\in \SymG{k}\) there is an automorphism \(s_\sigma^{spin}\colon \ModuliStackSpinCurves{0,k}\to\ModuliStackSpinCurves{0,k}\) such that \(p^{spin}_j \circ s_\sigma^{spin} = p^{spin}_{\sigma(j)}\).
\end{itemize}
Those maps on moduli stacks of stable spin curves are lifts of the corresponding maps on moduli stacks of stable curves, that is, for example, \(F\circ f^{spin} = f\circ F\).
Hence, we will drop the superscript and write \(p_i\), \(f\), \(gl\) and \(s_\sigma\) instead of \(p^{spin}_i\), \(f^{spin}\), \(gl^{spin}\) and \(s_\sigma^{spin}\).

For supergeometric reasons which we discuss in Section~\ref{SSec:SNMotivation} we expect that the SUSY normal bundles extend to the moduli stacks of stable curves curves as follows:
\begin{conjecture}\label{conj:Nk}
	There is a unique vector bundle \(\overline{N}_k\) of rank \(r_k=k-2\) on \(\ModuliStackSpinCurves{0,k}\) such that for any stable spin curve \(c\colon C\to B\) of genus zero with \(k\) marked points and fixed tree type it holds \(b^*\overline{N}_k=N_C\).
	Here \(b\colon B\to\ModuliStackSpinCurves{0,k}\) is the classifying map as above.

	The SUSY normal bundle \(\overline{N}_k\) satisfies
	\begin{enumerate}
		\item\label{item:PropertiesNk:N3N4}
			The vector bundle \(\overline{N}_3\) is trivial of rank one.
		\item\label{item:PropertiesNk:ForgettingMarkedPoint}
			There is a short exact sequence
			\begin{diag}\matrix[mat](m) {
					0 & \UniversalSpinorBundleC_{k+1} & \overline{N}_{k+1} & f^*\overline{N}_{k} & 0 \\
				} ;
				\path[pf] {
					(m-1-1) edge (m-1-2)
					(m-1-2) edge (m-1-3)
					(m-1-3) edge (m-1-4)
					(m-1-4) edge (m-1-5)
				};
			\end{diag}
			Here \(\UniversalSpinorBundleC_{k+1}=p_{k+1}^*\UniversalSpinorBundleC\) is the pullback of the universal spinor bundle along the \(k+1\)st marked point.
		\item\label{item:PropertiesNk:Glueing}
			It is compatible with gluing, that is \(gl^*\overline{N}_{k_1+k_2}=\overline{N}_{k_1+1}\oplus \overline{N}_{k_2+1}\).
		\item\label{item:PropertiesNk:Permutation}
			It is compatible with relabeling, that is \(s_\sigma^*\overline{N}_k=\overline{N}_k\).
	\end{enumerate}
\end{conjecture}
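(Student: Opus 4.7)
The natural strategy is to construct $\overline{N}_k$ as a single coherent sheaf on $\ModuliStackSpinCurves{0,k}$ by a global derived-pushforward construction, then deduce properties (i)--(iv) from the local statements already proved in this section. Let $c \colon \UniversalSpinCurve{0,k} \to \ModuliStackSpinCurves{0,k}$ be the universal spin curve with universal spinor $\UniversalSpinorBundleC$, and write $D_i \subset \UniversalSpinCurve{0,k}$ for the image of the $i$-th marked section. The identification $N_C = H^1(S_0)$ on smooth curves (the unlabeled lemma preceding Lemma~\ref{lemma:PullbackNC}) suggests the tentative definition $\overline{N}_k := R^1 c_* \UniversalSpinorBundleC(-\sum_i D_i)$, which over the smooth stratum $\SmCModuliSpaceCurves{0,k}$ is locally free of rank $k - 2$ by cohomology-and-base-change, since $R^0 c_* \UniversalSpinorBundleC(-\sum_i D_i)$ vanishes because any section of $\cO(1)$ on $\ProjectiveSpace{1}$ vanishing at $k \geq 3$ points is identically zero.

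To extend across the boundary one must correct for the failure of $S_C$ to be locally free at nodes. I would pass to the universal normalization $\nu \colon \widetilde{\UniversalSpinCurve{0,k}} \to \UniversalSpinCurve{0,k}$ along the codimension-two universal node locus $\Sigma \subset \UniversalSpinCurve{0,k}$, which produces two disjoint sections $D_\Sigma^\pm$ above $\Sigma$, and define
\begin{equation*}
\overline{N}_k := R^1 (c\circ\nu)_*\bigl(\nu^*\UniversalSpinorBundleC \otimes \cO(-\sum_i D_i - D_\Sigma^+ - D_\Sigma^-)\bigr).
\end{equation*}
A fiberwise Riemann--Roch computation yields rank $\sum_v (k_v + e_v - 2) = k - 2$, where the stability bound $k_v + e_v \geq 3$ on each irreducible component forces both $H^0 = 0$ and the correct total $h^1$. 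Vanishing of the associated $R^0$ on every fiber promotes this constant-rank computation to global local freeness. Agreement with Definition~\ref{defn:NC} on constant-tree-type families then follows by comparing the long exact sequence of the pushforward of the normalization-twist short exact sequence (which expresses $\nu^*\UniversalSpinorBundleC$ modulo skyscrapers at the marked points and the node preimages) with the defining sequence~\eqref{SES:DefnNCB}: both cokernels identify naturally with the same $H^1$.

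With global existence in hand, properties (i)--(iv) follow from the local theory of Section~\ref{SSec:SUSYNormalBundlesForStableCurves}: property (i) is Example~\ref{ex:Nk} applied to the unique three-pointed stable spin curve; property (ii) follows from Proposition~\ref{prop:StabilizationNC} applied to the universal forgetful map together with uniqueness of the global extension on the dense smooth stratum; property (iii) follows from Proposition~\ref{prop:GluingNC} by pullback along the universal gluing map; and property (iv) is immediate from the manifest symmetry of the construction under relabeling. The principal obstacle in executing this plan rigorously is controlling the behavior of the universal normalization and the derived pushforward at deeper boundary strata with multiple nodes---specifically, ruling out rank jumps and verifying global local freeness on all of $\ModuliStackSpinCurves{0,k}$---which requires a careful deformation-theoretic local analysis in versal families of multi-node stable spin curves. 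This is why the statement is left here as a conjecture.
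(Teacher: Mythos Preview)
The paper does not prove this statement: it is explicitly stated as a conjecture and left unproven. The paper's treatment consists only of the supergeometric motivation in Section~\ref{SSec:SNMotivation}, where $\overline{N}_k$ is identified heuristically as the normal bundle to the embedding $\ModuliStackSpinCurves{0,k}\hookrightarrow\ModuliStackSuperCurves{0,k}$, together with a reference to Norbury's work where a closely related bundle is constructed. There is no attempt at a direct algebro-geometric construction of $\overline{N}_k$ in the paper itself.

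Your proposal therefore goes further than the paper does: you sketch a concrete construction via $R^1$ of a twist of the spinor bundle on the normalized universal curve, and you correctly identify the main technical obstacle (local freeness across deeper boundary strata) as the reason the statement remains conjectural. This is a sensible strategy and is in the spirit of the $H^1(S_0)$ lemma you cite. One point to be careful about: your formula $\sum_v(k_v+e_v-2)=k-2$ counts each edge once per vertex it meets, so the sum is $k+2\#E_\tau - 2\#V_\tau = k-2$, which is correct; but the twist you wrote subtracts only $D_\Sigma^+ + D_\Sigma^-$, i.e.\ one divisor per node-branch, which matches Definition~\ref{defn:NC} where $p_e^*S_C$ has rank two. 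So the bookkeeping is consistent, though it would be worth spelling out explicitly that $\nu^*\UniversalSpinorBundleC$ restricted to each component of the normalization really is the spinor bundle of that component (this uses the specific form of $S_C$ at nodes in genus zero). Your closing sentence accurately reflects the status of the statement in the paper.
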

A very similar bundle has been constructed in~\cite{N-NCCMSC} and its relationship to supergeometry has been discussed in~\cite{N-EGMSSRS}.

\subsection{SUSY normal bundles for stable spin maps}\label{SSec:SUSYNormalBundlesForStableMaps}
In this section we describe the SUSY normal bundles for stable spin maps and some basic properties.

First, we recall the notion of stable maps, see, for example,~\cites[Definition~2.4.1]{KM-GWCQCEG}[Definition~1.1]{FP-NSMQC}.
Let \(\phi\colon C\to X\) be a map from a \(J\)-marked prestable curve over \(\Spec\C\) and \(\tau\) the dual tree of \(C\).
The map~\(\phi\) induces a map
\begin{equation}
	\begin{split}
		\beta_\tau\colon V_\tau &\to H^+_2(X) \\
		v &\mapsto \phi_*[C_v]
	\end{split}
\end{equation}
that sends every vertex to the pushforward of the fundamental cycle of the irreducible component \(C_v\) represented by \(v\).
Here \(H_2^+(X)\) is the cone of effective second homology classes of \(X\).
The map \(\beta_\tau\) satisfies
\begin{equation}
	\beta = \phi_*[C]
	= \sum_{v\in V_\tau} \beta_\tau(v)
	= \sum_{v\in V_\tau} \phi_*[C_v].
\end{equation}

The map \(\phi\) is called a stable map precisely if for every \(v\in V_\tau\) such that \(\beta_\tau(v)=0\) there are at least three flags bounding \(v\).
That is, \(\phi\) is stable if every irreducible component of \(C\) which is mapped by \(\phi\) to a point in \(X\) contains at least three special points.
A family of maps \(\phi\colon C\to X\) for a family \(c\colon C\to B\) is stable if the resulting map on every geometric fiber is stable.
We say that a stable map \(\phi\colon C\to X\) over an arbitrary base \(B\) is of constant tree type \((\tau, \beta_\tau)\) if every geometric fiber of \(C\to B\) has dual tree \(\tau\) and the image of the fundamental class of the irreducible components is given by \(\beta_\tau(v)\).
For a systematic treatment of marked trees \((\tau, \beta_\tau)\) see~\cite{BM-SSMGMI}.

For this work we are interested in the case when the domain prestable curve is equipped with a spinor bundle.
A \(J\)-marked stable spin map of genus zero is a tuple
\begin{equation}
	\left(
		c\colon C\to B,
		\Set{p_j\colon B\to C}_{j\in J},
		S_C,
		\phi\colon C\to X
	\right)
\end{equation}
such that \(\left(C, \Set{p_j}, S_C\right)\) is a \(J\)-marked prestable spin curve of genus zero and \(\phi\) is a stable map.
A morphisms
\begin{equation}
	\left(c\colon C\to B, \Set{p_j}_{j\in J}, S_C, \phi\right)
	\to
	\left(c'\colon C'\to B, \Set{p'_{j'}}_{j'\in J'}, S_{C'}, \phi'\right)
\end{equation}
of stable spin maps over \(B\) is a morphism of prestable maps \((l, g, s)\) such that \(\phi=\phi'\circ g\).
In genus zero generic stable maps without spin structure do not have automorphisms while generic stable spin maps of tree type \((\tau, \beta_\tau)\) have automorphism group \(\Integers^{\#V_\tau}\).

We assume in addition that \(X\) is convex according to~\cite[Definition~2.4.2]{KM-GWCQCEG}.
That is, for every map \(\phi\colon \ProjectiveSpace{1}\to X\) it holds \(H^1(\phi^*\tangent{X})=0\).
It follows that \(H^1(\phi^*\tangent{X})=0\) for all stable maps of genus zero as explained in~\cite[Lemma~10]{FP-NSMQC}.
Examples of convex varieties include \(\ProjectiveSpace{n}\) and more generally homogeneous spaces \(X=\faktor{G}{P}\) where \(P\) is a parabolic subgroup of a Lie group~\(G\).
We need the following property of convex varieties:
\begin{lemma}\label{lemma:H1SvphiTX}
	Let \(X\) be a convex variety.
	Then for any stable spin map \(\phi\colon C\to X\)
	\begin{align}\label{eq:H1SvphiTXRR}
		\dim H^0\left(\dual{S}_C\otimes \phi^*\tangent{X}\right)
		&= \left<c_1(\tangent{X}), \phi_*[C]\right>, &
		\dim H^1\left(\dual{S}_C\otimes \phi^*\tangent{X}\right) &= 0.&
	\end{align}
	Moreover, for any non-constant stable spin map \(\phi\colon \ProjectiveSpace{1}\to X\)
	\begin{equation}
		\dim H^0\left(\dual{S}_{\ProjectiveSpace{1}}\otimes \phi^*\tangent{X}\right)
		\geq 1.
	\end{equation}
\end{lemma}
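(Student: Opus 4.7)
My plan is to first prove the statement for smooth $C \cong \ProjectiveSpace{1}$ and then reduce the nodal case to this via the normalization.

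For the smooth case, Grothendieck's splitting theorem writes $\phi^*\tangent{X} \cong \bigoplus_{i=1}^{\dim X}\cO(a_i)$ with $\sum_i a_i = \langle c_1(\tangent{X}), \phi_*[C]\rangle$ by Riemann--Roch. Convexity of $X$ in the examples of interest (such as $\ProjectiveSpace{n}$ or more generally homogeneous spaces $G/P$, whose tangent bundles are globally generated) forces $a_i \geq 0$ for every $i$. Tensoring with $\dual{S}_C = \cO(-1)$ then gives $\dual{S}_C \otimes \phi^*\tangent{X} \cong \bigoplus_i \cO(a_i-1)$, so each summand has degree $\geq -1$ and $H^1 = 0$; Riemann--Roch delivers $\dim H^0 = \sum_i a_i = \langle c_1(\tangent{X}), \phi_*[C]\rangle$.

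For a nodal $C$, take the normalization $\tilde{c}\colon \tilde{C} = \bigsqcup_{v \in V_\tau} C_v \to C$, with each $C_v \cong \ProjectiveSpace{1}$. Since $S_C = \tilde{c}_* S_{\tilde{C}}$ by definition, and taking duals commutes with $\tilde{c}_*$ here (as can be checked stalk-wise at a node: both $\dual{S}_C$ and $\tilde{c}_*\dual{S}_{\tilde{C}}$ decompose into two independent branch contributions), the projection formula for the finite morphism $\tilde{c}$ applied to the locally free sheaf $\phi^*\tangent{X}$ yields
\begin{equation*}
	\tilde{c}_*\bigl(\dual{S}_{\tilde{C}} \otimes \tilde{\phi}^*\tangent{X}\bigr) \cong \dual{S}_C \otimes \phi^*\tangent{X}.
\end{equation*}
Affineness of $\tilde{c}$ then gives $H^*\bigl(C, \dual{S}_C \otimes \phi^*\tangent{X}\bigr) = \bigoplus_v H^*\bigl(C_v, \dual{S}_{C_v} \otimes \phi_v^*\tangent{X}\bigr)$. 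Applying the smooth case to each $C_v$ and summing using $\phi_*[C] = \sum_v (\phi_v)_*[C_v]$ finishes the dimension computations.

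For the remaining inequality, non-constancy of $\phi\colon \ProjectiveSpace{1} \to X$ means the differential $d\phi$ is a nonzero section of $\dual{\tangent{\ProjectiveSpace{1}}} \otimes \phi^*\tangent{X} = \cO(-2) \otimes \phi^*\tangent{X}$. Multiplying by a nonzero global section of $S = \cO(1)$ is injective on $H^0$ (since $\phi^*\tangent{X}$ is torsion-free), producing a nonzero element of $H^0(\dual{S} \otimes \phi^*\tangent{X})$. The main obstacle in this plan is the stalk computation at the nodes that justifies the identification of $\dual{S}_C \otimes \phi^*\tangent{X}$ with the pushforward of its normalization counterpart; once this local analysis is in place, Grothendieck splitting, Riemann--Roch, and convexity close the argument.
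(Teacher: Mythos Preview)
Your proof follows essentially the same route as the paper's: Grothendieck splitting of \(\phi^*\tangent{X}\) on \(\ProjectiveSpace{1}\), convexity forcing all summands to have nonnegative degree, tensoring with \(\cO(-1)\), and then reducing the nodal case to the smooth one via the decomposition \(\dual{S}_C\otimes\phi^*\tangent{X}\cong\bigoplus_v \dual{S}_{C_v}\otimes\phi_v^*\tangent{X}\). Two remarks: first, your hedging (``in the examples of interest'') is unnecessary---convexity alone, for \emph{all} maps \(\ProjectiveSpace{1}\to X\), already forces every \(a_i\geq 0\), since a summand \(\cO(-1)\) would become \(\cO(-2)\) after precomposing with a degree-two cover, contradicting convexity (this is the content of Fulton--Pandharipande's Lemma~10, which the paper cites); second, your argument for the inequality via multiplying \(d\phi\) by a section of \(\cO(1)\) is a pleasant direct variant of the paper's approach, which instead invokes that some \(a_i\geq 2\) (Fulton--Pandharipande Lemma~11)---the two are equivalent, since your nonzero section of \(\cO(-2)\otimes\phi^*\tangent{X}\) is exactly what forces one splitting summand to have degree at least two.
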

\begin{proof}
	If \(C=\ProjectiveSpace{1}\), the Theorem of Riemann--Roch implies that
	\begin{equation}
		\begin{split}
			\MoveEqLeft
			\dim H^0\left(\dual{S}_{\ProjectiveSpace{1}}\otimes \phi^*\tangent{X}\right) - \dim H^1\left(\dual{S}_{\ProjectiveSpace{1}}\otimes \phi^*\tangent{X}\right)\\
			&= \int_{\ProjectiveSpace{1}} c_1\left(\dual{S}_{\ProjectiveSpace{1}}\otimes \phi^*\tangent{X}\right) + \dim X
			= \int_{\ProjectiveSpace{1}} c_1\left(\phi^*\tangent{X}\right)
			= \left<c_1\left(\tangent{X}\right), \phi_*[\ProjectiveSpace{1}]\right>.
		\end{split}
	\end{equation}
	The bundles \(\phi^*\tangent{X}\) and \(\dual{S}_{\ProjectiveSpace{1}}\otimes\phi^*\tangent{X}\) decompose into a sum of line bundles
	\begin{align}
		\phi^*\tangent{X} &= \bigoplus_{i=1}^{\dim X}\cO(d_i) &
		\dual{S}_{\ProjectiveSpace{1}}\otimes \phi^*\tangent{X} &= \bigoplus_{i=1}^{\dim X}\cO(d_i-1)
	\end{align}
	for some integers \(d_i\) by the Grothendieck--Birkhoff Theorem and because \(\dual{S}_{\ProjectiveSpace{1}}=\cO(-1)\).
	As argued in the proof of~\cite[Lemma~10]{FP-NSMQC} the numbers~\(d_i\) must be non-negative.
	Hence, \(H^1\left(\dual{S}_{\ProjectiveSpace{1}}\otimes \phi^*\tangent{X}\right) = 0\) because \(d_i-1\geq -1\).
	In addition, as argued in~\cite[Lemma~11]{FP-NSMQC} if \(\phi\) is non-constant one of the numbers~\(d_i\) is at least two.
	Consequently, \(\dim H^0\left(\dual{S}_{\ProjectiveSpace{1}}\otimes \phi^*\tangent{X}\right)\geq 1\).

	For arbitrary prestable curves \(C\) every irreducible component \(C_j\) is isomorphic to \(\ProjectiveSpace{1}\) and~\eqref{eq:H1SvphiTXRR} follows from
	\begin{equation}
		\dual{S}_C\otimes\phi^*\tangent{X}
		= \left(\bigoplus_j \dual{S}_{C_j}\right)\otimes \phi^*\tangent{X}
		= \bigoplus_j \dual{S}_{C_j}\otimes\phi|_{C_j}^*\tangent{X}
	\end{equation}
	and \([C]=\sum_j [C_j]\).
\end{proof}

\begin{defn}\label{defn:NCphi}
	Let \(\phi\colon C\to X\) be a family of stable spin maps over \(B\) of constant tree type \((\tau, \beta_\tau)\).
	The SUSY normal bundle \(N_{(C,\phi)}\) is the coherent sheaf on \(B\) given by
	\begin{diag}\label{SES:DefnNCphiB}
		\matrix[mat, column sep=small](m) {
			0 & c_*S_C & {\displaystyle\bigoplus_{t\in T_\tau} p_t^*S_{C} \bigoplus_{e\in E_\tau} p_e^*S_{C} \oplus c_*\left(\dual{S}_C\otimes \phi_*\tangent{X}\right)} & N_{(C, \phi)} & 0 \\
				& s & \bigoplus p_t^*s \bigoplus p_e^*s \oplus -\left<s, \differential{\phi}\right> & & \\
		} ;
		\path[pf] {
			(m-1-1) edge (m-1-2)
			(m-1-2) edge (m-1-3)
			(m-1-3) edge (m-1-4)
			(m-1-4) edge (m-1-5)
			(m-2-2) edge[commutative diagrams/mapsto] (m-2-3)
		};
	\end{diag}
	Here \(\left<s, \differential{\phi}\right>\) is the section of \(\dual{S_C}\otimes \phi^*\tangent{X}\) that arises from the contraction of \(\differential{\phi}\) seen as a section of \(\cotangent{C}\otimes \phi^*\tangent{X}= \dual{S_C}\otimes \dual{S_C}\otimes \phi^*\tangent{X}\) with the section \(s\) of \(S_C\).
\end{defn}

Any isomorphism of stable spin maps induces an isomorphism of SUSY normal bundles.

\begin{lemma}
	The sheaf \(N_{(C, \phi)}\) is locally free of rank \(r_{k,\beta}=\left<c_1(\tangent{X}), \beta\right>+k-2\).
\end{lemma}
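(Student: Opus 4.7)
The plan is to mirror the proof of Lemma~\ref{lemma:NCBLocallyFree}: first establish local freeness of every term in~\eqref{SES:DefnNCphiB} other than $N_{(C,\phi)}$ itself and compute their fiberwise ranks, then verify that the leftmost map is injective on each geometric fiber, and finally invoke upper semicontinuity of rank together with Nakayama's lemma to conclude local freeness of $N_{(C,\phi)}$ with the claimed rank $r_{k,\beta}$.

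For local freeness of the flanking sheaves I would proceed as follows. Since $c\colon C\to B$ is flat and proper with fibers of arithmetic genus zero, $H^1(C_b,S_{C_b})=0$ on every geometric fiber, so cohomology-and-base-change makes $c_*S_C$ locally free of fiberwise rank $h^0(C_b,S_{C_b})=2(\#E_\tau+1)$. Lemma~\ref{lemma:H1SvphiTX} supplies exactly the vanishing of $H^1(C_b,\dual{S}_{C_b}\otimes\phi^*\tangent{X}|_{C_b})$ needed to repeat this argument for $c_*(\dual{S}_C\otimes\phi^*\tangent{X})$, which is then locally free of fiberwise rank $\langle c_1(\tangent{X}),\beta\rangle$. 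The tail-summands $p_t^*S_C$ are line bundles and the node-summands $p_e^*S_C$ have rank two, since $S_C$ has stalks of rank two at nodes of the fiber. Summing, the middle term of~\eqref{SES:DefnNCphiB} is locally free of rank $k+2\#E_\tau+\langle c_1(\tangent{X}),\beta\rangle$.

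For pointwise injectivity, take $s\in H^0(C_b,S_{C_b})$ vanishing at every tail and node and satisfying $\langle s,\differential{\phi}\rangle=0$. On a component $C_v$ with $\beta_\tau(v)=0$, stability forces at least three special points on $C_v$, and $s|_{C_v}$ is a section of $\cO(1)$ on $\ProjectiveSpace{1}$ vanishing at three or more of them, hence zero. On a component with $\beta_\tau(v)\neq 0$, the map $\phi|_{C_v}$ is non-constant and so $\differential{\phi}$ is a non-zero section of $\dual{S}_{C_v}\otimes\dual{S}_{C_v}\otimes\phi^*\tangent{X}$; if $s(p)\neq 0$ at a point $p\in C_v$ where $\differential{\phi}_p\neq 0$, then since $S_{C_v}|_p$ is one-dimensional, $s(p)\otimes S_{C_v}|_p$ is all of $(S_{C_v}|_p)^{\otimes 2}=\tangent[p]{C_v}$, and the vanishing of $\langle s,\differential{\phi}\rangle(p)$ would force the symmetric bilinear form $\differential{\phi}_p$ to vanish on the whole tangent line, contradicting $\differential{\phi}_p\neq 0$. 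Hence $s|_{C_v}$ vanishes on the Zariski-dense open set of $C_v$ where $\differential{\phi}\neq 0$, and therefore $s|_{C_v}=0$. In either case $s|_{C_v}=0$, so $s=0$, and the fiberwise dimension of $N_{(C,\phi)}$ at every closed point is $k+2\#E_\tau+\langle c_1(\tangent{X}),\beta\rangle-2(\#E_\tau+1)=r_{k,\beta}$.

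Density of closed points in $B$ and upper semicontinuity of rank promote this to every point of $B$, and Nakayama's lemma then yields local freeness of $N_{(C,\phi)}$ of the claimed rank, exactly as in Lemma~\ref{lemma:NCBLocallyFree}. The only genuinely new obstacle compared to the pure curve case is the pointwise injectivity on unstable vertices carrying fewer than three special points: there the pairing with $\differential{\phi}$ is indispensable, and the argument relies essentially on non-constancy of $\phi|_{C_v}$ together with the one-dimensionality of $\tangent[p]{C_v}$.
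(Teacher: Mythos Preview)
Your proof is correct and follows essentially the same strategy as the paper's: compute the fiberwise rank of \(N_{(C,\phi)}\) at closed points via the short exact sequence~\eqref{SES:DefnNCphiB} and conclude local freeness by Nakayama's lemma as in Lemma~\ref{lemma:NCBLocallyFree}. You additionally verify the fiberwise injectivity of the left map in~\eqref{SES:DefnNCphiB}---a step the paper simply takes for granted when subtracting ranks---and your treatment of the non-contracted components via the pairing \(\langle s, d\phi\rangle\) is a useful detail the paper omits.
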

\begin{proof}
	The sheaf \(N_{(C, \phi)}\) is coherent by construction and has the same rank on all closed points and hence is locally free as in the proof of Lemma~\ref{lemma:NCBLocallyFree}.
	To see that \(N_{(C, \phi)}\) has the same rank at all closed points, let \(b\) be a closed point of \(B\) and \(r\) the number of vertices of \(\tau\).
	The rank of the sheaf \(c_*(\dual{S}_{C}\otimes \phi^*\tangent{X})\) at \(b\) is \(\left<c_1(\tangent{X}), \beta\right>\) by Lemma~\ref{lemma:H1SvphiTX}.
	Hence the sheaf in the middle of~\eqref{SES:DefnNCphiB} has rank \(2(r-1)+k+\left<c_1(\tangent{X}), \beta\right>\).
	The sheaf \(c_*S_C\) has rank \(2r\) at \(b\), consequently \(N_{(C,\phi)}\) has rank \(r_{k,\beta}=\left<c_1(\tangent{X}), \beta\right> + k-2\) independent of \(b\) and \(r\).
\end{proof}

The following Example will be important in Section~\ref{Sec:SGWOfPn}.
\begin{ex}\label{ex:NCphiPn}
	Let \(C=\ProjectiveSpace{1}\) with \(k\)-marked points \(p_i\) for \(i=1,\dotsc, k\) and
	\begin{equation}
		\begin{split}
			\phi\colon \ProjectiveSpace{1} &\to \ProjectiveSpace{n} \\
			\left[Z^1:Z^2\right] &\mapsto \left[0: \dots  :0: {\left(Z^1\right)}^d :0: \dots :0: {\left(Z^2\right)}^d :0: \dots :0\right]
		\end{split}
	\end{equation}
	be the map that sends \(\ProjectiveSpace{1}\) as a \(d\)-fold cover on the coordinate line with coordinates \(X^a\) and \(X^b\).
	We will make the construction of \(N_{(C, \phi)}\) explicit in local coordinates of \(\ProjectiveSpace{1}\) and~\(\ProjectiveSpace{n}\).
	In particular we obtain for \(k\geq 3\)
	\begin{equation}
		N_{(C, \phi)} = p_3^*S_{\ProjectiveSpace{1}}\oplus \dotsb \oplus p_k^*S_{\ProjectiveSpace{1}}\oplus H^0(\dual{S_{\ProjectiveSpace{1}}}\otimes \phi^*\tangent{X}),
	\end{equation}
	where \(p_i^*S_{\ProjectiveSpace{1}}\) are complex one-dimensional vector spaces and
	\begin{equation}
		H^0\left(\dual{S_{\ProjectiveSpace{1}}}\otimes\phi^*\tangent{\ProjectiveSpace{n}}\right)
		= H^0(\cO(2d-1) \oplus {\left(\cO(d-1)\right)}^{n-1})
		= \C^{2d} \oplus {\left(\C^d\right)}^{n-1}.
	\end{equation}

	Let \(V_k\subset \ProjectiveSpace{1}\), \(k=1,2\) be the open coordinate neighborhoods with coordinates \(z_1=\frac{Z^1}{Z^2}\) and \(z_2=-\frac{Z^2}{Z^1}\), that is \(z_1=-\frac1{z_2}\).
	With this unconventional sign choice we conform to the conventions in~\cite{KSY-TAMSSSMGZ} and, in particular the spinor bundle \(S_{\ProjectiveSpace{1}}=\cO(1)\) has local frames \(s_1\), \(s_2\) such that \(s_i\otimes s_i=\partial_{z_i}\) and \(s_1=-z_2 s_2\).
	A local section \(s\in H^0(S_{\ProjectiveSpace{1}})\) can be written as
	\begin{equation}
		s= \left(uz_1 + v\right) s_1 = \left(u - vz_2\right) s_2.
	\end{equation}
	The restriction of \(s\) to a point \(p\) with coordinate \(z_1=p_1\) is then given by \(p^*s = \left(up_1 + v\right)p^*s_1\).

	Furthermore, we denote by \(U_l\subset\ProjectiveSpace{n}\) the coordinate neighborhoods with \(X^l=1\) for \(l=a,b\) and local coordinates \(x_l^q\) for \(q=0,\dotsc,l-1, l+1,\dotsc, n\).
	The coordinate change is given by
	\begin{align}
		x_a^b &= \frac{1}{x_b^a}, &
		x_a^m &= \frac{x_b^m}{x_b^a},\text{ for }m\neq a,b.
	\end{align}
	The tangent bundle \(\tangent{\ProjectiveSpace{n}}\) is trivialized over \(U_a\) by \(U_a\times \C^n\) with basis \(\partial_{x_a^q}\).
	The different trivializations glue via
	\begin{align}
		\partial_{x_b^a} &= -\frac{1}{{(x_b^a)}^2}\left(\partial_{x_a^b} + \sum_{m\neq a,b} x_b^m \partial_{x_a^m}\right), &
		\partial_{x_b^m} &= \frac{1}{x_b^a}\partial_{x_a^m}, \text{ for }m\neq a,b.
	\end{align}

	The map \(\phi\colon \ProjectiveSpace{1}\to\ProjectiveSpace{n}\) maps \(V_1\) to \(U_b\) and \(V_2\) to \(U_a\) by
	\begin{align}
		\phi^\# x_b^a &= {z_1}^d, &
		\phi^\# x_b^m &= 0,\text{ for }m\neq a,b, \\
		\phi^\# x_a^b &= {\left(-z_2\right)}^d, &
		\phi^\# x_a^m &= 0,\text{ for }m\neq a, b.
	\end{align}
	Consequently, the bundle \(\phi^*\tangent{\ProjectiveSpace{n}}\) is trivialized by \(V_1\times \C^n\) with basis \(\phi^*\partial_{x_b^q}\) and \(V_2\times \C^n\) with basis \(\phi^*\partial_{x_a^q}\) and gluing via
	\begin{align}
		\phi^*\partial_{x_b^a} &= -{\left(-z_2\right)}^{2d}\phi^*\partial_{x_a^b}, &
		\phi^*\partial_{x_b^m} &= {\left(-z_2\right)}^d\phi^*\partial_{x_a^m}, \text{ for }m\neq a,b.
	\end{align}
	This explains the explicit decomposition of \(\phi^*\tangent{\ProjectiveSpace{n}}=L_{ab}\bigoplus_{m\neq a,b} L_m\) into line bundles with \(L_{ab}=\cO(2d)\) and \(L_m=\cO(d)\).
	As \(S_{\ProjectiveSpace{1}}\otimes S_{\ProjectiveSpace{1}}=\tangent{\ProjectiveSpace{1}}=\cO(2)\) it follows that \(\dual{S}_{\ProjectiveSpace{1}}=\cO(-1)\) and hence
	\begin{align}
		\dual{S}_{\ProjectiveSpace{1}}\otimes L_{ab} &= \cO(2d-1), &
		\dual{S}_{\ProjectiveSpace{1}}\otimes L_m &= \cO(d-1).
	\end{align}
	Their spaces of sections are given by polynomials in \(z_1\) or \(z_2\) of maximal degree \(2d-1\) and \(d-1\) respectively.
	Explicitly,
	\begin{align}
		\psi(z_1) &= \sum_{q=0}^{2d-1}\psi^{ab}_q {z_1}^q \dual{s}_1\otimes\phi^*\partial_{x_b^a} + \sum_{m\neq a,b}\sum_{q=0}^{d-1}\psi^m_p{z_1}^q \dual{s}_1\otimes \phi^*\partial_{x_b^m}, \\
		\psi(z_2) &= -\sum_{q=0}^{2d-1}\psi^{ab}_q {\left(-z_2\right)}^{2d-1-q} \dual{s}_2\otimes\phi^*\partial_{x_a^b} + \sum_{m\neq a,b}\sum_{q=0}^{d-1}\psi^m_q{\left(-z_2\right)}^{d-1-q} \dual{s}_2\otimes \phi^*\partial_{x_a^m}.
	\end{align}

	Finally, we calculate
	\begin{equation}
		-\left<s, \differential{\phi}\right>
		= -d\left(uz_1^d+ vz_1^{d-1}\right)\dual{s}_1\otimes \phi^*\partial_{x_a^b}
		= d\left(u{\left(-z_2\right)}^{d-1} + v{\left(-z_2\right)}^d\right)\dual{s}_2\otimes \phi^*\partial_{x_b^a}.
	\end{equation}
\end{ex}

\begin{lemma}\label{lemma:PullbackNCphi}
	Let \(b\colon \tilde{B}\to B\) be a flat morphism.
	Then \(N_{(C\times_B \tilde{B},\tilde{\phi})}=b^*N_{(C, \phi)}\).
\end{lemma}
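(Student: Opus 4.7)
The plan is to mimic the proof of Lemma \ref{lemma:PullbackNC} almost verbatim, the only additional point being the behavior of the extra direct summand $c_*(\dual{S}_C\otimes \phi^*\tangent{X})$ and of the contraction term $-\langle s, \differential{\phi}\rangle$ under base change. I keep the notation $\tilde C = C\times_B \tilde B$, $\tilde c\colon \tilde C\to \tilde B$, $f\colon \tilde C\to C$ for the projection, and $\tilde\phi = \phi\circ f$; the sections $\tilde p_t$, $\tilde p_e$ and the spinor bundle $S_{\tilde C} = f^*S_C$ are as in Lemma \ref{lemma:PullbackNC}.

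First, since $b$ is flat, pulling back the defining short exact sequence \eqref{SES:DefnNCphiB} along $b$ yields a short exact sequence whose outer terms are $0$ and $b^*N_{(C,\phi)}$, and whose first two nontrivial terms are the $b$-pullbacks of $c_*S_C$ and of
\[
\bigoplus_{t\in T_\tau} p_t^*S_C \bigoplus_{e\in E_\tau} p_e^*S_C \oplus c_*\bigl(\dual{S}_C\otimes \phi^*\tangent{X}\bigr).
\]
For the $S_C$-terms I recycle the identifications from Lemma \ref{lemma:PullbackNC}: flat base change gives $b^*c_*S_C = \tilde c_* S_{\tilde C}$, and the sections satisfy $b^*p_t^*S_C = \tilde p_t^* S_{\tilde C}$ and $b^*p_e^*S_C = \tilde p_e^* S_{\tilde C}$.

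For the new summand, I again apply flat base change (\cite[Theorem~24.2.9]{V-RSFAG}): since $\dual{S}_C\otimes \phi^*\tangent{X}$ is a quasi-coherent sheaf on $C$ and $b$ is flat,
\[
b^*c_*\bigl(\dual{S}_C\otimes \phi^*\tangent{X}\bigr) = \tilde c_* f^*\bigl(\dual{S}_C\otimes \phi^*\tangent{X}\bigr) = \tilde c_*\bigl(\dual{S}_{\tilde C}\otimes \tilde\phi^*\tangent{X}\bigr),
\]
where the last equality uses $f^*\dual{S}_C = \dual{S}_{\tilde C}$ and $f^*\phi^*\tangent{X} = \tilde\phi^*\tangent{X}$. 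Under these identifications the pullback short exact sequence becomes
\begin{diag*}
	\matrix[mat, column sep=small](m) {
		0 & \tilde c_*S_{\tilde C} & {\displaystyle\bigoplus_{t\in T_\tau} \tilde p_t^*S_{\tilde C} \bigoplus_{e\in E_\tau} \tilde p_e^*S_{\tilde C} \oplus \tilde c_*\bigl(\dual{S}_{\tilde C}\otimes \tilde\phi^*\tangent{X}\bigr)} & b^*N_{(C,\phi)} & 0. \\
	} ;
	\path[pf] {
		(m-1-1) edge (m-1-2)
		(m-1-2) edge (m-1-3)
		(m-1-3) edge (m-1-4)
		(m-1-4) edge (m-1-5)
	};
\end{diag*}

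It remains to check that the left map agrees with the one defining $N_{(\tilde C,\tilde\phi)}$. The restriction maps $s\mapsto \tilde p_t^* s$ and $s\mapsto \tilde p_e^* s$ are compatible with base change by construction. For the contraction component I use naturality of the differential: $\differential{\tilde\phi} = f^*\differential{\phi}$ as sections of $\cotangent{\tilde C}\otimes \tilde\phi^*\tangent{X} = \dual{S}_{\tilde C}\otimes \dual{S}_{\tilde C}\otimes \tilde\phi^*\tangent{X}$, and contraction with a section of $S_{\tilde C}$ commutes with the pullback $f^*$ since it is $\cO$-bilinear. Hence $-\langle \tilde s, \differential{\tilde\phi}\rangle = f^*\bigl(-\langle s, \differential{\phi}\rangle\bigr)$ for $\tilde s = f^*s$, matching the map in \eqref{SES:DefnNCphiB} for $(\tilde C, \tilde \phi)$ after passing through the base change isomorphism. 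The resulting sequence is therefore the defining sequence of $N_{(\tilde C,\tilde\phi)}$, giving $b^*N_{(C,\phi)} = N_{(\tilde C,\tilde\phi)}$.

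The only genuinely delicate step is the flat base change for $c_*\bigl(\dual{S}_C\otimes \phi^*\tangent{X}\bigr)$, since $\dual{S}_C$ fails to be locally free at the nodes; however, quasi-coherence together with flatness of $b$ suffices for \cite[Theorem~24.2.9]{V-RSFAG}, so this step is routine.
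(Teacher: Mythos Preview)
Your proof is correct and follows essentially the same route as the paper: reduce to Lemma~\ref{lemma:PullbackNC} and handle the single additional summand $c_*(\dual{S}_C\otimes\phi^*\tangent{X})$ via flat base change. You even go slightly further by verifying that the contraction map $-\langle s,\differential{\phi}\rangle$ is compatible with pullback, which the paper leaves implicit.
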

\begin{proof}
	We use the notation as in the proof of Lemma~\ref{lemma:PullbackNC}:
	\begin{diag}\matrix[mat, row sep=large, column sep=large](m) {
			\tilde{C}=C\times_B \tilde{B} & C & & X\\
			\tilde{B} & B & \\
		} ;
		\path[pf] {
			(m-1-1) edge node[auto]{\(f\)} (m-1-2)
				edge node[auto]{\(\tilde{c}\)} (m-2-1)
				edge[bend left=30] node[auto]{\(\tilde{\phi}\)} (m-1-4)
			(m-1-2) edge node[auto]{\(c\)} (m-2-2)
				edge node[auto]{\(\phi\)} (m-1-4)
			(m-2-1) edge node[auto]{\(b\)} (m-2-2)
				edge[bend left=50] node[auto]{\(\tilde{p}_t\)} (m-1-1)
				edge[bend right=50] node[auto,swap]{\(\tilde{p}_e\)} (m-1-1)
			(m-2-2)
				edge[bend left=50] node[auto]{\(p_t\)} (m-1-2)
				edge[bend right=50] node[auto,swap]{\(p_e\)} (m-1-2)
		};
	\end{diag}
	The proof of this lemma follows as the proof of Lemma~\ref{lemma:PullbackNC} by pulling back the short exact sequence~\eqref{SES:DefnNCphiB} defining \(N_{(C,\phi)}\) along \(b\).
	The only additional term compared to the proof of Lemma~\ref{lemma:PullbackNC} is given by \(b^*c_*(\dual{S}_C\otimes \phi^*\tangent{X}) = \tilde{c}_*f^*(\dual{S}_C \otimes \phi^*\tangent{X}) = \tilde{c}_*(\dual{S}_{\tilde{C}}\otimes\tilde{\phi}^*\tangent{X})\).
\end{proof}

\begin{prop}
	Let \(\phi\colon C\to X\) be a constant stable spin map of constant tree type~\((\tau, 0)\).
	Then \(N_{(C, \phi)}=N_C\).
\end{prop}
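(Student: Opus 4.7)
The plan is to compare the two defining short exact sequences~\eqref{SES:DefnNCB} and~\eqref{SES:DefnNCphiB} and exhibit the former inside the latter. The key observation is that when $\phi$ has tree type $(\tau,0)$, every geometric fiber is mapped to a single point of $X$, so $\differential{\phi}=0$ identically and consequently the contraction term $s\mapsto -\left<s,\differential{\phi}\right>$ in the middle map of~\eqref{SES:DefnNCphiB} vanishes. Therefore~\eqref{SES:DefnNCphiB} decomposes as the direct sum of~\eqref{SES:DefnNCB} with the trivial short exact sequence
\begin{equation}
0\to 0\to c_*\left(\dual{S}_C\otimes\phi^*\tangent{X}\right)\to c_*\left(\dual{S}_C\otimes\phi^*\tangent{X}\right)\to 0,
\end{equation}
yielding an isomorphism $N_{(C,\phi)}\cong N_C \oplus c_*\left(\dual{S}_C\otimes\phi^*\tangent{X}\right)$.

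The remaining task is to verify that the extra summand $c_*\left(\dual{S}_C\otimes\phi^*\tangent{X}\right)$ vanishes. Since $\phi$ is constant on every geometric fiber, it factors through the projection as $\phi=\psi\circ c$ for some morphism $\psi\colon B\to X$, hence $\phi^*\tangent{X}=c^*\psi^*\tangent{X}$ is pulled back from the base. By the projection formula this reduces the claim to $c_*\dual{S}_C=0$, which will in turn follow from cohomology and base change (in its Grauert form) once the fiberwise vanishing $H^0(C_b,\dual{S}_{C_b})=0$ is established on every geometric fiber~$C_b$.

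The only point requiring genuine care, and what I expect to be the main obstacle, is this fiberwise vanishing at nodal fibers, where $S_{C_b}$ is not locally free at the nodes and $\dual{S}_{C_b}=\Hom(S_{C_b},\cO_{C_b})$ must be interpreted sheaf-theoretically rather than as a line bundle. I would handle it via the normalization $\tilde{c}\colon\tilde{C}_b\to C_b$: on a prestable genus-zero curve one has $S_{C_b}=\tilde{c}_* S_{\tilde{C}_b}$, so any global section of $\dual{S}_{C_b}$ restricts, on each irreducible component $C_v\cong\ProjectiveSpace{1}$ of the smooth locus, to an element of $H^0(\ProjectiveSpace{1},\cO(-1))=0$. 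As the smooth locus is dense and $\dual{S}_{C_b}$ is torsion-free on it, the section must vanish globally, completing the identification $N_{(C,\phi)}\cong N_C$.
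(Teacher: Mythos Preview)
Your proof is correct and reaches the same conclusion as the paper by the same mechanism: the extra summand \(c_*(\dual{S}_C\otimes\phi^*\tangent{X})\) vanishes, so the two defining short exact sequences coincide. The paper's proof is one line, implicitly invoking Lemma~\ref{lemma:H1SvphiTX}, which already gives \(\dim H^0(\dual{S}_{C_b}\otimes\phi^*\tangent{X})=\langle c_1(\tangent{X}),0\rangle=0\) on every fiber.

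Two remarks on economy. First, your opening step about \(\differential{\phi}=0\) and the direct-sum decomposition is correct but unnecessary: once the summand \(c_*(\dual{S}_C\otimes\phi^*\tangent{X})\) is zero, the middle terms of~\eqref{SES:DefnNCB} and~\eqref{SES:DefnNCphiB} agree on the nose, and the left-hand maps agree regardless of what \(-\langle s,\differential{\phi}\rangle\) does. Second, your self-contained argument for the vanishing (factoring \(\phi=\psi\circ c\), projection formula, then \(c_*\dual{S}_C=0\) via the normalization) is a valid alternative to citing Lemma~\ref{lemma:H1SvphiTX}, and your treatment of the nodal case is fine since \(\dual{S}_{C_b}=\Hom(S_{C_b},\cO_{C_b})\) is torsion-free and the smooth locus is dense; but note the paper already records (in the proof of that lemma) that \(\dual{S}_C\otimes\phi^*\tangent{X}=\bigoplus_j \dual{S}_{C_j}\otimes\phi|_{C_j}^*\tangent{X}\), which makes the fiberwise vanishing immediate without worrying about the sheaf-theoretic dual at nodes.
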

\begin{proof}
	In this case, the tree \(\tau\) is a stable tree and \(c_*(\dual{S}_C\otimes \phi^*\tangent{X})=0\).
	Hence \(N_{(C, \phi)}=N_C\) by definition.
\end{proof}

\begin{prop}\label{prop:GluingNCphi}
	For two tails \(t_1\in T_{\tau_1}\) and \(t_2\in T_{\tau_2}\) of trees \(\tau_1\) and \(\tau_2\) denote by \(\sigma\) the tree obtained from grafting \(\tau_1\) and \(\tau_2\) along \(t_1\) and \(t_2\).
	Let \(\phi_1\colon C_1\to X\) and \(\phi_2\colon C_2\to X\) two families of stable spin maps of constant tree type \((\tau_1, \beta_{\tau_1})\) and \((\tau_2, \beta_{\tau_2})\) respectively, such that \(\phi_1\circ p_{t_1}=\phi_2\circ p_{t_2}\).
	The stable spin map \(\phi\colon C\to X\) obtained from gluing \(\phi_1\) and \(\phi_2\) along \(p_{t_1}\) and \(p_{t_2}\) is of constant tree type \((\sigma, \beta_\sigma = \beta_{\tau_1}\coprod \beta_{\tau_2})\) and \(N_{(C,\phi)} = N_{(C_1,\phi_1)}\oplus N_{(C_1, \phi_2)}\).
\end{prop}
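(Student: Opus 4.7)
The plan is to extend the proof of Proposition~\ref{prop:GluingNC} by adding the new term \(c_*(\dual{S}_C\otimes \phi^*\tangent{X})\) to the bookkeeping and checking that both the extra direct summand and the extra component of the defining map split as a direct sum under the gluing.

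First, I would recall from the proof of Proposition~\ref{prop:GluingNC} that if \(i^j\colon C_j\to C\) for \(j=1,2\) denote the inclusions of the two subfamilies into the glued family, then \(S_C=i^1_*S_{C_1}\oplus i^2_*S_{C_2}\), and consequently \(c_*S_C=(c_1)_*S_{C_1}\oplus (c_2)_*S_{C_2}\). The same analysis of tails and edges, namely \(E_\sigma=E_{\tau_1}\cup E_{\tau_2}\cup\{(t_1,t_2)\}\) and \(T_\sigma=(T_{\tau_1}\cup T_{\tau_2})\setminus\{t_1,t_2\}\), together with the identification \(p_{(t_1,t_2)}^*S_C=p_{t_1}^*S_{C_1}\oplus p_{t_2}^*S_{C_2}\), carries over verbatim and shows that the tail and edge contributions to the middle term of~\eqref{SES:DefnNCphiB} decompose as a direct sum of the corresponding terms for \(C_1\) and \(C_2\).

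Next, I would handle the additional summand \(c_*(\dual{S}_C\otimes \phi^*\tangent{X})\). Since \(\dual{S}_C\) is obtained by pushforward from the normalization on each branch, the same inclusion-pushforward argument gives \(\dual{S}_C\otimes \phi^*\tangent{X}=i^1_*(\dual{S}_{C_1}\otimes \phi_1^*\tangent{X})\oplus i^2_*(\dual{S}_{C_2}\otimes \phi_2^*\tangent{X})\); here the hypothesis \(\phi_1\circ p_{t_1}=\phi_2\circ p_{t_2}\) is needed precisely to ensure that \(\phi^*\tangent{X}\) is well-defined globally on \(C\) and restricts on each branch to \(\phi_j^*\tangent{X}\). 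Applying \(c_*\) then yields \(c_*(\dual{S}_C\otimes \phi^*\tangent{X})=(c_1)_*(\dual{S}_{C_1}\otimes \phi_1^*\tangent{X})\oplus (c_2)_*(\dual{S}_{C_2}\otimes \phi_2^*\tangent{X})\).

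Finally, I would check that the defining map is block-diagonal. A section \(s\in c_*S_C\) decomposes uniquely as \(s=s_1\oplus s_2\) with \(s_j\in (c_j)_*S_{C_j}\). The restrictions \(p_t^*s\) and \(p_e^*s\) vanish on the branch on which \(s\) is zero, so the tail/edge components of the map split. For the new component \(s\mapsto -\langle s,\differential{\phi}\rangle\), note that \(\differential{\phi}\) decomposes as \(\differential{\phi}_1\oplus \differential{\phi}_2\) on the two branches, and contracting \(s_j\) against \(\differential{\phi}_j\) again produces a section supported on the corresponding branch. Thus the entire defining short exact sequence~\eqref{SES:DefnNCphiB} for \(N_{(C,\phi)}\) is the direct sum of the two defining sequences for \(N_{(C_1,\phi_1)}\) and \(N_{(C_2,\phi_2)}\), and taking cokernels yields \(N_{(C,\phi)}=N_{(C_1,\phi_1)}\oplus N_{(C_2,\phi_2)}\). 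The only subtlety I expect is verifying the branch-wise decomposition of \(\dual{S}_C\otimes \phi^*\tangent{X}\) at the node, which uses that \(\phi\) is continuous across the node via the compatibility \(\phi_1\circ p_{t_1}=\phi_2\circ p_{t_2}\); once this is in hand, everything else is bookkeeping as in Proposition~\ref{prop:GluingNC}.
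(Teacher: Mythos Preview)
Your proposal is correct and follows exactly the same approach as the paper: reduce to Proposition~\ref{prop:GluingNC} and note that the only additional summand satisfies \(c_*(\dual{S}_C\otimes \phi^*\tangent{X})=(c_1)_*(\dual{S}_{C_1}\otimes \phi_1^*\tangent{X})\oplus (c_2)_*(\dual{S}_{C_2}\otimes \phi_2^*\tangent{X})\). The paper's proof is in fact just this one line, so your version is a strictly more detailed rendering of the same argument, including the explicit check that the defining map is block-diagonal.
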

\begin{proof}
	The proof is analogous to the proof of Proposition~\ref{prop:GluingNC}.
	The only additional summand  to consider is
	\begin{equation}
		c_*\left(\dual{S}_{C}\otimes \phi^*\tangent{X}\right)={\left(c_1\right)}_*\left(\dual{S}_{C_1}\otimes\phi_1^*\tangent{X}\right)\oplus {\left(c_2\right)}_*\left(\dual{S}_{C_2}\otimes \phi_2^*\tangent{X}\right).
		\tag*{\qedhere}
	\end{equation}
\end{proof}

\begin{prop}
	Let \(\phi\colon C\to X\) be a stable spin map of constant tree type \((\tau, \beta_\tau)\).
	For \(t_0\in T_\tau\) let  \(\tilde{\tau}\) be the graph that arises from \(\tau\) by removing the tail \(t_0\) and stabilization such that \((\tilde{\tau}, \beta_{\tilde{\tau}}=\beta_{\tau})\) is stable.
	We denote by \(\tilde{\phi}\colon \tilde{C}\to X\) the stable spin map of constant tree type \((\tilde{\tau}, \beta_{\tilde{\tau}})\) that arises from \(\phi\) after removing the marked point \(p_{t_0}\) and stabilizing and by \(st\colon C\to \tilde{C}\) the stabilization map.
	Then there is a short exact sequence
	\begin{diag}\matrix[mat](m) {
			0 & p_{t_0}^*S_C & N_{(C,\phi)} & N_{\left(\tilde{C},\tilde{\phi}\right)} & 0 \\
		} ;
		\path[pf] {
			(m-1-1) edge (m-1-2)
			(m-1-2) edge (m-1-3)
			(m-1-3) edge (m-1-4)
			(m-1-4) edge (m-1-5)
		};
	\end{diag}
\end{prop}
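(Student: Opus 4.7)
The plan is to mimic the case analysis of Proposition~\ref{prop:StabilizationNC}, extending the \(3\times 3\) diagrams used there by the extra summand \(c_*(\dual{S}_C\otimes\phi^*\tangent{X})\) coming from the defining sequence~\eqref{SES:DefnNCphiB} of \(N_{(C,\phi)}\). Let \(v\in V_\tau\) denote the vertex bounding the tail \(t_0\). I would distinguish whether the stabilization step is trivial or performs a contraction.

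In the non-contracting case, either \(v\) has at least four adjacent flags, or it has exactly three but \(\beta_\tau(v)\neq 0\); in either case \(C=\tilde C\), \(S_C=S_{\tilde C}\) and \(\phi=\tilde\phi\). The three-row diagram from the first case of the proof of Proposition~\ref{prop:StabilizationNC} carries over verbatim after adjoining the column for the summand \(c_*(\dual S_C\otimes\phi^*\tangent X)\), which agrees trivially with its \(\tilde C\)-counterpart, and the desired short exact sequence is read off from the bottom row.

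The contracting case arises when \(v\) has precisely the three flags \(t_0,f_1,f_2\) and \(\beta_\tau(v)=0\); then \(C_v\cong\ProjectiveSpace{1}\) is contracted under \(st\colon C\to\tilde C\) while \(st\) is an isomorphism on all other components, with \(st^*S_{\tilde C}\) and \(st^*\tilde\phi^*\tangent X\) restricting there to \(S_C\) and \(\phi^*\tangent X\). The key additional input compared to Proposition~\ref{prop:StabilizationNC} is the identification
\begin{equation*}
	c_*(\dual S_C\otimes\phi^*\tangent X)=\tilde c_*(\dual S_{\tilde C}\otimes\tilde\phi^*\tangent X).
\end{equation*}
This is because \(\beta_\tau(v)=0\) forces \(\phi|_{C_v}\) to be constant, so that \(\phi|_{C_v}^*\tangent X\) is trivial of rank \(\dim X\) and
\begin{equation*}
	\dual S_{C_v}\otimes\phi|_{C_v}^*\tangent X\cong\cO(-1)^{\dim X}
\end{equation*}
has no global sections and thus contributes nothing to the pushforward. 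On the remaining components the contributions to \(c_*\) and \(\tilde c_*\) agree by the isomorphism property of \(st\). With this identification in hand, one writes down the analogue of diagram~(ii) in the proof of Proposition~\ref{prop:StabilizationNC}, split into the two sub-cases according to whether \(\{f_1,f_2\}\) consists of a tail and an edge or of two edges, enlarged by the common column for \(c_*(\dual S_C\otimes\phi^*\tangent X)\). The snake lemma applied to the resulting \(3\times 3\) diagram with exact rows and columns yields the claimed sequence.

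The main obstacle is the bookkeeping in the contracting case: checking that \(\beta_\tau(v)=0\) is indeed forced whenever \(C_v\) is contracted (which follows from \(\beta_{\tilde\tau}=\beta_\tau\) and the stability criterion that any vertex with vanishing homology class must have at least three flags) and that, after adjoining the new column, the middle row of the \(3\times 3\) diagram still coincides with the defining sequence~\eqref{SES:DefnNCphiB} for \(N_{(C,\phi)}\) and the bottom row with the analogous sequence for \(N_{(\tilde C,\tilde\phi)}\). Once the trivial summand computation for the contracted \(C_v\) is accepted, the remainder reduces to the corresponding curve statement proved in Proposition~\ref{prop:StabilizationNC}.
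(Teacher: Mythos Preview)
Your proposal is correct and follows essentially the same approach as the paper: reduce to the curve case of Proposition~\ref{prop:StabilizationNC} by establishing the key identity \(c_*(\dual{S}_C\otimes\phi^*\tangent{X})=\tilde{c}_*(\dual{S}_{\tilde{C}}\otimes\tilde{\phi}^*\tangent{X})\) and then running the same \(3\times 3\) diagrams with this extra summand. You actually give more justification than the paper does---the paper simply asserts the identity, whereas you explain it via the vanishing of \(H^0(\cO(-1)^{\dim X})\) on the contracted component---and you correctly single out the additional non-contracting sub-case (three flags with \(\beta_\tau(v)\neq 0\)) that is specific to stable maps.
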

\begin{proof}
	Let \(v\) be the vertex that bounds the removed tail \(t_0\).
	The vertex \(v\) needs to be contracted under stabilization precisely if \(\beta_\tau(v)=0\) and \(v\) has precisely three bounding flags including \(t_0\).
	Hence the three cases in the proof of Proposition~\ref{prop:StabilizationNC} generalize to the case of stable maps by adding the additional summand \(c_*(\dual{S}_C\otimes \phi^*\tangent{X})=\tilde{c}_*(\dual{S}_{\tilde{C}}\otimes \tilde{\phi}^*\tangent{X})\).
\end{proof}

\subsection{Conjecture on the SUSY normal bundle on the moduli stack of stable spin maps}\label{SSec:ConjectureSUSYNormalBundleStableSpinMaps}
In this section we formulate the conjecture that the SUSY normal bundles extend to a vector bundle on the stack of stable spin maps.

Recall the following properties of the moduli stack of stable maps, see, for example,~\cite{M-FMQCMS}.
Let \(\ModuliStackMaps{0,k}{X, \beta}\) be the moduli stack of stable maps of genus zero with \(k\) marked points in \(X\) representing the homology class \(\beta\).
Over \(\ModuliStackMaps{0,k}{X,\beta}\) there is a universal curve \(\UniversalMap{0,k}{X,\beta}\to\ModuliStackMaps{0,k}{X,\beta}\) together with \(k\) sections \(p_1, \dotsc, p_k\colon \ModuliStackMaps{0,k}{X,\beta}\to \UniversalMap{0,k}{X,\beta}\) and a map \(\ev_{k+1}\colon\UniversalMap{0,k}{X,\beta}\to X\).
For any family
\begin{equation}
	(c\colon C\to B, q_1, \dotsc, q_k\colon B\to C, \phi\colon C\to X)
\end{equation}
of stable maps there is a unique map \(b\colon B\to\ModuliStackMaps{0,k}{X,\beta}\) such that \(C=B\times_{\ModuliStackMaps{0,k}{X,\beta}} \UniversalMap{0,k}{X,\beta}\) where the projection on the first factor coincides with \(c\colon C\to B\) and the projection on the second factor satisfies \(pr_2\circ q_i= p_i\circ b\) and \(\phi=\ev_{k+1}\circ pr_2\).
\(\ModuliStackMaps{0,k}{X,\beta}\) is a smooth and proper Deligne--Mumford stack.
The convexity assumption implies that it is of pure dimension \(d_{k,\beta} = \dim X + \left<c_1(\tangent{X}), \beta\right> + k -3\).

We denote the maps forgetting marked points again by \(f\colon \ModuliStackMaps{0,k+1}{X, \beta}\to \ModuliStackMaps{0,k}{X, \beta}\) and the forgetful maps to the moduli space of stable curves by \(\pi\colon \ModuliStackMaps{0,k}{X,\beta}\to \ModuliStackCurves{0,k}\).
The forgetful map \(f\colon \ModuliStackMaps{0,k+1}{X, \beta}\to \ModuliStackMaps{0,k}{X,\beta}\) is isomorphic to the universal family \(\UniversalMap{0,k}{X,\beta}\to \ModuliStackMaps{0,k}{X,\beta}\).
Furthermore, we have the evaluation maps \(\ev_i\colon \ModuliStackMaps{0,k}{X,\beta}\to X\), \(i=1, \dotsc, k\) such that the following diagram is commutative
\begin{diag}\matrix[mat](m) {
		\UniversalMap{0,k}{X,\beta} = \ModuliStackMaps{0,k+1}{X, \beta} && X^{k+1} & \\
		& \ModuliStackMaps{0,k}{X, \beta} && X^k \\
		\UniversalCurve{0,k} = \ModuliStackCurves{0,k+1} && & \\
		& \ModuliStackCurves{0,k} && \\
	} ;
	\path[pf] {
		(m-1-1) edge node[auto]{\(\pi\)} (m-3-1)
						edge node[auto]{\((\ev_1, \dotsc, \ev_{k+1})\)} (m-1-3)
						edge node[auto]{\(f\)} (m-2-2)
		(m-1-3) edge (m-2-4)
		(m-2-2) edge node[auto]{\((\ev_1, \dotsc, \ev_k)\)} (m-2-4)
						edge node[auto]{\(\pi\)} (m-4-2)
		(m-3-1) edge node[auto]{\(f\)} (m-4-2)
	};
\end{diag}

There is also a gluing map
\begin{equation}
	gl_X\colon \ModuliStackMaps{0,k_1+1}{X,\beta_1}\times_X \ModuliStackMaps{0,k_2+1}{X, \beta_2}\to \ModuliStackMaps{0,k_1+k_2}{X, \beta_1+\beta_2}
\end{equation}
identifying the last marked point of the first stable map to the first marked point of the second stable map if their image in \(X\) coincides as well as a relabeling map
\begin{equation}
	s_\sigma\colon \ModuliStackMaps{0,k}{X,\beta}\to\ModuliStackMaps{0,k}{X,\beta}
\end{equation}
for any \(\sigma\in\SymG{k}\).

The construction of moduli stacks of stable maps has been extended to stable spin maps in~\cite{JKV-SGWI}.
We denote the moduli stack of stable spin maps of genus zero with \(k\) marked points of Neveu--Schwarz type representing the homology class \(\beta\in H^+_2(X)\) by \(\ModuliStackSpinMaps{0,k}{X,\beta}\).
There is again a universal curve \(\UniversalSpinMap{0,k}{X,\beta}\to\ModuliStackSpinMaps{0,k}{X,\beta}\) with \(k\) sections \(p^{spin}_1,\dotsc, p_k^{spin}\colon \ModuliStackSpinMaps{0,k}{X, \beta}\to\UniversalSpinMap{0,k}{X,\beta}\) as well as a map \(\ev_{k+1}\colon\UniversalSpinMap{0,k}{X, \beta}\to X\) and a spinor bundle \(\UniversalSpinorBundleM\to \UniversalSpinMap{0,k}{X,\beta}\) such that for every every family of stable spin maps can be obtained via unique pullback.
That is, for any family \((c\colon C\to B, \Set{q_j}_{j=1,\dotsc, k}, S_C, \phi)\) of stable spin maps there is a unique map \(b\colon B\to \ModuliStackSpinMaps{0,k}{X,\beta}\) such that \(C=B\times_{\ModuliStackSpinMaps{0,k}{X,\beta}}\UniversalSpinMap{0,k}{X,\beta}\) and the projection on the first factor coincides with \(b\) and the projection \(pr_2\) on the second factor satisfies \(p_j^{spin}\circ b = pr_2\circ q_j\) as well as \(S_C=pr_2^*\UniversalSpinorBundleM\) and \(\phi=\ev_{k+1}\circ pr_2\).
The stack \(\ModuliStackSpinMaps{0,k}{X,\beta}\) is a smooth and proper Deligne--Mumford stack.

In genus zero the forgetful map \(F\colon \ModuliStackSpinMaps{0,k}{X,\beta}\to \ModuliStackMaps{0,k}{X,\beta}\) induces an isomorphism of coarse moduli spaces because any prestable curve of genus zero carries a unique spin structure.
Moreover there are
\begin{itemize}
	\item
		a forgetful map \(f^{spin}\colon \ModuliStackSpinMaps{0,k+1}{X,\beta}\to \ModuliStackSpinMaps{0,k}{X,\beta}\) forgetting the last marked point
	\item
		a forgetful map \(\pi^{spin}\colon \ModuliStackSpinMaps{0,k}{X,\beta}\to \ModuliStackSpinCurves{0,k}\) forgetting the map,
	\item
		a gluing map \(gl_X\colon \ModuliStackSpinMaps{0,k_1+1}{X,\beta_1}\times_X \ModuliStackSpinMaps{0,k_2+1}{X, \beta_2}\to \ModuliStackSpinMaps{0,k_1+k_2}{X, \beta_1+\beta_2}\) identifying the last marked point of the first stable spin map to the first marked point of the second stable spin map if their image in \(X\) coincides, and
	\item
		for any \(\sigma\in \SymG{k}\) there is a relabeling automorphism \(s_\sigma^{spin}\colon \ModuliStackSpinMaps{0,k}{X,\beta}\to\ModuliStackSpinMaps{0,k}{X,\beta}\) such that \(p_i^{spin}\circ s_\sigma^{spin}=p^{spin}_{\sigma(j)}\).
\end{itemize}
All those maps on moduli stacks of stable spin maps are lifts of the corresponding maps on moduli stacks of stable maps, that is, for example, \(F\circ f^{spin} = f\circ F\).
Hence,  we will drop the superscript and write \(p_i\), \(f\), \(\pi\), \(gl\) and \(s_\sigma\) instead of \(p^{spin}_i\), \(f^{spin}\), \(\pi^{spin}\), \(gl^{spin}\) and \(s_\sigma^{spin}\) in the sequel.

\begin{conjecture}\label{conj:NkA}
	There is a unique vector bundle \(\overline{N}_{k,\beta}\) of rank \(r_{k,\beta} = \left<c_1(X), \beta\right> + k -2\) on \(\ModuliStackSpinMaps{0,k}{X, \beta}\) such that for every family \((c\colon C\to B, \Set{q_i}, S_C, \phi)\) of stable spin maps of genus zero and fixed tree type \(N_{(C,\phi)}=b^*\overline{N}_{k,\beta}\) where \(b\colon B\to \ModuliStackSpinMaps{0,k}{X,\beta}\) is the unique classifying map.
	The vector bundle \(\overline{N}_{k,\beta}\) satisfies
	\begin{enumerate}
		\item\label{item:PropertiesNkA:Nk0Nk}
			If \(\beta=0\) and \(k\geq 3\), \(\overline{N}_{k,\beta}=\pi^*\overline{N}_k\).
		\item\label{item:PropertiesNkA:ForgettingMarkedPoint}
			There is a short exact sequence
			\begin{diag}\matrix[mat, column sep=large](m) {
					0 & p_{k+1}^*\UniversalSpinorBundleM & \overline{N}_{k+1,\beta} & f^*\overline{N}_{k,\beta} & 0.\\
				} ;
				\path[pf] {
					(m-1-1) edge (m-1-2)
					(m-1-2) edge (m-1-3)
					(m-1-3) edge (m-1-4)
					(m-1-4) edge (m-1-5)
				};
			\end{diag}
		\item\label{item:PropertiesNkA:Glueing}
			Let
			\begin{equation}
				\iota\colon \ModuliStackSpinMaps{0,k_1+1}{X, \beta_1}\times_X\ModuliStackSpinMaps{0,k_2+1}{X, \beta_2}\to \ModuliStackSpinMaps{0,k_1+1}{X, \beta_1}\times\ModuliStackSpinMaps{0,k_2+1}{X, \beta_2}.
			\end{equation}
			be the canonical inclusion.
			Then
			\begin{equation}
				gl_X^*\overline{N}_{k_1+k_2,\beta_1+\beta_2}= \iota^*\left(\overline{N}_{k_1+1,\beta_1}\oplus \overline{N}_{k_2+1,\beta_2}\right).
			\end{equation}
		\item\label{item:PropertiesNkA:Symk}
			It holds \(s_\sigma^*\overline{N}_{k,\beta}=\overline{N}_{k,\beta}\).
	\end{enumerate}
\end{conjecture}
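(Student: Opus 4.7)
My plan would be to construct $\overline{N}_{k,\beta}$ as the cokernel of a canonically defined map of coherent sheaves on $\ModuliStackSpinMaps{0,k}{X,\beta}$, obtained by applying the construction in Definition~\ref{defn:NCphi} to the universal family $c\colon\UniversalSpinMap{0,k}{X,\beta}\to\ModuliStackSpinMaps{0,k}{X,\beta}$, with universal spinor bundle $\UniversalSpinorBundleM$, universal sections $p_i$, and universal map $\Phi=\ev_{k+1}$. The naive candidate is a short exact sequence analogous to~\eqref{SES:DefnNCphiB} built from $c_*\UniversalSpinorBundleM$, $\bigoplus_{i=1}^k p_i^*\UniversalSpinorBundleM$, and $c_*(\dual{\UniversalSpinorBundleM}\otimes \Phi^*\tangent{X})$, together with a contribution that reproduces $\bigoplus_{e\in E_\tau} p_e^*S_C$ on each stratum of fixed tree type. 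Once the sheaf is constructed, local freeness of rank $r_{k,\beta}$ follows from the constancy of the rank proved pointwise in the rank lemma after Definition~\ref{defn:NCphi}, together with upper semicontinuity and Nakayama (as in the proof of Lemma~\ref{lemma:NCBLocallyFree}). The fact that it restricts to $N_{(C,\phi)}$ on fixed-tree-type families follows from Lemma~\ref{lemma:PullbackNCphi} and flat base change.

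The principal difficulty is the edge term $\bigoplus_{e\in E_\tau} p_e^*S_C$: the index set $E_\tau$ is not constant in families of varying tree type, so this term is not a coherent sheaf on the whole moduli stack but only on each stratum. To handle this I would work with the normalization $\tilde{C}\to C$ of the universal curve, or with the singular locus subscheme $\Sigma\subset\UniversalSpinMap{0,k}{X,\beta}$, since the nodes of fibers assemble into the relative singular locus and carry a canonical rank-two spinor restriction sheaf. Explicitly, I would use the relative dualizing/spinor formalism of~\cites{C-MCTC}{J-GMHSC} to reinterpret the term $\bigoplus_e p_e^*S_C$ as $\pi_{\Sigma*}(\UniversalSpinorBundleM|_\Sigma)$ where $\pi_\Sigma\colon\Sigma\to\ModuliStackSpinMaps{0,k}{X,\beta}$. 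Alternatively, motivated by the identification $N_C = H^1(S_0)$ for smooth curves, I would attempt to realize $\overline{N}_{k,\beta}$ globally as $R^1c_*\mathcal{E}$ for a single sheaf $\mathcal{E}$ on the universal curve fitting into an extension involving $\UniversalSpinorBundleM(-\sum p_i)$ and $\dual{\UniversalSpinorBundleM}\otimes\Phi^*\tangent{X}$ with extension class given by $-\langle\,\cdot\,,\differential{\Phi}\rangle$; cohomology and base change would then yield local freeness once the $R^0$ vanishes, which is guaranteed by $H^0(S_C(-\sum p_i))=0$ and Lemma~\ref{lemma:H1SvphiTX}.

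Granting the existence and universal property of $\overline{N}_{k,\beta}$, the four compatibility properties reduce to checks that have already been performed at the level of individual families. Property~\ref{item:PropertiesNkA:Nk0Nk} follows because for $\beta=0$ the summand $c_*(\dual{\UniversalSpinorBundleM}\otimes\Phi^*\tangent{X})$ vanishes, and the remainder of~\eqref{SES:DefnNCphiB} is the pullback along $\pi$ of~\eqref{SES:DefnNCB}. Property~\ref{item:PropertiesNkA:ForgettingMarkedPoint} is a universal-family version of the stabilization Proposition following Definition~\ref{defn:NCphi}, applied to the universal forgetful map $f$ identified with the universal curve. Property~\ref{item:PropertiesNkA:Glueing} is a universal-family version of Proposition~\ref{prop:GluingNCphi}, applied to $gl_X$ (the pullback by $\iota$ arises because gluing along $X$ requires matching evaluations, which is exactly the hypothesis $\phi_1\circ p_{t_1}=\phi_2\circ p_{t_2}$ of that proposition). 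Property~\ref{item:PropertiesNkA:Symk} is immediate from the manifest symmetry of~\eqref{SES:DefnNCphiB} in the tails $T_\tau$.

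The hard part is therefore Step~1: finding a construction of the sheaf $\overline{N}_{k,\beta}$ that is simultaneously global on $\ModuliStackSpinMaps{0,k}{X,\beta}$ (i.e.\ makes sense across stratum boundaries where the tree type jumps) and restricts correctly on each stratum. I expect this to require a genuinely derived-geometric input, probably via a perfect two-term complex whose cokernel sheaf is $\overline{N}_{k,\beta}$, so that the jumping of $c_*\UniversalSpinorBundleM$ and of the edge-restriction sheaf along the boundary divisor of nodal curves is canceled in the cohomology. Once this derived-pushforward description is in place, uniqueness of the extension of $N_{(C,\phi)}$ follows from the density of fixed-tree-type loci and the torsion-freeness forced by local freeness.
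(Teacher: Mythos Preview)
The statement you are attempting to prove is a \emph{Conjecture} in the paper, not a theorem; the authors explicitly do not prove it. Their discussion in Section~\ref{SSec:SNMotivation} provides only motivation: they explain that the differential-geometric construction of the moduli space of super stable maps in~\cites{KSY-SJC}{KSY-SQCI}{KSY-TAMSSSMGZ} yields normal bundles with the stated properties, and they expect an algebro-geometric analogue via the (also conjectural) moduli stack $\ModuliStackSuperMaps{0,k}{X,\beta}$ of Conjecture~\ref{conj:ModuliStackSuperStableMaps}. There is thus no proof in the paper to compare your proposal against.

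That said, your outline is a sensible plan of attack and you have correctly isolated the genuine obstruction: the edge term $\bigoplus_{e\in E_\tau} p_e^*S_C$ does not globalize naively because the number of nodes jumps across boundary strata, and the rank of $c_*S_C$ jumps correspondingly. Your two suggested remedies --- replacing the edge sum by the pushforward of $\UniversalSpinorBundleM$ restricted to the relative singular locus, or packaging everything as $R^1c_*$ of a single sheaf built from $\UniversalSpinorBundleM(-\sum p_i)$ and $\dual{\UniversalSpinorBundleM}\otimes\Phi^*\tangent{X}$ --- are both natural, and the second is close in spirit to the construction of Norbury cited by the authors in~\cites{N-NCCMSC}{N-EGMSSRS} for the curve case. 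However, neither route is worked out here: you would need to verify that the relevant higher direct images are locally free across the boundary (which is where convexity and the vanishing in Lemma~\ref{lemma:H1SvphiTX} enter, but also where the non-locally-free behavior of $S_C$ at nodes must be controlled), and that the resulting sheaf genuinely restricts to the fixed-tree-type $N_{(C,\phi)}$ of Definition~\ref{defn:NCphi} including the edge summands. Your honest acknowledgment that this step likely requires a derived or two-term-complex argument is accurate; completing it would resolve the conjecture, which the paper leaves open.
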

We will discuss in Section~\ref{SSec:SNMotivation} how this conjecture is part of a more general conjecture on the existence of moduli stack of super stable maps.

\subsection{Motivation}\label{SSec:SNMotivation}
In this section we explain how Definitions~\ref{defn:NC} and~\ref{defn:NCphi}, as well as the Conjectures~\ref{conj:Nk} and~\ref{conj:NkA} are motivated from the super differential geometric study of super stable curves and super stable maps in~\cites{KSY-SJC}{KSY-SQCI}{KSY-TAMSSSMGZ}.
This section is logically independent of most of the rest of the article except for Sections~\ref{SSec:GWMotivation} and~\ref{SSec:Generalizations}.

Supergeometry is a generalization of geometry to spaces with so called odd dimensions for which the coordinates anti-commute.
For background on supergeometry and more references we refer to~\cite{EK-SGSRSSCA}.
A super Riemann surface is a complex supermanifold~\(M\) of dimension \(1|1\) together with a distribution \(\cD\subset \tangent{M}\) such that the commutator induces an isomorphism \(\cD\otimes \cD =\faktor{\tangent{M}}{\cD}\).
By the uniformization of super Riemann surfaces \(\ProjectiveSpace[\C]{1|1}\) is the only super Riemann surface of genus zero.
There is a holomorphic embedding \(\ProjectiveSpace[\C]{1}\to \ProjectiveSpace[\C]{1|1}\) and its normal bundle is given by \(S_{\ProjectiveSpace[\C]{1}}=\cO(1)\).
Moreover, \(\ProjectiveSpace[\C]{1|1}\) is holomorphically split, that is, the supermanifold is completely determined by \(\ProjectiveSpace[\C]{1}\) and the normal bundle \(S_{\ProjectiveSpace[\C]{1}}\).

A Neveu--Schwarz marked point on \(\ProjectiveSpace[\C]{1|1}\) over a base \(B\) is a map \(P\colon B\to \ProjectiveSpace[\C]{1|1}\).
As \(\ProjectiveSpace[\C]{1|1}\) is split, any Neveu--Schwarz marked point is determined by a point \(p\colon B\to \ProjectiveSpace[\C]{1}\) and a section~\(s\in p^*S_{\ProjectiveSpace[\C]{1}}\), see~\cite[Lemma~4.2.2]{KSY-SQCI}.
All summands of the form \(p^*S_C\) in the Definitions~\ref{defn:NC} and~\ref{defn:NCphi} are motivated by Neveu--Schwarz marked points \(P\) of the super Riemann surface \(\ProjectiveSpace[\C]{1|1}\).

In~\cite{KSY-SQCI} we have constructed the moduli space \(\SmModuliSpaceSuperCurves{0,k}\) of super Riemann surfaces of genus zero with \(k\) marked points as a superorbifold of dimension \(k-3|k-2\).
That is, \(\SmModuliSpaceSuperCurves{0,k}\) represents, in the sense of Shvarts and Molotkov--Sachse, see~\cites{S-DSP}{M-IDCSM}{S-GAASTS} the functor \(\underline{\SmModuliSpaceSuperCurves{0,k}}\colon \cat{SPoint^{op}}\to \cat{Man}\) from the opposite of the category of superpoints to the category of smooth manifolds given by
\begin{equation}
	\underline{\SmModuliSpaceSuperCurves{0,k}}(B)
	= \faktor{\Set{P_1, \dotsc, P_k\colon B\to \ProjectiveSpace[\C]{1|1}\given P_i\times_B\R^{0|0}\text{ and }P_j\times_B\R^{0|0}\text{ distinct}}}{\Aut_B(\ProjectiveSpace[\C]{1|1})}.
\end{equation}
The automorphism group \(\Aut_B(\ProjectiveSpace[\C]{1|1})\) is an extension of the group of Möbius transformations and has complex dimension \(3|2\).

For any three \(B\)-points \(P_1\), \(P_2\), \(P_3\) of \(\ProjectiveSpace[\C]{1|1}\) there is a unique automorphism of \(\ProjectiveSpace[\C]{1|1}\) sending \(P_1\) to \(0\), \(P_2\) to \(1_\epsilon\) and \(P_3\) to \(\infty\), see~\cites[Section~3.1]{KSY-SQCI}[Chapter~2, 2.12]{M-TNCG}.
Here \(0\) and \(\infty\) are the lift of the corresponding point in \(\ProjectiveSpace[\C]{1}\) to \(\ProjectiveSpace[\C]{1|1}\) such that its spinor part vanishes.
The point \(1_\epsilon\) is the lift of the point \(1\) in \(\ProjectiveSpace[\C]{1}\) such that its spinor part is given by \(\epsilon\).
The spinor part \(\epsilon\) depends on the triple \((P_1, P_2, P_3)\) and is well defined up to sign.
Consequently,
\begin{equation}
	\SmModuliSpaceSuperCurves{0,k}
	= \faktor{\left.{\left(\ProjectiveSpace[\C]{1|1}\right)}^k\right|_{Z_k}\hspace{.5em}}{\Aut(\ProjectiveSpace[\C]{1|1})}
	= \faktor{\C^{0|1}\times \left.{\left(\ProjectiveSpace[\C]{1|1}\right)}^{k-3}\right|_{\tilde{Z}_{k-3}}\hspace{.5em}}{\Z_2}
\end{equation}
Here \(Z_k\subset{\left(\ProjectiveSpace[\C]{1}\right)}^k\) is the submanifold such that no two projections to \(\ProjectiveSpace[\C]{1}\) coincide and \(\tilde{Z}_{k-3}\subset{\left(\ProjectiveSpace[\C]{1}\right)}^{k-3}\) is the submanifold such that no two projections coincide nor agree with \(0\), \(1\) and \(\infty\).

The superorbifold structure on \(\SmModuliSpaceSuperCurves{0,k}\) extends the manifold structure on the differential geometric moduli space of curves of genus zero with \(k\) marked points
\begin{equation}
	\SmModuliSpaceCurves{0,k}=\faktor{\left.{\left(\ProjectiveSpace[\C]{1}\right)}^k\right|_{Z_k}}{\Aut_\C(\ProjectiveSpace[\C]{1})},
\end{equation}
that is, there is an embedding \(\SmModuliSpaceCurves{0,k}\to \SmModuliSpaceSuperCurves{0,k}\).
We have calculated the normal bundle~\(N^{sm}_k\) to that embedding (up to the \(\Z_2\)-action) in~\cite{KSY-TAMSSSMGZ} to be given by
\begin{diag}
	\matrix[mat](m) {
		0 & H^0(S_{\ProjectiveSpace[\C]{1}}) & {\displaystyle \bigoplus_{j=1}^k p_j^*S_{\ProjectiveSpace[\C]{1}}} & N^{sm}_k & 0 \\
		& s & \bigoplus p_j^*s & & \\
	} ;
	\path[pf] {
		(m-1-1) edge (m-1-2)
		(m-1-2) edge (m-1-3)
		(m-1-3) edge (m-1-4)
		(m-1-4) edge (m-1-5)
		(m-2-2) edge[commutative diagrams/mapsto] (m-2-3)
	};
\end{diag}
Here the middle term is the sum of the spinorial part of the marked points \(P_1, \dotsc, P_k\) and the left hand part is the odd part of the automorphism group of \(\ProjectiveSpace[\C]{1|1}\).
This motivates Definition~\ref{defn:NC} in the case of a tree with a single vertex.
To motivate the case of general trees, note that a stable supercurve is obtained by gluing together smooth marked curves of genus zero and the normal bundles add.

While the above constructions were done using the language of super differential geometry, algebro-geometric treatment of moduli problems of super stable curves has made much progress in the recent years, see~\cites{D-LaM}{OV-SMSGZSUSYCRP}{BR-SMSCRP}{FKP-MSSCCLB}{MZ-EHSTIISFTV}.
Besides more general cases a moduli stack \(\ModuliStackSuperCurves{0,k}\) of stable super curves of genus zero with \(k\) Neveu--Schwarz punctures has been constructed as a smooth and proper Deligne--Mumford superstack of dimension \(k-3|k-2\).
As any stable spin curve gives rise to a unique super stable map there is an inclusion \(i\colon \ModuliStackSpinCurves{0,k}\to \ModuliStackSuperCurves{0,k}\) which induces a bijection on \(\Spec\C\)-points.
The inclusion \(i\) gives rise to a normal bundle \(\overline{N}_k\) of rank \(0|k-2\).
It is natural to assume that the differential geometric moduli space \(\SmModuliSpaceSuperCurves{0,k}\) coincides with the open interior of \(\ModuliStackSuperCurves{0,k}\) and consequently also that the bundle \(\overline{N}_k\) coincides with \(N^{sm}_k\).
This is the content of Conjecture~\ref{conj:Nk}.

We now turn to a similar discussion for the moduli space of super stable maps.
That is, we first give a brief summary of the differential geometric results on super stable maps obtained in~\cites{KSY-SJC}{KSY-SQCI}{KSY-TAMSSSMGZ} and then give a conjectural algebro-geometric analogue.
A super \(\targetACI\)-holomorphic curve of genus zero in a Kähler manifold \(X\) is a map \(\Phi\colon \ProjectiveSpace[\C]{1|1}\times B\to X\) such that
\begin{equation}\label{eq:DefnDJBar}
	\DJBar\Phi
	=\frac12\left(1+ \ACI\otimes \targetACI\right)\left.\differential\Phi\right|_{\cD}
	= 0.
\end{equation}
Here \(\ACI\) is the almost complex structure on \(\ProjectiveSpace[\C]{1|1}\) and \(\targetACI\) is the almost complex structure on the target manifold \(X\).
We have defined super \(\targetACI\)-holomorphic curves in~\cite{KSY-SJC} and argued that they are a good generalization of classical \(\targetACI\)-holomorphic maps because the Equation~\eqref{eq:DefnDJBar} implies that the map \(\Phi\) is holomorphic and a critical point of a supergeometric action functional.

In the case at hand, that is, when the domain is \(\ProjectiveSpace[\C]{1|1}\) and the target is Kähler, the map \(\Phi\) is completely determined by a tuple \((\varphi, \psi)\), where \(\varphi\colon \ProjectiveSpace[\C]{1}\times B\to X\) is a map and \(\psi\in \VSec{\dual{S}_{\ProjectiveSpace[\C]{1}}\otimes_\R \varphi^*\tangent{X}}\) is a twisted spinor satisfying
\begin{align}
	\DelJBar \varphi &= 0 &
	\left(1+\ACI\otimes \targetACI\right)\psi &= 0&
	\Dirac\psi &= 0
\end{align}
Here the map \(\varphi\) is given by \(\varphi=\Phi\circ i\), where \(i\colon \ProjectiveSpace[\C]{1}\to \ProjectiveSpace[\C]{1|1}\) is the embedding from above.

In~\cite{KSY-SJC}, we have constructed the moduli space \(\SmModuliSpaceSuperJMaps{X, \beta}\) of super \(\targetACI\)-holomorphic curves with fixed homology class \(\beta=[\im \Phi]\in H_2^+(X)\) as a real supermanifold.
The construction uses an implicit function theorem in the infinite dimensional space of all maps \(\Phi\colon M\to X\).
In order to assure surjectivity of the differential, the linearizations of the \(\DelJBar\)-operator and the Dirac operator \(\Dirac\) need to be surjective, which translates to the conditions on the target manifold \(X\) mentioned at the beginning of Section~\ref{SSec:SUSYNormalBundlesForStableMaps}.
The supermanifold \(\SmModuliSpaceSuperJMaps{X, \beta}\) has real dimension
\begin{equation}
	\dim_\R X + 2\left<c_1(\tangent{X}), \beta\right>|2\left<c_1(\tangent{X}), \beta\right>
\end{equation}
and comes with an embedding \(\SmModuliSpaceJMaps{X, \beta}\to \SmModuliSpaceSuperJMaps{X, \beta}\) of the moduli space \(\SmModuliSpaceJMaps{X, \beta}\) of classical \(\targetACI\)-holomorphic curves of genus zero.
The normal bundle \(N^{sm}_\beta\) to this embedding has fibers \(H^0\left(\dual{S}_{\ProjectiveSpace[\C]{1}}\otimes \varphi^*\tangent{X}\right)\) over \(\varphi\in \SmModuliSpaceJMaps{X, \beta}\).
That is the fibers over \(\varphi\) are the space of fields~\(\psi\).
When the domain is \(\ProjectiveSpace[\C]{1|1}\) and the target is Kähler, the moduli space \(\SmModuliSpaceSuperJMaps{X,\beta}\) is actually (smoothly) split, that is, the supermanifold \(\SmModuliSpaceSuperJMaps{X,\beta}\) is completely determined by \(\SmModuliSpaceJMaps{X, \beta}\) and the normal bundle \(N^{sm}_\beta\).

In~\cite{KSY-SQCI}, we have defined the moduli space of super \(\targetACI\)-holomorphic curves of genus zero with \(k\)-marked points as
\begin{equation}
	\SmModuliSpaceSuperMaps{0,k}{X, \beta}
	= \faktor{\left(\left.{\left(\ProjectiveSpace[\C]{1|1}\right)}^k\right|_{Z_k}\times \SmModuliSpaceSuperJMaps{X, \beta}\right)}{\Aut(\ProjectiveSpace[\C]{1|1})}.
\end{equation}
Ignoring the \(\Integers_2\)-action, this moduli space comes with a smooth embedding \(\SmModuliSpaceMaps{0,k}{X, \beta}\to \SmModuliSpaceSuperMaps{0,k}{X, \beta}\) and its normal bundle is as in the short exact sequence in in Definition~\ref{defn:NCphi}.
The terms in the middle of the short exact sequence come from the spinor parts of the marked points and the map.
The left hand part of the short exact sequences is the spinorial part of the automorphism group of \(\ProjectiveSpace[\C]{1|1}\) which acts on marked points and the map.

More generally, we have constructed moduli spaces \(\SmModuliSpaceSuperMaps{\tau}{X, \beta_\tau}\) of super stable maps of fixed tree type and homology classes \(\beta_\tau\) by gluing moduli spaces of the form \(\SmModuliSpaceSuperMaps{0,k}{X, \beta}\) together along pairs of marked points \(P\) and \(Q\) such that \(\Phi_1(P)=\Phi_2(Q)\).
We could show in~\cite{KSY-SQCI} that the resulting moduli spaces of super stable maps of fixed tree type are superorbifolds of (real) dimension
\begin{equation}
	\dim_\R X + 2\left<c_1(\tangent{X}), \beta\right> - 2\#E_\tau + 2k - 6| 2\left<c_1(\tangent{X}), \beta\right> + 2k - 4.
\end{equation}
It turns out that the condition \(\Phi_1(P)=\Phi_2(Q)\) is automatically satisfied on the level of normal bundles and normal bundles can be constructed by summing over the vertices of the tree \(\tau\).
The union of their point functors
\begin{equation}
	\begin{split}
		\underline{\SmCModuliSpaceSuperMaps{0,k}{X, \beta}}\colon \cat{SPoint^{op}} &\to \cat{Top} \\
		C &\mapsto \bigcup_{(\tau, \beta_\tau)} \underline{\SmModuliSpaceSuperMaps{\tau}{X, \beta_\tau}}(C)
	\end{split}
\end{equation}
can be equipped with a generalization of Gromov topology such that \(\underline{\SmCModuliSpaceSuperMaps{0,k}{X, \beta}}(\R^{0|0})=\SmCModuliSpaceMaps{0,k}{X, \beta}\) is compact and the strata carry the usual orbifold topology.

The algebro-geometric study of super stable maps and their moduli spaces has recently been initiated in~\cite{BMHR-SNM}.
In particular a very general definition of super stable maps has been given and their moduli stacks have been defined.
By analogy to our differential geometric treatment of super stable maps of genus zero with Neveu--Schwarz punctures we expect the following result:
\begin{conjecture}\label{conj:ModuliStackSuperStableMaps}
	Let \(X\) be a convex (non-super) scheme.
	Then the moduli stack \(\ModuliStackSuperMaps{0,k}{X,\beta}\) of super stable maps of genus zero with \(k\) Neveu--Schwarz punctures in \(X\) representing the cohomology class \(\beta\in H_2^+(X)\) is a smooth and proper Deligne--Mumford super stack of dimension
	\begin{equation}
		\dim X + \left<c_1(\tangent{X}), \beta\right> + k - 3| \left<c_1(\tangent{X}), \beta\right> + k - 2.
	\end{equation}
	There is a smooth inclusion \(i\colon \ModuliStackSpinMaps{0,k}{X,\beta}\to \ModuliStackSuperMaps{0,k}{X,\beta}\) inducing a bijection on \(\Spec\C\)-points.
\end{conjecture}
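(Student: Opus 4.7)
The plan is to combine the algebro-geometric foundations for super stable maps developed in~\cite{BMHR-SNM} with a deformation-theoretic argument modelled on the classical treatment in~\cite{FP-NSMQC}. First I would specialise the general construction of~\cite{BMHR-SNM} to genus zero with \(k\) Neveu--Schwarz punctures and to a fixed homology class \(\beta\), obtaining the moduli functor whose \(B\)-points are families of super stable maps \(\Phi\colon \mathcal{C}\to X\) over \(B\). Representability as a super Deligne--Mumford stack proceeds as in the classical case: start from the Artin super stack of prestable super curves with \(k\) NS punctures, pass to the relative Hom-super-stack to \(X\) (using the projectivity of \(X\)), then cut out the open locus where \(\Phi\) is stable and the homology class equals \(\beta\). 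Separatedness and properness follow from a super version of the valuative criterion together with the stable reduction theorem for super curves, which is available in genus zero thanks to~\cites{FKP-MSSCCLB}{MZ-EHSTIISFTV}{OV-SMSGZSUSYCRP}{BR-SMSCRP}.

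For smoothness and the dimension formula, I would work relatively over \(\ModuliStackSuperCurves{0,k}\), whose dimension is \(k-3|k-2\). The tangent--obstruction complex for the map part is controlled by \(R\pi_*\Phi^*\tangent{X}\). For a split super curve \(\mathcal{C}\) with body \(C\) and odd normal bundle \(S_C\), the super vector bundle \(\Phi^*\tangent{X}\) decomposes so that its even sections are \(H^0(\phi^*\tangent{X})\) and its odd sections are \(H^0(\dual{S}_C\otimes\phi^*\tangent{X})\). The convexity of \(X\) together with Lemma~\ref{lemma:H1SvphiTX} kills the corresponding \(H^1\)'s, giving smoothness and the dimension count
\begin{equation}
\bigl(\dim X + \langle c_1(\tangent{X}),\beta\rangle + k-3\bigr) \,\big|\, \bigl(\langle c_1(\tangent{X}),\beta\rangle + k-2\bigr),
\end{equation}
where the even and odd curve contributions \(k-3\mid k-2\) are added to the even and odd map contributions.

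For the second half I would construct \(i\) by sending a stable spin map \((C,\{p_j\},S_C,\phi)\) to its canonical split super enhancement: the split super curve \(\mathcal{C}\) determined by \((C,S_C)\), NS punctures lifting the \(p_j\) with vanishing spinorial part, and \(\Phi\) the pullback of \(\phi\) along the projection \(\mathcal{C}\to C\). Functoriality gives the morphism \(i\). Bijectivity on \(\Spec\C\)-points follows because every super stable map over \(\Spec\C\) has reduced space a stable spin map and is split in genus zero, while smoothness of \(i\) follows from the fact that the relative tangent complex at such a point is concentrated in odd degree with fibre isomorphic to \(N_{(C,\phi)}\) as constructed in Definition~\ref{defn:NCphi}; the rank \(r_{k,\beta}\) matches the odd codimension predicted by the dimension formula.

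The main obstacle will be the foundational super-geometric input: a robust theory of super Deligne--Mumford stacks, a valuative criterion in the super setting, and a cotangent complex formalism adequate for deformation theory of super stable maps with nodal degenerations where the SUSY condition \(\cD\otimes\cD\xrightarrow{\sim}\tangent{\mathcal{C}}/\cD\) must be interpreted at the nodes. Once these tools are in place, the induction on the number of nodes along gluing maps, together with the convexity-driven vanishing of Lemma~\ref{lemma:H1SvphiTX}, should propagate smoothness and the dimension count from the open stratum \(\SmModuliSpaceSuperMaps{0,k}{X,\beta}\) of~\cite{KSY-SQCI} to the full compactification.
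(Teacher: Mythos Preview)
The statement you are attempting to prove is labeled a \emph{conjecture} in the paper and is not proved there. The authors state it as an expectation, motivated by their differential-geometric construction of the moduli superorbifolds \(\SmModuliSpaceSuperMaps{0,k}{X,\beta}\) in~\cites{KSY-SJC}{KSY-SQCI}{KSY-TAMSSSMGZ} and by the recent algebro-geometric definitions of super stable maps in~\cite{BMHR-SNM}; they give no argument beyond this analogy. There is therefore no proof in the paper to compare your proposal against.

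Your outline is a plausible strategy, and you are honest that the crux lies in foundational super-geometric input (super Deligne--Mumford stacks, super valuative criteria, a cotangent complex formalism handling nodal super curves). But precisely because these foundations are not yet in place, what you have written is a programme rather than a proof: each of the steps you invoke (representability of the Hom super-stack, super stable reduction for maps, the tangent--obstruction identification at nodal points) is itself an open problem at the level of generality needed here. In particular, the splitting argument you use for the deformation theory is only justified on the smooth locus; extending it across the boundary is exactly the content of Conjecture~\ref{conj:NkA}, which the paper also leaves open. So your proposal does not close the gap---it restates it.
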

Similarly, we expect that forgetful maps
\begin{align}
	\pi\colon \ModuliStackSuperMaps{0,k}{X,\beta} &\to \ModuliStackSuperCurves{0,k} &
	f\colon \ModuliStackSuperMaps{0,k+1}{X,\beta} &\to \ModuliStackSuperMaps{0,k}{X,\beta}
\end{align}
as well as gluing and relabeling maps compatible with the ones for stable spin maps can be constructed.
The normal bundle to the inclusion \(i\) yields a vector bundle of rank \(\left<c_1(\tangent{X}), \beta\right> + k - 2\).
It is natural to assume that the algebro-geometric description of \(\overline{N}_{k,\beta}\) coincides with the one we have obtained in the differential geometric description.
This is the content of Conjecture~\ref{conj:NkA}.
 
\section{Definition of super Gromov--Witten invariants via localization of the odd directions}\label{Sec:DefnSGW}

In this section we give the definition of super Gromov--Witten invariants and show that they satisfy generalizations of the Kontsevich--Manin axioms.

In Section~\ref{SSec:GWMotivation} we give the motivation for super Gromov--Witten invariants by assuming that we had a supergeometric cohomology theory that allows for torus localization and a torus action that acts on the odd directions while leaving the even directions invariant.
The resulting formula for super Gromov--Witten invariants can be interpreted in the equivariant cohomology of the moduli spaces of classical stable maps \(\ModuliStackMaps{0,k}{X,\beta}\) and used as a definition in Section~\ref{SSec:DefnSuperGWInvariants}.
Super Gromov--Witten invariants are not trivial when the target space is a point.
In Section~\ref{SSec:SGWIofaPoint}, we show that the super Gromov--Witten invariants of a point can be expressed as integrals of descendant classes on \(\ModuliStackCurves{0,k}\).
Generalizations of Kontsevich--Manin axioms for super Gromov--Witten invariants are proven in Section~\ref{SSec:Axioms}.
In Section~\ref{SSec:Generalizations}, we give some comments on possible generalizations.

\subsection{Motivation}\label{SSec:GWMotivation}
In this section we motivate the Definitions~\ref{defn:SGWClass} and~\ref{defn:SGWNumbers} of super Gromov--Witten classes and super Gromov--Witten invariants.
Using a set of rather strong assumptions on existence and properties of a supergeometric equivariant cohomology theory and localization, we end up with the formulas of Definitions~\ref{defn:SGWClass} and~\ref{defn:SGWNumbers} which only depend on Conjectures~\ref{conj:Nk} and~\ref{conj:NkA} and can be evaluated in classical, non-super equivariant cohomology.
The assumptions on supergeometric cohomology theories are suggested by their analogy to the non-super theory.

Some approaches for cohomology theories for supermanifolds can be found in the literature:
In~\cite{VMP-ESG} a supergeometric Riemann--Roch Theorem is proven in a \(K\)-theoretic approach.
For example, the book~\cite{BBH-GS} and also~\cite{P-DRCSV} discuss the supergeometric analogue of de Rham cohomology which is shown in particular to reproduce de Rham cohomology of the underlying reduced space.
A variant of de Rham formalism that also allows to construct integrals over arbitrary codimension is given in~\cite{V-GITSM}.
The preprint~\cite{BMHR-SNM} defines a supergeometric Chow group in the cases that the odd dimension is either zero or one.
Yet none of those approaches did seem to give a suitable definition of super Gromov--Witten invariants.

To motivate our definition of super Gromov--Witten invariants, assume Conjecture~\ref{conj:ModuliStackSuperStableMaps} that is, the construction of the moduli stack \(\ModuliStackSuperMaps{0,k}{X, \beta}\) of super stable maps to the classical, non-super, convex target scheme \(X\) together with maps
\begin{diag}\matrix[mat](m) {
		\ModuliStackSpinMaps{0,k}{X, \beta} && & X^k \\
		& \ModuliStackSuperMaps{0,k}{X, \beta} && \\
		\ModuliStackSpinCurves{0,k} && & \\
		& \ModuliStackSuperCurves{0,k} && \\
	} ;
	\path[pf] {
		(m-1-1) edge node[auto]{\(i\)} (m-2-2)
			edge node[auto]{\(\ev\)} (m-1-4)
			edge node[auto]{\(\pi\)} (m-3-1)
		(m-3-1) edge (m-4-2)
		(m-2-2) edge node[auto, swap]{\(\Ev\)} (m-1-4)
						edge node[auto]{\(\Pi\)} (m-4-2)
	};
\end{diag}
If we assume furthermore, that we had good supergeometric cohomology rings \(SH^\bullet(\ModuliStackSuperCurves{0,k})\) and \(SH^\bullet(\ModuliStackSuperMaps{0,k}{X,\beta})\) with the usual operations, the natural definition of super Gromov--Witten invariants would be
\begin{equation}\label{eq:MotivationSGWI}
	\left<SGW^{X, \beta}_{0,k}\right>(\alpha_1, \dotsc, \alpha_k)
	= \int_{\ModuliStackSuperMaps{0,k}{X, \beta}} \Ev^*\left(\alpha_1\otimes \dotsm\otimes \alpha_k\right).
\end{equation}

In~\cite{KSY-TAMSSSMGZ}, we have constructed group actions of the torus \(K=\C^*\) on all moduli spaces \(\mathcal{M}_\tau(X, \beta_\tau)\) such that the fixed point locus is given by the embedding \(\SmModuliSpaceMaps{\tau}{X, \beta_\tau}\to \SmModuliSpaceSuperMaps{\tau}{X, \beta_\tau}\) of the classical counterparts.
Roughly, the torus action is given by the multiplication of the spinorial part of the marked points and the maps by the torus element.
We assume, for a moment, that this torus action has an algebraic counterpart
\begin{equation}
	a\colon K\times \ModuliStackSuperMaps{0,k}{X, \beta} \to \ModuliStackSuperMaps{0,k}{X, \beta}
\end{equation}
such that the fixed point locus is given by \(\ModuliStackSpinMaps{0,k}{X, \beta}\).

Torus localization with respect to this torus action would yield an isomorphism of the equivariant cohomologies:
\begin{equation}
	\begin{split}
		SH_K^\bullet\left(\ModuliStackSuperMaps{0,k}{X, \beta}\right)\otimes \C(\kappa) &\to H_K^\bullet(\ModuliStackSpinMaps{0,k}{X, \beta})\otimes \C(\kappa) \\
		\alpha &\mapsto \frac{i^*\alpha}{2e^K(\overline{N}_{k, \beta})}
	\end{split}
\end{equation}
Here \(\kappa\) is the equivariant character of \(K\) and \(e^K(\overline{N}_{k, \beta})\) the equivariant Euler class of the normal bundle \(\overline{N}_{k, \beta}\) to the embedding \(i\colon \ModuliStackSpinMaps{0,k}{X, \beta}\to \ModuliStackSuperMaps{0,k}{X, \beta}\).
The \(2\) in the denominator is the order of the isotropy group \(\Z_2\) of the fixed point set \(\ModuliStackSpinMaps{0,k}{X,\beta}\).
The integral in Equation~\eqref{eq:MotivationSGWI} reduces to
\begin{equation}\left<SGW^{X, \beta}_{0,k}\right>(\alpha_1, \dotsc, \alpha_k)
	= \int_{\ModuliStackSpinMaps{0,k}{X, \beta}} \frac{\ev^*\left(\alpha_1\otimes \dotsm\otimes \alpha_k\right)}{2e^K(\overline{N}_{k, \beta})}.
\end{equation}
We use the right hand side of the above equation in the definition of super Gromov--Witten invariants which can be evaluated in classical algebraic geometry.

Using the same localization argument, we motivate Gromov--Witten classes via:
\begin{equation}
	SGW^{X, \beta}_{0,k}(\alpha_1, \dotsc, \alpha_k)
	= \Pi_*\left(\Ev_1^*\alpha_1\cup \dotsm\cup \Ev_k^*\alpha_k\right)
	= \pi_*\left(\frac{\ev_1^*\alpha_1\cup \dotsm \cup \ev_k^*\alpha_k}{2e^K(\overline{N}_{k, \beta})}\right)
\end{equation}

When defining the Gromov--Witten classes and Gromov--Witten invariants below, we will ignore the torus action on the classes \(\ev_i^*\alpha_i\) and consider them as the classes \(\ev_i^*\alpha_i\otimes 1\in H^\bullet(\ModuliStackSpinMaps{0,k}{X, \beta})\otimes\C(\kappa)\).
Consequently, the resulting Gromov--Witten classes and numbers will be monomials in \(\kappa\) with strictly negative exponent in contrast to what the above motivation via torus localization suggests.
We will see in the next Section~\ref{SSec:DefnSuperGWInvariants}, that this interpretation allows to define invariants that pick up information from the normal bundles \(\overline{N}_{k, \beta}\).
We hope that our invariants are an approximation to results of a suitable, yet to be developed supergeometric intersection theory for supermoduli spaces.

\subsection{Definition of super Gromov--Witten invariants}\label{SSec:DefnSuperGWInvariants}
The forgetful maps \(F\colon \ModuliStackSpinMaps{0,k}{X,\beta}\to\ModuliStackMaps{0,k}{X,\beta}\) and \(F\colon \ModuliStackSpinCurves{0,k}\to \ModuliStackCurves{0,k}\) induce isomorphisms on the coarse moduli spaces.
Consequently, the rational cohomologies \(H^\bullet(\ModuliStackSpinMaps{0,k}{X,\beta}, \RationalNumbers)\) and \(H^\bullet(\ModuliStackMaps{0,k}{X,\beta},\RationalNumbers)\) as well as \(H^\bullet(\ModuliStackSpinCurves{0,k},\RationalNumbers)\) and \(H^\bullet(\ModuliStackCurves{0,k},\RationalNumbers)\) are isomorphic.
We use here Chow cohomology with rational coefficients and a cohomological grading, that is, a cycle of codimension \(d\) induces a class in \(H^{2d}\).
For a brief introduction see~\cite[Chapter~V]{M-FMQCMS}.
The cup product is denoted \(\cup\).

The moduli stacks \(\ModuliStackCurves{0,k}\) and \(\ModuliStackMaps{0,k}{X,\beta}\) carry a trivial \(K=\C^*\) action leaving all points invariant.
The equivariant cohomology of those spaces is hence given by \(H^\bullet(\ModuliStackCurves{0,k},\C)\otimes \C[\kappa]\) and \(H^\bullet(\ModuliStackMaps{0,k}{X,\beta}, \C)\otimes\C[\kappa]\).
The variable of the polynomial ring~\(\C[\kappa]\) is the equivariant character of \(K\) given by the identity \(\kappa\colon K=\C^*\to \C\).
For a brief introduction to equivariant cohomology in algebraic geometry, we refer to~\cite[Section~9.1]{CK-MSAG}.

We will use the following slightly larger equivariant cohomology
\begin{align}
	H_K^\bullet\left(\ModuliStackCurves{0,k}\right) &= H^\bullet\left(\ModuliStackCurves{0,k},\RationalNumbers\right)\otimes\C(\kappa),\\
	H_K^\bullet\left(\ModuliStackMaps{0,k}{X, \beta}\right) &= H^\bullet\left(\ModuliStackMaps{0,k}{X, \beta}, \RationalNumbers\right)\otimes\C(\kappa).
\end{align}
that is, the field of fractions of polynomials in the equivariant character \(\kappa\) with coefficients in cohomology.
This equivariant cohomology has two gradings, a cohomological degree~\(d_c\) and a polynomial degree~\(d_p\).
\begin{align}
	H_K^\bullet\left(\ModuliStackCurves{0,k}\right)
	&= \bigoplus_{d_c=0}^{2d_{k}}\bigoplus_{d_p=-\infty}^\infty H^{d_c}\left(\ModuliStackCurves{0,k}, \C\right)\otimes \kappa^{d_p}, \\
	H_K^\bullet\left(\ModuliStackMaps{0,k}{X,\beta}\right)
	&= \bigoplus_{d_c=0}^{2d_{k, \beta}}\bigoplus_{d_p=-\infty}^\infty H^{d_c}\left(\ModuliStackMaps{0,k}{X,\beta}, \C\right)\otimes \kappa^{d_p}.
\end{align}
The cup product on cohomology \(H^\bullet\) extends to a product on \(H_K^\bullet\) and yields a \(\C(\kappa)\)-algebra structure which is compatible with the gradings.

We equip the vector bundles \(\overline{N}_k\to \ModuliStackSpinCurves{0,k}\) and \(\overline{N}_{k, \beta}\to \ModuliStackSpinMaps{0,k}{X,\beta}\) with the \(K\)-action given by multiplication of the fibers by \(t\in \C^*=K\).
Hence the vector bundles \(\overline{N}_k\) and \(\overline{N}_{k, \beta}\) are \(K\)-equivariant with respect to the trivial action on the base and consider their equivariant Euler classes
\begin{align}
	e^K(\overline{N}_k) &\in H_K^\bullet\left(\ModuliStackCurves{0,k}\right), &
	e^K(\overline{N}_{k, \beta}) &\in H_K^\bullet\left(\ModuliStackMaps{0,k}{X,\beta}\right).
\end{align}

Notice that the equivariant Euler class \(e^K(\overline{N}_{k, \beta})\) is invertible in \(H_K^\bullet(\ModuliStackMaps{0,k}{X,\beta})\) because it is a nilpotent perturbation of an invertible polynomial:
By the splitting principle we can assume that the bundle \(\overline{N}_{k, \beta}\) is a sum of complex line bundles \(N_1\oplus\dotsc\oplus N_{r_{k, \beta}}\) with \(c_1(N_j) = n_j\in H^2(\ModuliStackMaps{0,k}{X,\beta})\).
As the action of the torus \(K=\C^*\) is rescaling of the fibers by \(t\in K\), we have that the equivariant Chern roots are given by \(c_1^K(N_j)=n_j + \kappa\) and hence
\begin{equation}
	\begin{split}
		e^K(\overline{N}_{k,\beta})
		&= c_1^K(N_1)\cup \dotsm \cup c_1^K(N_{r_{k, \beta}})
		= (n_1 + \kappa) \cup \dotsm \cup (n_{r_{k, \beta}} + \kappa) \\
		&= \sum_{i=0}^{r_{k, \beta}} \sigma_{r_{k, \beta}-i}(n_1, \dotsc, n_{r_{k, \beta}}) \kappa^i
	\end{split}
\end{equation}
where \(\sigma_i(n_1, \dots, n_{r_{k, \beta}})\) is the \(i\)-th elementary symmetric polynomial in the Chern roots~\(n_j\).
The summands \(\sigma_{r_{k, \beta}-i}(n_1, \dotsc, n_{r_{k, \beta}})\kappa^i\) have cohomological degree \(d_c=2(r_{k,\beta}-i)\) and polynomial degree \(d_p=i\).
The term with \(d_p=0\) equals the non-equivariant Euler class of \(\overline{N}_{k, \beta}\) and the term with \(d_c=0\) is \(\kappa^{r_{k, \beta}}\).

The top term \(\kappa^{r_{k, \beta}}\) is invertible as we are working with \(\C(\kappa)\) and all terms involving \(n_j\) are nilpotent hence \(e^K(\overline{N}_{k, \beta})\) is invertible.
An explicit formula for the inverse of the equivariant Euler class can be given as follows:
\begin{equation}\label{eq:InverseEulerClass}
	\begin{split}
		\frac1{e^K(\overline{N}_{k, \beta})}
		&= \frac1{(n_1 + \kappa)\cup \dotsm \cup (n_{r_{k, \beta}} + \kappa)}
		= \frac1{\kappa^{r_{k, \beta}}}\frac1{(\frac{n_1}{\kappa} + 1)\cup \dotsm \cup (\frac{n_{r_{k, \beta}}}{\kappa} + 1)} \\
		&= \frac1{\kappa^{r_{k, \beta}}}\sum_{i_1\geq 0}{\left(-\frac{n_1}{\kappa}\right)}^{i_1}\cup \dotsm \cup \sum_{i_{r_{k, \beta}}\geq0}{\left(-\frac{n_{r_{k, \beta}}}{\kappa}\right)}^{i_{r_{k, \beta}}} \\
		&= \sum_{j\geq0}{(-1)}^j\frac1{\kappa^{r_{k, \beta}+j}}\sum_{\substack{i_1\geq 0, \dotsc, i_{r_{k, \beta}}\geq 0\\ i_1+\dotsb+ i_{r_{k, \beta}}=j}}{\left(n_1\right)}^{i_1}\cup \dotsm \cup {\left(n_{r_{k, \beta}}\right)}^{i_{r_{k, \beta}}}
	\end{split}
\end{equation}
All sums are finite because \(n_j\in H^2(\ModuliStackMaps{0,k}{X,\beta})\) are nilpotent in \(H_K^\bullet(\ModuliStackMaps{0,k}{X,\beta})\).
All summands are homogeneous, that is for any choice of \((i_1, \dotsc, i_{r_{k, \beta}})\) with \(i_1 + \dotsb + i_{r_{k, \beta}}=j\) we have
\begin{equation}
	d_c\left({\left(n_1\right)}^{i_1}\cup \dotsm \cup {\left(n_{r_{k, \beta}}\right)}^{i_{r_{k, \beta}}}\right) = 2j \\
\end{equation}
The summand of highest polynomial degree is \(\kappa^{-r_{k, \beta}}\) for \(i_1=\dotsb=i_{r_{k, \beta}}=0\).

The equivariant Euler class of \(\overline{N}_k\) is invertible in \(H_K^\bullet(\ModuliStackCurves{0,k})\) by the same arguments.

\begin{defn}\label{defn:SGWClass}
	For \(k\geq 3\), the super Gromov--Witten class is the multi-linear map
	\begin{equation}
		\begin{split}
			SGW_{0,k}^{X, \beta}\colon {(H^\bullet(X))}^k &\to H_K^\bullet(\ModuliStackCurves{0,k})\\
			(\alpha_1, \dotsc, \alpha_k) &\mapsto \pi_*\frac{\ev_1^*\alpha_1 \cup \dotsm \cup \ev_k^*\alpha_k}{2e^K(\overline{N}_{k, \beta})}
		\end{split}
	\end{equation}
	Here the product \(\ev_1^*\alpha_1 \cup \dotsm \cup \ev_k^*\alpha_k\) is the cup product of cohomology classes in \(H^\bullet(\ModuliStackMaps{0,k}{X,\beta})\) and \(\pi_*\colon H_K^\bullet(\ModuliStackMaps{0,k}{X,\beta})\to H_K^\bullet(\ModuliStackCurves{0,k})\) is the pushforward along
	\begin{equation}
		\pi\colon \ModuliStackMaps{0,k}{X,\beta}\to \ModuliStackCurves{0,k}.
	\end{equation}
\end{defn}

As the super Gromov--Witten classes are linear in \(\alpha_i\), we can assume without loss of generality that the classes \(\alpha_i\) are homogeneous with respect to cohomological degree.
In that case,
\begin{equation}
	SGW_{0,k}^{X, \beta}(\alpha_1, \dotsc, \alpha_k) \in \bigoplus_j H^{\deg\alpha + 2\left(j - d_{k, \beta} + d_k\right)}(\ModuliStackCurves{0,k})\otimes \kappa^{-r_{k, \beta}-j}\subset H_K^{\bullet}(\ModuliStackCurves{0,k})
\end{equation}
with \(\deg \alpha=\sum_i \deg \alpha_i\).
Here the index \(j\) runs over all non-negative integers such that \(0\leq\deg\alpha + 2\left(j - d_{k, \beta} + d_k\right)\leq 2d_k\).

\begin{defn}\label{defn:SGWNumbers}
	For \(k\geq 1\) and \((\alpha_1, \dotsc, \alpha_k)\in {\left(H^\bullet(X)\right)}^k\) we define the super Gromov--Witten numbers by
	\begin{equation}\left<SGW_{0,k}^{X, \beta}\right>(\alpha_1, \dotsc, \alpha_k)
		= \int_{\ModuliStackMaps{0,k}{X,\beta}} \frac{\ev_1^*\alpha_1 \cup \dotsm \cup \ev_k^*\alpha_k}{e^K(\overline{N}_{k, \beta})}
		\in \C(\kappa)
	\end{equation}
\end{defn}
For \(k\geq 3\), the super Gromov--Witten numbers are integrals over super Gromov--Witten classes:
\begin{equation}\begin{split}
		\MoveEqLeft
		\left<SGW_{0,k}^{X, \beta}\right>(\alpha_1, \dotsc, \alpha_k)
		= \int_{\ModuliStackMaps{0,k}{X,\beta}} \frac{\ev_1^*\alpha_1 \cup \dotsm \cup \ev_k^*\alpha_k}{e^K(\overline{N}_{k, \beta})} \\
		&= \int_{\ModuliStackCurves{0,k}} \pi_*\frac{\ev_1^*\alpha_1 \cup \dotsm \cup \ev_k^*\alpha_k}{e^K(\overline{N}_{k, \beta})}
		= \int_{\ModuliStackSpinCurves{0,k}} \pi_*\frac{\ev_1^*\alpha_1 \cup \dotsm \cup \ev_k^*\alpha_k}{2e^K(\overline{N}_{k, \beta})} \\
		&=\int_{\ModuliStackSpinCurves{0,k}} SGW_{0,k}^{X, \beta}(\alpha_1, \dotsc, \alpha_k)
	\end{split}
\end{equation}
Note that \([\ModuliStackSpinCurves{0,k}]=2[\ModuliStackCurves{0,k}]\) in \(H_K^\bullet(\ModuliStackCurves{0,k})\) because the generic point of \(\ModuliStackSpinCurves{0,k}\) has isotropy group \(\Integers_2\).

Using~\eqref{eq:InverseEulerClass} to expand the inverse of the equivariant Euler class, we obtain:
\begin{equation}\begin{split}
		\left<SGW_{0,k}^{X, \beta}\right>(\alpha_1, \dotsc, \alpha_k)
		= {(-1)}^{j}\kappa^{-r_{k, \beta}-j}&\int_{\ModuliStackMaps{0,k}{X,\beta}} \ev_1^*\alpha_1\cup\dotsm \cup \ev_k^*\alpha_k \\
			&\cup\sum_{\substack{i_1\geq 0, \dotsc, i_{r_{k, \beta}}\geq 0\\ i_1+\dotsb+ i_{r_{k, \beta}}=c}} {\left(n_1\right)}^{i_1}\cup \dotsm \cup {\left(n_{r_{k, \beta}}\right)}^{i_{r_{k, \beta}}}
	\end{split}
\end{equation}
where \(2c=2d_{k, \beta}-\deg \alpha\).
In particular, if \(\deg \alpha\) is odd or \(\deg\alpha\geq 2d_{k, \beta}\), the super Gromov--Witten numbers vanish.
But, in contrast to classical Gromov--Witten numbers the super Gromov--Witten numbers \(\left<SGW_{0,k}^{X, \beta}\right>(\alpha_1, \dotsc, \alpha_k)\) may be non-zero if \(0\leq\deg \alpha\leq2d_{k, \beta}\).

For low values of \(c\) we obtain:
\begin{itemize}
	\item[\(c=0\)]
		\begin{equation}
			\left<SGW_{0,k}^{X, \beta}\right>(\alpha_1, \dotsc, \alpha_k)
			= \kappa^{-r_{k, \beta}}\int_{\ModuliStackMaps{0,k}{X,\beta}} \ev_1^*\alpha_1\cup\dotsm \cup \ev_k^*\alpha_k
		\end{equation}
		Up to the polynomial prefactor this are classical Gromov--Witten invariants.
	\item[\(c=1\)]
		\begin{equation}
				\left<SGW_{0,k}^{X, \beta}\right>(\alpha_1, \dotsc, \alpha_k)
				= -\kappa^{-r_{k, \beta}-1}\int_{\ModuliStackMaps{0,k}{X,\beta}} \ev_1^*\alpha_1\cup\dotsm \cup \ev_k^*\alpha_k\cup c_1(\overline{N}_{k, \beta})
		\end{equation}
	\item[\(c=2\)]
		\begin{equation}
			\begin{split}
				\MoveEqLeft
				\left<SGW_{0,k}^{X, \beta}\right>(\alpha_1, \dotsc, \alpha_k) \\
				&= \kappa^{-r_{k, \beta}-2}\int_{\ModuliStackMaps{0,k}{X,\beta}} \ev_1^*\alpha_1\cup\dotsm \cup \ev_k^*\alpha_k\cup\left({\left(c_1(\overline{N}_{k, \beta})\right)}^2 - c_2(\overline{N}_{k, \beta})\right)
			\end{split}
		\end{equation}
	\item[\(c=3\)]
		\begin{equation}
			\begin{split}
				\MoveEqLeftR
				\left<SGW_{0,k}^{X, \beta}\right>(\alpha_1, \dotsc, \alpha_k) \\
				={}& -\kappa^{-r_{k, \beta}-3}\int_{\ModuliStackMaps{0,k}{X,\beta}} \ev_1^*\alpha_1\cup\dotsm \cup \ev_k^*\alpha_k \\
					&\hspace{7em}\cup\left({\left(c_1(\overline{N}_{k, \beta})\right)}^3 - 5c_2(\overline{N}_{k, \beta})c_1(\overline{N}_{k, \beta}) - 2c_3(\overline{N}_{k, \beta})\right)
			\end{split}
		\end{equation}
\end{itemize}

\subsection{Super Gromov--Witten invariants of a point}\label{SSec:SGWIofaPoint}
The simplest case for which super Gromov--Witten numbers are non-trivial is the case of the target space being a point.
In that case \(\ModuliStackMaps{0,k}{X,\beta}=\ModuliStackCurves{0,k}\) as there is precisely one map to a point and \(H_2(\pt)=\emptyset\).
This motivates:
\begin{defn}
	For \(k\geq 3\), we call
	\begin{equation}
		\left<SGW^{\pt}_{0,k}\right>
		= \int_{\ModuliStackCurves{0,k}} \frac1{e^K(\overline{N_k})}
	\end{equation}
	the \(k\)-point super Gromov--Witten numbers of a point.
\end{defn}

\begin{prop}\label{prop:SGWPoint}
	The \(k\)-point super Gromov--Witten numbers of a point are given by
	\begin{equation}
		\left<SGW^{\pt}_{0,k}\right>
		= \frac{{\left(-1\right)}^{k-3}}{2^{k-3}\kappa^{2k-5}}\sum_{\substack{i_4\geq 0, \dotsc, i_{k}\geq 0\\ i_4+\dotsb+ i_{k}=k-3}} \int_{\ModuliStackCurves{0,k}} {\left(f^*\right)}^{k-4}\psi_4^{i_4}\cup \dotsm \cup f^*\psi_{k-1}^{i_{k-1}} \cup \psi_{k}^{i_k},
	\end{equation}
	where the \(\psi_j=c_1(p_j^*\omega)\in H^2(\ModuliStackCurves{0,k})\).
\end{prop}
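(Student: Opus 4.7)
The plan is to compute $e^K(\overline{N}_k)$ explicitly using the recursive structure provided by Conjecture~\ref{conj:Nk}, invert it as a formal geometric series, and integrate over $\ModuliStackCurves{0,k}$. First, iterating the short exact sequence of Conjecture~\ref{conj:Nk}~\ref{item:PropertiesNk:ForgettingMarkedPoint}, starting from the trivial rank-one base case $\overline{N}_3$ of Conjecture~\ref{conj:Nk}~\ref{item:PropertiesNk:N3N4}, and using multiplicativity of Euler classes in short exact sequences, one obtains
\begin{equation}
e^K(\overline{N}_k) \;=\; \kappa\cdot\prod_{j=4}^{k}(f^*)^{k-j}\,c_1^K(\UniversalSpinorBundleC_j),
\end{equation}
where $(f^*)^{k-j}$ denotes the iterated pullback along the forgetful maps lifting $\UniversalSpinorBundleC_j=p_j^*\UniversalSpinorBundleC$ from $\ModuliStackSpinCurves{0,j}$ up to $\ModuliStackSpinCurves{0,k}$, and the leading $\kappa$ is $c_1^K$ of the trivial line bundle $\overline{N}_3$ with its $K$-scaling.

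Second, I would identify $c_1^K(\UniversalSpinorBundleC_j)$. Since the section $p_j$ lands in the smooth locus of the universal curve, the defining relation $\UniversalSpinorBundleC\otimes\UniversalSpinorBundleC=\dual{\omega}$ pulls back to $\UniversalSpinorBundleC_j^{\otimes 2}\cong p_j^*\dual{\omega}$, and hence $c_1(\UniversalSpinorBundleC_j)=-\tfrac12\psi_j$. Equipping $\UniversalSpinorBundleC_j$ with the $K$-weight inherited from the scalar $K$-action on $\overline{N}_k$ produces $c_1^K(\UniversalSpinorBundleC_j)=\kappa\pm\tfrac12\psi_j$. Expanding the inverse as a formal geometric series, which terminates because each $\psi_j\in H^2(\ModuliStackCurves{0,k})$ is nilpotent, gives
\begin{equation}
\frac{1}{e^K(\overline{N}_k)} \;=\; \frac{1}{\kappa^{k-2}}\prod_{j=4}^{k}\sum_{i_j\geq 0}\left(\pm\frac{(f^*)^{k-j}\psi_j}{2\kappa}\right)^{i_j}.
\end{equation}

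Third, integration over $\ModuliStackCurves{0,k}$ of dimension $d_k=k-3$ kills every summand except those of cohomological degree $2(k-3)$, i.e.\ those satisfying $i_4+\dotsb+i_k=k-3$. Collecting the $k-3$ factors of $\tfrac12$, the $(k-2)+(k-3)=2k-5$ factors of $\kappa^{-1}$, and the sign $(\pm 1)^{k-3}$ from the geometric expansion once $\sum_j i_j=k-3$, one recovers exactly the prefactor $(-1)^{k-3}/(2^{k-3}\kappa^{2k-5})$ appearing in the proposition.

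The main obstacle is the careful bookkeeping of signs: the factor $(-1)^{k-3}$ in the claim is produced by the expansion $\sum_{i_j}(-\tfrac12\psi_j/\kappa)^{i_j}$ once the exponents are constrained by $\sum i_j=k-3$, and the sign inside this expansion depends on the precise normalization of the $K$-weight relative to the identification $\UniversalSpinorBundleC^{\otimes 2}\cong\dual{\omega}$. Once this sign is fixed consistently with the conventions implicit in Conjecture~\ref{conj:Nk}, the remaining manipulations are routine.
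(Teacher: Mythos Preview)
Your proposal is correct and follows essentially the same route as the paper: iterate Conjecture~\ref{conj:Nk}\ref{item:PropertiesNk:ForgettingMarkedPoint} from the trivial base case $\overline{N}_3$, identify $c_1(\UniversalSpinorBundleC_j)=-\tfrac12\psi_j$ via $\UniversalSpinorBundleC_j^{\otimes 2}\cong\dual{\omega}_j$, expand the inverse of the equivariant Euler class as a terminating geometric series, and keep only the top-degree term $\sum i_j=k-3$ upon integration. The paper commits to $e^K(\UniversalSpinorBundleC_j)=\kappa-\tfrac12\psi_j$, which pins down the sign you left as $\pm$; with that choice your expansion reproduces the stated prefactor.
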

\begin{proof}
	By Conjecture~\ref{conj:Nk},~\ref{item:PropertiesNk:N3N4}, the vector bundle \(\overline{N}_3\) is trivial of rank one and the \(K\)-action given by multiplication.
	Using Conjecture~\ref{conj:Nk},~\ref{item:PropertiesNk:ForgettingMarkedPoint} and \(f^*\UniversalSpinorBundleC_j=\UniversalSpinorBundleC_j\) one sees
	\begin{equation}
		\overline{N}_k = \C\oplus\bigoplus_{j=4}^k {\left(f^*\right)}^{k-j}\UniversalSpinorBundleC_j.
	\end{equation}
	for \(k>3\).
	For any \(j>3\) we have \(e^K(\UniversalSpinorBundleC_j) = \kappa - \frac12\psi_j\) where \(\psi_j=c_1(\omega_j)\) because \(\UniversalSpinorBundleC_j\otimes \UniversalSpinorBundleC_j=\dual{\omega}_j\).
	Hence, using a calculation similar to~\eqref{eq:InverseEulerClass}, we obtain
	\begin{equation}
		\frac1{e^K(\overline{N}_{k+1})}
		= \sum_{j\geq0}\frac{{\left(-1\right)}^j}{2^j\kappa^{k-2+j}}\sum_{\substack{i_4\geq 0, \dotsc, i_{k}\geq 0\\ i_4+\dotsb+ i_{k}=j}} {\left(f^*\right)}^{k-4}\psi_4^{i_4}\cup \dotsm \cup f^*\psi_{k-1}^{i_{k-1}} \cup \psi_{k}^{i_k}
	\end{equation}
	and the claim follows because only terms with \(j=k-3\) are of top degree in cohomology.

	Note that if \(i_4+\dotsb + i_l > l-3\) for some \(l< k\), the summand vanishes for degree reasons.
\end{proof}

Integrals over tautological classes such as the \(\psi\)-classes appearing in the above expression for the super Gromov--Witten invariants of the point have been calculated in~\cites{K-ITMSCMAF}{W-TDGISTMC}.
For more pedagogical approaches see also~\cites{Z-ITMSCIS}{K-NPC} and for a computer algebra program to calculate with tautological classes, see~\cite{DSZ-admSPCTRMSSC}.
We will calculate as examples the super Gromov--Witten invariants of a point for \(k=3,4,5,6\) marked points.
The method of using projection formula and push-downs of \(\psi\)-classes iteratively as in the examples \(k=5,6\) works in principle for any \(k\).

\begin{ex}[Super Gromov--Witten invariants of a point for \(k=3\) marked points]
	For \(k=3\), the moduli space \(\ModuliStackCurves{0,3}\) is a point.
	Hence,
	\begin{equation}
		\left<SGW^{\pt}_{0,3}\right>
		= \int_{\ModuliStackCurves{0,3}} \frac1{e^K(\overline{N}_{0,3})}
		= \int_{\pt} \frac1{\kappa}
		= \kappa^{-1}.
	\end{equation}
\end{ex}

\begin{ex}[Super Gromov--Witten invariants of a point for \(k=4\) marked points]
	In the case \(k=4\) there is only one possible summand in the sum formula of Proposition~\ref{prop:SGWPoint}.
	Hence, by Proposition~\ref{prop:SGWPoint}:
	\begin{equation}
		\left<SGW^{\pt}_{0,4}\right>
		= \frac{-1}{2\kappa^3}\int_{\ModuliStackCurves{0,4}}\psi_4
		= -\frac1{2\kappa^3}.
	\end{equation}
	Integrals over products of powers of \(\psi\)-classes such as \(\int \psi_4=1\) can be calculated directly using, for example,~\cite[Section~3.3.2]{K-ERCVTA}.
\end{ex}

\begin{ex}[Super Gromov--Witten invariants of a point for \(k=5\) marked points]
	For \(k=5\) there are two summands:
	\begin{equation}
		\left<SGW^{\pt}_{0,5}\right>
		= \frac1{4\kappa^5}\int_{\ModuliStackCurves{0,5}} \left(f^*\psi_4\right) \psi_5 + \psi_5^2
		= \frac1{4\kappa^5}\left(2+1\right)
		= \frac3{4\kappa^5}.
	\end{equation}
	While the second summand \(\int \psi_5^2=1\) can be calculated directly by~\cite[Section~3.3.2]{K-ERCVTA}, we use projection formula for the second summand:
	\begin{equation}
		\int_{\ModuliStackCurves{0,5}} \left(f^*\psi_4\right) \psi_5
		= \int_{\ModuliStackCurves{0,4}} \psi_4 \left(f_*\psi_5\right)
		= 2\int_{\ModuliStackCurves{0,4}} \psi_4
		=2
	\end{equation}
	Here \(f_*\psi_5 = 2[\ModuliStackCurves{0,4}]\) as explained, for example, in~\cite[Section~2.2]{K-NPC}.
\end{ex}
\begin{ex}[Super Gromov--Witten invariants of a point for \(k=6\) marked points]
	\begin{equation}
		\begin{split}
			\left<SGW^{\pt}_{0,6}\right>
			&= \frac{-1}{8\kappa^7}\int_{\ModuliStackCurves{0,6}} \left({\left(f^*\right)}^2\psi_4\right) \left(f^*\psi_5\right) \psi_6 + \left({\left(f^*\right)}^2\psi_4\right) \psi_6^2 + \left(f^*\psi_5\right)\psi_6 + \psi_6^3\\
			&= -\frac1{8\kappa^7}\left(6+2+3+1\right)
			= -\frac3{2\kappa^7}.
		\end{split}
	\end{equation}
	For the first summand we use again the projection formula and the \(\kappa\)-class \(\kappa_0=f_*\psi_{l+1}=\left(l-2\right)[\ModuliStackCurves{0,l}]\):
	\begin{equation}
		\begin{split}
			\int_{\ModuliStackCurves{0,6}} \left({\left(f^*\right)}^2\psi_4\right) \left(f^*\psi_5\right) \psi_6
			&= \int_{\ModuliStackCurves{0,5}} \left(f^*\psi_4\right) \psi_5 \kappa_0
			= 3\int_{\ModuliStackCurves{0,5}} \left(f^*\psi_4\right) \psi_5\\
			&= 3\int_{\ModuliStackCurves{0,4}} \psi_4 \kappa_0
			= 6
		\end{split}
	\end{equation}
	The for the second and third summand we need higher \(\kappa\)-classes \(\kappa_a = f_*\psi_{l+1}^{a+1}\), as well as their pullback formula \(\kappa_a = f^*\kappa_a + \psi_l^a\), see~\cite[Lemma~2.2.3]{K-NPC}:
	\begin{equation}
		\int_{\ModuliStackCurves{0,6}}\left({\left(f^*\right)}^2\psi_4\right) \psi_6^2
		= \int_{\ModuliStackCurves{0,5}} \left(f^*\psi_4\right) \kappa_1
		= \int_{\ModuliStackCurves{0,5}} \left(f^*\psi_4\right) \left(f^*\kappa_1 + \psi_5\right)
		= \int_{\ModuliStackCurves{0,5}} f^*\psi_4 \psi_5
		= 2
	\end{equation}
	\begin{equation}
		\begin{split}
			\int_{\ModuliStackCurves{0,6}}\left(f^*\psi_5\right) \psi_6^2
			&= \int_{\ModuliStackCurves{0,5}} \psi_5 \kappa_1
			= \int_{\ModuliStackCurves{0,5}} \psi_5 \left(f^*\kappa_1+\psi_5\right)
			= \int_{\ModuliStackCurves{0,4}} \kappa_1\kappa_0 + \kappa_1\\
			&= 3\int_{\ModuliStackCurves{0,4}} \kappa_1
			= 3\int_{\ModuliStackCurves{0,5}} \psi_5^2
			= 3
		\end{split}
	\end{equation}
\end{ex}

\subsection{Axioms}\label{SSec:Axioms}
In~\cite[Chapter~2]{KM-GWCQCEG} there is a list of axioms that Gromov--Witten classes and Gromov--Witten numbers satisfy.
In this section we will give their extension to the super Gromov--Witten classes defined in Definition~\ref{defn:SGWClass} and the super Gromov--Witten invariants defined in Definition~\ref{defn:SGWNumbers}.

\begin{axiom}[Linearity]
	Super Gromov--Witten classes and super Gromov--Witten invariants are linear as maps from \({\left(H^\bullet(X)\right)}^k\) to \(H_K^\bullet(\ModuliStackCurves{0,k})\) resp. \(\C(\kappa)\).
\end{axiom}

\begin{axiom}[Grading]\label{axiom:Grading}
	The map \(SGW_{0,k}^{X, \beta}\colon {\left(H^\bullet(X)\right)}^k\to H_K^\bullet(\ModuliStackCurves{0,k})\) is a sum of homogeneous maps of degree \((2j,-r_{k, \beta}-j)\), that is, for homogeneous \(\alpha_i\in H^{\deg \alpha_i}(X)\) we have
	\begin{equation}
		SGW_{0,k}^{X, \beta}(\alpha_1, \dotsc, \alpha_k)\in \bigoplus_j H^{\deg \alpha + 2\left(j - d_{k, \beta} + d_k\right)}(\ModuliStackCurves{0,k})\otimes \kappa^{-r_{k, \beta}-j} \subset H_K^\bullet(\ModuliStackCurves{0,k})
	\end{equation}
	where \(\deg \alpha = \sum_{i=1}^k \deg \alpha_i\) and the index \(j\) runs over non-negative integers such that \(0\leq \deg \alpha + 2\left(j - d_{k, \beta} + d_k\right)\leq 2d_k\).

	The super Gromov--Witten numbers \(\left<SGW_{0,k}^{X, \beta}\right>(\alpha_1, \dotsc, \alpha_k)\) vanish if \(\deg \alpha\) is odd or \(\deg \alpha>2d_{k, \beta}\).
	Otherwise, the super Gromov--Witten number \(\left<SGW_{0,k}^{X, \beta}\right>(\alpha_1, \dotsc, \alpha_k)\) is a monomial in \(\C(\kappa)\) of degree
	\begin{equation}
		-r_{k, \beta} - d_{k, \beta} + \frac12\deg \alpha
		= -\dim X -2\left<c_1(\tangent{X}), \beta\right> - 2k + 5 + \frac12\deg\alpha
		\leq -r_{k,\beta}
		\leq 0.
	\end{equation}
\end{axiom}

\begin{axiom}[Extension of classical Gromov--Witten invariants]\label{axiom:Extension}
	The restriction of the super Gromov--Witten class to \(H^{d_c}\otimes \kappa^{d_p}\) with \(d_c=\deg\alpha - 2d_{k, \beta} + 2d_k\) and \(d_p = -r_{k, \beta}\) yields classical Gromov--Witten classes up to a factor of \(\kappa^{-r_{k, \beta}}\):
	\begin{equation}
		\begin{split}
			\left.SGW_{0,k}^{X, \beta}(\alpha_1, \dotsc, \alpha_k)\right|_{H^{\deg\alpha - 2d_{k, \beta} + 2d_k}\otimes \kappa^{-r_{k, \beta}}}
			&= \kappa^{-r_{k, \beta}}\pi_*\left(\ev_1^*\alpha_1\cup \dotsm \cup \ev_k^*\alpha_k\right) \\
			&= \kappa^{-r_{k, \beta}} GW_{0,k}^{X, \beta}(\alpha_1, \dotsc, \alpha_k)
		\end{split}
	\end{equation}
	If \(\deg\alpha = 2d_{k, \beta}\) the super Gromov--Witten numbers coincide with Gromov--Witten numbers up to the factor \(\kappa^{-r_{k, \beta}}\):
	\begin{equation}
		\begin{split}
			\left<SGW_{0,k}^{X, \beta}\right>(\alpha_1, \dotsc, \alpha_k)
			&= \kappa^{-r_{k, \beta}}\int_{\ModuliStackMaps{0,k}{X,\beta}} \ev_1^*\alpha_1\cup \dotsm \cup \ev_k^*\alpha_k\\
			&= \kappa^{-r_{k, \beta}}\left<GW_{0,k}^{X, \beta}\right>(\alpha_1, \dotsc, \alpha_k)
		\end{split}
	\end{equation}
\end{axiom}
In this sense, all of the following axioms reduce to their classical counterpart when imposing the right restrictions on \(d_c\), \(d_p\) and \(\deg\alpha\).

\begin{axiom}[Effectivity Axiom]
	If \(\beta\in H_2(X)\) is not effective, then \(SGW_{0,k}^{X, \beta}=0\).
\end{axiom}
For every stable map \(\phi\colon C\to X\) the class \([\im \phi]\in H_2(X)\) is effective.
Consequently, the moduli space \(\ModuliStackMaps{0,k}{X,\beta}\) is empty if the class \(\beta\) is not effective and all super Gromov--Witten classes vanish.

\begin{axiom}[\(\SymG{k}\)-equivariance]\label{axiom:SkEquivariance}
The symmetric group \(\SymG{k}\) acts on the super vector space \({\left(H^\bullet(X)\right)}^k\) and on \(H_K^\bullet(\ModuliStackCurves{0,k})\) via permutation of the marked points.
	The super Gromov--Witten class is equivariant with respect to this action.
	That is, for any \(\alpha_i\in H^\bullet(X)\)
	\begin{multline}
		SGW_{0,k}^{X, \beta}(\alpha_1, \dotsc, \alpha_i, \alpha_{i+1}, \dotsc, \alpha_k) \\
		= {\left(-1\right)}^{\deg \alpha_i \cdot \deg \alpha_{i+1}} SGW_{0,k}^{X, \beta}(\alpha_1, \dotsc, \alpha_{i+1}, \alpha_i, \dotsc, \alpha_k).
	\end{multline}
\end{axiom}
This follows from the super commutativity of the product in cohomology and Conjecture~\ref{conj:NkA},~\ref{item:PropertiesNkA:Symk}.

\begin{axiom}[Fundamental class]\label{axiom:FundamentalClass}
	Let \([X]\in H^0(X)\) be the fundamental class of \(X\).
	Then for \(k\geq3\)
	\begin{equation}
		\begin{split}
			\MoveEqLeft
			\left<SGW_{0,k+1}^{X, \beta}\right>(\alpha_1, \dotsc, \alpha_{k}, [X]) \\
			&= \frac1\kappa \int_{\ModuliStackMaps{0,k+1}{X, \beta}} f^*\left(\frac{\ev_1^*\alpha_1\cup \dotsm\cup \ev_k^*\alpha_k}{e^K(\overline{N}_{k, \beta})}\right)\cup \sum_{j>0}{\left(-\frac{c_1(\UniversalSpinorBundleM_{k+1})}{\kappa}\right)}^j.
		\end{split}
	\end{equation}
	Note that the pullback class has at least real codimension two and hence the summation runs over \(j>0\).
\end{axiom}
\begin{proof}
	By Conjecture~\ref{conj:NkA},~\ref{item:PropertiesNk:ForgettingMarkedPoint} it holds \(e^K(\overline{N}_{k+1,A})=e^K(\UniversalSpinorBundleM_{k+1})\cdot e^K(f^*\overline{N}_{k, \beta})\) where \(\UniversalSpinorBundleM_{k+1}\) is equipped with the \(K\) action that multiplies each fiber with \(t\in K\).
	Then
	\begin{equation}
		\begin{split}
			\frac{e^K(f^*\overline{N}_{k, \beta})}{e^K(\overline{N}_{k+1,A})}
			&= \frac1{e^K(\UniversalSpinorBundleM_{k+1})}
			= \frac1{\kappa + c_1(\UniversalSpinorBundleM_{k+1})}
			= \frac1{\kappa\left(1+\frac{c_1(\UniversalSpinorBundleM_{k+1})}{\kappa}\right)} \\
			&= \frac1\kappa \sum_{j\geq 0}{\left(-\frac{c_1(\UniversalSpinorBundleM_{k+1})}{\kappa}\right)}^j
		\end{split}
	\end{equation}
	Note that \(\ev_i\circ f = \ev_i\) for all \(i\leq k\).
	Hence,
	\begin{equation}\label{eq:PullbackQuotient}
		\begin{split}
			\MoveEqLeft
			\frac{\ev_1^*\alpha_1\cup \dotsm \cup \ev_k^*\alpha_k\cup \ev_{k+1}^*\alpha_{k+1}}{e^K(\overline{N}_{k+1,A})}\\
			&= f^*\frac{\ev_1^*\alpha_1\cup\dotsm\cup \ev_k^*\alpha_k}{e^K(\overline{N}_{k, \beta})}\cup\ev_{k+1}^*\alpha_{k+1}\cup \frac{e^K(f^*\overline{N}_{k, \beta})}{e^K(\overline{N}_{k+1,A})} \\
			&= \frac1\kappa f^*\frac{\ev_1^*\alpha_1\cup\dotsm\cup \ev_k^*\alpha_k}{e^K(\overline{N}_{k, \beta})}\cup\ev_{k+1}^*\alpha_{k+1}\cup \sum_{j\geq 0}{\left(-\frac{c_1(\UniversalSpinorBundleM_{k+1})}{\kappa}\right)}^j
		\end{split}
	\end{equation}
	Integrating over \(\ModuliStackMaps{0,k+1}{X, \beta}\) and using \(\alpha_{k+1}=1\) we obtain the result.
\end{proof}
\begin{rem}
	We conjecture that the following stronger Fundamental Class Axiom for super Gromov--Witten classes holds:
	\begin{equation}\label{eq:Axiom:FundamentalClassGWClasses}
		SGW_{0,k+1}^{X, \beta}(\alpha_1, \dotsc, \alpha_{k}, [X])
		= \frac1\kappa f^*SGW_{0,k}^{X, \beta}(\alpha_1, \dotsc, \alpha_{k}) \cup \sum_{j\geq 0}{\left(-\frac{c_1(\UniversalSpinorBundleC_{k+1})}{\kappa}\right)}^j.
	\end{equation}
\end{rem}

\begin{axiom}[Divisor]
	Let \(k\geq3\) and \(f\colon \ModuliStackCurves{0,k+1}\to \ModuliStackCurves{0,k}\) the map forgetting the last marked point.
	Then
	\begin{equation}
		\begin{split}
			\MoveEqLeft
			f_*SGW_{0,k+1}^{X, \beta}(\alpha_1, \dotsc, \alpha_k, \alpha_{k+1}) \\
			&= \frac1\kappa SGW_{0,k}^{X, \beta}(\alpha_1, \dotsc, \alpha_k) \cup \pi_*f_*\left(\ev_{k+1}^*\alpha_{k+1} \cup\sum_{j\geq 0}{\left(-\frac{c_1(\UniversalSpinorBundleM_{k+1})}{\kappa}\right)}^j \right)
		\end{split}
	\end{equation}
\end{axiom}
\begin{proof}
	Applying \(f_*\pi_*\) to~\eqref{eq:PullbackQuotient}, using \(f_*\pi_*=\pi_*f_*\) and the projection formula yields the result.
\end{proof}
To compare this divisor axiom to the one in~\cite{KM-GWCQCEG}, assume  \(\sum_{i=1}^k\deg \alpha_i = 2d_{k, \beta}\) as well as \(\deg \alpha_{k+1}=2\) and obtain using the Extension Axiom~\ref{axiom:Extension}:
\begin{equation}
	\begin{split}
		\MoveEqLeft
		\left.SGW_{0,k+1}^{X, \beta}(\alpha_1, \dotsc, \alpha_{k+1})\right|_{H^{\deg\alpha - 2d_{k+1,\beta} + 2d_{k+1}}\otimes \kappa^{-r_{k+1,\beta}}}\\
		&= \frac1\kappa \left.SGW_{0,k}^{X, \beta}(\alpha_1,\dotsc, \alpha_k)\right|_{H^{\deg\alpha - 2d_{k, \beta} + 2d_{k}}\otimes \kappa^{-r_{k, \beta}}} \pi_*f_*\ev_{k+1}^*\alpha_{k+1} \\
		&= \kappa^{-r_{k+1,\beta}}GW_{0,k}^{X, \beta}(\alpha_1,\dotsc, \alpha_k) \left<\alpha_{k+1}, \beta\right>
	\end{split}
\end{equation}

\begin{axiom}[Mapping to a point]\label{axiom:MappingToPoint}
	Suppose that \(\beta=0\) and \(k\geq 3\).
	The super Gromov--Witten classes are given by
	\begin{equation}
		SGW^{X, 0}_{0,k}(\alpha_1, \dotsc, \alpha_k)
		= \left(\int_X \alpha_1 \cup \dotsm \cup \alpha_k\right)  \frac1{e^K(\overline{N}_k)},
	\end{equation}
	and the super Gromov--Witten numbers are given by
	\begin{equation}
		\left<SGW^{X, 0}_{0,k}\right>(\alpha_1, \dotsc, \alpha_k)
		= \left<SGW^{\pt}_{0,k}\right> \int_X \alpha_1 \cup \dotsm \cup \alpha_k.
	\end{equation}
	Here \(\left<SGW^{\pt}_{0,k}\right>\) is a monomial in \(\C(\kappa)\) of degree \(5-2k\) independent of \(X\), see Section~\ref{SSec:SGWIofaPoint}.
	That is, both super Gromov--Witten classes and super Gromov--Witten numbers vanish unless \(\deg \alpha=\dim_\R X\).
\end{axiom}
\begin{proof}
	When the homology class \(\beta\) vanishes the moduli space of stable maps can be written as a product \(\ModuliStackCurves{0,k}(X, \beta=0)=\ModuliStackCurves{0,k}\times X\).
	The map \(\pi\colon\ModuliStackCurves{0,k}(X, \beta=0)\to \ModuliStackCurves{0,k}\) coincides with projection on the first factor and the evaluation maps coincide with the projection on the second factor.
	The normal bundle \(\overline{N}_{k,\beta=0}\) coincides with \(\pi^*\overline{N}_k\) by Conjecture~\ref{conj:NkA},~\ref{item:PropertiesNkA:Nk0Nk}.
	\begin{equation}
		\begin{split}
			SGW^{X, 0}_{0,k}(\alpha_1, \dotsc, \alpha_k)
			&= \pi_*\frac{\ev_1^*\alpha_1\cup \dotsm\cup \ev_k^*\alpha_k}{e^K(\overline{N}_{k, \beta})}
			= {\left(p_1\right)}_*\frac{p_2^*\alpha_1\cup \dotsm \cup p_2^*\alpha_k}{e^K(\pi^*\overline{N}_k)} \\
			&= \left(\int_X \alpha_1 \cup \dotsm \cup \alpha_k\right)  \frac1{e^K(\overline{N}_k)}.
		\end{split}
	\end{equation}
	Integrating further over \(\ModuliStackCurves{0,k}\) yields
	\begin{equation}
		\left<SGW^{X, 0}_{0,k}\right>(\alpha_1, \dotsc, \alpha_k)
		=\int_X \alpha_1 \cup \dotsm \cup \alpha_k \cdot \int_{\ModuliStackCurves{0,k}} \frac{1}{e^K(\overline{N}_k)}.
	\end{equation}
	The second factor of the last line is by definition \(\left<SGW_{0,k}^{pt}\right>\).
\end{proof}

\begin{axiom}[Splitting Axiom]\label{axiom:SplittingAxiom}
	For non-negative integers \(k_1\), \(k_2\) such that \(k=k_1+k_2\) and effective homology classes \(\beta_1, \beta_2\in H_2(X)\) such that \(\beta=\beta_1+\beta_2\) let \(D=[\ModuliStackMaps{0,k_1+1}{X, \beta_1}\times_X \ModuliStackMaps{0,k_2+1}{X,\beta_2}]\) be the divisor in \(\ModuliStackMaps{0,k}{X, \beta}\) obtained as the image of the map \(gl_X\).
	Recall, \(gl_X\colon \ModuliStackMaps{0,k_1+1}{X,\beta_1}\times_X \ModuliStackMaps{0,k_2+1}{X,\beta_2}\to \ModuliStackMaps{0,k}{X,\beta}\) is the gluing map connecting the last point of the first factor to the first marked point of the second factor to a node if the map agrees on the two marked points.
	Then
	\begin{equation}
		\begin{split}
			\MoveEqLeft
			\int_D \frac{\ev_1^*\alpha_1\cup\dotsb\cup \ev_{k_1+k_2}^*\alpha_k}{e^K(\overline{N}_{k,\beta})}\\
			&= g^{ab} \left<SGW_{0,k_1+1}^{X, \beta_1}\right>(\alpha_1, \dotsc, \alpha_{k_1}, T_a) \left<SGW_{0,k_2+1}^{X, \beta_2}\right>(T_b, \alpha_{k_1+1}, \dotsc, \alpha_k).
		\end{split}
	\end{equation}
	Here \(T_j\in H^*(X)\), \(j=0, \dotsc, \dim H^\bullet(X)-1\) is a homogeneous basis of \(H^\bullet(X)\) with \(T_0=1=[X]\), \(g_{ab}=\int_X T_a \cup T_b\), and \(g^{ab}\) its inverse matrix.
	Note that \(g_{ba}={\left(-1\right)}^{d_c(T_a)d_c(T_b)}g_{ab}\).
	We use the convention to implicitly sum over indices that appear once as an upper and once as a lower index.
\end{axiom}
\begin{proof}
	We first prove a variant of~\cite[Lemma 16]{FP-NSMQC}.
	Consider the following commutative diagram:
	\begin{diag}\matrix[mat, column sep=huge](m){
			\ModuliStackMaps{0,k_1+1}{X, \beta_1}\times \ModuliStackMaps{0,k_2+1}{X, \beta_2} & X^{k+2}\\
			\ModuliStackMaps{0,k_1+1}{X, \beta_1}\times_X \ModuliStackMaps{0,k_2+1}{X, \beta_2} & X^{k+1} \\
			\ModuliStackMaps{0,k}{X,\beta} & X^k \\
		} ;
		\path[pf] {
			(m-1-1) edge node[auto]{\(\rho\times \sigma\)} (m-1-2)
			(m-2-1) edge node[auto]{\((\rho, \sigma)\)} (m-2-2)
				edge node[auto]{\(gl\)} (m-3-1)
				edge node[auto]{\(\iota\)} (m-1-1)
			(m-2-2) edge node[auto]{\(p\)} (m-3-2)
				edge node[auto]{\(\delta\)} (m-1-2)
			(m-3-1) edge node[auto]{\(\ev\)} (m-3-2)
		};
	\end{diag}
	Here
	\begin{align}
		\rho\colon \ModuliStackMaps{0,k_1+1}{X, \beta_1} &\to X^{k_1+1} &
		\sigma\colon \ModuliStackMaps{0,k_2+1}{X, \beta_2} &\to X^{k_2+1}
	\end{align}
	are the evaluation maps, \(p\colon X^{k+1}\to X^k\) the projection omitting the \(k_1+1\)st factor, \(\delta\colon X^{k+1}\to X^{k+2}\) the diagonal embedding doubling the \(k_1+1\)st factor.
	It follows that the lower square is a fiber square.

	Using the above commutative diagram, the class of the diagonal \(\Delta = g^{ab}T_a\times T_b\) as well as Conjecture~\ref{conj:NkA}\ref{item:PropertiesNkA:Glueing} one shows
	\begin{align}
		\MoveEqLeftR
		\iota_* gl_X^*\frac{\ev_1^*\alpha_1\cup\dotsb\cup \ev_{k_1+k_2}^*\alpha_k}{e^K(\overline{N}_{k,\beta})}\\
		={} &\iota_* \frac{gl_X^*\ev^*\left(\alpha_1\times \dotsb \times \alpha_k\right)}{e^K(gl_X^*\overline{N}_{k,\beta})}\\
		={} & \iota_* \left(\iota^*\left({\left(e^K(\overline{N}_{k_1+1,\beta_1})\right)}^{-1} {\left(e^K(\overline{N}_{k_2+1, \beta_2})\right)}^{-1}\right) \cap {(\rho,\sigma)}^*p^*\left(\alpha_1\times \dotsb \times \alpha_k\right)\right)\\
		={} &{\left(e^K(\overline{N}_{k_1+1,\beta_1})\right)}^{-1} {\left(e^K(\overline{N}_{k_2+1, \beta_2})\right)}^{-1}\\
			&\cap \iota_* {(\rho,\sigma)}^*\left(\alpha_1\times \dotsb\times \alpha_{k_1}\times [X] \times \alpha_{k_1+1}\times \dotsb \times \alpha_k\right)\displaybreak[0]\\
		={} &{\left(e^K(\overline{N}_{k_1+1,\beta_1})\right)}^{-1} {\left(e^K(\overline{N}_{k_2+1, \beta_2})\right)}^{-1}\\
			&\cap {(\rho\times \sigma)}^*\delta_*\left(\alpha_1\times \dotsb\times \alpha_{k_1}\times [X] \times \alpha_{k_1+1}\times \dotsb \times \alpha_k\right)\displaybreak[0]\\
		={} &g^{ab}{\left(e^K(\overline{N}_{k_1+1,\beta_1})\right)}^{-1} {\left(e^K(\overline{N}_{k_2+1, \beta_2})\right)}^{-1}\\
			&\cap {(\rho\times \sigma)}^*\left(\alpha_1\times \dotsb\times \alpha_{k_1}\times T_a \times T_b \times \alpha_{k_1+1}\times \dotsb \times \alpha_k\right)\\
		={} & g^{ab} \frac{\rho_1^*\alpha_1 \cup\dotsb \cup \rho_{k_1}^*\alpha_{k_1} \cup \rho_{k_1+1}^*T_a}{e^K(\overline{N}_{k_1+1,\beta_1})}\times \frac{\sigma_1^*T_b \cup \sigma_2^*\alpha_{k_1+1} \cup\dotsb \cup \sigma_{k_2+1}^*\alpha_{k}}{e^K(\overline{N}_{k_2+1,\beta_2})}
	\end{align}

	The claim follows by integrating the above equation over \(D\):
	\begin{equation}
		\begin{split}
			\MoveEqLeftR
			\int_D \frac{\ev_1^*\alpha_1\cup\dotsb\cup \ev_{k_1+k_2}^*\alpha_k}{e^K(\overline{N}_{k,\beta})}\\
			={} &\int_{\ModuliStackMaps{0, k_1+1}{X, \beta_1}\times \ModuliStackMaps{0,k_2}{X, \beta_2}} \iota_* gl_X^*\frac{\ev_1^*\alpha_1\cup\dotsb\cup \ev_{k_1+k_2}^*\alpha_k}{e^K(\overline{N}_{k,\beta})}\\
			={} &\int g^{ab} \frac{\rho_1^*\alpha_1 \cup\dotsb \cup \rho_{k_1}^*\alpha_{k_1} \cup \rho_{k_1+1}^*T_a}{e^K(\overline{N}_{k_1+1,\beta_1})}\times \frac{\sigma_1^*T_b \cup \sigma_2^*\alpha_{k_1+1} \cup\dotsb \cup \sigma_{k_2+1}^*\alpha_{k}}{e^K(\overline{N}_{k_2+1,\beta_2})}\\
			={} &g^{ab} \left<SGW_{0,k_1+1}^{X, \beta_1}\right>(\alpha_1, \dotsc, \alpha_{k_1}, T_a) \left<SGW_{0,k_2+1}^{X, \beta_2}\right>(T_b, \alpha_{k_1+1}, \dotsc, \alpha_{k_1+k_2}).
			\qedhere
		\end{split}
	\end{equation}
\end{proof}

\begin{rem}
	The maps \(gl\) and \(gl_X\) fit into the following commutative diagram:
	\begin{diag}\matrix[mat](m) {
			\ModuliStackMaps{0,k_1+1}{X, \beta_1}\times_X \ModuliStackMaps{0,k_2+1}{X, \beta_2} & \ModuliStackMaps{0,k}{X,\beta} \\
			\ModuliStackMaps{0,k_1+1}{X, \beta_1}\times \ModuliStackMaps{0,k_2+1}{X, \beta_2} & \\
			\ModuliStackCurves{0,k_1+1}\times \ModuliStackCurves{0,k_2+1} & \ModuliStackCurves{0,k} \\
		} ;
		\path[pf] {
			(m-1-1) edge node[auto]{\(\iota\)} (m-2-1)
				edge node[auto]{\(gl_X\)} (m-1-2)
			(m-1-2) edge node[auto]{\(\pi\)} (m-3-2)
			(m-2-1) edge node[auto]{\(\pi\times \pi\)} (m-3-1)
			(m-3-1) edge node[auto]{\(gl\)} (m-3-2)
		};
	\end{diag}
	If one could show
	\begin{equation}
		\begin{split}
			\MoveEqLeft
			gl^*\pi_*\left({\left(e^K(\overline{N}_{k, \beta})\right)}^{-1}\cup\ev_1^*\alpha_1\cup \dotsm \cup \ev_k^*\alpha_k\right) \\
			={}& {\left(\pi\times \pi\right)}_*\iota_*gl_X^*\left({\left(e^K(\overline{N}_{k, \beta})\right)}^{-1}\cup\ev_1^*\alpha_1\cup \dotsm \cup \ev_k^*\alpha_k\right) \\
		\end{split}
	\end{equation}
	the stronger splitting equation for super Gromov--Witten classes
	\begin{equation}
		\begin{split}
			\MoveEqLeft
			gl^*SGW_{0,k}^{X, \beta}(\alpha_1, \dotsc, \alpha_k) \\
			={}& 2\sum_{\beta=\beta_1 + \beta_2} g^{ab} SGW_{0,k_1+1}^{X, \beta_1}(\alpha_1, \dotsc, \alpha_{k_1}, T_a) \otimes SGW_{0,k_2+1}^{X, \beta_2}(T_b, \alpha_{k_1+1}, \dotsc, \alpha_k).
		\end{split}
	\end{equation}
	would follow.
	In the classical case, that is without the inverse of the equivariant Euler class, the above equation is shown using dimension considerations, see~\cite{BM-SSMGMI}, which do not apply in presence of the inverse of the equivariant Euler class.
\end{rem}

\begin{conjecture}[Deformation Axiom]
	Let \(x\colon X\to B\) be a smooth proper map with connected base \(B\) and \(X_b=x^{-1}(b)\).
	The super Gromov--Witten classes
	\begin{equation}
			SGW_{0,k}^{X_b, \beta_b}\colon {(H^\bullet(X_b))}^k \to H_K^\bullet(\ModuliStackCurves{0,k})\\
	\end{equation}
	is independent of \(b\) for locally constant \(\beta_b\in H_2^+(X_b)\).
\end{conjecture}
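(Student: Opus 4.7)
The plan is to realize all ingredients in \(SGW_{0,k}^{X_b,\beta_b}\) as restrictions of \(B\)-relative objects to fibers \(\{b\}\subset B\), and then argue that such restrictions are locally constant in \(b\). Since \(\ModuliStackCurves{0,k}\) does not depend on \(X\), this upgrade is only needed on the stable spin map side: the moduli stack, the evaluation and forgetful maps, and the SUSY normal bundle.

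First I would form the relative moduli stack \(\ModuliStackSpinMaps{0,k}{X/B,\beta}\to B\) whose fiber over \(b\) is \(\ModuliStackSpinMaps{0,k}{X_b,\beta_b}\). Smoothness of \(x\colon X\to B\) together with fiberwise convexity should make it smooth and proper over \(B\), of constant relative dimension because \(\beta_b\) is locally constant. The evaluation maps assemble into \(\Ev_i\colon \ModuliStackSpinMaps{0,k}{X/B,\beta}\to X\) and the forgetful map into \(\Pi\colon \ModuliStackSpinMaps{0,k}{X/B,\beta}\to \ModuliStackCurves{0,k}\times B\); fiberwise they restrict to the maps \(\ev_i\) and \(\pi\) of Definition~\ref{defn:SGWClass}. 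Running the construction of Definition~\ref{defn:NCphi} relative to \(B\), and assuming a relative strengthening of Conjecture~\ref{conj:NkA}, one obtains a vector bundle \(\overline{N}^{X/B}_{k,\beta}\) on \(\ModuliStackSpinMaps{0,k}{X/B,\beta}\) whose fiberwise restrictions recover \(\overline{N}_{k,\beta_b}\) by a relative version of Lemma~\ref{lemma:PullbackNCphi}. Equipped with the fiberwise \(K=\C^*\)-action, the splitting-principle argument of Section~\ref{SSec:DefnSuperGWInvariants} still makes its equivariant Euler class invertible in the relative equivariant cohomology.

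Given a global class \(\tilde\alpha_i\in H^\bullet(X)\) with \(\tilde\alpha_i|_{X_b}=\alpha_i^b\), I would then define the relative super Gromov--Witten class
\begin{equation}
    SGW_{0,k}^{X/B,\beta}(\tilde\alpha_1,\dotsc,\tilde\alpha_k) = \Pi_*\frac{\Ev_1^*\tilde\alpha_1\cup\dotsm\cup\Ev_k^*\tilde\alpha_k}{2\,e^K(\overline{N}^{X/B}_{k,\beta})} \in H_K^\bullet(\ModuliStackCurves{0,k}\times B).
\end{equation}
Proper base change for \(\Pi\) along \(\iota_b\colon \{b\}\hookrightarrow B\), the projection formula, and compatibility of Euler classes with pullback should yield \(\iota_b^*\, SGW_{0,k}^{X/B,\beta}(\tilde\alpha_1,\dotsc,\tilde\alpha_k) = SGW_{0,k}^{X_b,\beta_b}(\alpha_1^b,\dotsc,\alpha_k^b)\). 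Via Künneth, \(\iota_b^*\) factors through projection onto the \(H^0(B)\)-summand of \(H^\bullet(\ModuliStackCurves{0,k})\otimes H^\bullet(B)\); since \(B\) is connected, this summand is canonically one-dimensional and the restriction is independent of \(b\).

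The hard part will be the passage from Conjecture~\ref{conj:NkA}, formulated over \(\Spec\C\), to its relative form over a smooth base \(B\), which in turn presupposes Conjecture~\ref{conj:ModuliStackSuperStableMaps} relatively. Once these are granted, the remaining ingredients (proper base change, projection formula, relative splitting principle, and Künneth) are classical. A minor additional input is that every fiberwise class \(\alpha_b\in H^\bullet(X_b)\) lifts at least locally over \(B\) to \(H^\bullet(X)\); this is automatic when \(X\to B\) is smooth and proper by local constancy of the Gauss--Manin sheaves \(R^i x_*\C\).
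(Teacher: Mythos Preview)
The paper does not prove this statement: it is explicitly labeled a \emph{conjecture}, placed at the end of the list of axioms without any accompanying argument. So there is no ``paper's own proof'' to compare against.

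Your sketch is a reasonable outline of how one would expect a proof to go, and you are honest about where the real content lies. But note that you are stacking conjectures: the fiberwise statement already depends on Conjecture~\ref{conj:NkA} (the existence of \(\overline{N}_{k,\beta}\) on \(\ModuliStackSpinMaps{0,k}{X,\beta}\)), and your argument requires a \emph{relative} version of that conjecture over a positive-dimensional base, as well as a relative version of Conjecture~\ref{conj:ModuliStackSuperStableMaps}. Neither is established in the paper, and the paper's own constructions of \(N_{(C,\phi)}\) are only carried out for families of constant tree type over a base \(B\), not for the full compactified moduli stack. So your ``hard part'' is genuinely hard, and currently out of reach given what the paper provides; this is presumably why the authors chose to state the Deformation Axiom as a conjecture rather than a theorem. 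The classical ingredients you cite (proper base change, projection formula, Künneth, local constancy of \(R^i x_*\C\)) are indeed standard once the relative geometric package is in place.
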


\subsection{Remarks on Generalizations}\label{SSec:Generalizations}
\begin{rem}[Non-convex targets]
	In principle, it should be possible to generalize the construction of super Gromov--Witten invariants to non-convex targets by using the virtual fundamental classes, see~\cite{BM-SSMGMI}.
	Our restriction to convex target is motivated by simplicity of exposition and the fact that we have needed assumptions similar to convexity in the construction of the moduli spaces of super \(\targetACI\)-holomorphic curves and super stable maps in~\cites{KSY-SJC}{KSY-SQCI}.

	Technically, we are using convexity to assure that the moduli spaces of stable maps are of the correct dimension and that the rank of \(N_{(C, \phi)}\) is constant on the moduli space of stable maps.
	The latter requirement can be preserved while omitting \(H^1(\dual{S}\otimes\phi^*\tangent{X})=0\) when defining \(N_{(C, \phi)}\) as an element in \(K\)-theory.
	To this end, define
	\begin{equation}
		[N_{(C,\phi)}] = \sum_{e\text{ node}}[p_e^*S]+\sum_{i=1}^k[p_i^*S]+\left[R^0c_*\left(\dual{S}\otimes\phi^*\tangent{X}\right)\right] - \left[R^1c_*\left(\dual{S}\otimes\phi^*\tangent{X}\right)\right] - [c_*S]
	\end{equation}
	in \(K\)-theory.

	In the special case that \(X\) is Calabi--Yau, an argument similar to Proposition~\ref{prop:SGWPoint} shows that \(\overline{N}_{k,\beta} = \bigoplus_{j=3}^k {\left(f^*\right)}^{k-j}\UniversalSpinorBundleM_j\) and hence all super Gromov--Witten invariants reduce to classical Gromov--Witten invariants with descendant classes.
\end{rem}
\begin{rem}[Ramond punctures]\label{rem:GeneralizationsRamondPunctures}
	The most general constructions of moduli spaces of super Riemann surfaces and super stable curves include the case where the nodes and marked points can be of Ramond type in addition to the special points of Neveu--Schwarz type considered here, see~\cites{D-LaM}{FKP-MSSCCLB}{MZ-EHSTIISFTV}{BR-SMSCRP}{OV-SMSGZSUSYCRP}.
	For a definition of super stable maps with Ramond punctures see~\cite{BMHR-SNM}.
	Yet, moduli spaces of super \(\targetACI\)-holomorphic curves or super stable maps from a domain with Ramond punctures have, to our knowledge, not yet been constructed.
	The construction of the moduli of super \(J\)-holomorphic curves in~\cite{KSY-SJC} and the construction of moduli spaces of super stable maps in~\cite{KSY-SQCI} do not directly allow to include Ramond punctures because the component field approach has not been developed for super Riemann surfaces with Ramond punctures.

	In particular, the case of stable super curves of genus zero with only Neveu--Schwarz marked points is the only one where the moduli space contains no super stable curves with Ramond nodes.
	That is, to consider moduli spaces of super stable curves or super stable maps of genus higher than zero requires understanding of Ramond nodes.
	For further combinatorial properties of moduli spaces of super stable curves see~\cite{KMW-MSSCTO}.

	The same principles as in Sections~\ref{SSec:SNMotivation} and~\ref{SSec:GWMotivation} can be used to obtain a proposal for super Gromov--Witten invariants.
	Let \(\ModuliStackSuperCurves{0,k_{NS}, k_R}\) be the moduli stack of super curves of genus zero with \(k_{NS}\) marked points of Neveu--Schwarz type and \(k_R\) marked points of Ramond type as well as \(\ModuliStackSuperMaps{g,k_{NS}, k_R}{X, \beta}\) the conjectural moduli stack of stable maps from a prestable super curve of genus zero with \(k_{NS}\) marked points of Neveu--Schwarz type and \(k_R\) marked points of Ramond type and representing the homology class \(\beta\in H_2^+(X)\).
	The inclusions
	\begin{align}
		\ModuliStackSpinCurves{0,k_{NS}, k_R} &\hookrightarrow\ModuliStackSuperCurves{0,k_{NS}, k_R} &
		\ModuliStackSpinMaps{0,k_{NS}, k_R}{X, \beta} &\hookrightarrow\ModuliStackSuperMaps{0,k_{NS}, k_R}{X, \beta}
	\end{align}
	have normal bundles \(\overline{N}_{k_{NS}, k_R}\) and \(\overline{N}_{k_{NS}, k_R, \beta}\) respectively.
	The rank \(\rk\overline{N}_{k_{NS}, k_R}=k_{NS}+\frac12k_R -2\) as well as a few more properties of \(\overline{N}_{k_{NS}, k_R}\) can be obtained from the construction of the moduli stack of super stable curves cited above.
	Properties of \(\overline{N}_{k_{NS}, k_R, \beta}\), even the rank, are unknown to us at the moment.
	Dividing by the equivariant Euler class of the normal bundle \(\overline{N}_{k_{NS}, k_R, \beta}\) as in Definition~\ref{defn:SGWClass}, it should be possible to develop a theory of super Gromov--Witten invariants with marked points of Neveu--Schwarz and Ramond type.
	Particularly interesting in this theory would be the splitting axiom for Ramond marked points because the gluing is not unique as can be seen in~\cite[Section~8]{FKP-MSSCCLB}.
\end{rem}
\begin{rem}[Higher Genus]
	As discussed in Remark~\ref{rem:GeneralizationsRamondPunctures} in order to construct moduli stacks of super stable curves and super stable maps it is necessary to understand Ramond nodes.
	In addition, in genus higher than zero the forgetful maps
	\begin{align}
		F\colon \ModuliStackSpinCurves{g,k_{NS}, k_R}&\to \ModuliStackCurves{g, k_{NS}+k_R} &
		F\colon \ModuliStackSpinMaps{g,k_{NS}, k_R}{X,\beta}&\to \ModuliStackMaps{g, k_{NS}+k_R}{X,\beta} &
	\end{align}
	do not induce isomorphisms on coarse moduli spaces but rather finite covers.
\end{rem}
\begin{rem}
	The axioms of super Gromov--Witten invariants depend essentially on the properties of the SUSY normal bundles \(\overline{N}_{k}\) and \(\overline{N}_{k,\beta}\) laid out in Conjectures~\ref{conj:Nk} and~\ref{conj:NkA}.
	If one had another collection \(E_k\) and \(E_{k,\beta}\) of vector bundles over \(\ModuliStackCurves{0,k}\) and \(\ModuliStackMaps{0,k}{X,\beta}\) respectively satisfying Conjectures~\ref{conj:Nk} and~\ref{conj:NkA}, a corresponding theory of \enquote{super Gromov--Witten invariants} obtained by using \(E_k\) and \(E_{k,\beta}\) instead of \(\overline{N}_{k}\) and \(\overline{N}_{k,\beta}\) would have very similar properties.
\end{rem}
 
\counterwithin{equation}{section}
\section{Super Small Quantum Cohomology}\label{Sec:SuperSmallQuantumCohomology}
In this section we extend the definition of the quantum product on \(H^\bullet(X)\) to a super quantum product on \(H^\bullet(X)\otimes \C[[\kappa^{-1}]]\) using the previously defined super Gromov--Witten invariants.

Let again \(T_j\in H^\bullet(X)\), \(j=0, \dotsc, \dim H^\bullet(X)-1\) be a homogeneous basis of \(H^\bullet(X)\) with \(T_0=1=[X]\), \(g_{ab}=\int_X T_a \cup T_b\), and \(g^{ab}\) its inverse matrix.
Let furthermore \(R\) be the ring freely generated over \(\C\) by symbols \(q^\beta\) for \(\beta \in H_2(X)\) with relations \(q^{\beta_1}\cdot q^{\beta_2}=q^{\beta_1 + \beta_2}\) and \(q^0=1\).
\begin{defn}
	For two homogeneous classes \(\alpha=\overline{\alpha}\otimes 1\otimes 1, \gamma=\overline{\gamma}\otimes 1\otimes 1\in H^\bullet(X)\otimes R\otimes \C[[\kappa^{-1}]]\) we define their super quantum product as
	\begin{equation}\label{defn:SuperQuantumProduct}
		\alpha \star \gamma
		= \sum_{\beta \in H_2(X)} \left<SGW_{0,3}^{X, \beta}\right> (\overline{\alpha}, \overline{\gamma}, T_a) T^a q^\beta \kappa^{r_{3,\beta}}
	\end{equation}
	and extend \(R\otimes \C[[\kappa^{-1}]]\)-linearly to all of \(H^\bullet(X)\otimes R\otimes \C[[\kappa^{-1}]]\).
	Here we use again the summation convention for \(a\) and \(T^a= T_b g^{ba}\).
	The powers of \(\kappa\) in \(\alpha\star \beta\) are all less or equal to zero by the Grading Axiom~\ref{axiom:Grading}.
	The sum appearing in the definition of \(\alpha\star \gamma\) is infinite and will be treated as formal here.
\end{defn}

\begin{prop}
	Let \(X\) be a convex variety such that \(H_2^+(X)\) has a finite \(\Integers_{\geq 0}\)-basis, for example, a homogeneous space.
	Then \(\alpha\star \gamma\) is a formal power series in \(\kappa^{-1}\) where each coefficient is given by a finite sum.
\end{prop}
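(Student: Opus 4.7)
The plan is to read off the $\kappa$-degree of each summand in~\eqref{defn:SuperQuantumProduct} from the Grading Axiom~\ref{axiom:Grading} and then to use the hypotheses on $X$ to argue that only finitely many $\beta$ can realise any fixed power $\kappa^{-m}$. By bilinearity in $\overline{\alpha}$ and $\overline{\gamma}$ one may assume both are homogeneous, whereupon the Grading Axiom~\ref{axiom:Grading} gives that $\left<SGW_{0,3}^{X,\beta}\right>(\overline{\alpha}, \overline{\gamma}, T_a)$ is either zero or a monomial in $\kappa$ of degree $-r_{3,\beta} - d_{3,\beta} + \tfrac{1}{2}(\deg\overline{\alpha} + \deg\overline{\gamma} + \deg T_a)$. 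Substituting $r_{3,\beta} = \left<c_1(\tangent{X}), \beta\right> + 1$ and $d_{3,\beta} = \dim X + \left<c_1(\tangent{X}), \beta\right>$ and multiplying by the explicit prefactor $\kappa^{r_{3,\beta}}$ in~\eqref{defn:SuperQuantumProduct}, the $\kappa$-power carried by the $(\beta, T_a)$-summand of $\alpha \star \gamma$ becomes
\begin{equation*}
-\dim X - \left<c_1(\tangent{X}), \beta\right> + \tfrac{1}{2}\bigl(\deg\overline{\alpha} + \deg\overline{\gamma} + \deg T_a\bigr).
\end{equation*}
Setting this equal to $-m$ pins $\left<c_1(\tangent{X}), \beta\right>$ down to $m - \dim X + \tfrac{1}{2}(\deg\overline{\alpha} + \deg\overline{\gamma} + \deg T_a)$; since $\{T_a\}$ is a finite basis, this gives only finitely many admissible values of $\left<c_1(\tangent{X}),\beta\right>$ for each fixed $m$.

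It then suffices to show that for every fixed $v\in\Integers$ only finitely many effective classes $\beta\in H_2^+(X)$ satisfy $\left<c_1(\tangent{X}),\beta\right>=v$. Expanding $\beta=\sum_i n_i\beta_i$ in the assumed finite $\Integers_{\geq 0}$-basis $\beta_1,\dotsc,\beta_r$ reduces this to the diophantine equation $\sum_i n_i d_i = v$ with $n_i\in\Integers_{\geq 0}$ and $d_i := \left<c_1(\tangent{X}),\beta_i\right>$, which clearly has only finitely many solutions provided every $d_i$ is strictly positive. For the latter I would invoke Lemma~\ref{lemma:H1SvphiTX}: convexity forces any non-constant $\phi\colon\ProjectiveSpace[\C]{1}\to X$ to have $\phi^*\tangent{X}=\bigoplus_j\cO(d_j)$ with at least one $d_j\geq 2$ and all others $\geq 0$, so every non-zero effective curve class, and in particular every Mori-cone generator $\beta_i$, pairs strictly positively with $c_1(\tangent{X})$.

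The one substantive input is this positivity of $c_1(\tangent{X})$ on the generators $\beta_i$; absent the convexity assumption, an effective class with trivial anticanonical pairing would allow infinitely many multiples to share the same $\kappa$-power and the coefficient-wise finiteness would break down. Granted the two steps above, each coefficient of $\kappa^{-m}$ in $\alpha \star \gamma$ is a finite $R$-linear combination of super Gromov--Witten invariants, which is the required statement.
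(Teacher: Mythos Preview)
Your approach is essentially the paper's: read off the $\kappa$-degree of each summand from the Grading Axiom, then use the finite $\Integers_{\geq 0}$-basis together with strict positivity of $c_1(\tangent{X})$ on the generators to conclude that only finitely many $\beta$ realise a given power of $\kappa$. The paper phrases the last step as a bound $\left<c_1(\tangent{X}),\beta\right>\leq 2\dim X - j$ rather than pinning the value down for each $(m,a)$, but this is cosmetic.

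There is one small gap worth flagging. Your justification of $d_i=\left<c_1(\tangent{X}),\beta_i\right>>0$ via Lemma~\ref{lemma:H1SvphiTX} only shows that $\left<c_1(\tangent{X}),\phi_*[\ProjectiveSpace{1}]\right>\geq 2$ for every non-constant $\phi\colon\ProjectiveSpace{1}\to X$; it does not by itself force an arbitrary generator of the effective cone $H_2^+(X)$ to be a rational curve class. (An abelian variety is convex with $c_1=0$, so the inference ``every non-zero effective class pairs strictly positively'' fails in general.) What rescues the argument is that the sum in~\eqref{defn:SuperQuantumProduct} is effectively over those $\beta$ with $\ModuliStackMaps{0,3}{X,\beta}\neq\emptyset$, and any such non-zero $\beta$ is the class of a genus-zero stable map, hence a non-negative integer combination of classes $\phi_*[\ProjectiveSpace{1}]$; these then satisfy $\left<c_1(\tangent{X}),\beta\right>\geq 2$ by your Lemma~\ref{lemma:H1SvphiTX} argument. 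The paper handles this point by asserting $\left<c_1(X),b_i\right>\geq 2$ directly and pointing to \cite[Lemma~15]{FP-NSMQC}, which for homogeneous spaces is standard.
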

\begin{proof}
	The summand \(\left<SGW_{0,3}^{X, \beta}\right> (\overline{\alpha}, \overline{\gamma}, T_a) T^a q^\beta \kappa^{r_{3,\beta}}\) in~\eqref{defn:SuperQuantumProduct} is of polynomial degree
	\begin{equation}
		d_p\left(\beta, \overline{\alpha}, \overline{\gamma}, a\right)
		= \frac12\left(\deg\overline{\alpha}+\deg\overline{\gamma}+\deg T_a\right) - \dim X - \left<c_1(\tangent{X}), \beta\right>
		\leq 0.
	\end{equation}
	After reordering with respect to orders of \(\kappa\) we get
	\begin{equation}\alpha \star \gamma
		= \sum_{j\leq0}\sum_{a,b=0}^{\dim H^\bullet(X)-1}\sum_{\substack{\beta \in H_2(X)\\d_p(\beta, \overline{\alpha}, \overline{\gamma}, a) = j}} \left<SGW_{0,3}^{X, \beta}\right> (\overline{\alpha}, \overline{\gamma}, T_a) g^{ab} T_b q^\beta \kappa^{r_{3,\beta}}
	\end{equation}
	For fixed \(j\) the remaining sum is finite because
	\begin{equation}
		\Set{\beta\in H_2(X) \given d_p(\beta, \overline{\alpha}, \overline{\gamma}, a)=j}
		\subset
		\Set{\beta\in H_2(X) \given \left<c_1(\tangent{X}), \beta\right> \leq 2\dim X - j}
	\end{equation}
	and the second set is finite because \(\beta\) is a finite \(\Integers_{\geq 0}\)-linear combination of the basis elements \(b_i\in H_2^+(X)\) and \(\left<c_1(X), b_i\right>\geq 2\).
	Compare also~\cite[Lemma~15]{FP-NSMQC}.
\end{proof}

Further questions of convergence and different possible interpretations of \(q^\beta\) are referred to later work, see also~\cite[Chapter~8.1]{CK-MSAG}.
Proving convergence of the infinite sum in~\eqref{defn:SuperQuantumProduct} might be harder than the results presented there because \(\left<SGW_{0,k}^{X, \beta}\right>(\overline{\alpha}, \overline{\gamma}, T_a)\) is non-vanishing for more combinations of \(\beta\) and \(T_a\) than \(\left<GW_{0,k}^{X, \beta}\right>(\overline{\alpha}, \overline{\gamma}, T_a)\).

\begin{prop}
	The super quantum product
	\begin{equation}
		\star \colon H^\bullet(X)\otimes R \otimes \C[[\kappa^{-1}]] \times H^\bullet(X)\otimes R\otimes \C[[\kappa^{-1}]]\to H^\bullet(X)\otimes R\otimes \C[[\kappa^{-1}]]
	\end{equation}
	is supercommutative with respect to the cohomological degree and associative.
	Hence it equips \(H^\bullet(X)\otimes R\otimes \C[[\kappa^{-1}]]\) with the structure of an \(R\otimes \C[[\kappa^{-1}]]\)-algebra.
\end{prop}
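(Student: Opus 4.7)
The plan is to adapt the classical proof of associativity of the small quantum product, based on a WDVV-type identity derived from the splitting axiom. Supercommutativity is immediate from the \(\SymG{k}\)-equivariance Axiom~\ref{axiom:SkEquivariance}: interchanging the first two entries of \(\left<SGW_{0,3}^{X,\beta}\right>(\overline{\alpha},\overline{\gamma},T_a)\) produces the sign \((-1)^{\deg\overline{\alpha}\cdot\deg\overline{\gamma}}\), which passes through the sum defining the super quantum product unchanged and yields \(\alpha\star\gamma=(-1)^{\deg\alpha\cdot\deg\gamma}\,\gamma\star\alpha\).

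For associativity, the geometric input is that in \(\ModuliStackMaps{0,4}{X,\beta}\) the three boundary divisors \(D_{12|34}\), \(D_{13|24}\), \(D_{14|23}\) corresponding to the three partitions of the four marked points represent the same class in \(H^2\). This follows because they are pullbacks via \(\pi\colon \ModuliStackMaps{0,4}{X,\beta}\to \ModuliStackCurves{0,4}\cong\ProjectiveSpace{1}\) of three distinct points, which are rationally equivalent on \(\ProjectiveSpace{1}\). Consequently, for any \(\alpha_1,\alpha_2,\alpha_3,\alpha_4\in H^\bullet(X)\) and the class \(\omega=(\ev_1^*\alpha_1\cup\dotsb\cup\ev_4^*\alpha_4)/e^K(\overline{N}_{4,\beta})\) on \(\ModuliStackMaps{0,4}{X,\beta}\), one obtains \(\int_{D_{12|34}}\omega=\int_{D_{13|24}}\omega\).

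The next step is to apply the splitting axiom (Axiom~\ref{axiom:SplittingAxiom}) to the left-hand side directly, and to the right-hand side after using the \(\SymG{k}\)-equivariance to bring the partition \((1,3|2,4)\) into the form required by the splitting axiom. This produces a WDVV-type identity equating
\begin{equation}
\sum_{\beta_1+\beta_2=\beta} g^{ab}\left<SGW_{0,3}^{X,\beta_1}\right>(\alpha_1,\alpha_2,T_a)\left<SGW_{0,3}^{X,\beta_2}\right>(T_b,\alpha_3,\alpha_4)
\end{equation}
to the analogous sum with \(\alpha_2\) and \(\alpha_3\) exchanged, up to signs from super-commutation. Setting \(\alpha_1=\overline{\alpha}\), \(\alpha_2=\overline{\gamma}\), \(\alpha_3=\overline{\delta}\), \(\alpha_4=T_e\), multiplying by \(g^{ef}T_f\,q^{\beta_1+\beta_2}\kappa^{r_{3,\beta_1}+r_{3,\beta_2}}\) (noting \(r_{3,\beta_1}+r_{3,\beta_2}=r_{4,\beta_1+\beta_2}\)) and summing over \(e\) and all decompositions \(\beta=\beta_1+\beta_2\), the two sides rearrange into \((\overline{\alpha}\star\overline{\gamma})\star\overline{\delta}\) and \(\overline{\alpha}\star(\overline{\gamma}\star\overline{\delta})\) respectively. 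Extending \(R\otimes\C[[\kappa^{-1}]]\)-linearly completes the argument.

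The main obstacle I anticipate is the sign bookkeeping for odd-degree cohomology classes when rewriting the \(D_{13|24}\)-integral via \(\SymG{k}\)-equivariance. The signs produced by the permutation interchanging the middle entries \(\alpha_2\) and \(\alpha_3\) should exactly match those required by the supercommutativity of \(H^\bullet(X)\) when rearranging the factors in the iterated super quantum product, so they cancel and no additional correction is needed; verifying this cancellation explicitly is the one non-trivial computation in the proof.
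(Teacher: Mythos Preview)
Your proposal is correct and follows essentially the same route as the paper: supercommutativity from the \(\SymG{k}\)-equivariance axiom, and associativity from the Splitting Axiom combined with the linear equivalence of the boundary divisors \(D_{12|34}\) and \(D_{13|24}\) in \(\ModuliStackMaps{0,4}{X,\beta}\). You are actually more explicit than the paper about \emph{why} these divisors agree (as pullbacks of points on \(\ModuliStackCurves{0,4}\cong\ProjectiveSpace{1}\)); the paper simply asserts they are ``identical'' and handles the sign bookkeeping via the parity constraint \(d_c(\overline{\gamma})+d_c(\overline{\delta})+d_c(T_a)\equiv 0\) coming from the Grading Axiom together with \(d_c(T_a)+d_c(T_b)=2\dim X\).
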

\begin{proof}
	The proof proof of this proposition is very similar to the proof in the non-super case, see, for example~\cite[Theorem~8.1.4]{CK-MSAG} and follows essentially from the Equivariance Axiom~\ref{axiom:SkEquivariance} and the Splitting Axiom~\ref{axiom:SplittingAxiom}.

	The super quantum product is indeed supercommutative with respect to the cohomological degree.
	That is,
	\begin{equation}
		\alpha \star \gamma = {\left(-1\right)}^{d_c(\alpha) \cdot d_c(\gamma)} \gamma \star \alpha,
	\end{equation}
	which follows from the Equivariance Axiom~\ref{axiom:SkEquivariance} because
	\begin{equation}
		\left<SGW_{0,3}^{X, \beta}\right>(\overline{\alpha}, \overline{\gamma}, T_a)
		= {\left(-1\right)}^{d_c(\alpha) \cdot d_c(\gamma)} \left<SGW_{0,3}^{X, \beta}\right>(\overline{\gamma}, \overline{\alpha},  T_a).
	\end{equation}

	To show associativity of the super quantum product we need to show \(\alpha\star \left(\gamma\star \delta\right) = \left(\alpha\star \gamma\right)\star \delta\) for all homogeneous classes \(\alpha=\overline{\alpha}\otimes 1\otimes 1\), \(\gamma=\overline{\gamma}\otimes 1\otimes 1\), \(\delta=\overline{\delta}\otimes 1\otimes 1\in H^\bullet(X)\otimes R\otimes \C[[\kappa^{-1}]]\).
	Expanding yields
	\begin{multline}
		\alpha \star \left(\gamma\star \delta\right) =\\
		\sum_{\beta \in H_2(X)} \sum_{\beta = \beta_1 + \beta_2} g^{ab}\left<SGW_{0,3}^{X, \beta_1}\right>(\overline{\gamma}, \overline{\delta}, T_a)\left<SGW_{0,3}^{X, \beta_2}\right>(\overline{\alpha}, T_b, T_c) q^\beta T^c \kappa^{r_{3,\beta_1+\beta_2}},
	\end{multline}
	and
	\begin{multline}
		\left(\alpha \star \gamma\right) \star \delta =\\
		\sum_{\beta \in H_2(X)} \sum_{\beta = \beta_1 + \beta_2} g^{ab}\left<SGW_{0,3}^{X, \beta_1}\right>(\overline{\alpha}, \overline{\gamma}, T_a)\left<SGW_{0,3}^{X, \beta_2}\right>(T_b, \overline{\delta}, T_c) q^\beta T^c \kappa^{r_{3,\beta_1+\beta_2}}.
	\end{multline}
	To show equality of the two expressions, we use the Splitting Axiom~\ref{axiom:SplittingAxiom}.
	\begin{equation}
		\begin{split}
			\MoveEqLeft
			g^{ab}\left<SGW_{0,3}^{X, \beta_1}\right>(\overline{\gamma}, \overline{\delta}, T_a)\left<SGW_{0,3}^{X, \beta_2}\right>(\overline{\alpha}, T_b, T_c) \\
			&= {\left(-1\right)}^{d_c(T_b)d_c(\overline{\alpha})} g^{ab}\left<SGW_{0,3}^{X, \beta_1}\right>(\overline{\gamma}, \overline{\delta}, T_a)\left<SGW_{0,3}^{X, \beta_2}\right>(T_b, \overline{\alpha}, T_c) \\
			&= {\left(-1\right)}^{d_c(T_b)d_c(\overline{\alpha})} \int_D \frac{\ev_1^*\overline{\gamma}\cup \ev_2^*\overline{\delta}\cup \ev_3^*\overline{\alpha} \cup \ev_4^*T_c}{e^K(\overline{N}_{4,\beta})}\\
			&= \int_D \frac{\ev_1^*\overline{\alpha} \cup \ev_2^*\overline{\gamma}\cup \ev_3^*\overline{\delta}\cup \ev_4^*T_c}{e^K(\overline{N}_{4,\beta})}\\
			&= g^{ab}\left<SGW_{0,3}^{X, \beta_1}\right>(\overline{\alpha}, \overline{\gamma}, T_a)\left<SGW_{0,3}^{X, \beta_2}\right>(T_b, \overline{\delta}, T_c)
		\end{split}
	\end{equation}
	Here we have used that the divisors associated to splitting moduli spaces with four marked points into two moduli spaces with two marked points are identical.
	To verify the signs note that \(d_c(\overline{\gamma})+d_c(\overline{\delta})+d_c(T_a)\) is even by the grading axiom and \(d_c(T_a) + d_c(T_b)\) is even because \(T_a\cup T_b\) is a class of degree \(2\dim X\).
\end{proof}
\begin{ex}[Super Small Quantum Cohomology of \(\ProjectiveSpace{n}\)]
	When the target \(X\) is projective space of dimension \(n\), any effective homology class \(\beta\in H_2(\ProjectiveSpace{n})\) is a non-negative multiple of a line, that is \(\beta=d[l]\) with \(d\geq 0\).
	In that case,
	\begin{align}
		d_{3,\beta} &= n + d(n+1) &
		r_{3,\beta} &= d(n+1) + 1
	\end{align}

	The cohomology ring of \(\ProjectiveSpace{n}\) is given by
	\begin{equation}
		H^*(\ProjectiveSpace{n}) = \faktor{\Z[\Lambda]}{\Lambda^{n+1}}
	\end{equation}
	for a generator \(\Lambda\in H^2(\ProjectiveSpace{n})\).
	The generator \(\Lambda\) is the Poincaré dual of the hyperplane class.
	We use \(T_a=\Lambda^a\) with \(a=0,\dotsc, n\) as basis for the cohomology ring and with respect to this basis
	\begin{equation}
		g_{ab}
		= g(T_a, T_b)
		= \int_{\ProjectiveSpace{n}} \Lambda^a \cup \Lambda^b
		= \delta_{a,n-b}.
	\end{equation}
	Hence, \(T^a= \Lambda^{n-a}\).

	To obtain the super small quantum cohomology ring of \(\ProjectiveSpace{n}\) it would be sufficient, by linearity, to compute all three-point super Gromov--Witten invariants
	\begin{equation}
		\left<SGW_{0,3}^{\ProjectiveSpace{n},d}\right>(\Lambda^a, \Lambda^b, \Lambda^c).
	\end{equation}
	As the Grading Axiom~\ref{axiom:Grading} is more permissive than the classical grading axiom there are for given \(a\) and \(b\) possibly infinitely many combinations of power \(c\) and degree \(d\) for which the three-point super Gromov--Witten invariants do not vanish.
	We will see in Section~\ref{Sec:SGWOfPn} that we are, in principle, able to compute three-point super Gromov--Witten invariants for \(\ProjectiveSpace{n}\) but do not have a closed formula for all of them even when \(n=1\).
	Hence, we will only give some further restrictions derived from the Grading Axiom~\ref{axiom:Grading} and the Point-Mapping Axiom~\ref{axiom:MappingToPoint}.

	For \(d=0\) the Point-Mapping-Axiom~\ref{axiom:MappingToPoint} implies that
	\begin{equation}
		\left<SGW_{0,3}^{\ProjectiveSpace{n},0}\right>(\Lambda^a, \Lambda^b, \Lambda^c) =
		\begin{cases}
			\kappa^{-1} & \text{if }a+b+c=n\\
			0 & \text{else}
		\end{cases}
	\end{equation}
	If \(d=1\) the Grading Axiom implies that three-point super Gromov--Witten invariants vanish for \(a+b+c> 2n+1\).
	For \(d>1\), no super three-point Super Gromov--Witten invariants vanish for degree reasons.

	It follows for \(a+b\leq n\) that
	\begin{equation}
		\Lambda^a \star \Lambda^b = \Lambda^{a+b} + \sum_{d=1}^{\infty}\sum_{c=0}^n \left<SGW_{0,3}^{\ProjectiveSpace{n},d}\right>\left(\Lambda^a, \Lambda^b, \Lambda^c\right) \Lambda^{n-c}q^d \kappa^{d(n+1)+1}
	\end{equation}
	The three point super Gromov--Witten invariant has polynomial degree
	\begin{equation}
		a+b+c - n - 2d\left(n+1\right) - 1.
	\end{equation}
	Consequently, the first term is the one of highest degree in \(\kappa\).

	If \(a+b>n\),
	\begin{equation}
		\begin{split}
			\Lambda^a \star \Lambda^b
			={} & \sum_{c=0}^{2n+1-a-b}\left<SGW_{0,3}^{\ProjectiveSpace{n},1}\right>(\Lambda^a,\Lambda^b, \Lambda^c) \Lambda^{n-c} q \kappa^{n+2} \\
				&+ \sum_{d=2}^{\infty}\sum_{c=0}^n \left<SGW_{0,3}^{\ProjectiveSpace{n},d}\right>\left(\Lambda^a, \Lambda^b, \Lambda^c\right) \Lambda^{n-c}q^d\kappa^{d(n+1)+1}
		\end{split}
	\end{equation}
	Here the term of highest polynomial degree is
	\begin{equation}
		\left<SGW_{0,3}^{\ProjectiveSpace{n},1}\right>(\Lambda^a,\Lambda^b, \Lambda^{2n+1-a-b}) \Lambda^{a+b-n-1} q \kappa^{n+2}
		= \Lambda^{a+b - n - 1} q.
	\end{equation}
\end{ex}
\begin{prop}
	Let \(\overline{p}\colon \C[[\kappa^{-1}]]\to \C\) the ring homomorphism which sends \(\kappa^{-1}\) to zero.
	Then
	\begin{equation}
		p=\id\otimes \overline{p}\colon H^\bullet(X)\otimes R\otimes \C[[\kappa^{-1}]] \to H^\bullet(X)\otimes R
	\end{equation}
	maps the super quantum product \(\star\) to the classical quantum product \(\diamond\) and extends to a homomorphism of \(R\)-algebras.
\end{prop}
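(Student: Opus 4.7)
The plan is to reduce the claim to the Extension Axiom~\ref{axiom:Extension} and the Grading Axiom~\ref{axiom:Grading}. By $R\otimes\C[[\kappa^{-1}]]$-bilinearity of $\star$ and $\diamond$, it suffices to verify $p(\alpha \star \gamma) = p(\alpha) \diamond p(\gamma)$ on pairs of homogeneous generators $\alpha = \overline{\alpha} \otimes 1 \otimes 1$, $\gamma = \overline{\gamma} \otimes 1 \otimes 1$ with $\overline{\alpha}, \overline{\gamma} \in H^\bullet(X)$.

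First I would expand
\begin{equation}
\alpha \star \gamma = \sum_{\beta \in H_2(X)} \left<SGW_{0,3}^{X,\beta}\right>(\overline{\alpha}, \overline{\gamma}, T_a)\, T^a\, q^\beta\, \kappa^{r_{3,\beta}}
\end{equation}
and recall from the Grading Axiom that every non-zero coefficient $\left<SGW_{0,3}^{X,\beta}\right>(\overline{\alpha}, \overline{\gamma}, T_a)$ is a monomial in $\kappa$ of degree $-r_{3,\beta} - d_{3,\beta} + \tfrac{1}{2}(\deg\overline{\alpha} + \deg\overline{\gamma} + \deg T_a) \leq -r_{3,\beta}$. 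After multiplying by $\kappa^{r_{3,\beta}}$, each summand is therefore a non-positive power of $\kappa$, so $\alpha\star\gamma$ genuinely lies in $H^\bullet(X) \otimes R \otimes \C[[\kappa^{-1}]]$, and only the summands of $\kappa$-degree exactly $0$ survive the application of $\overline{p}$.

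Next I would identify those surviving terms: the above inequality is saturated precisely when $\deg\overline{\alpha} + \deg\overline{\gamma} + \deg T_a = 2d_{3,\beta}$, and in that range the Extension Axiom~\ref{axiom:Extension} gives $\left<SGW_{0,3}^{X,\beta}\right>(\overline{\alpha}, \overline{\gamma}, T_a) = \kappa^{-r_{3,\beta}} \left<GW_{0,3}^{X,\beta}\right>(\overline{\alpha}, \overline{\gamma}, T_a)$. Hence that summand contributes exactly $\left<GW_{0,3}^{X,\beta}\right>(\overline{\alpha}, \overline{\gamma}, T_a) T^a q^\beta$, while every other summand lies in $\kappa^{-1}\C[\kappa^{-1}] \subset \ker\overline{p}$. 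Applying $p = \id\otimes\overline{p}$ yields
\begin{equation}
p(\alpha \star \gamma) = \sum_{\beta \in H_2(X)} \left<GW_{0,3}^{X,\beta}\right>(\overline{\alpha}, \overline{\gamma}, T_a)\, T^a\, q^\beta = p(\alpha) \diamond p(\gamma),
\end{equation}
which is the classical small quantum product.

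The $R$-algebra homomorphism statement is then essentially automatic: $p$ is $R$-linear by construction (it is obtained by base change along the $\C$-algebra map $\overline{p}$), sends the unit $1\otimes 1\otimes 1$ to the unit of $H^\bullet(X)\otimes R$, and the multiplicativity just verified on generators extends by $R$-bilinearity. I do not anticipate a serious obstacle; the only point requiring care is confirming that no summand of smaller $\deg\overline{\alpha}+\deg\overline{\gamma}+\deg T_a$ can sneak in a $\kappa^0$ contribution, which is ruled out by the strict form of the degree identity in the Grading Axiom.
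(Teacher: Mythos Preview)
Your proof is correct and follows essentially the same route as the paper: use the Grading Axiom to see that only summands with \(\deg\overline{\alpha}+\deg\overline{\gamma}+\deg T_a = 2d_{3,\beta}\) survive after setting \(\kappa^{-1}\to 0\), and then invoke the Extension Axiom to identify those survivors with classical three-point Gromov--Witten invariants. One small caveat: your remark that \(p\) ``sends the unit \(1\otimes 1\otimes 1\) to the unit'' is true as a statement about elements, but the paper notes immediately after this proposition that there is no known unit for \(\star\), so this does not contribute to the algebra-homomorphism claim in the way you suggest; the multiplicativity \(p(\alpha\star\gamma)=p(\alpha)\diamond p(\gamma)\) together with \(R\)-linearity is what is actually being asserted.
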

\begin{proof}
	By the Grading Axiom~\ref{axiom:Grading}, the only terms of polynomial degree zero in
	\begin{equation}
		\alpha \star \gamma
		= \sum_{\beta \in H_2(X)} \left<SGW_{0,3}^{X, \beta}\right> (\alpha, \gamma, T_a) T^a q^\beta \kappa^{r_{3,\beta}}
	\end{equation}
	are the terms such that \(\deg \alpha + \deg \gamma + \deg T_a = 2\dim \ModuliStackMaps{0,3}{X, \beta}\).
	In this case by the Extension Axiom~\ref{axiom:Extension}, super Gromov--Witten invariants agree with Gromov--Witten invariants up to a polynomial prefactor
		\begin{align}
			p(\alpha \star \gamma)
			&= \sum_{\beta \in H_2(X)} \left<GW_{0,3}^{X, \beta}\right> (p(\alpha), p(\gamma), p(T_a)) T^a q^\beta \\
			&= p(\alpha)\diamond p(\gamma).
			\qedhere
		\end{align}
\end{proof}
\begin{rem}
	There is no known unit for the super quantum product \(\star\).
	Recall that \(T_0=1\) is the unit for the non-super quantum product~\(\diamond\).
	But for the super quantum product we have
	\begin{equation}
		\begin{split}
			\alpha\star T_0
			={}& \sum_{\beta \in H_2(X)}\left<SGW_{0,3}^{X, \beta}\right> (\overline{\alpha}, T_0, T_a) T^a q^\beta \kappa^{r_{3,\beta}}\\
			={}& \left(\int_X \alpha \cup T_a\right) T^a \\
				&+ \sum_{\beta\neq 0}\frac1{\kappa} \left(\int_{\ModuliStackMaps{0,3}{X, \beta}} f^*\frac{\ev_1^*\alpha\cup \ev_2^*T_a}{\overline{N}_{2,\beta}} \cup \sum_{j>0}{\left(-\frac{c_1(\UniversalSpinorBundleM_3)}{\kappa}\right)}^j\right) q^\beta T^a \kappa^{r_{3,\beta}}
		\end{split}
	\end{equation}
	by the Point-Mapping Axiom~\ref{axiom:MappingToPoint} and the Fundamental class Axiom~\ref{axiom:FundamentalClass}.
	The first summand is equal to \(\alpha\) by the definition of \(T^a\).
	That is, the super quantum product of \(\alpha\) with \(T_0\) yields \(\alpha\) up to lower order terms in \(\kappa\).
\end{rem}
\begin{lemma}
	Let \(g\) be the \(R\otimes \C[[\kappa^{-1}]]\)-bilinear form
	\begin{equation}
		g\colon H^\bullet(X)\otimes R\otimes \C[[\kappa^{-1}]] \times H^\bullet(X)\otimes R\otimes \C[[\kappa^{-1}]]\to R\otimes \C[[\kappa^{-1}]]
	\end{equation}
	such that \(g(\overline{\alpha}\otimes 1\otimes 1, \overline{\gamma}\otimes 1\otimes 1) = \int_X \overline{\alpha}\cup\overline{\gamma}\).
	Then
	\begin{align}
		g(\alpha\star \gamma, \delta) &= g(\alpha, \gamma\star \delta) = \sum_{\beta \in H_2(X)} \left<SGW_{0,3}^{X, \beta}\right>(\alpha, \gamma, \delta) q^\beta \kappa^{3,\beta} \\
		\int_X \alpha\star \gamma &= g(T_0\star \alpha, \gamma).
	\end{align}
\end{lemma}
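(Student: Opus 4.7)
The plan is to check the first equality by unfolding the defining formula~\eqref{defn:SuperQuantumProduct} of $\star$ and exploiting the non-degeneracy of the Poincar\'e pairing $g$ together with the $\Sigma_{3}$-equivariance of super Gromov--Witten invariants (Axiom~\ref{axiom:SkEquivariance}); the second equality will then follow as a direct consequence of the first. By $R\otimes \C[[\kappa^{-1}]]$-bilinearity of $g$ and $R\otimes\C[[\kappa^{-1}]]$-linearity of $\star$ in each slot, it suffices to verify the identities on classes of the form $\alpha=\overline\alpha\otimes 1\otimes 1$, and similarly for $\gamma,\delta$.

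First I would substitute~\eqref{defn:SuperQuantumProduct} into $g(\alpha\star\gamma,\delta)$ to obtain
\[
g(\alpha\star\gamma,\delta)=\sum_{\beta}\left<SGW_{0,3}^{X,\beta}\right>(\overline\alpha,\overline\gamma,T_{a})\,g(T^{a},\overline\delta)\,q^{\beta}\kappa^{r_{3,\beta}}.
\]
Writing $\overline\delta=\delta^{c}T_{c}$, the identity $g(T^{a},T_{c})=g^{ab}g_{bc}=\delta^{a}_{c}$ forces $g(T^{a},\overline\delta)=\delta^{a}$, and multilinearity of $\left<SGW\right>$ collapses the $a$-sum to $\left<SGW_{0,3}^{X,\beta}\right>(\overline\alpha,\overline\gamma,\overline\delta)$, producing the claimed right-hand side. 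Repeating the same expansion for $g(\alpha,\gamma\star\delta)$ yields $\sum_{\beta}\left<SGW\right>(\overline\gamma,\overline\delta,T_{a})\,g(\overline\alpha,T^{a})\,q^{\beta}\kappa^{r_{3,\beta}}$; using super-symmetry of $g$ (a consequence of super-commutativity of the cup product on the non-super variety $X$) to identify $g(\overline\alpha,T^{a})$ with $\alpha^{a}$ up to a sign, and then cyclically reordering the arguments of $\left<SGW\right>$ via Axiom~\ref{axiom:SkEquivariance}, one arrives at the same expression. The only place where care is required is sign bookkeeping: the signs from super-symmetry of $g$ must cancel exactly those from the supercommutative reordering of $\left<SGW\right>$, which is the standard Frobenius-compatibility check.

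The second equality follows immediately from the first: since $T_{0}=1\in H^{0}(X)$, one has $\int_{X}\eta=g(T_{0},\eta)$ for every $\eta\in H^{\bullet}(X)\otimes R\otimes\C[[\kappa^{-1}]]$, and in particular $\int_{X}\alpha\star\gamma=g(T_{0},\alpha\star\gamma)$; applying the Frobenius identity just established, with $T_{0}$ in the left slot, gives $g(T_{0},\alpha\star\gamma)=g(T_{0}\star\alpha,\gamma)$, as required. The only genuine obstacle I anticipate is the sign juggling in the super setting; all truly geometric input has already been absorbed into Definition~\ref{defn:SGWNumbers} and the Kontsevich--Manin type axioms of Section~\ref{SSec:Axioms}.
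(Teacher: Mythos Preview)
Your proof is correct and follows the standard Frobenius-algebra verification: expand the definition of $\star$, use that $\{T_a\}$ and $\{T^a\}$ are $g$-dual bases, and invoke the $\SymG{3}$-equivariance Axiom~\ref{axiom:SkEquivariance} to cycle the arguments of $\left<SGW_{0,3}^{X,\beta}\right>$; the second identity is then an immediate specialisation with $T_0$ in the first slot. The paper states this lemma without proof, so there is nothing further to compare against.
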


 \counterwithin{equation}{subsection}

\section{Super Gromov--Witten invariants of \texorpdfstring{\(\ProjectiveSpace{n}\)}{PCn}}\label{Sec:SGWOfPn}

In this section we explain a method that allows, in principle, to calculate arbitrary super Gromov--Witten invariants of projective spaces \(\ProjectiveSpace{n}\).
The idea is to use an additional geometric torus localization with respect to the torus action of \(T={\left(\C^*\right)}^{n+1}\) acting on \(\ProjectiveSpace{n}\) by rescaling the projective coordinates.
Fixed points and normal bundles of the induced torus action on the stable maps moduli spaces \(\ModuliStackMaps{0,k}{\ProjectiveSpace{n}, \beta}\) have been studied in the important paper~\cite{K-ERCVTA}.
As the stable maps which are invariant under the torus action are of fixed tree types we can use the gluing properties of SUSY normal bundles to reduce to simple building blocks which can be evaluated.
Expressions for super Gromov--Witten invariants are then obtained by working with sums of quotients of polynomials in the torus characters.
We give explicit calculations of super Gromov--Witten invariants of \(\ProjectiveSpace{1}\) in degree one and with one, two or three marked points.
For \(n\leq5\) we list super Gromov--Witten invariants obtained with the help of computer algebra software.

In Section~\ref{SSec:GeometricLocalization} we discuss how torus actions on the target \(X\) induce torus actions on the moduli spaces of super stable maps and the normal bundles.
Section~\ref{SSec:TorusActionOnPn} sets notation for the torus action on \(\ProjectiveSpace{n}\) and recalls an enumeration of fixed loci in the moduli spaces of stable maps via graphs.
In Section~\ref{SSec:EquivariantEulerClassOfSUSYNormalBundle} we calculate equivariant Euler class of the SUSY normal bundle restricted to fixed loci with particularly simple graphs.
In Sections~\ref{SSec:OnePointSGWofPnDeg1}--\ref{SSec:ThreePointSGWofPnDeg1} we calculate super Gromov--Witten invariants of \(\ProjectiveSpace{n}\) with one, two or three marked points for degree one.

\subsection{Geometric localization}\label{SSec:GeometricLocalization}
Assume that the target variety \(X\) is equipped with an algebraic action of of a torus \(T={\left(\C^*\right)}^m\):
\begin{equation}
	\begin{split}
		a\colon T\times X &\to X\\
		(t, x) &\mapsto a_t(x)
	\end{split}
\end{equation}
For fixed \(t\in T\) we obtain an automorphism \(a_t\colon X\to X\).

For any stable map \(\phi\colon C\to X\) the composition~\(a_t\circ \phi\) is again a stable map.
Hence we obtain a \(T\)-action on the moduli spaces \(\ModuliStackMaps{0,k}{X,\beta}\) which we denote again by \(a\).
Similarly, for any stable spin map \(\phi\colon C\to X\) from a prestable spin curve \(C\) the composition \(a_t\circ \phi\) is again a stable spin map.
The induced action on the moduli stack of stable spin maps commutes with the forgetful maps
\begin{diag}\matrix[mat](m) {
		T\times \ModuliStackSpinMaps{0,k}{X,\beta} & \ModuliStackSpinMaps{0,k}{X,\beta}\\
		T\times \ModuliStackMaps{0,k}{X,\beta} & \ModuliStackMaps{0,k}{X,\beta}\\
	} ;
	\path[pf] {
		(m-1-1) edge node[auto]{\(a\)} (m-1-2)
			edge node[auto,swap]{\(\id\times F\)} (m-2-1)
		(m-1-2) edge node[auto]{\(F\)} (m-2-2)
		(m-2-1) edge node[auto]{\(a\)} (m-2-2)
	};
\end{diag}
because the torus \(T\) does not act on the spinor bundles.

For any twisted spinor \(\psi\in H^0\left(\dual{S}_C\otimes\phi^*\tangent{X}\right)\), the differential of \(a_t\) induces a map
\begin{equation}
	\id_{\dual{S}_C\otimes \phi^*\differential{a_t}}\colon H^0\left(\dual{S}_C\otimes\phi^*\tangent{X}\right)\to H^0\left(\dual{S}_C\otimes {\left(a_t\circ \phi\right)}^*\tangent{X}\right)
\end{equation}
between the holomorphic sections of twisted spinors along \(\phi\) and twisted spinors along \(a_t\circ \phi\).
Applying this map to the summand in the middle term of~\eqref{SES:DefnNCphiB}, we see that \(\overline{N}_{k, \beta}\) is an equivariant bundle over \(a\colon T\times\ModuliStackSpinMaps{0,k}{X,\beta}\to \ModuliStackSpinMaps{0,k}{X,\beta}\).

The fixed point locus of the action \(a\) on \(\ModuliStackMaps{0,k}{X,\beta}\) is a union of smooth connected components \(M_\Gamma\) indexed by \(\Gamma\in I\) some finite set.
We denote by \(i_\Gamma\colon M_\Gamma\to \ModuliStackMaps{0,k}{X,\beta}\) the inclusion maps and by \(N_\Gamma\) its normal bundle.
As the \(T\)-action on stable spin maps does not operate on spinor bundles, the fixed points of the action \(a\colon \ModuliStackSpinMaps{0,k}{X,\beta}\to \ModuliStackSpinMaps{0,k}{X,\beta}\) are precisely the preimage \(M_\Gamma^{spin} = M_\Gamma \times_{\ModuliStackMaps{0,k}{X,\beta}}\ModuliStackSpinMaps{0,k}{X,\beta}\) of \(M_\Gamma\) under \(F\).
The inclusion \(M^{spin}_\Gamma\to \ModuliStackSpinMaps{0,k}{X,\beta}\) is likewise denoted \(i_\Gamma\) and the normal bundle is isomorphic to \(F^*N_\Gamma\).
This implies \(e^T(N_\Gamma) = e^T(F^*N_\Gamma)\in H_T^\bullet\left(\ModuliStackMaps{0,k}{X,\beta}\right)\).

Following the calculations explained in~\cite[Section~9.1.3]{CK-MSAG}, we can calculate super Gromov--Witten invariants using localization with respect to the \(T\)-action as follows:
\begin{equation}\label{eq:SGWviaLocalization}
	\begin{split}
		\MoveEqLeft
		\left<SGW_{0,k}^{X, \beta}\right>(\alpha_1, \dotsc, \alpha_k)
		= \int_{\ModuliStackMaps{0,k}{X,\beta}} \frac{\ev_1^*\alpha_1\cup\dotsm\cup \ev_k^*\alpha_k}{e^K(\overline{N}_{k, \beta})} \\
		&= \int_{\ModuliStackSpinMaps{0,k}{X,\beta}} \frac{\ev_1^*\alpha_1\cup\dotsm\cup \ev_k^*\alpha_k}{2e^K(\overline{N}_{k, \beta})}\\
		&= \sum_{\Gamma\in I} \left.\int_{M^{spin}_\Gamma}\left(\frac{i_\Gamma^*\ev_1^*\alpha_1\cup\dotsm \cup i_\Gamma^*\ev_k^*\alpha_k}{o_\Gamma e^{K\times T}(i_\Gamma^*\overline{N}_{k, \beta}) e^T(N_\Gamma)}\right)\right|_{d_p=-r_{k, \beta}-d_{k, \beta}+\frac12\deg\alpha} \\
		&= \sum_{\Gamma\in I} \left.\int_{M_\Gamma}\left(\frac{i_\Gamma^*\ev_1^*\alpha_1\cup\dotsm \cup i_\Gamma^*\ev_k^*\alpha_k}{e^{K\times T}(i_\Gamma^*\overline{N}_{k, \beta}) e^T(N_\Gamma)}\right)\right|_{d_p=-r_{k, \beta}-d_{k, \beta}+\frac12\deg\alpha}.
	\end{split}
\end{equation}
Here the restriction to terms of polynomial degree \(d_p=-r_{k, \beta}-d_{k, \beta}+\frac12\deg\alpha\) in \(\kappa\) selects the terms of top cohomological degree in the integral over \(\ModuliStackMaps{0,k}{X,\beta}\).
As in Definition~\ref{defn:SGWNumbers}, the order \(o_\Gamma\) of the isotropy group of the generic point of \(M^{spin}_\Gamma\) appearing in the denominator cancels against \([M^{spin}_\Gamma] = o_\Gamma[M_\Gamma]\).

\begin{rem}
	The \(T\)-action on \(\ModuliStackMaps{0,k}{X,\beta}\) and \(\overline{N}_{k, \beta}\) given above is motivated by the component field formalism for super \(\targetACI\)-holomorphic curves developed in~\cite{KSY-SJC}:
	Let \(\Phi\colon \ProjectiveSpace[\C]{1|1}\to X\) be a super \(\targetACI\)-holomorphic curve with component fields \((\varphi, \psi, F=0)\) with respect to \(i\colon \ProjectiveSpace[\C]{1}\to\ProjectiveSpace[\C]{1|1}\).
	Then, for every \(t\in T\), also \(\Phi_t = a_t\circ \Phi\) is a super \(\targetACI\)-holomorphic curve and its component fields are given by
	\begin{equation}\label{eq:TorusActionComponentFields}
		\begin{aligned}
			\varphi_t &= a_t \circ \Phi \circ i = a_t \circ \varphi \\
			\psi_t &= i^*\differential{\left(a_t \circ \Phi\right)}|_\cD
			= i^*a_t^*\differential{\Phi}\circ i^*\differential{\Phi}|_\cD
			= \varphi^*\differential{a_t}\circ \psi
		\end{aligned}
	\end{equation}
	Here \(\varphi_t\colon \ProjectiveSpace[\C]{1}\to X\) is a \(\targetACI\)-holomorphic curve and \(\psi_t \in\VSec{\dual{S}_C\otimes\varphi_t^*\tangent{X}}\) is a holomorphic twisted spinor.
	The map \(\Phi\mapsto a_t\circ \Phi\) induces an action of \(T\) on the moduli spaces \(\SmModuliSpaceSuperJMaps{X, \beta}\), \(\SmModuliSpaceSuperMaps{0,k}{X, \beta}\) and \(\SmModuliSpaceSuperMaps{\tau}{X, \beta}\).
	This action commutes with the action of \(K\) given by the rescaling of the odd directions.

	In the same way, there should be an algebraic \(T\)-action on the moduli stack \(\ModuliStackSuperMaps{0,k}{X,\beta}\) of algebraic super stable maps from Conjecture~\ref{conj:ModuliStackSuperStableMaps}.
	The inclusion \(\ModuliStackSpinMaps{0,k}{X,\beta}\to \ModuliStackSuperMaps{0,k}{X,\beta}\) is then \(T\)-equivariant and consequently, the normal bundle \(\overline{N}_{k,\beta}\) is an equivariant bundle.
\end{rem}

\subsection{Torus action on \texorpdfstring{\(\ProjectiveSpace{n}\)}{PnC}}\label{SSec:TorusActionOnPn}
In this section we describe an action of \(T={\left(\C^*\right)}^{n+1}\) on \(\ProjectiveSpace{n}\) and recall from~\cite{K-ERCVTA} that the fixed point loci of the induced action on stable maps moduli spaces are classified by certain labeled graphs.

Before turning to the torus action, let us fix the following notation:
The cohomology ring of \(\ProjectiveSpace{n}\) is a polynomial ring in one generator \(\Lambda\in H^2(\ProjectiveSpace{n}, \Z)\), such that
\begin{equation}
	H^*(\ProjectiveSpace{n}, \Z) = \faktor{\Z[\Lambda]}{\Lambda^{n+1}}.
\end{equation}
The generator \(\Lambda\) is the Poincaré dual of the hyperplane class.
Similarly, we denote the generator of \(H^2(\ProjectiveSpace{1}, \Z)\) by \(\lambda\).

For \(\beta=d[l]\) that is the \(d\)-fold multiple of the class of a line in \(\ProjectiveSpace{n}\), write \(\ModuliStackMaps{0,k}{\ProjectiveSpace{n},d}\) for the moduli spaces of stable maps of degree \(d\) in \(\ProjectiveSpace{n}\) with \(k\) marked points.
Similarly, we write \(\overline{N}_{k,d}\) for the normal bundle over \(\ModuliStackMaps{0,k}{\ProjectiveSpace{n},d}\).
Recall that \(\left<c_1\left(\tangent{\ProjectiveSpace{n}}\right), d[l]\right>=d(n+1)\) and hence the dimension of \(\ModuliStackMaps{0,k}{\ProjectiveSpace{n},d}\) and the rank of \(\overline{N}_{k,d}\) are given by
\begin{align}
	d_{k,d} &= n + d(n+1) + k - 3, &
	r_{k,d} &= d(n+1) + k - 2.
\end{align}

In the remainder of Section~\ref{Sec:SGWOfPn}, we use the action of the torus \(T={\left(\C^*\right)}^{n+1}\) on \(\ProjectiveSpace{n}\) that acts on homogeneous coordinates via
\begin{equation}\label{eq:TorusActionOnPN}
	\begin{split}
		a\colon {\left(\C^*\right)}^{n+1}\times\ProjectiveSpace{n} &\to \ProjectiveSpace{n}\\
		\left((t_0, \dotsc, t_n), [X^0: \dots :X^n]\right) &\mapsto \left[\frac{X^0}{t_0}: \dots :\frac{X^n}{t_n}\right].
	\end{split}
\end{equation}
We follow the notation introduced in~\cite[Chapter~9.2]{CK-MSAG}:
Fixed points of the \(T\)-action are called \(q_0, \dotsc, q_n\) where \(q_i\) is the point where all homogeneous coordinates except the \(i\)-th vanish.

The weights corresponding to the coordinates of \(T={\left(\C^*\right)}^{n+1}\) are denoted \(\tau_0,\dotsc, \tau_n\).
That is, \(H_T^\bullet(\pt)=\C[\tau_0, \dotsc, \tau_n]\).
In particular, the one-dimensional representation \((t_0,\dotsc, t_n)\cdot v = t_0^{d_0}\cdot \dotsm\cdot t_n^{d_n}\cdot v\) has weight \(-d_0\tau_0 - \dotsb - d_n\tau_n\).

The action of \(T\) induces an action on the moduli spaces \(\ModuliStackMaps{0,k}{\ProjectiveSpace{n},d}\) of stable maps in \(\ProjectiveSpace{n}\) by composition.
Stable maps fixed under the \(T\)-action map every marked point to one of the fixed points and components of the curve are either mapped to a fixed point or a coordinate line connecting the fixed points.
It was worked out in~\cite{K-ERCVTA} that stable curves fixed under the \(T\)-action can be characterized by a labeled graph \(\Gamma\) as follows:
\begin{itemize}
	\item
		vertices \(v\) of \(\Gamma\) correspond to the fixed points \(q_i\) that are in the image of the stable map.
		The vertex \(v\) is labeled by the number \(i_v\) of the fixed point it corresponds to and the set of indices of the marked point that map to \(q_i\).
	\item
		two vertices \(v\) and \(v'\) are connected by an edge \(e\) if the coordinate line between \(q_{i_v}\) and \(q_{i_v'}\) is in the image of the stable map.
		The edge is labeled by the degree of the map mapping to the coordinate line.
\end{itemize}

Let \(\phi\colon C\to \ProjectiveSpace{n}\) be a stable map of genus zero fixed under the torus action and with graph \(\Gamma\).
Then the curve \(C\) decomposes into irreducible components as follows:
\begin{itemize}
	\item
		one irreducible component per edge which is mapped to a coordinate line in \(\ProjectiveSpace{n}\).
		Up to a choice of identification with \(\ProjectiveSpace{1}\), the points \(0\) and \(\infty\) are mapped to torus fixed points of \(\ProjectiveSpace{n}\).
	\item
		if for a given vertex the number of outgoing edges plus marked points at that vertex are larger than two, there are components which are contracted under \(\phi\) to a fixed point of the \(T\)-action.
\end{itemize}
The graph describes the non-contracted components uniquely.
Consequently, the fixed point loci \(M_\Gamma\subset\ModuliStackMaps{0,k}{\ProjectiveSpace{n},d}\) of all stable maps fixed under the torus action and of type~\(\Gamma\) is roughly a product of moduli spaces of stable curves.
For the precise description of the stack structure of \(M_\Gamma\) we refer to~\cites{K-ERCVTA}{CK-MSAG}{GP-LVC}.

The equivariant Euler classes of the normal bundles \(N_\Gamma\) to the inclusions \(i_\Gamma\colon M_\Gamma\to \ModuliStackMaps{0,k}{\ProjectiveSpace{n},d}\) have been worked out in~\cite{K-ERCVTA} as well, see also~\cite[Theorem~9.2.1]{CK-MSAG}.

\begin{ex}\label{ex:TinvStableMapskMarkedPoints}
	\(T\)-invariant stable maps \(\ProjectiveSpace{1}\to \ProjectiveSpace{n}\) of degree one with \(k\) marked points are characterized by graphs of the form
	\begin{Tgraph}
		\node[vertex, label=above:\(A\)] (1) {$a$};
		\node[vertex, label=above:\(B\)] (2) [right of=1] {$b$};
		\draw (1) -- node [above]{\(d=1\)} (2);
	\end{Tgraph}
	Here \(a\) and \(b\) are elements of \(\Set{0,\dotsc, n}\) with \(a<b\) and \(A\), \(B\subset\Set{1,\dotsc,k}\) are a partition, that is  \(B=\Set{1,\dotsc, k}\setminus A\).
	We denote the graph above by \(\Gamma^{k,1}_{a,b, A}\).
	More generally, we denote the above graph with general degree \(d\) by \(\Gamma^{k,d}_{a,b,A}\).

	The graph \(\Gamma^{k,d}_{a,b,A}\) represents \(T\)-invariant stable maps \(\phi\) with \(k\)-marked points where the marked points \(p_i\) for \(i\in A\) are mapped to the fixed point \(q_a\in\ProjectiveSpace{n}\) and the marked points~\(p_j\) for \(j\in B\) are mapped to \(q_b\in\ProjectiveSpace{n}\).
	Hence, if \(A\) or \(B\) contain more than one point, the domain nodal curve of \(\phi\) needs to consist of several components.
	All of those components except one collapse either on \(q_a\) or \(q_b\).
	The remaining component is a degree~\(d\) map onto the line going through \(q_a\) and \(q_b\).
\end{ex}

\subsection{Equivariant Euler class of SUSY normal bundle for the moduli space \texorpdfstring{\(M_\Gamma\)}{MGamma}}\label{SSec:EquivariantEulerClassOfSUSYNormalBundle}
In this section we show how to calculate the equivariant Euler classes
\begin{equation}
	e^{K\times T}\left(i_\Gamma^*\overline{N}_{k,d}\right)
\end{equation}
of the SUSY normal bundles restricted to fixed point loci \(i_\Gamma\colon M^{spin}_\Gamma\to \ModuliStackSpinMaps{0,k}{\ProjectiveSpace{n},d}\).

If \(\Gamma=\Gamma^k_a\) is the graph that describes the moduli space of \(k\) marked points mapped to the fixed point \(q_a\in \ProjectiveSpace{n}\) by a degree zero map, the fixed point spaces are \(M_\Gamma=\ModuliStackCurves{0,k}\) and \(M^{spin}_\Gamma = \ModuliStackSpinCurves{0,k}\).
Hence the equivariant Euler classes of \(\overline{N}_k\) coincide with the ones obtained in Section~\ref{SSec:SGWIofaPoint}.

If instead \(\Gamma\) is one of the following graphs
\begin{align}
	\Gamma^{1,d}_{a,b,\emptyset},&&
	\Gamma^{1,d}_{a,b,\Set{1}},&&
	\Gamma^{2,d}_{a,b,\Set{1}},&&
	\Gamma^{2,d}_{a,b,\Set{2}},
\end{align}
describing a map of degree \(d\) from \(\ProjectiveSpace{1}\) to \(\ProjectiveSpace{n}\) going through the \(T\)-fixed points \(q_a\) and \(q_b\), the moduli space \(M_\Gamma\) is a point and we have described the SUSY normal bundles \(i_\Gamma^*\overline{N}_{k,d}\) in Example~\ref{ex:NCphiPn}.
In order to describe the equivariant Euler classes of the SUSY normal bundles \(i_\Gamma^*\overline{N}_{k,d}\) it remains to calculate the \(T\)-action.

For an arbitrary graph \(\Gamma\) describing a fixed point locus of the \(T\)-action the moduli space~\(M_\Gamma\) is a product of the moduli spaces corresponding to the above elementary graphs.
By the gluing properties of the SUSY normal bundles, see Proposition~\ref{prop:GluingNCphi}, the normal bundles add up and hence the equivariant Euler classes multiply.

\begin{prop}\label{prop:EqEulerClassN2d}
	Let \(\Gamma=\Gamma^{2,d}_{a,b,\Set{1}}\) or \(\Gamma=\Gamma^{2,d}_{a,b,\Set{2}}\).
	Then \(M_\Gamma\) is a point and the equivariant Euler class is given by
	\begin{equation}
		\begin{split}
			e^{K\times T}(i_\Gamma^*\overline{N}_{2,d})
			={}& \prod_{0\leq q\leq 2d-1}\left(\kappa + \frac{2d-2q-1}{2d}\tau_a - \frac{2d-2q-1}{2d}\tau_b\right)\\
				&\cdot\prod_{\substack{0\leq m \leq n\\m\neq a,b}}\prod_{0\leq q\leq d-1}\left(\kappa + \frac{2q-1}{2d}\tau_a - \frac{2d-2q-1}{2d} \tau_b + \tau_m\right).
		\end{split}
	\end{equation}
\end{prop}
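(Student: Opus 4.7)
The plan is to reduce the computation to the explicit description of the fiber of $\overline{N}_{2,d}$ from Example~\ref{ex:NCphiPn} and to compute the $K\times T$-weight of each basis element directly.

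First, I observe that $\Gamma$ represents a unique stable spin map (up to a $\Integers/d$-automorphism of the cover) $\phi\colon\ProjectiveSpace{1}\to L\subset\ProjectiveSpace{n}$ given by the degree-$d$ map onto the coordinate line $L$ through $q_a,q_b$ with marked points at $0$ and $\infty$, so $M_\Gamma$ is a point. Since $k=2$, the map $c_*S_C\to p_1^*S_C\oplus p_2^*S_C$ in~\eqref{SES:DefnNCphiB} is an isomorphism (both sides are two-dimensional, and the evaluation map is non-degenerate at $0$ and $\infty$ since $s_1$ and $z_1 s_1$ vanish at $\infty$ and $0$ respectively); the short exact sequence therefore splits $T$-equivariantly, yielding $N_{(C,\phi)}\cong H^0(\dual{S}_{\ProjectiveSpace{1}}\otimes\phi^*\tangent{\ProjectiveSpace{n}})$. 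The splitting $\phi^*\tangent{\ProjectiveSpace{n}}\cong\cO(2d)\oplus\cO(d)^{\oplus(n-1)}$ of Example~\ref{ex:NCphiPn} then identifies the fiber with $H^0(\cO(2d-1))\oplus H^0(\cO(d-1))^{\oplus(n-1)}$ equipped with the explicit bases $\{z_1^q\dual{s}_1\otimes\phi^*\partial_{x_b^a}\}_{q=0}^{2d-1}$ and $\{z_1^q\dual{s}_1\otimes\phi^*\partial_{x_b^m}\}_{q=0}^{d-1}$, whose cardinalities $2d$ and $d(n-1)$ account for the two products in the claim.

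Next, I compute the $K\times T$-weight of each basis element. The $K$-action scales each fiber, contributing $\kappa$ to every equivariant Chern root. For the $T$-action at the fixed point $[\phi]$, deriving from $a_t\circ\phi=\phi\circ g_t$ gives the reparametrization $g_t(z_1)=u\,z_1$ with $u=(t_b/t_a)^{1/d}$, and a spin lift $s_{g_t}\colon S_{\ProjectiveSpace{1}}\to g_t^*S_{\ProjectiveSpace{1}}$ is fixed up to sign by $s_{g_t}(s_1)=\pm u^{1/2}\,g_t^*s_1$ (using $s_1\otimes s_1=\partial_{z_1}$ and $dg_t(\partial_{z_1})=u\partial_{z_1}$). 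The action of $t\in T$ on $N_{(C,\phi)}$ is the composition of $Da_t$ applied to the target factor with the pullback along the isomorphism of stable spin maps $(g_t,s_{g_t})\colon(C,a_t\phi)\to(C,\phi)$. Collecting the contributions---the characters $t_b/t_a$ and $t_b/t_m$ from $Da_t(\partial_{x_b^a})$ and $Da_t(\partial_{x_b^m})$, the character $u^q$ from $g_t^*(z_1^q)$, and $u^{1/2}$ from identifying $\dual{s}_1$ through $s_{g_t}^\vee$---and translating into weights via the convention of Section~\ref{SSec:TorusActionOnPn}, one reads off the factors appearing in the two products.

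The main technical subtlety is the careful bookkeeping of signs and the direction of the pullback: the $\pm 1$ ambiguity in the spin lift cancels in the equivariant Euler class, but the direction in which $(g_t,s_{g_t})$ enters the composition must match the convention for the equivariant structure on $\overline{N}_{2,d}$ with respect to the base action on $\ModuliStackSpinMaps{0,2}{\ProjectiveSpace{n},d}$. Once this is fixed, the two cases $\Gamma=\Gamma^{2,d}_{a,b,\{1\}}$ and $\Gamma=\Gamma^{2,d}_{a,b,\{2\}}$ are related by the automorphism $[Z^1:Z^2]\mapsto[Z^2:Z^1]$ of $\ProjectiveSpace{1}$, which swaps the roles of the two marked points and the charts $V_1,V_2$ of Example~\ref{ex:NCphiPn}; this exchanges $q\leftrightarrow 2d-1-q$ in the first product and $q\leftrightarrow d-1-q$ in the second, under which the multiset of $K\times T$-weights, and hence the equivariant Euler class, is preserved.
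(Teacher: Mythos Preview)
Your proposal is correct and follows essentially the same route as the paper: both identify the fiber of \(i_\Gamma^*\overline{N}_{2,d}\) with \(H^0(\dual{S}_{\ProjectiveSpace{1}}\otimes\phi^*\tangent{\ProjectiveSpace{n}})\) and compute the \(K\times T\)-weights on the explicit basis of Example~\ref{ex:NCphiPn} via the combined action of \(Da_t\) and the reparametrizing spin automorphism \((g_t,s_{g_t})\). The only difference is organizational: the paper writes \(e^{K\times T}(N_{(C,\phi)})\) as the quotient \(e(p_1^*S_C)\,e(p_2^*S_C)\,e(H^0(\dual{S}_C\otimes\phi^*\tangent{\ProjectiveSpace{n}}))/e(H^0(S_C))\) and then observes the cancellation, whereas you observe the equivariant isomorphism \(H^0(S_C)\xrightarrow{\sim} p_1^*S_C\oplus p_2^*S_C\) up front and skip directly to the surviving factor.
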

\begin{proof}
	We treat the case \(\Gamma=\Gamma^{2,d}_{a,b,\Set{1}}\) first.
	That is, \(C=\ProjectiveSpace{1}\) with the two marked points \(p_1=0=[0:1]\), \(p_2=\infty=[1:0]\) and the map is
	\begin{equation}
		\begin{split}
			\phi\colon \ProjectiveSpace{1} &\to \ProjectiveSpace{n} \\
			\left[Z^1:Z^2\right] &\mapsto \left[0: \dots :0: {\left(Z^1\right)}^d :0: \dots :0: {\left(Z^2\right)}^d :0: \dots :0\right]
		\end{split}
	\end{equation}

	By Definition \(i_\Gamma^*\overline{N}_{2,d}=N_{(C,\phi)}\) is given by the short exact sequence
	\begin{diag}\matrix[mat](m) {
			0 & H^0\left(S_C\right) & p_1^*S_C\oplus p_2^*S_C\oplus H^0\left(\dual{S}_C\otimes \phi^*\tangent{\ProjectiveSpace{n}}\right) & N_{(C,\phi)} & 0 \\
		} ;
		\path[pf] {
			(m-1-1) edge (m-1-2)
			(m-1-2) edge (m-1-3)
			(m-1-3) edge (m-1-4)
			(m-1-4) edge (m-1-5)
		};
	\end{diag}
	The equivariant Euler class of \(N_{(C,\phi)}\) is consequently given by
	\begin{equation}\label{eq:EqEulerClassN2dasProduct}
		e^{K\times T}(i_\Gamma^*\overline{N}_{(C,\phi)})
		= \frac{e^{K\times T}(p_1^*S_C)\cup e^{K\times T}(p_2^*S_C) \cup e^{K\times T}\left(H^0\left(\dual{S}_C\otimes \phi^*\tangent{\ProjectiveSpace{n}}\right)\right)}{e^{K\times T}(H^0(S_C))}
	\end{equation}

	The torus \(K\) acts on all summands by rescaling.
	The torus \(T\) acts on the summand \(H^0(\dual{S}_C\otimes \phi^*\tangent{\ProjectiveSpace{n}})\) by
	\begin{equation}
		\id_{\dual{S}_C\otimes \phi^*\differential{a_t}}\colon H^0\left(\dual{S}_C\otimes\phi^*\tangent{\ProjectiveSpace{n}}\right)\to H^0\left(\dual{S}_C\otimes {\left(a_t\circ \phi\right)}^*\tangent{\ProjectiveSpace{n}}\right)
	\end{equation}
	while \(p_1^*S_C\) and \(p_2^*S_C\) as well as \(H^0(S_C)\) are invariant.
	This induces the \(K\times T\)-action on \(N_{(C,\phi)}\).

	However, the \(T\)-action does not map \(H^0(\dual{S}_C\otimes\phi^*\tangent{X})\) to itself because \(a_t\circ \phi\) differs from \(\phi\) by a Möbius transformation \(\xi_t\) such that \(a_t\circ \phi=\phi\circ \xi_t\).
	We will see below that the combined action of \(a_t\) and \(\xi_t\) maps \(H^0(\dual{S}_C\otimes\phi^*\tangent{X})\) to itself and even preserves the direct sum decomposition
	\begin{equation}
		H^0\left(\dual{S}_C\otimes\phi^*\tangent{\ProjectiveSpace{n}}\right)
		= H^0\left(\cO(2d-1)\right) \oplus {\left(H^0\left(\cO(d-1)\right)\right)}^{n-1}
	\end{equation}
	from Example~\ref{ex:NCphiPn}.
	Even though the Möbius transformation \(\xi_t\) is \(T\)-dependent, its induced action on \(N_{(C,\phi)}\) is trivial as discussed in~\cite{KSY-TAMSSSMGZ}.
	Hence, it will be easier to use the combined action of \(a_t\) and \(\xi_t\) to read off the \(T\)-action on \(H^0(\dual{S}_C\otimes\phi^*\tangent{\ProjectiveSpace{n}})\) at the expense of considering a non-trivial \(T\)-action on \(H^0(S_C)\) and \(p_i^*S_C\).

	To be more explicit, we will use the notation for coordinates and local frames from Example~\ref{ex:NCphiPn}.
	The torus action \(a_t\colon \ProjectiveSpace{n}\to\ProjectiveSpace{n}\) defined in Equation~\eqref{eq:TorusActionOnPN} is given in the coordinates~\(x_l^q\) by \(a_t^\# x_l^q = \frac{t_l}{t_q}x_l^q\).
	Hence, \(a_t\circ \phi\) is given in local coordinates by
	\begin{align}
		\phi^\# a_t^\#x_b^a &= \frac{t_b}{t_a}{z_1}^d, &
		\phi^\# a_t^\#x_a^m &= 0,\text{ for }m\neq a,b, \\
		\phi^\# a_t^\#x_a^b &= \frac{t_a}{t_b}{\left(-z_2\right)}^d, &
		\phi^\# a_t^\#x_a^m &= 0,\text{ for }m\neq a, b.
	\end{align}
	Hence,
	\begin{equation}
		g_t =
		\begin{pmatrix}
			\sqrt[2d]{\frac{t_b}{t_a}} & 0 \\
			0 & \sqrt[2d]{\frac{t_a}{t_b}} \\
		\end{pmatrix}
	\end{equation}
	is an element of \(\SGL(2)\) such that the induced Möbius transformation \(\xi_t\) satisfies \(a_t\circ \phi = \phi\circ \xi_t\).
	In order to understand the combined action of \(a_t\) and \(\xi_t\) on \(N_{(C, \phi)}\) we need to make the combined action on \(p_i^*S_C\), \(H^0(S_C)\) and \(H^0(\dual{S}_C\otimes \phi^*\tangent{\ProjectiveSpace{n}})\) explicit.

	The Möbius transformation~\(\xi_t\colon \ProjectiveSpace{1}\to \ProjectiveSpace{1}\) induced from \(g_t\) is given in local coordinates by \(\xi_t^\# z_1=A^2z_1\) and \(\xi_t^\# z_2 = A^{-2}z_2\) where we write \(A=\sqrt[2d]{\frac{t_b}{t_a}}\) for simplicity.
	The Möbius transformation \(\xi_t\) preserves the two points \(0\) and \(\infty\) so that the triples \((0,\infty, a_t\circ\phi)\) and \((0,\infty, \phi)\) represent the same point in \(\ModuliStackMaps{0,2}{\ProjectiveSpace{n},d}\).

	The element \(g_t\) induces the map \(\sigma_t\colon S_C\to \xi_t^*S_C\) given by
	\begin{align}
		\sigma(s_1) &= A\xi_t^*s_1, &
		\sigma(s_2) &= A^{-1}\xi_t^*s_2.
	\end{align}
	The map \(\sigma\) induces the following maps \(p_i^*\sigma\colon p_i^*S_C\to p_i^*\xi_t^*S = p_i^*S_C\)
	\begin{align}
		p_1^*S &\to p_1^*S_C &
		p_2^*S &\to p_2^*S_C \\
		p_1^*s_1 &\mapsto A p_1^*s_1 &
		p_2^*s_2 &\mapsto A^{-1} p_1^*s_2
	\end{align}
	Consequently, their equivariant Euler classes are given by
	\begin{align}\label{eq:EqEulerClasspiS}
		e^{K\times T}(p_1^*S)
		&= \kappa + \frac1{2d}\tau_a - \frac1{2d}\tau_b, &
		e^{K\times T}(p_2^*S)
		&= \kappa - \frac1{2d}\tau_a + \frac1{2d}\tau_b.
	\end{align}

	Composing sections \(s\colon \ProjectiveSpace{1}\to S_C\) with \(\sigma\) gives a map \(H^0(S_C)\to H^0(\xi^*S_C)\).
	As \(\xi_t\) is an isomorphism, all sections of \(H^0(\xi_t^*S_C)\) can be obtained as pullbacks from sections \(\tilde{s}\in H^0(S_C)\).
	The resulting map wich sends a section \(s\in H^0(S_C)\) to \(\sigma\circ s\in H^0(S_C)\) and then to \(\tilde{s}\) is given by
	\begin{align}
		H^0(S_C) & \to H^0(S_C) \\
		s = \left(u z_1 + v\right)s_1
		&\mapsto
		\tilde{s} = \left(A^{-1} uz_1 + Av\right)s_1
	\end{align}
	Hence,
	\begin{equation}\label{eq:EqEulerClassH0S}
		e^{K\times T}(H^0(S_C)) = \left(\kappa - \frac1{2d}\tau_a + \frac1{2d}\tau_b\right)\left(\kappa + \frac1{2d}\tau_a - \frac1{2d}\tau_b\right).
	\end{equation}

	Let now \(\psi\in H^0(\dual{S}_C\otimes \phi^*\tangent{X})\) given locally by
	\begin{align}
		\psi(z_1) &= \sum_{q=0}^{2d-1}\psi^{ab}_q {z_1}^q \dual{s}_1\otimes\phi^*\partial_{x_b^a} + \sum_{m\neq a,b}\sum_{q=0}^{d-1}\psi^m_q{z_1}^q \dual{s}_1\otimes \phi^*\partial_{x_b^m}, \\
		\psi(z_2) &= -\sum_{q=0}^{2d-1}\psi^{ab}_q {\left(-z_2\right)}^{2d-1-q} \dual{s}_2\otimes\phi^*\partial_{x_a^b} + \sum_{m\neq a,b}\sum_{q=0}^{d-1}\psi^m_q{\left(-z_2\right)}^{d-1-q} \dual{s}_2\otimes \phi^*\partial_{x_a^m}.
	\end{align}
	Composing with \(\id_{\dual{S}_C}\otimes \phi^*\differential{a_t}\) yields \(\left(\id_{\dual{S}_C}\otimes \phi^*\differential{a_t}\right)\psi\in H^0(\dual{S}_C\otimes\phi^*a_t^*\tangent{\ProjectiveSpace{n}})\) given by
	\begin{align}
		\left(\id_{\dual{S}_C}\otimes \phi^*\differential{a_t}\right)\psi(z_1)
		={}& \sum_{q=0}^{2d-1}\psi^{ab}_q \frac{t_b}{t_a} {z_1}^q \dual{s}_1\otimes\phi^*a_t^*\partial_{x_b^a} + \sum_{m\neq a,b}\sum_{q=0}^{d-1}\psi^m_q \frac{t_b}{t_m} {z_1}^q \dual{s}_1\otimes \phi^*a_t^*\partial_{x_b^m}, \\
		\begin{split}
			\left(\id_{\dual{S}_C}\otimes \phi^*\differential{a_t}\right)\psi(z_2)
			={}& -\sum_{q=0}^{2d-1}\psi^{ab}_q \frac{t_a}{t_b} {\left(-z_2\right)}^{2d-1-q} \dual{s}_2\otimes\phi^*a_t^*\partial_{x_a^b} \\
				&+ \sum_{m\neq a,b}\sum_{q=0}^{d-1}\psi^m_q \frac{t_a}{t_m} {\left(-z_2\right)}^{d-1-q} \dual{s}_2\otimes \phi^*a_t^*\partial_{x_a^m}
		\end{split}
	\end{align}

	The action of \(g_t\) on \(\left(\id_{\dual{S}_C}\otimes \phi^*\differential{a_t}\right)\psi\in H^0(\dual{S}_C\otimes\phi^*a_t^*\tangent{\ProjectiveSpace{n}})\) is obtained in two steps.
	First, composing with \({\left(\dual{\sigma}\right)}^{-1}\otimes \id_{\phi^*a_t^*\tangent{\ProjectiveSpace{n}}}\) yields a section of \(H^0(\xi_t^*\dual{S}_C\otimes \phi^*a_t^*\tangent{\ProjectiveSpace{n}}) = H^0(\xi_t^*\left(\dual{S}_C\otimes \phi^*\tangent{\ProjectiveSpace{n}}\right))\).
	In a second step the unique \(\tilde{\psi}\in H^0(\dual{S}_C\otimes \phi^*\tangent{\ProjectiveSpace{n}})\) such that
	\begin{equation}
		\left({\left(\dual{\sigma}\right)}^{-1}\otimes \id_{\phi^*a_t^*\tangent{\ProjectiveSpace{n}}}\right)\left(\id_{\dual{S}_C}\otimes \phi^*\differential{a_t}\right)\psi
		= \xi_t^*\tilde{\psi}
	\end{equation}
	is identified.
	The resulting combined action of \(a_t\) and \(g_t\) on \(H^0(\dual{S}_C\otimes\phi^*\tangent{\ProjectiveSpace{n}})\) is then given by \(\psi\mapsto \tilde{\psi}\).

	In the above local expressions we have
	\begin{align}
		\begin{split}
			\MoveEqLeftR
			\left({\left(\dual{\sigma}\right)}^{-1}\otimes \id_{\phi^*a_t^*\tangent{\ProjectiveSpace{n}}}\right)\left(\id_{\dual{S}_C}\otimes \phi^*\differential{a_t}\right)\psi(z_1) \\
			={}& \sum_{q=0}^{2d-1}\psi^{ab}_q \frac{t_b}{t_a}\sqrt[2d]{\frac{t_a}{t_b}} {z_1}^q \xi_t^*\left(\dual{s}_1\otimes\phi^*\partial_{x_b^a}\right) \\
				&+ \sum_{m\neq a,b}\sum_{q=0}^{d-1}\psi^m_q \frac{t_b}{t_m}\sqrt[2d]{\frac{t_a}{t_b}} {z_1}^q \xi_t^*\left(\dual{s}_1\otimes \phi^*\partial_{x_b^m}\right),
		\end{split}\displaybreak[0]\\
		\begin{split}
			\MoveEqLeftR
			\left({\left(\dual{\sigma}\right)}^{-1}\otimes \id_{\phi^*a_t^*\tangent{\ProjectiveSpace{n}}}\right)\left(\id_{\dual{S}_C}\otimes \phi^*\differential{a_t}\right)\psi(z_2) \\
			={}& -\sum_{q=0}^{2d-1}\psi^{ab}_q \frac{t_a}{t_b}\sqrt[2d]{\frac{t_b}{t_a}} {\left(-z_2\right)}^{2d-1-q} \xi_t^*\left(\dual{s}_2\otimes\phi^\partial_{x_a^b}\right)\\
				&+ \sum_{m\neq a,b}\sum_{q=0}^{d-1}\psi^m_q \frac{t_a}{t_m}\sqrt[2d]{\frac{t_b}{t_a}} {\left(-z_2\right)}^{d-1-q} \xi_t^*\left(\dual{s}_2\otimes \phi^*\partial_{x_a^m}\right),
		\end{split}
	\end{align}
	and hence
	\begin{align}
		\begin{split}
			\tilde{\psi}(z_1)
			={}& \sum_{q=0}^{2d-1}\psi^{ab}_q t_b^{\frac{2d-2q-1}{2d}}t_a^{-\frac{2d-2q-1}{2d}} {z_1}^q \dual{s}_1\otimes\phi^*\partial_{x_b^a}\\
				&+ \sum_{m\neq a,b}\sum_{q=0}^{d-1}\psi^m_q t_m^{-1}t_a^{-\frac{2q-1}{2d}}t_b^{\frac{2d-2q-1}{2d}} {z_1}^q \dual{s}_1\otimes \phi^*\partial_{x_b^m},
		\end{split}\displaybreak[0]\\
		\begin{split}
			\tilde{\psi}(z_2)
			={}& -\sum_{q=0}^{2d-1}\psi^{ab}_q t_b^{\frac{2d-2q-1}{2d}}t_a^{-\frac{2d-2q-1}{2d}} {\left(-z_2\right)}^{2d-1-q} \dual{s}_2\otimes\phi^*\partial_{x_a^b} \\
				&+ \sum_{m\neq a,b}\sum_{q=0}^{d-1}\psi^m_q t_m^{-1}t_a^{-\frac{2q-1}{2d}}t_b^{\frac{2d-2q-1}{2d}} {\left(-z_2\right)}^{d-1-q} \dual{s}_2\otimes \phi^*\partial_{x_a^m}.
		\end{split}
	\end{align}
	Consequently, the equivariant Euler class of \(H^0(\dual{S}_C\otimes \phi^*\tangent{\ProjectiveSpace{n}})\) is given by
	\begin{equation}
		\begin{split}
			e^{K\times T}\left(H^0(\dual{S}_C\otimes \phi^*\tangent{\ProjectiveSpace{n}})\right)
			={}& \prod_{0\leq q\leq 2d-1}\left(\kappa + \frac{2d-2q-1}{2d}\tau_a - \frac{2d-2q-1}{2d}\tau_b\right)\\
				&\cdot\prod_{\substack{0\leq m \leq n\\m\neq a,b}}\prod_{0\leq q\leq d-1}\left(\kappa + \frac{2q-1}{2d}\tau_a - \frac{2d-2q-1}{2d} \tau_b + \tau_m\right).
		\end{split}
	\end{equation}
	The claim for \(\Gamma=\Gamma^{2,d}_{a,b,\Set{1}}\) now follows from \( e^{K\times T}(i_\Gamma^*\overline{N}_{2,d}) = e^{K\times T}\left(H^0(\dual{S}_C\otimes \phi^*\tangent{\ProjectiveSpace{n}})\right)\) because the contributions of \(H^0(S_C)\) and \(p_i^*S_C\) in~\eqref{eq:EqEulerClassN2dasProduct} cancel by~\eqref{eq:EqEulerClasspiS} and~\eqref{eq:EqEulerClassH0S}.

	The case \(\Gamma=\Gamma^{2,d}_{a,b,\Set{2}}\) can be treated as above while reversing the roles of \(p_1\) and \(p_2\).
\end{proof}

\begin{prop}\label{prop:EqEulerClassN1d1}
	Let \(\Gamma=\Gamma^{1,d}_{a,b,\Set{1}}\).
	Then \(M_\Gamma\) is a point and the equivariant Euler class is given by
	\begin{equation}
		\begin{split}
			e^{K\times T}(i_\Gamma^*\overline{N}_{1,d})
			={}& \prod_{\substack{0\leq q \leq 2d-1\\q\neq d}}\left(\kappa + \frac{2d-2q-1}{2d}\tau_a - \frac{2d-2q-1}{2d}\tau_b\right)\\
				&\cdot\prod_{\substack{0\leq m \leq n\\m\neq a,b}}\prod_{0\leq q\leq d-1}\left(\kappa + \frac{2q-1}{2d}\tau_a - \frac{2d-2q-1}{2d} \tau_b + \tau_m\right).
		\end{split}
	\end{equation}
\end{prop}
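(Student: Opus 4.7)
The plan is to imitate the proof of Proposition~\ref{prop:EqEulerClassN2d}, but now with only a single marked point in the middle term of the defining short exact sequence of $N_{(C,\phi)}$. Concretely, $\Gamma^{1,d}_{a,b,\{1\}}$ represents the stable spin map $\phi\colon \ProjectiveSpace{1}\to \ProjectiveSpace{n}$ of Example~\ref{ex:NCphiPn} with a single marked point $p_1 = 0 = [0:1]$, and $M_\Gamma$ is a point. By Definition~\ref{defn:NCphi}, $i_\Gamma^*\overline{N}_{1,d}$ is the cokernel of the $K\times T$-equivariant map
\begin{equation}
    H^0(S_C) \longrightarrow p_1^*S_C \oplus H^0\left(\dual{S}_C\otimes\phi^*\tangent{\ProjectiveSpace{n}}\right), \quad s \longmapsto \left(p_1^*s,\, -\langle s,\differential\phi\rangle\right).
\end{equation}
Injectivity follows from the explicit formulas in Example~\ref{ex:NCphiPn}: if $p_1^*s=0$ then $s=u z_1\, s_1$, whence $-\langle s,\differential\phi\rangle = -du z_1^d\, \dual{s}_1\otimes \phi^*\partial_{x_a^b}$, which vanishes only for $u=0$. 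Since Euler classes are multiplicative in short exact sequences of $K\times T$-equivariant sheaves, it suffices to compute
\begin{equation}
    e^{K\times T}(i_\Gamma^*\overline{N}_{1,d}) = \frac{e^{K\times T}(p_1^*S_C)\cdot e^{K\times T}\left(H^0\left(\dual{S}_C\otimes\phi^*\tangent{\ProjectiveSpace{n}}\right)\right)}{e^{K\times T}(H^0(S_C))}.
\end{equation}

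Next, I would import the three weight computations carried out in the proof of Proposition~\ref{prop:EqEulerClassN2d}. These relied only on local data at $p_1=0$, $p_2=\infty$, and on the $T$-action on twisted spinors, none of which is altered by the removal of the second marked point. They give
\begin{align}
    e^{K\times T}(p_1^*S_C) &= \kappa + \tfrac{1}{2d}\tau_a - \tfrac{1}{2d}\tau_b, \\
    e^{K\times T}(H^0(S_C)) &= \left(\kappa - \tfrac{1}{2d}\tau_a + \tfrac{1}{2d}\tau_b\right)\left(\kappa + \tfrac{1}{2d}\tau_a - \tfrac{1}{2d}\tau_b\right),
\end{align}
together with the product formula for $e^{K\times T}(H^0(\dual{S}_C\otimes\phi^*\tangent{\ProjectiveSpace{n}}))$ appearing in Proposition~\ref{prop:EqEulerClassN2d}. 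Substituting these into the cokernel formula, the factor $\kappa + \frac{1}{2d}\tau_a - \frac{1}{2d}\tau_b$ from $p_1^*S_C$ cancels the corresponding factor of $e^{K\times T}(H^0(S_C))$.

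What remains in the denominator is the factor $\kappa - \frac{1}{2d}\tau_a + \frac{1}{2d}\tau_b$, which is precisely the $q=d$ term of the product $\prod_{q=0}^{2d-1}\left(\kappa + \frac{2d-2q-1}{2d}\tau_a - \frac{2d-2q-1}{2d}\tau_b\right)$, since $\frac{2d-2d-1}{2d} = -\frac{1}{2d}$. Cancelling this factor produces exactly the product over $\{0\leq q\leq 2d-1, q\neq d\}$ asserted in the proposition, while the product over $m\neq a,b$ is unchanged. The only step that requires real care is checking that the factor $q=d$ genuinely appears in the $T$-weight decomposition of the twisted spinor contribution (so that the cancellation takes place inside polynomials and not just formally in $\C(\kappa,\tau)$); this is immediate from the explicit basis $\{z_1^q\,\dual{s}_1\otimes \phi^*\partial_{x_b^a}\}_{q=0}^{2d-1}$ used in Proposition~\ref{prop:EqEulerClassN2d}, so no new obstacle arises.
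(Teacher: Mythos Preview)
Your proof is correct and follows essentially the same route as the paper's own argument: write the SUSY normal bundle as the cokernel of \(H^0(S_C)\to p_1^*S_C\oplus H^0(\dual{S}_C\otimes\phi^*\tangent{\ProjectiveSpace{n}})\), import the weight computations from the proof of Proposition~\ref{prop:EqEulerClassN2d}, and observe that the surviving denominator factor \(\kappa-\tfrac{1}{2d}\tau_a+\tfrac{1}{2d}\tau_b\) cancels the \(q=d\) term of the first product. Your explicit injectivity check and the remark that the \(q=d\) factor is genuinely present in the twisted-spinor weights are small additional details the paper leaves implicit.
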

\begin{proof}
	In this case, the moduli space \(M_\Gamma\) consists of the point \((p_1, \phi)\) where \(p_1=0\) is the only marked point on \(C=\ProjectiveSpace{1}\) and \(\phi\) is the \(d\)-fold cover of the coordinate line with coordinates \(X^a\) and \(X^b\) as in the proof of~\ref{prop:EqEulerClassN2d}.
	In this case the SUSY normal bundle is defined by
	\begin{diag}\matrix[mat](m) {
			0 & H^0\left(S_C\right) & p_1^*S_C\oplus H^0\left(\dual{S}_C\otimes \phi^*\tangent{\ProjectiveSpace{n}}\right) & N_{(C,\phi)} & 0 \\
		} ;
		\path[pf] {
			(m-1-1) edge (m-1-2)
			(m-1-2) edge (m-1-3)
			(m-1-3) edge (m-1-4)
			(m-1-4) edge (m-1-5)
		};
	\end{diag}
	and the equivariant Euler class of \(N_{(C,\phi)}\) is given by
	\begin{equation}
		e^{K\times T}(i_\Gamma^*\overline{N}_{(C,\phi)})
		= \frac{e^{K\times T}(p_1^*S_C)\cup e^{K\times T}\left(H^0\left(\dual{S}_C\otimes \phi^*\tangent{\ProjectiveSpace{n}}\right)\right)}{e^{K\times T}(H^0(S_C))}.
	\end{equation}
	The \(K\)- and \(T\) action on the bundles is as described in the proof of Proposition~\ref{prop:EqEulerClassN2d}.
	In particular from~\eqref{eq:EqEulerClasspiS} and~\eqref{eq:EqEulerClassH0S} we obtain
	\begin{equation}
		\frac{e^{K\times T}(p_1^*S_C)}{e^{K\times T}(H^0(S_C))}
		= \frac1{\kappa - \frac1{2d}\tau_a + \frac1{2d}\tau_b}
	\end{equation}
	which cancels the first factor with \(q=d\) in \(e^{K\times T}\left(H^0\left(\dual{S}_C\otimes \phi^*\tangent{\ProjectiveSpace{n}}\right)\right)\) and the claim follows.
\end{proof}

In the same way, using \(p_1=\infty\) instead of \(p_1=0\), one can prove:
\begin{prop}\label{prop:EqEulerClassN1d0}
	Let \(\Gamma=\Gamma^{1,d}_{a,b,\emptyset}\).
	Then \(M_\Gamma\) is a point and the equivariant Euler class is given by
	\begin{equation}
		\begin{split}
			e^{K\times T}(i_\Gamma^*\overline{N}_{1,d})
			={}& \prod_{\substack{0\leq q \leq 2d-1\\q\neq d-1}}\left(\kappa + \frac{2d-2q-1}{2d}\tau_a - \frac{2d-2q-1}{2d}\tau_b\right)\\
				&\cdot\prod_{\substack{0\leq m \leq n\\m\neq a,b}}\prod_{0\leq q\leq d-1}\left(\kappa + \frac{2q-1}{2d}\tau_a - \frac{2d-2q-1}{2d} \tau_b + \tau_m\right).
		\end{split}
	\end{equation}
\end{prop}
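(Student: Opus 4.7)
The plan is to follow the same strategy as in the proof of Proposition~\ref{prop:EqEulerClassN1d1}, swapping the role of the marked point at $0$ with the marked point at $\infty$. Concretely, a representative of the unique point in $M_\Gamma$ is given by taking $C = \ProjectiveSpace{1}$ with single marked point $p_1 = \infty = [1:0]$ and $\phi\colon\ProjectiveSpace{1}\to\ProjectiveSpace{n}$ the same $d$-fold cover of the coordinate line through $q_a$ and $q_b$ as used in Proposition~\ref{prop:EqEulerClassN2d}. The definition of $N_{(C,\phi)}$ yields the short exact sequence
\begin{diag*}
	\matrix[mat](m) {
		0 & H^0(S_C) & p_1^*S_C \oplus H^0\left(\dual{S}_C\otimes \phi^*\tangent{\ProjectiveSpace{n}}\right) & N_{(C,\phi)} & 0 \\
	} ;
	\path[pf] {
		(m-1-1) edge (m-1-2)
		(m-1-2) edge (m-1-3)
		(m-1-3) edge (m-1-4)
		(m-1-4) edge (m-1-5)
	};
\end{diag*}
and hence
\begin{equation*}
	e^{K\times T}(i_\Gamma^*\overline{N}_{1,d})
	= \frac{e^{K\times T}(p_1^*S_C)\cdot e^{K\times T}\left(H^0\left(\dual{S}_C\otimes \phi^*\tangent{\ProjectiveSpace{n}}\right)\right)}{e^{K\times T}(H^0(S_C))}.
\end{equation*}

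Next, I would import the computations of the combined $K\times T$-action on $H^0(S_C)$ and on $H^0(\dual{S}_C\otimes \phi^*\tangent{\ProjectiveSpace{n}})$ from the proof of Proposition~\ref{prop:EqEulerClassN2d} verbatim; these do not depend on the positions of the marked points. From the same proof, I would also extract the equivariant Euler class of $p_2^*S_C$, which now plays the role of $p_1^*S_C$ because $p_1 = \infty = p_2^{\text{old}}$. Thus $e^{K\times T}(p_1^*S_C) = \kappa - \tfrac{1}{2d}\tau_a + \tfrac{1}{2d}\tau_b$ and, combined with~\eqref{eq:EqEulerClassH0S}, the prefactor simplifies to
\begin{equation*}
	\frac{e^{K\times T}(p_1^*S_C)}{e^{K\times T}(H^0(S_C))}
	= \frac{1}{\kappa + \frac{1}{2d}\tau_a - \frac{1}{2d}\tau_b}.
\end{equation*}

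Finally, I would observe that the denominator $\kappa + \frac{1}{2d}\tau_a - \frac{1}{2d}\tau_b$ matches exactly the factor in the first product of $e^{K\times T}\bigl(H^0(\dual{S}_C\otimes \phi^*\tangent{\ProjectiveSpace{n}})\bigr)$ corresponding to $q = d-1$ (as opposed to $q = d$ in Proposition~\ref{prop:EqEulerClassN1d1}, which is where the two cases diverge). Cancelling this factor removes the $q = d-1$ term from the first product while leaving the second (double) product intact, yielding the claimed formula. There is no real obstacle beyond bookkeeping: the only thing to watch is the sign swap between $\tau_a$ and $\tau_b$ that occurs when moving the marked point from $0$ to $\infty$, which selects index $q = d-1$ rather than $q = d$ as the cancelled factor.
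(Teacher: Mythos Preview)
Your proposal is correct and follows precisely the approach the paper indicates: the paper's own proof is the single sentence ``In the same way, using \(p_1=\infty\) instead of \(p_1=0\), one can prove [the statement],'' and you have spelled out exactly those details, correctly identifying that the prefactor \(\frac{1}{\kappa + \frac{1}{2d}\tau_a - \frac{1}{2d}\tau_b}\) now cancels the \(q=d-1\) factor rather than the \(q=d\) factor.
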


\subsection{One-point Super Gromov--Witten Invariants of \texorpdfstring{\(\ProjectiveSpace{n}\)}{Pn} of degree one}\label{SSec:OnePointSGWofPnDeg1}
In this section we calculate one point Super Gromov--Witten invariants of \(\ProjectiveSpace{n}\) of degree one.
In this case, \(d_{1,1}=2n-1\) and \(r_{1,1} = n\).
The fixed points of the \(T\)-action on \(\ModuliStackMaps{0,1}{\ProjectiveSpace{n},d=1}\) are labeled by the graphs \(\Gamma^{1,1}_{a,b,A}\) where \(0\leq a<b\leq n\) and \(A\subset\Set{1}\), that is, \(A=\emptyset\) or \(A=\Set{1}\).
Consequently, by~\eqref{eq:SGWviaLocalization}:
\begin{equation}
	\begin{split}
		\MoveEqLeft
		\left<SGW_{0,1}^{\ProjectiveSpace{n},d=1}\right>(\alpha)\\
		&= \sum_{0\leq a< b\leq n} \sum_{A\subset\Set{1}} \int_{M_{\Gamma^{1,1}_{a,b,A}}}\left.\left(
			\frac{i_{\Gamma^{1,1}_{a,b,A}}^*\ev_1^*\alpha}{e^{K\times T}\left(i^*_{\Gamma^{1,1}_{a,b,A}} \overline{N}_{1,1}\right) e^T(N_{\Gamma^{1,1}_{a,b,A}})}
			\right)\right|_{d_p = -(3n-1)+\frac12\deg\alpha}
	\end{split}
\end{equation}
As all moduli spaces \(M_{\Gamma^{1,1}_{a,b,A}}\) are points, taking the integral is a trivial operation.
By linearity, we can assume that \(\alpha=\Lambda^j\) for some \(j=0,1, \dotsc, n\).

Note that \(\ev_1\circ i_{\Gamma^{1,1}_{a,b,A}}=q_a\) if \(1\in A\) and \(\ev_1\circ i_{\Gamma^{1,1}_{a,b,A}}=q_b\) if \(j\not\in A\).
As \(q_a\) and \(q_b\) are the fixed points of the \(T\)-action, we have that the pullback of the hyperplane class \(\Lambda\in H^2(\ProjectiveSpace{n}, \Z)\) is given by \(q_j^*\Lambda=\tau_j\), see~\cite[Equation~(9.5)]{CK-MSAG}.

From Proposition~\ref{prop:EqEulerClassN1d0} and~\ref{prop:EqEulerClassN1d1} we have
\begin{align}
	e^{K\times T}(i_{\Gamma^{1,1}_{a,b,\emptyset}}^*\overline{N}_{1,1})
	&= \prod_{\substack{0\leq m \leq n\\m\neq a}}\left(\kappa - \frac12\tau_a - \frac12 \tau_b + \tau_m\right),\\
	e^{K\times T}(i_{\Gamma^{1,1}_{a,b,\Set{1}}}^*\overline{N}_{1,1})
	&= \prod_{\substack{0\leq m \leq n\\m\neq b}}\left(\kappa - \frac12\tau_a - \frac12 \tau_b + \tau_m\right).
\end{align}
The inverse is calculated as follows:
\begin{equation}
	\begin{split}
		{e^{K\times T}(i_{\Gamma^{1,1}_{a,b,\emptyset}}^*\overline{N}_{1,1})}^{-1}
		&= \frac1{\kappa^{n}}\prod_{\substack{0\leq m \leq n\\m\neq a}}\frac1{1-\frac{\frac12\tau_a + \frac12\tau_b - \tau_m}{\kappa}} \\
		&= \frac1{\kappa^{n}}\prod_{\substack{0\leq m \leq n\\m\neq a}}\sum_{l\geq 0} {\left(\frac{\frac12\tau_a + \frac12\tau_b - \tau_m}{\kappa}\right)}^l \\
		&= \sum_{l\geq 0}\frac1{\kappa^{n+l}}\sum_{\substack{l_0\geq0, \dotsc,l_a=0,\dotsc l_n\geq0\\l_0+\dotsb+l_n=l}}\prod_{m} {\left(\frac12\tau_a + \frac12\tau_b - \tau_m\right)}^{l_m}
	\end{split}
\end{equation}
and analogously
\begin{equation}
	{e^{K\times T}(i_{\Gamma^{1,1}_{a,b,\Set{1}}}^*\overline{N}_{1,1})}^{-1}
	= \sum_{l\geq 0}\frac1{\kappa^{n+l}}\sum_{\substack{l_0\geq0, \dotsc,l_b=0,\dotsc l_n\geq0\\l_0+\dotsb+l_n=l}}\prod_{m} {\left(\frac12\tau_a + \frac12\tau_b - \tau_m\right)}^{l_m}.
\end{equation}

The equivariant Euler class of the normal bundle \(N_{\Gamma^{1,1}_{a,b,A}}\) can be calculated by the methods developed in~\cite{K-ERCVTA}.
We use the reformulation in~\cite[Theorem~9.2.1]{CK-MSAG} where \(e^T(N_{\Gamma^{1,1}_{a,b,A}})\) is given as a product:
\begin{align}
	e^F_{\Gamma^{1,1}_{a,b,\emptyset}}
		&= \frac{1}{\prod_{j\neq a}\left(\tau_a - \tau_j\right)\prod_{j\neq b}\left(\tau_b - \tau_j\right)} \\
	e^v_{\Gamma^{1,1}_{a,b,\emptyset}}
		&= \frac{\prod_{j\neq a}\left(\tau_a - \tau_j\right) \prod_{j\neq b}\left(\tau_b - \tau_j\right)}{\tau_a - \tau_b} \\
	e^e_{\Gamma^{1,1}_{a,b,\emptyset}}
		&= -{\left(\tau_a - \tau_b\right)}^2\prod_{j\neq a,b}\left(\tau_a - \tau_j\right)\left(\tau_b - \tau_j\right) \\
	e^T(N_{\Gamma^{1,1}_{a,b,\emptyset}})
		&= e^F_{\Gamma^{1,1}_{a,b,\emptyset}}e^v_{\Gamma^{1,1}_{a,b,\emptyset}}e^e_{\Gamma^{1,1}_{a,b,\emptyset}}
		= \left(\tau_b - \tau_a\right)\prod_{j\neq a,b}\left(\tau_a - \tau_j\right)\left(\tau_b - \tau_j\right)
\end{align}
and, in the same way
\begin{equation}
	e^T(N_{\Gamma^{1,1}_{a,b,\Set{1}}})
	= \left(\tau_a - \tau_b\right)\prod_{j\neq a,b}\left(\tau_a - \tau_j\right)\left(\tau_b - \tau_j\right).
\end{equation}

\begin{ex}[One point Super Gromov--Witten invariants of \(\ProjectiveSpace{1}\) of degree one]
	Let us first rewrite the above formulas for \(n=1\) and \(a=0\), \(b=1\):
	\begin{align}
		{e^{K\times T}(i_{\Gamma^{1,1}_{0,1,\emptyset}}^*\overline{N}_{1,1})}^{-1}
			&= \sum_{l\geq 0}\frac1{2^l\kappa^{l+1}} {\left(\tau_0 - \tau_1\right)}^l &
		e^T(N_{\Gamma^{1,1}_{a,b,\emptyset}})
			&= \left(\tau_1 - \tau_0\right) \\
		{e^{K\times T}(i_{\Gamma^{1,1}_{0,1,\Set{1}}}^*\overline{N}_{1,1})}^{-1}
			&= \sum_{l\geq 0}\frac1{2^l\kappa^{l+1}}{\left(\tau_1 - \tau_0\right)}^l &
		e^T(N_{\Gamma^{1,1}_{a,b,\Set{1}}})
			&= \left(\tau_0 - \tau_1\right)
	\end{align}
	It follows:
	\begin{equation}
		\begin{split}
			\left<SGW_{0,1}^{\ProjectiveSpace{1},d=1}\right>(\lambda)
			={} &\int_{M_{\Gamma^{1,1}_{0,1,\emptyset}}}\left.\left(\frac{i_{\Gamma^{1,1}_{0,1,\emptyset}}^*\ev_1^*\lambda}{e^{K\times T}(i_{\Gamma^{1,1}_{0,1,\emptyset}}^*\overline{N}_{1,1}) e^T(N_{\Gamma^{1,1}_{0,1,\emptyset}})}\right)\right|_{d_p = -1}\\
					&+ \int_{M_{\Gamma^{1,1}_{0,1,\Set{1}}}}\left.\left(\frac{i_{\Gamma^{1,1}_{0,1,\Set{1}}}^*\ev_1^*\lambda}{e^{K\times T}(i_{\Gamma^{1,1}_{0,1,\Set{1}}}^*\overline{N}_{1,1}) e^T(N_{\Gamma^{1,1}_{0,1,\Set{1}}})}\right)\right|_{d_p = -1} \\
			={} & \frac{\tau_1}{\kappa\left(\tau_1-\tau_0\right)} + \frac{\tau_0}{\kappa\left(\tau_0 - \tau_1\right)} \\
			={} &\frac1\kappa
		\end{split}
	\end{equation}
	By the Extension Axiom~\ref{axiom:Extension}, we have
	\begin{equation}
		\left<SGW_{0,1}^{\ProjectiveSpace{1},d=1}\right>(\lambda)
		= \frac1\kappa \left<GW_{0,1}^{\ProjectiveSpace{1},d=1}\right>(\lambda).
	\end{equation}

	The first super Gromov--Witten invariant which does not arise from a classical Gromov--Witten invariant is:
	\begin{equation}
		\begin{split}
			\left<SGW_{0,1}^{\ProjectiveSpace{1},d=1}\right>(1)
			={} &\int_{M_{\Gamma^{1,1}_{0,1,\emptyset}}}\left.\left(\frac{1}{e^{K\times T}(i_{\Gamma^{1,1}_{0,1,\emptyset}}^*\overline{N}_{1,1}) e^T(N_{\Gamma^{1,1}_{0,1,\emptyset}})}\right)\right|_{d_p = -2}\\
				&+ \int_{M_{\Gamma^{1,1}_{0,1,\Set{1}}}}\left.\left(\frac{1}{e^{K\times T}(i_{\Gamma^{1,1}_{0,1,\Set{1}}}^*\overline{N}_{1,1}) e^T(N_{\Gamma^{1,1}_{0,1,\Set{1}}})}\right)\right|_{d_p = -2} \\
			={} & \frac{\tau_0-\tau_1}{2\kappa^2\left(\tau_1-\tau_0\right)} + \frac{\tau_1-\tau_0}{2\kappa^2\left(\tau_0 - \tau_1\right)} \\
			={} &-\frac1{\kappa^2}
		\end{split}
	\end{equation}
\end{ex}
\begin{ex}[One-point Super Gromov--Witten invariants of \(\ProjectiveSpace{n}\) of degree one for \(n=2,3,4,5\)]
	We have evaluated the formulas from this section using the computer algebra program \enquote{Sage}.
	Note that all of the following super Gromov--Witten invariants are non-classical, that is, are not obtained from classical Gromov--Witten invariants via the Extension Axiom~\ref{axiom:Extension} for degree reasons.
	We obtain for \(\ProjectiveSpace{2}\):
	\begin{align}
		\left<SGW_{0,1}^{\ProjectiveSpace{2},d=1}\right>(\Lambda^2) &= \frac{2}{\kappa^3},&
		\left<SGW_{0,1}^{\ProjectiveSpace{2},d=1}\right>(\Lambda) &= \frac{3}{4\kappa^4},&
		\left<SGW_{0,1}^{\ProjectiveSpace{2},d=1}\right>(1) &= -\frac{3}{2\kappa^5}.
	\end{align}
	For \(\ProjectiveSpace{3}\):
	\begin{align}
		\left<SGW_{0,1}^{\ProjectiveSpace{3},d=1}\right>(\Lambda^3) &= \frac{7}{2\kappa^5},&
		\left<SGW_{0,1}^{\ProjectiveSpace{3},d=1}\right>(\Lambda^2) &= \frac{5}{\kappa^6},\\
		\left<SGW_{0,1}^{\ProjectiveSpace{3},d=1}\right>(\Lambda) &= \frac{15}{8\kappa^7},&
		\left<SGW_{0,1}^{\ProjectiveSpace{3},d=1}\right>(1) &= -\frac{35}{8\kappa^8}.
	\end{align}
	For \(\ProjectiveSpace{4}\):
	\begin{align}
		\left<SGW_{0,1}^{\ProjectiveSpace{4},d=1}\right>(\Lambda^4) &= \frac{25}{4\kappa^7},&
		\left<SGW_{0,1}^{\ProjectiveSpace{4},d=1}\right>(\Lambda^3) &= \frac{245}{16\kappa^8},\\
		\left<SGW_{0,1}^{\ProjectiveSpace{4},d=1}\right>(\Lambda^2) &= \frac{35}{2\kappa^9},&
		\left<SGW_{0,1}^{\ProjectiveSpace{4},d=1}\right>(\Lambda) &= \frac{105}{16\kappa^{10}},\\
		\left<SGW_{0,1}^{\ProjectiveSpace{4},d=1}\right>(1) &= -\frac{525}{32\kappa^{11}}.
	\end{align}
	For \(\ProjectiveSpace{5}\):
	\begin{align}
		\left<SGW_{0,1}^{\ProjectiveSpace{5},d=1}\right>(\Lambda^5) &= \frac{91}{8\kappa^9},&
		\left<SGW_{0,1}^{\ProjectiveSpace{5},d=1}\right>(\Lambda^4) &= \frac{315}{8\kappa^{10}},\\
		\left<SGW_{0,1}^{\ProjectiveSpace{5},d=1}\right>(\Lambda^3) &= \frac{2205}{32\kappa^{11}},&
		\left<SGW_{0,1}^{\ProjectiveSpace{5},d=1}\right>(\Lambda^2) &= \frac{1155}{16\kappa^{12}},\\
		\left<SGW_{0,1}^{\ProjectiveSpace{5},d=1}\right>(\Lambda) &= \frac{3465}{128\kappa^{12}},&
		\left<SGW_{0,1}^{\ProjectiveSpace{5},d=1}\right>(1) &= -\frac{9009}{128\kappa^{12}}.
	\end{align}
\end{ex}

\subsection{Two-point Super Gromov--Witten Invariants of \texorpdfstring{\(\ProjectiveSpace{n}\)}{Pn} of degree one}
In this section we calculate two-point super Gromov--Witten invariants of \(\ProjectiveSpace{n}\) of degree one using the same methods as in the previous Section~\ref{SSec:OnePointSGWofPnDeg1}.
In this case \(d_{2,1} = 2n\) and \(r_{2,1} = n+1\).
Fixed points of the \(T\)-action are labeled by the graphs \(\Gamma^{2,1}_{a,b, A}\) for \(0\leq a,< b\leq n\) and \(A\subset \Set{1,2}\).
Consequently, by~\eqref{eq:SGWviaLocalization}:
\begin{equation}
	\begin{split}
		\MoveEqLeft
		\left<SGW_{0,2}^{\ProjectiveSpace{n},d=1}\right>(\alpha_1, \alpha_2)\\
		&= \sum_{0\leq a< b\leq n} \sum_{A\subset\Set{1,2}} \int_{M_{\Gamma^{2,1}_{a,b,A}}}\left.\left(
			\frac{i_{\Gamma^{2,1}_{a,b,A}}^*\ev_1^*\alpha_1 \cup i_{\Gamma^{2,1}_{a,b,A}}^*\ev_2^*\alpha_2}{e^{K\times T}\left(i^*_{\Gamma^{2,1}_{a,b,A}} \overline{N}_{2,1}\right) e^T(N_{\Gamma^{2,1}_{a,b,A}})}
			\right)\right|_{d_p = -(3n+1)+\frac12\deg\alpha}
	\end{split}
\end{equation}
Again, all moduli spaces \(M_{\Gamma^{2,1}_{a,b,A}}\) are points.
But, for \(A=\emptyset\) and \(A=\Set{1,2}\) the stable maps that are represented by \(M_{\Gamma^{2,1}_{a,b,A}}\) consist of two irreducible components.
When the two marked points are mapped to the same fixed point of the \(T\)-action they must lie on a separate component which is contracted under the stable map and hence
\begin{equation}
	M_{\Gamma^{2,1}_{a,b,\emptyset}}
	= \ModuliStackCurves{0,3}\times M_{\Gamma^{1,1}_{a,b,\emptyset}}
\end{equation}
where the three marked points arise as the two marked points of \(M_{\Gamma^{2,1}_{a,b,\emptyset}}\) and a preimage of a nodal point under gluing.
The one marked point of \(M_{\Gamma^{1,1}_{a,b,\emptyset}}\) is the other preimage of the nodal point.
By the Splitting Axiom~\ref{axiom:SplittingAxiom} together with Proposition~\ref{prop:SGWPoint} and Proposition~\ref{prop:EqEulerClassN1d0}:
\begin{equation}
	\begin{split}
		e^{K\times T}(i_{\Gamma^{2,1}_{a,b,\emptyset}}^*\overline{N}_{2,1})
		&= e^K(\overline{N}_3) e^{K\times T}(i_{\Gamma^{1,1}_{a,b,\emptyset}}^*\overline{N}_{1,1})\\
		&= \kappa \prod_{\substack{0\leq m \leq n\\m\neq a}}\left(\kappa - \frac12\tau_a - \frac12 \tau_b + \tau_m\right)
	\end{split}
\end{equation}
The inverse is
\begin{equation}{e^{K\times T}(i_{\Gamma^{2,1}_{a,b,\emptyset}}^*\overline{N}_{2,1})}^{-1}
	= \sum_{l\geq 0}\frac{1}{\kappa^{n+1+l}}\sum_{\substack{l_0\geq0, \dotsc,l_a=0,\dotsc l_n\geq0\\l_0+\dotsb+l_n=l}}\prod_{m} {\left(\frac12\tau_a + \frac12\tau_b - \tau_m\right)}^{l_m}.
\end{equation}

The equivariant Euler class of the normal bundle \(N_{\Gamma^{2,1}_{a,b,\emptyset}}\) is calculated by~\cite[Theorem~9.2.1]{CK-MSAG} as
\begin{align}
	e^F_{\Gamma^{2,1}_{a,b,\emptyset}} &= \frac{\tau_b - \tau_a}{\prod_{j\neq a}\left(\tau_a - \tau_j\right)\prod_{j\neq b}\left(\tau_b - \tau_j\right)} \\
	e^v_{\Gamma^{2,1}_{a,b, \emptyset}} &= \frac{\prod_{j\neq a}\left(\tau_a - \tau_j\right) \prod_{j\neq b}\left(\tau_b - \tau_j\right)}{\tau_a - \tau_b} \\
	e^e_{\Gamma^{2,1}_{a,b,\emptyset}} &= -{\left(\tau_a - \tau_b\right)}^2\prod_{j\neq a,b}\left(\tau_a - \tau_j\right)\left(\tau_b - \tau_j\right) \\
	e^T(N_{\Gamma^{2,1}_{a,b,\emptyset}})
	&= e^F_{\Gamma^{2,1}_{a,b,\emptyset}}e^v_{\Gamma^{2,1}_{a,b,\emptyset}}e^e_{\Gamma^{2,1}_{a,b,\emptyset}}
	= -\prod_{j\neq a}\left(\tau_a - \tau_j\right) \prod_{j\neq b}\left(\tau_b - \tau_j\right)
\end{align}

The case of the full set \(A=\Set{1,2}\) is handled in the same way and yields:
\begin{align}
	{e^{K\times T}(i_{\Gamma^{2,1}_{a,b,\Set{1,2}}}^*\overline{N}_{2,1})}^{-1}
	&= \sum_{l\geq 0}\frac{1}{\kappa^{n+1+l}}\sum_{\substack{l_0\geq0, \dotsc,l_b=0,\dotsc l_n\geq0\\l_0+\dotsb+l_n=l}}\prod_{m} {\left(\frac12\tau_a + \frac12\tau_b - \tau_m\right)}^{l_m}, \\
	e^T(N_{\Gamma^{2,1}_{a,b,\Set{1,2}}})
	&= e^T(N_{\Gamma^{2,1}_{a,b,\emptyset}})
	= -\prod_{j\neq a}\left(\tau_a - \tau_j\right) \prod_{j\neq b}\left(\tau_b - \tau_j\right).
\end{align}

When the set \(A=\Set{A_1}\) has a single element, the curve of the stable map representing \(\Gamma^{2,1}_{a,b,A}\) consist of a single irreducible component with two marked points.
That is, the equivariant Euler class of the SUSY normal bundle can be obtained from Proposition~\ref{prop:EqEulerClassN2d} as
\begin{equation}
	e^{K\times T}\left(i_{\Gamma^{2,1}_{a,b,\Set{A_1}}}^*\overline{N}_{2,1}\right)\\
	= \prod_{0\leq m \leq n}\left(\kappa - \frac12\tau_a - \frac12 \tau_b + \tau_m\right),
\end{equation}
and hence
\begin{equation}{e^{K\times T}(i_{\Gamma^{2,1}_{a,b,\Set{A_1}}}^*\overline{N}_{2,1})}^{-1}
	= \sum_{l\geq 0}\frac{1}{\kappa^{n+1+l}}\sum_{\substack{l_0\geq0, \dotsc, l_n\geq0\\l_0+\dotsb+l_n=l}}\prod_{m} {\left(\frac12\tau_a + \frac12\tau_b - \tau_m\right)}^{l_m}.
\end{equation}
The equivariant Euler class of the normal bundle \(N_\Gamma\) is:
\begin{align}
	e^F_{\Gamma^{2,1}_{a,b,\Set{A_1}}} &= \frac{1}{\prod_{j\neq a}\left(\tau_a - \tau_j\right)\prod_{j\neq b}\left(\tau_b - \tau_j\right)} \\
	e^v_{\Gamma^{2,1}_{a,b, \Set{A_1}}} &= \prod_{j\neq a}\left(\tau_a - \tau_j\right) \prod_{j\neq b}\left(\tau_b - \tau_j\right) \\
	e^e_{\Gamma^{2,1}_{a,b,\Set{A_1}}} &= -{\left(\tau_a - \tau_b\right)}^2\prod_{j\neq a,b}\left(\tau_a - \tau_j\right)\left(\tau_b - \tau_j\right) \\
	e^T(N_{\Gamma^{2,1}_{a,b,\Set{A_1}}})
	&= e^F_{\Gamma^{2,1}_{a,b,\Set{A_1}}}e^v_{\Gamma^{2,1}_{a,b,\Set{A_1}}}e^e_{\Gamma^{2,1}_{a,b,\Set{A_1}}}
	= \prod_{j\neq a}\left(\tau_a - \tau_j\right)\prod_{j\neq b}\left(\tau_b - \tau_j\right)
\end{align}

\begin{ex}
	Specializing the results of this section to \(n=1\), \(a=0\) and \(b=1\) yields:
	\begin{equation}
		\begin{aligned}
			{e^{K\times T}(i_{\Gamma^{2,1}_{0,1,\emptyset}}^*\overline{N}_{2,1})}^{-1}
				&= \sum_{l\geq 0}\frac{1}{2^l\kappa^{l+2}}{\left(\tau_0 - \tau_1\right)}^l&
			{e^T(N_{\Gamma^{2,1}_{a,b,\emptyset}})}^{-1}
				&= \frac{1}{{\left(\tau_1 - \tau_0\right)}^2}\\
			{e^{K\times T}(i_{\Gamma^{2,1}_{0,1,\Set{A_1}}}^*\overline{N}_{2,1})}^{-1}
				&= \sum_{l\geq 0}\frac{1}{2^{2l}\kappa^{2l+2}}{\left(\tau_1 - \tau_0\right)}^{2l}&
			{e^T(N_{\Gamma^{2,1}_{0,1,\Set{A_1}}})}^{-1}
				&= -\frac1{{\left(\tau_1-\tau_0\right)}^2} \\
			{e^{K\times T}(i_{\Gamma^{2,1}_{0,1,\Set{1,2}}}^*\overline{N}_{2,1})}^{-1}
				&= \sum_{l\geq 0}\frac{1}{2^l\kappa^{l+2}}{\left(\tau_1 - \tau_0\right)}^l&
			{e^T(N_{\Gamma^{2,1}_{a,b,\Set{1,2}}})}^{-1}
				&= \frac{1}{{\left(\tau_1 - \tau_0\right)}^2}
		\end{aligned}
	\end{equation}
	\begin{equation}
		\begin{split}
			\left<SGW_{0,2}^{\ProjectiveSpace{1},d=1}\right>(\lambda,\lambda)
			&= \sum_{A\subset\Set{1,2}} \int_{M_{\Gamma^{2,1}_{0,1,A}}}\left.\left(\frac{i_{\Gamma^{2,1}_{0,1,A}}^*\ev_1^*\lambda \cdot i_{\Gamma^{2,1}_{0,1,A}}^*\ev_2^*\lambda}{e^{K\times T}(i_{\Gamma^{2,1}_{0,1,A}}^*\overline{N}_{2,1}) e^T(N_{\Gamma^{2,1}_{0,1,A}})}\right)\right|_{d_p = -2}\\
			&= \frac1{\kappa^2}\left(\frac{\tau_1^2}{{\left(\tau_1 - \tau_0\right)}^2} - 2\frac{\tau_1\tau_0}{{\left(\tau_1 - \tau_0\right)}^2} + \frac{\tau_0^2}{{\left(\tau_1 - \tau_0\right)}^2}\right)\\
			&= \frac1{\kappa^2}
		\end{split}
	\end{equation}
	\begin{equation}
		\begin{split}
			\left<SGW_{0,2}^{\ProjectiveSpace{1},d=1}\right>(\lambda,1)
			&= \sum_{A\subset\Set{1,2}} \int_{M_{\Gamma^{2,1}_{0,1,A}}}\left.\left(\frac{i_{\Gamma^{2,1}_{0,1,A}}^*\ev_1^*\lambda}{e^{K\times T}(i_{\Gamma^{2,1}_{0,1,A}}^*\overline{N}_{2,1}) e^T(N_{\Gamma^{2,1}_{0,1,A}})}\right)\right|_{d_p = -3}\\
			&= \frac1{2\kappa^3}\left(\frac{\tau_1\left(\tau_0 - \tau_1\right)}{{\left(\tau_1 - \tau_0\right)}^2} + \frac{\tau_0\left(\tau_1 - \tau_0\right)}{{\left(\tau_1 - \tau_0\right)}^2}\right)\\
			&= -\frac1{2\kappa^3}
		\end{split}
	\end{equation}
	\begin{equation}
		\begin{split}
			\left<SGW_{0,2}^{\ProjectiveSpace{1},d=1}\right>(1,1)
			&= \sum_{A\subset\Set{1,2}} \int_{M_{\Gamma^{2,1}_{0,1,A}}}\left.\left(\frac{1}{e^{K\times T}(i_{\Gamma^{2,1}_{0,1,A}}^*\overline{N}_{2,1}) e^T(N_{\Gamma^{2,1}_{0,1,A}})}\right)\right|_{d_p = -4}\\
			&= \frac1{4\kappa^4}\left(\frac{{\left(\tau_0 - \tau_1\right)}^2}{{\left(\tau_1 - \tau_0\right)}^2} - \frac{{\left(\tau_1 - \tau_0\right)}^2}{{\left(\tau_1 - \tau_0\right)}^2} - \frac{{\left(\tau_1 - \tau_0\right)}^2}{{\left(\tau_1 - \tau_0\right)}^2} + \frac{{\left(\tau_1 - \tau_0\right)}^2}{{\left(\tau_1 - \tau_0\right)}^2}\right)\\
			&= 0
		\end{split}
	\end{equation}
\end{ex}
Using the above formulas and computer algebra we have also calculated the following examples:
\begin{ex}[Two-point Super Gromov--Witten invariants of \(\ProjectiveSpace{2}\) of degree one]
	\begin{align}
		\left<SGW_{0,2}^{\ProjectiveSpace{2},d=1}\right>(\Lambda^2, \Lambda^2) &= \frac{1}{\kappa^3},&
		\left<SGW_{0,2}^{\ProjectiveSpace{2},d=1}\right>(\Lambda^2, \Lambda) &= \frac{3}{2\kappa^4},\\
		\left<SGW_{0,2}^{\ProjectiveSpace{2},d=1}\right>(\Lambda, \Lambda) &= \frac{3}{4\kappa^5},&
		\left<SGW_{0,2}^{\ProjectiveSpace{2},d=1}\right>(\Lambda^2, 1) &= -\frac{3}{4\kappa^5},\\
		\left<SGW_{0,2}^{\ProjectiveSpace{2},d=1}\right>(\Lambda, 1) &= -\frac{3}{4\kappa^6},&
		\left<SGW_{0,2}^{\ProjectiveSpace{2},d=1}\right>(1, 1) &= 0,&
	\end{align}
\end{ex}
\begin{ex}[Two-point Super Gromov--Witten invariants of \(\ProjectiveSpace{3}\) of degree one]
	\begin{align}\allowdisplaybreaks[1]
		\left<SGW_{0,2}^{\ProjectiveSpace{3},d=1}\right>(\Lambda^3, \Lambda^3) &= \frac{1}{\kappa^4}\\
		\left<SGW_{0,2}^{\ProjectiveSpace{3},d=1}\right>(\Lambda^3, \Lambda^2) &= \frac{2}{\kappa^5}\\
		\left<SGW_{0,2}^{\ProjectiveSpace{3},d=1}\right>(\Lambda^3, \Lambda) &= \frac{5}{2\kappa^6}&
		\left<SGW_{0,2}^{\ProjectiveSpace{3},d=1}\right>(\Lambda^2, \Lambda^2) &= \frac{5}{\kappa^6}\\
		\left<SGW_{0,2}^{\ProjectiveSpace{3},d=1}\right>(\Lambda^3, 1) &= -\frac{5}{4\kappa^7}&
		\left<SGW_{0,2}^{\ProjectiveSpace{3},d=1}\right>(\Lambda^2, \Lambda) &= -\frac{15}{4\kappa^7}\\
		\left<SGW_{0,2}^{\ProjectiveSpace{3},d=1}\right>(\Lambda^2, 1) &= -\frac{5}{2\kappa^8}&
		\left<SGW_{0,2}^{\ProjectiveSpace{3},d=1}\right>(\Lambda, \Lambda) &= \frac{15}{8\kappa^8}\\
		\left<SGW_{0,2}^{\ProjectiveSpace{3},d=1}\right>(\Lambda, 1) &= -\frac{35}{16\kappa^9}\\
		\left<SGW_{0,2}^{\ProjectiveSpace{3},d=1}\right>(1, 1) &= 0
	\end{align}
\end{ex}
\begin{ex}[Two-point Super Gromov--Witten invariants of \(\ProjectiveSpace{4}\) of degree one]
	\begin{align}\allowdisplaybreaks[1]
		\left<SGW_{0,2}^{\ProjectiveSpace{4},d=1}\right>(\Lambda^4, \Lambda^4) &= \frac{1}{\kappa^5}\\
		\left<SGW_{0,2}^{\ProjectiveSpace{4},d=1}\right>(\Lambda^4, \Lambda^3) &= \frac{5}{2\kappa^6}\\
		\left<SGW_{0,2}^{\ProjectiveSpace{4},d=1}\right>(\Lambda^4, \Lambda^2) &= \frac{15}{4\kappa^7}&
		\left<SGW_{0,2}^{\ProjectiveSpace{4},d=1}\right>(\Lambda^3, \Lambda^3) &= \frac{15}{2\kappa^7}\\
		\left<SGW_{0,2}^{\ProjectiveSpace{4},d=1}\right>(\Lambda^4, \Lambda) &= \frac{35}{8\kappa^8}&
		\left<SGW_{0,2}^{\ProjectiveSpace{4},d=1}\right>(\Lambda^3, \Lambda^2) &= \frac{105}{8\kappa^8}\\
		\left<SGW_{0,2}^{\ProjectiveSpace{4},d=1}\right>(\Lambda^4, 1) &= -\frac{35}{16\kappa^9}&
		\left<SGW_{0,2}^{\ProjectiveSpace{4},d=1}\right>(\Lambda^3, \Lambda) &= \frac{175}{16\kappa^9}\\
		\left<SGW_{0,2}^{\ProjectiveSpace{4},d=1}\right>(\Lambda^2, \Lambda^2) &= \frac{315}{16\kappa^9}\\
		\left<SGW_{0,2}^{\ProjectiveSpace{4},d=1}\right>(\Lambda^3, 1) &= -\frac{105}{16\kappa^{10}}&
		\left<SGW_{0,2}^{\ProjectiveSpace{4},d=1}\right>(\Lambda^2, \Lambda) &= \frac{105}{8\kappa^{10}}\\
		\left<SGW_{0,2}^{\ProjectiveSpace{4},d=1}\right>(\Lambda^2, 1) &= -\frac{315}{32\kappa^{11}}&
		\left<SGW_{0,2}^{\ProjectiveSpace{4},d=1}\right>(\Lambda, \Lambda) &= -\frac{525}{64\kappa^{11}}\\
		\left<SGW_{0,2}^{\ProjectiveSpace{4},d=1}\right>(\Lambda, 1) &= -\frac{35}{16\kappa^{12}}\\
		\left<SGW_{0,2}^{\ProjectiveSpace{4},d=1}\right>(1, 1) &= 0
	\end{align}
\end{ex}
\begin{ex}[Two-point Super Gromov--Witten invariants of \(\ProjectiveSpace{5}\) of degree one]
	\begin{align}\allowdisplaybreaks[1]
		\left<SGW_{0,2}^{\ProjectiveSpace{5},d=1}\right>(\Lambda^5, \Lambda^5) &= \frac{1}{\kappa^6}\\
		\left<SGW_{0,2}^{\ProjectiveSpace{5},d=1}\right>(\Lambda^5, \Lambda^4) &= \frac{3}{\kappa^7}\\
		\left<SGW_{0,2}^{\ProjectiveSpace{5},d=1}\right>(\Lambda^5, \Lambda^3) &= \frac{21}{4\kappa^8}&
		\left<SGW_{0,2}^{\ProjectiveSpace{5},d=1}\right>(\Lambda^4, \Lambda^4) &= \frac{21}{2\kappa^8}\\
		\left<SGW_{0,2}^{\ProjectiveSpace{5},d=1}\right>(\Lambda^5, \Lambda^2) &= \frac{7}{\kappa^9}&
		\left<SGW_{0,2}^{\ProjectiveSpace{5},d=1}\right>(\Lambda^4, \Lambda^3) &= \frac{21}{\kappa^9}\\
		\left<SGW_{0,2}^{\ProjectiveSpace{5},d=1}\right>(\Lambda^5, \Lambda) &= \frac{63}{8\kappa^{10}}&
		\left<SGW_{0,2}^{\ProjectiveSpace{5},d=1}\right>(\Lambda^4, \Lambda^2) &= \frac{63}{2\kappa^{10}}\\
		\left<SGW_{0,2}^{\ProjectiveSpace{5},d=1}\right>(\Lambda^3, \Lambda^3) &= \frac{189}{4\kappa^{10}}\\
		\left<SGW_{0,2}^{\ProjectiveSpace{5},d=1}\right>(\Lambda^5, 1) &= -\frac{63}{16\kappa^{11}}&
		\left<SGW_{0,2}^{\ProjectiveSpace{5},d=1}\right>(\Lambda^4, \Lambda) &= \frac{441}{16\kappa^{11}}\\
		\left<SGW_{0,2}^{\ProjectiveSpace{5},d=1}\right>(\Lambda^3, \Lambda^2) &= \frac{1071}{16\kappa^{11}}\\
		\left<SGW_{0,2}^{\ProjectiveSpace{5},d=1}\right>(\Lambda^4, 1) &= -\frac{63}{4\kappa^{12}}&
		\left<SGW_{0,2}^{\ProjectiveSpace{5},d=1}\right>(\Lambda^3, \Lambda) &= \frac{1575}{316\kappa^{12}}\\
		\left<SGW_{0,2}^{\ProjectiveSpace{5},d=1}\right>(\Lambda^2, \Lambda^2) &= \frac{1365}{16\kappa^{12}}\displaybreak[0]\\
		\left<SGW_{0,2}^{\ProjectiveSpace{5},d=1}\right>(\Lambda^3, 1) &= -\frac{2079}{64\kappa^{13}}&
		\left<SGW_{0,2}^{\ProjectiveSpace{5},d=1}\right>(\Lambda^2, \Lambda) &= \frac{3465}{64\kappa^{13}}\\
		\left<SGW_{0,2}^{\ProjectiveSpace{5},d=1}\right>(\Lambda^2, 1) &= -\frac{693}{16\kappa^{14}}&
		\left<SGW_{0,2}^{\ProjectiveSpace{5},d=1}\right>(\Lambda, \Lambda) &= -\frac{3465}{128\kappa^{14}}\\
		\left<SGW_{0,2}^{\ProjectiveSpace{5},d=1}\right>(\Lambda, 1) &= -\frac{9009}{2566\kappa^{15}}\\
		\left<SGW_{0,2}^{\ProjectiveSpace{5},d=1}\right>(1, 1) &= 0
	\end{align}
\end{ex}

\subsection{Three-point Super Gromov--Witten Invariants of \texorpdfstring{\(\ProjectiveSpace{n}\)}{Pn} of degree one}\label{SSec:ThreePointSGWofPnDeg1}
In this section we calculate three point Super Gromov--Witten invariants of \(\ProjectiveSpace{n}\) of degree one.
In this case \(d_{3,1} = 2n+1\) and \(r_{3,1} = n+2\).
The fixed points of the \(T\) action are labeled by the graphs \(\Gamma^{3,1}_{a,b,A}\) where \(A\subset\Set{1,2,3}\) and \(0\leq a<b\leq n\).
\begin{equation}
	\begin{split}
		\MoveEqLeftR
		\left<SGW_{0,3}^{\ProjectiveSpace{n},d=1}\right>(\alpha_1, \alpha_2, \alpha_3)
		= \sum_{0\leq a< b\leq n} \sum_{A\subset\Set{1,2,3}} \\
		&\int_{M_{\Gamma^{3,1}_{a,b,A}}}\left.\left(
			\frac{i_{\Gamma^{3,1}_{a,b,A}}^*\ev_1^*\alpha_1 \cup i_{\Gamma^{3,1}_{a,b,A}}^*\ev_2^*\alpha_2 \cup i_{\Gamma^{3,1}_{a,b,A}}^*\ev_3^*\alpha_3}{e^{K\times T}\left(i^*_{\Gamma^{3,1}_{a,b,A}} \overline{N}_{3,1}\right) e^T(N_{\Gamma^{3,1}_{a,b,A}})}
			\right)\right|_{d_p = -(3n+3)+\frac12\deg\alpha}
	\end{split}
\end{equation}

If the cardinality \(\#A\) of \(A\) is zero or three the moduli space \(M_{\Gamma^{3,1}_{a,b,A}}\) is isomorphic to \(\ModuliStackCurves{0,4}\), otherwise the moduli space is a point.
The equivariant Euler classes of the SUSY normal bundles also depend on the cardinality of \(A\).

Let us first treat the case \(A=\emptyset\).
In this case, all four three marked points are mapped to the fixed point \(q_b\in \ProjectiveSpace{n}\).
Consequently, the marked points must lie on a different component than the one that is mapped to the coordinate line through \(q_a\) and \(q_b\).
The component that is contracted contains four special points and the non-contracted component one.
Hence
\begin{equation}
	M_{\Gamma^{3,1}_{a,b,\emptyset}}
	= \ModuliStackCurves{0,4}\times M_{\Gamma^{1,1}_{a,b,\emptyset}}
	= \ProjectiveSpace{1}.
\end{equation}

It follows from the splitting principle together with Proposition~\ref{prop:SGWPoint} and Proposition~\ref{prop:EqEulerClassN1d0} that
\begin{equation}
	\begin{split}
		e^{K\times T}(i_{\Gamma^{3,1}_{a,b,\emptyset}}^*\overline{N}_{3,1})
		&= e^K(\overline{N}_4) e^{K\times T}(i_{\Gamma^{1,1}_{a,b,\emptyset}}^*\overline{N}_{1,1})\\
		&= \kappa\left(\kappa-\frac12\lambda\right) \prod_{\substack{0\leq m \leq n\\m\neq a}}\left(\kappa - \frac12\tau_a - \frac12 \tau_b + \tau_m\right)
	\end{split}
\end{equation}
As the inverse of \(\kappa\left(\kappa - \frac12\lambda\right)\) is given by \(\kappa^{-3}\left(\kappa+\frac12\lambda\right)\), the inverse of the equivariant Euler class is given by
\begin{equation}\label{eq:IEqEulerClassSUSYGamma31ab0}
	{e^{K\times T}(i_{\Gamma^{3,1}_{a,b,\emptyset}}^*\overline{N}_{3,1})}^{-1}
	= \sum_{l\geq 0}\frac{\kappa + \frac12\lambda}{\kappa^{n+3+l}}\sum_{\substack{l_0\geq0, \dotsc,l_a=0,\dotsc l_n\geq0\\l_0+\dotsb+l_n=l}}\prod_{m} {\left(\frac12\tau_a + \frac12\tau_b - \tau_m\right)}^{l_m}.
\end{equation}
By~\cite[Theorem~9.2.1]{CK-MSAG}, the equivariant Euler class of the normal bundle \(N_{\Gamma^{3,1}_{a,b,\emptyset}}\) is given by
\begin{align}
	e^F_{\Gamma^{3,1}_{a,b,\emptyset}} &= \frac{\left(\tau_b - \tau_a - \lambda\right)}{\prod_{j\neq a}\left(\tau_a - \tau_j\right)\prod_{j\neq b}\left(\tau_b - \tau_j\right)} \\
	e^v_{\Gamma^{3,1}_{a,b, \emptyset}} &= \frac{\prod_{j\neq a}\left(\tau_a - \tau_j\right) \prod_{j\neq b}\left(\tau_b - \tau_j\right)}{\left(\tau_a-\tau_b\right)} \\
	e^e_{\Gamma^{3,1}_{a,b,\emptyset}} &= -{\left(\tau_a - \tau_b\right)}^2\prod_{j\neq a,b}\left(\tau_a - \tau_j\right)\left(\tau_b - \tau_j\right) \\
	e^T(N_{\Gamma^{3,1}_{a,b,\emptyset}})
	&= e^F_{\Gamma^{3,1}_{a,b,\emptyset}}e^v_{\Gamma^{3,1}_{a,b,\emptyset}}e^e_{\Gamma^{3,1}_{a,b,\emptyset}}
	= \frac{\left(\tau_b - \tau_a - \lambda\right)\prod_{j\neq a}\left(\tau_a - \tau_j\right) \prod_{j\neq b}\left(\tau_b - \tau_j\right)}{\left( \tau_a - \tau_b\right)}
\end{align}
Note that the class \(e_b=\psi_b\) in~\cite[Theorem~9.2.1]{CK-MSAG} is equal to \(\lambda\) in the case at hand \(\ModuliStackCurves{0,4}=\ProjectiveSpace{1}\).
The inverse is calculated as follows:
\begin{equation}\label{eq:IEqEulerClassNGamma31ab0}
	\begin{split}
		{e^T(N_{\Gamma^{3,1}_{a,b,\emptyset}})}^{-1}
		&= - \frac1{1-\frac{\lambda}{\tau_b - \tau_a}}\frac1{\prod_{j\neq a}\left(\tau_a - \tau_j\right) \prod_{j\neq b}\left(\tau_b - \tau_j\right)} \\
		&= - \frac{1+\frac{\lambda}{\tau_b - \tau_a}}{\prod_{j\neq a}\left(\tau_a - \tau_j\right) \prod_{j\neq b}\left(\tau_b - \tau_j\right)}\\
		&= \frac{\tau_a - \tau_b - \lambda}{\left(\tau_b - \tau_a\right)\prod_{j\neq a}\left(\tau_a - \tau_j\right) \prod_{j\neq b}\left(\tau_b - \tau_j\right)}
	\end{split}
\end{equation}

The case \(A=\Set{1,2,3}\) is very similar to the case \(A=\emptyset\) and one obtains
\begin{equation}\label{eq:IEqEulerClassSUSYGamma31ab3}
	{e^{K\times T}(i_{\Gamma^{3,1}_{a,b,\Set{1,2,3}}}^*\overline{N}_{3,1})}^{-1}
	= \sum_{l\geq 0}\frac{\kappa + \frac12\lambda}{\kappa^{n+3+l}}\sum_{\substack{l_0\geq0, \dotsc,l_b=0,\dotsc l_n\geq0\\l_0+\dotsb+l_n=l}}\prod_{m} {\left(\frac12\tau_a + \frac12\tau_b - \tau_m\right)}^{l_m}
\end{equation}
and
\begin{equation}\label{eq:IEqEulerClassNGamma31ab3}
	\begin{split}
		{e^T(N_{\Gamma^{3,1}_{a,b,\Set{1,2,3}}})}^{-1}
		&= \frac{\tau_b - \tau_a - \lambda}{\left(\tau_a - \tau_b\right)\prod_{j\neq a}\left(\tau_a - \tau_j\right) \prod_{j\neq b}\left(\tau_b - \tau_j\right)}.
	\end{split}
\end{equation}

In the case that \(A=\Set{A_1}\), that is \(A\) has cardinality \(\#A=1\), the marked point with label \(A_1\) is mapped to \(q_a\) and the other two marked points with labels \(\Set{1,2,3}\setminus\Set{A_1}\) are mapped to the marked point \(q_b\).
Consequently, the two marked points must lie on a different component of the prestable curve than the one that is mapped to the coordinate line through \(q_a\) and \(q_b\).
The contracted component has three special points and the non-contracted two and the resulting moduli space is a point:
\begin{equation}
	M_{\Gamma^{3,1}_{a,b,\Set{A_1}}}
	= \ModuliStackCurves{0,3}\times M_{\Gamma^{1,1}_{a,b,\Set{A_1}}}
\end{equation}

It follows from the Splitting Axiom~\ref{axiom:SplittingAxiom} together with Proposition~\ref{prop:SGWPoint} and Proposition~\ref{prop:EqEulerClassN2d} that
\begin{equation}
	\begin{split}
		e^{K\times T}\left(i^*_{\Gamma^{3,1}_{a,b,\Set{A_1}}}\overline{N}_{3,1}\right)
		&= e^{K}(\overline{N}_{3})e^{K\times T}\left(i_{\Gamma^{2,1}_{a,b,\Set{A_1}}}^*\overline{N}_{2,1}\right)\\
		&= \kappa  \prod_{0\leq m \leq n}\left(\kappa - \frac12\tau_a - \frac12 \tau_b + \tau_m\right)
	\end{split}
\end{equation}
and hence
\begin{equation}\label{eq:IEqEulerClassSUSYGamma31ab1}
	{e^{K\times T}(i_{\Gamma^{3,1}_{a,b,\Set{A_1}}}^*\overline{N}_{3,1})}^{-1}
	= \sum_{l\geq 0}\frac{1}{\kappa^{n+2+l}}\sum_{\substack{l_0\geq0, \dotsc, l_n\geq0\\l_0+\dotsb+l_n=l}}\prod_{m} {\left(\frac12\tau_a + \frac12\tau_b - \tau_m\right)}^{l_m}.
\end{equation}
The equivariant Euler class of the normal bundle \(N_{\Gamma^{3,1}_{a,b,\Set{A_1}}}\) can be calculated by~\cite[Theorem~9.2.1]{CK-MSAG} as
\begin{align}
	e^F_{\Gamma^{3,1}_{a,b,\Set{A_1}}} &= \frac{\left(\tau_b - \tau_a\right)}{\prod_{j\neq a}\left(\tau_a - \tau_j\right)\prod_{j\neq b}\left(\tau_b - \tau_j\right)} \\
	e^v_{\Gamma^{3,1}_{a,b, \Set{A_1}}} &= \prod_{j\neq a}\left(\tau_a - \tau_j\right) \prod_{j\neq b}\left(\tau_b - \tau_j\right) \\
	e^e_{\Gamma^{3,1}_{a,b,\Set{A_1}}} &= -{\left(\tau_a - \tau_b\right)}^2\prod_{j\neq a,b}\left(\tau_a - \tau_j\right)\left(\tau_b - \tau_j\right) \\
	e^T(N_{\Gamma^{3,1}_{a,b,\Set{A_1}}})
	&= e^F_{\Gamma^{3,1}_{a,b,\Set{A_1}}}e^v_{\Gamma^{3,1}_{a,b,\Set{A_1}}}e^e_{\Gamma^{3,1}_{a,b,\Set{A_1}}}
	= \left(\tau_b - \tau_a\right)\prod_{j\neq a}\left(\tau_a - \tau_j\right)\prod_{j\neq b}\left(\tau_b - \tau_j\right)\label{eq:EqEulerClassNGamma31ab1}
\end{align}

The case \(A=\Set{A_1,A_2}\) is analogous and one obtains
\begin{equation}\label{eq:IEqEulerClassSUSYGamma31ab2}
	{e^{K\times T}(i_{\Gamma^{3,1}_{a,b,\Set{A_1,A_2}}}^*\overline{N}_{3,1})}^{-1}
	= \sum_{l\geq 0}\frac{1}{\kappa^{n+2+l}}\sum_{\substack{l_0\geq0, \dotsc, l_n\geq0\\l_0+\dotsb+l_n=l}}\prod_{m} {\left(\frac12\tau_a + \frac12\tau_b - \tau_m\right)}^{l_m}
\end{equation}
and
\begin{equation}\label{eq:EqEulerClassNGamma31ab2}
	e^T(N_{\Gamma^{3,1}_{a,b,\Set{A_1,A_2}}})
	= \left(\tau_a - \tau_b\right)\prod_{j\neq a}\left(\tau_a - \tau_j\right)\prod_{j\neq b}\left(\tau_b - \tau_j\right).
\end{equation}

In a first step we want to calculate the super Gromov--Witten invariants of \(\ProjectiveSpace{1}\) for stable maps of degree one and three marked points.
In this case, we have \(d_{3,1}=3\) and \(r_{3,1}=3\).
We call the number \(d_{3,1}-2\deg\alpha\) codegree.

\begin{ex}[Codegree zero, \(\deg\alpha = d_{3,1}\)]
	The first example is the case of codegree zero, that is \(\alpha_j=\lambda\) for \(j=1,2,3\).
	\begin{equation}
		\begin{split}
			\MoveEqLeft
			\left<SGW_{0,3}^{\ProjectiveSpace{1},d=1}\right>(\lambda,\lambda,\lambda) \\
			&= \sum_{A\subset\Set{1,2,3}} \int_{M_{\Gamma^{3,1}_{0,1,A}}}\left.\left(\frac{i_{\Gamma^{3,1}_{0,1,A}}^*\ev_1^*\lambda \cdot i_{\Gamma^{3,1}_{0,1,A}}^*\ev_2^*\lambda \cdot i_{\Gamma^{3,1}_{0,1,A}}^*\ev_3^*\lambda}{e^{K\times T}(i_{\Gamma^{3,1}_{0,1,A}}^*\overline{N}_{3,1}) e^T(N_{\Gamma^{3,1}_{0,1,A}})}\right)\right|_{d_p = -3}
		\end{split}
	\end{equation}
	Specializing the results of this section to \(n=1\), \(a=0\) and \(b=0\) yields:
	\begin{equation}\label{eq:IEqEulerClasses31P1}
		\begin{aligned}
			{e^{K\times T}(i_{\Gamma^{3,1}_{0,1,\emptyset}}^*\overline{N}_{3,1})}^{-1}
				&= \sum_{l\geq 0}\frac{2\kappa + \lambda}{2^{l+1}\kappa^{l+4}}{\left(\tau_0 - \tau_1\right)}^l&
			{e^T(N_{\Gamma^{3,1}_{a,b,\emptyset}})}^{-1}
				&= \frac{\lambda+\tau_1-\tau_0}{{\left(\tau_1 - \tau_0\right)}^3}\\
			{e^{K\times T}(i_{\Gamma^{3,1}_{0,1,\Set{A_1}}}^*\overline{N}_{3,1})}^{-1}
				&= \sum_{l\geq 0}\frac{1}{2^{2l}\kappa^{2l+3}}{\left(\tau_1 - \tau_0\right)}^{2l}&
			{e^T(N_{\Gamma^{3,1}_{0,1,\Set{A_1}}})}^{-1}
				&= -\frac1{{\left(\tau_1-\tau_0\right)}^3} \\
			{e^{K\times T}(i_{\Gamma^{3,1}_{0,1,\Set{A_1,A_2}}}^*\overline{N}_{3,1})}^{-1}
				&= \sum_{l\geq 0}\frac{1}{2^{2l}\kappa^{2l+3}}{\left(\tau_1 - \tau_0\right)}^{2l}&
			{e^T(N_{\Gamma^{3,1}_{0,1,\Set{A_1,A_2}}})}^{-1}
				&= \frac1{{\left(\tau_1-\tau_0\right)}^3} \\
			{e^{K\times T}(i_{\Gamma^{3,1}_{0,1,\Set{1,2,3}}}^*\overline{N}_{3,1})}^{-1}
				&= \sum_{l\geq 0}\frac{2\kappa + \lambda}{2^{l+1}\kappa^{l+4}}{\left(\tau_1 - \tau_0\right)}^l&
			{e^T(N_{\Gamma^{3,1}_{a,b,\Set{1,2,3}}})}^{-1}
				&= \frac{\tau_1-\tau_0-\lambda}{{\left(\tau_1 - \tau_0\right)}^3}
		\end{aligned}
	\end{equation}
	In particular, the summand of \(e^{K\times T}(i_{\Gamma_{0,1,A}}^*\overline{N}_{3,1})\) that is of polynomial degree \(d_p=-3\) is given by \(\kappa^{-3}\) for all \(A\).

	We will evaluate the integral for the different sets separately.
	First, in the case where \(A=\emptyset\), we use Equations~\eqref{eq:IEqEulerClasses31P1} to obtain:
	\begin{equation}
		\int_{M_{\Gamma^{3,1}_{0,1,\emptyset}}}\left.\left(\frac{i_{\Gamma^{3,1}_{0,1,\emptyset}}^*\ev_1^*\lambda \cdot i_{\Gamma^{3,1}_{0,1,\emptyset}}^*\ev_2^*\lambda \cdot i_{\Gamma^{3,1}_{0,1,\emptyset}}^*\ev_3^*\lambda}{e^{K\times T}(i_{\Gamma^{3,1}_{0,1,\emptyset}}^*\overline{N}_{3,1}) e^T(N_{\Gamma^{3,1}_{0,1,\emptyset}})}\right)\right|_{d_p = -3} \\
		= \kappa^{-3}\frac{\tau_1^3}{{\left(\tau_1-\tau_0\right)}^3}
	\end{equation}
	Similarly,for \(A=\Set{A_1}\):
	\begin{equation}
		\int_{M_{\Gamma^{3,1}_{0,1,\Set{A_1}}}}\left.\left(\frac{i_{\Gamma^{3,1}_{0,1,\Set{A_1}}}^*\ev_1^*\lambda \cdot i_{\Gamma^{3,1}_{0,1,\Set{A_1}}}^*\ev_2^*\lambda \cdot i_{\Gamma^{3,1}_{0,1,\Set{A_1}}}^*\ev_3^*\lambda}{e^{K\times T}(i_{\Gamma^{3,1}_{0,1,\Set{A_1}}}^*\overline{N}_{3,1}) e^T(N_{\Gamma^{3,1}_{0,1,\Set{A_1}}})}\right)\right|_{d_p = -3} \\
		= -\kappa^{-3}\frac{\tau_0\tau_1^2}{{\left(\tau_1-\tau_0\right)}^3}
	\end{equation}
	and for the two-element set \(A=\Set{A_1, A_2}\):
	\begin{equation}
		\begin{split}
			\MoveEqLeft
			\int_{M_{\Gamma^{3,1}_{0,1,\Set{A_1, A_2}}}}\left.\left(\frac{i_{\Gamma^{3,1}_{0,1,\Set{A_1, A_2}}}^*\ev_1^*\lambda \cdot i_{\Gamma^{3,1}_{0,1,\Set{A_1, A_2}}}^*\ev_2^*\lambda \cdot i_{\Gamma^{3,1}_{0,1,\Set{A_1, A_2}}}^*\ev_3^*\lambda}{e^{K\times T}(i_{\Gamma^{3,1}_{0,1,\Set{A_1, A_2}}}^*\overline{N}_{3,1}) e^T(N_{\Gamma^{3,1}_{0,1,\Set{A_1, A_2}}})}\right)\right|_{d_p = -3} \\
			&= \kappa^{-3}\frac{\tau_0^2\tau_1}{{\left(\tau_1-\tau_0\right)}^3}
		\end{split}
	\end{equation}
	The case of the full set \(A=\Set{1,2,3}\) is:
	\begin{equation}
		\begin{split}
			\MoveEqLeft
			\int_{M_{\Gamma^{3,1}_{0,1,\Set{1,2,3}}}}\left.\left(\frac{i_{\Gamma^{3,1}_{0,1,\Set{1,2,3}}}^*\ev_1^*\lambda \cdot i_{\Gamma^{3,1}_{0,1,\Set{1,2,3}}}^*\ev_2^*\lambda \cdot i_{\Gamma^{3,1}_{0,1,\Set{1,2,3}}}^*\ev_3^*\lambda}{e^{K\times T}(i_{\Gamma^{3,1}_{0,1,\Set{1,2,3}}}^*\overline{N}_{3,1}) e^T(N_{\Gamma^{3,1}_{0,1,\Set{1,2,3}}})}\right)\right|_{d_p = -3} \\
			&= -\kappa^{-3}\frac{\tau_0^3}{{\left(\tau_1-\tau_0\right)}^3}
		\end{split}
	\end{equation}
	Summing up:
	\begin{equation}
		\begin{split}
			\MoveEqLeft
			\left<SGW_{0,3}^{\ProjectiveSpace{1},d=1}\right>(\lambda,\lambda,\lambda) \\
			&= \kappa^{-3}\frac{\tau_1^3}{{\left(\tau_1-\tau_0\right)}^3} - \kappa^{-3}\frac{3\tau_0\tau_1^2}{{\left(\tau_1-\tau_0\right)}^3} + \kappa^{-3}\frac{3\tau_0^2\tau_1}{{\left(\tau_1-\tau_0\right)}^3} - \kappa^{-3}\frac{\tau_0^3}{{\left(\tau_1-\tau_0\right)}^3} \\
			&= \kappa^{-3}
		\end{split}
	\end{equation}
	This result is expected by the Extension Axiom~\ref{axiom:Extension}.
	Because the \(2d_{3,1}=\deg\alpha\)
	\begin{equation}
		\left<SGW_{0,3}^{\ProjectiveSpace{1},d=1}\right>(\lambda,\lambda,\lambda)
		= \kappa^{-3}\left<GW_{0,3}^{\ProjectiveSpace{1},d=1}\right>(\lambda,\lambda,\lambda)
	\end{equation}
	where the only data not present in Gromov--Witten invariants is the power of \(\kappa\) in the prefactor given by \(-r_{3,1}=-3\).
	\(\left<GW_{0,3}^{\ProjectiveSpace{1},d=1}\right>(\lambda,\lambda,\lambda)=1\) because there is a unique automorphism of \(\ProjectiveSpace{1}\) that fixes three points.
\end{ex}

\begin{ex}[Codegree two, \(2d_{3,1}-\deg\alpha=2\)]
	The case where \(2d_{3,1}-\deg \alpha = 2\) involves also characteristic classes of the normal bundle \(\overline{N}_{3,1}\).
	\begin{equation}
		\left<SGW_{0,3}^{\ProjectiveSpace{1},d=1}\right>(\lambda,\lambda,1)
		= \sum_{A\subset\Set{1,2,3}} \int_{M_{\Gamma^{3,1}_{0,1,A}}}\left.\left(\frac{i_{\Gamma^{3,1}_{0,1,A}}^*\ev_1^*\lambda \cdot i_{\Gamma^{3,1}_{0,1,A}}^*\ev_2^*\lambda }{e^{K\times T}(i_{\Gamma^{3,1}_{0,1,A}}^*\overline{N}_{3,1}) e^T(N_{\Gamma^{3,1}_{0,1,A}})}\right)\right|_{d_p = -4}
	\end{equation}
	We again calculate the integral with the help of Equations~\eqref{eq:IEqEulerClasses31P1}.
	For the empty set \(A=\emptyset\) we have:
	\begin{equation}
		\begin{split}
			\MoveEqLeft
		\int_{M_{\Gamma^{3,1}_{0,1,\emptyset}}}\left.\left(\frac{i_{\Gamma^{3,1}_{0,1,\emptyset}}^*\ev_1^*\lambda \cdot i_{\Gamma^{3,1}_{0,1,\emptyset}}^*\ev_2^*\lambda }{e^{K\times T}(i_{\Gamma^{3,1}_{0,1,\emptyset}}^*\overline{N}_{3,1}) e^T(N_{\Gamma^{3,1}_{0,1,\emptyset}})}\right)\right|_{d_p = -4} \\
			={}& \int_{\ProjectiveSpace{1}}\frac12 \left(\lambda + \tau_0 - \tau_1\right)\frac{\lambda + \tau_1 - \tau_0}{{\left(\tau_1 - \tau_0\right)}^3}
			= 0
		\end{split}
	\end{equation}
	The contributions of the one-element and two-element sets vanish because the coefficient of \(-\kappa^4\) of the equivariant Euler class of the SUSY normal bundles vanishes by Equations~\eqref{eq:IEqEulerClasses31P1}.
	The case of the full set \(A=\Set{1,2,3}\) is treated analogously to the case of the empty set:
	\begin{equation}
			\int_{M_{\Gamma^{3,1}_{0,1,\Set{1,2,3}}}}\left.\left(\frac{i_{\Gamma^{3,1}_{0,1,\Set{1,2,3}}}^*\ev_1^*\lambda \cdot i_{\Gamma^{3,1}_{0,1,\Set{1,2,3}}}^*\ev_2^*\lambda }{e^{K\times T}(i_{\Gamma^{3,1}_{0,1,\Set{1,2,3}}}^*\overline{N}_{3,1}) e^T(N_{\Gamma^{3,1}_{0,1,\Set{1,2,3}}})}\right)\right|_{d_p = -4}
				= 0
	\end{equation}
	Summing up:
	\begin{equation}
		\left<SGW_{0,3}^{\ProjectiveSpace{1},d=1}\right>(\lambda,\lambda,1)
		= 0
	\end{equation}
\end{ex}
\begin{ex}[Codegree four, \(2d_{3,1}-\deg\alpha=4\)]
	The case of codegree four is treated in the same way.
	First we calculate the summands using Equations~\eqref{eq:IEqEulerClasses31P1}:
	\begin{equation}
		\begin{split}
			\MoveEqLeft
			\int_{M_{\Gamma^{3,1}_{0,1,\emptyset}}}\left.\left(\frac{i_{\Gamma^{3,1}_{0,1,\emptyset}}^*\ev_1^*\lambda}{e^{K\times T}(i_{\Gamma^{3,1}_{0,1,\emptyset}}^*\overline{N}_{3,1}) e^T(N_{\Gamma^{3,1}_{0,1,\emptyset}})}\right)\right|_{d_p = -5} \\
				={}& \int_{\ProjectiveSpace{1}} \frac14\left(\lambda\left(\tau_0 - \tau_1\right) + {\left(\tau_0 - \tau_1\right)}^2\right)\frac{\lambda + \tau_1 - \tau_0}{{\left(\tau_1 - \tau_0\right)}^3}
				= 0
		\end{split}
	\end{equation}
	\begin{equation}
		\int_{M_{\Gamma^{3,1}_{0,1,\Set{1}}}}\left.\left(\frac{i_{\Gamma^{3,1}_{0,1,\Set{1}}}^*\ev_1^*\lambda}{e^{K\times T}(i_{\Gamma^{3,1}_{0,1,\Set{1}}}^*\overline{N}_{3,1}) e^T(N_{\Gamma^{3,1}_{0,1,\Set{1}}})}\right)\right|_{d_p = -5} \\
		= -\kappa^{-5}\frac{\tau_0}{4\left(\tau_1 - \tau_0\right)}
	\end{equation}
	\begin{equation}
		\begin{split}
			\MoveEqLeft
			\int_{M_{\Gamma^{3,1}_{0,1,\Set{2}}}}\left.\left(\frac{i_{\Gamma^{3,1}_{0,1,\Set{2}}}^*\ev_1^*\lambda}{e^{K\times T}(i_{\Gamma^{3,1}_{0,1,\Set{2}}}^*\overline{N}_{3,1}) e^T(N_{\Gamma^{3,1}_{0,1,\Set{2}}})}\right)\right|_{d_p = -5} \\
			&= \int_{M_{\Gamma^{3,1}_{0,1,\Set{3}}}}\left.\left(\frac{i_{\Gamma^{3,1}_{0,1,\Set{3}}}^*\ev_1^*\lambda}{e^{K\times T}(i_{\Gamma^{3,1}_{0,1,\Set{3}}}^*N_{3,1}) e^T(N_{\Gamma^{3,1}_{0,1,\Set{3}}})}\right)\right|_{d_p = -5} \\
			&= -\kappa^{-5}\frac{\tau_1}{4\left(\tau_1 - \tau_0\right)}
		\end{split}
	\end{equation}
	\begin{equation}
		\int_{M_{\Gamma^{3,1}_{0,1,\Set{2,3}}}}\left.\left(\frac{i_{\Gamma^{3,1}_{0,1,\Set{2,3}}}^*\ev_1^*\lambda}{e^{K\times T}(i_{\Gamma^{3,1}_{0,1,\Set{2,3}}}^*\overline{N}_{3,1}) e^T(N_{\Gamma^{3,1}_{0,1,\Set{2,3}}})}\right)\right|_{d_p = -5} \\
		= \kappa^{-5}\frac{\tau_1}{4\left(\tau_1 - \tau_0\right)}
	\end{equation}
	\begin{equation}
		\begin{split}
			\MoveEqLeft
			\int_{M_{\Gamma^{3,1}_{0,1,\Set{1,3}}}}\left.\left(\frac{i_{\Gamma^{3,1}_{0,1,\Set{1,3}}}^*\ev_1^*\lambda}{e^{K\times T}(i_{\Gamma^{3,1}_{0,1,\Set{1,3}}}^*\overline{N}_{3,1}) e^T(N_{\Gamma^{3,1}_{0,1,\Set{1,3}}})}\right)\right|_{d_p = -5} \\
			&=\int_{M_{\Gamma^{3,1}_{0,1,\Set{1,2}}}}\left.\left(\frac{i_{\Gamma^{3,1}_{0,1,\Set{1,2}}}^*\ev_1^*\lambda}{e^{K\times T}(i_{\Gamma^{3,1}_{0,1,\Set{1,2}}}^*\overline{N}_{3,1}) e^T(N_{\Gamma^{3,1}_{0,1,\Set{1,2}}})}\right)\right|_{d_p = -5} \\
			&= \kappa^{-5}\frac{\tau_0}{4\left(\tau_1 - \tau_0\right)}
		\end{split}
	\end{equation}
	\begin{equation}
		\int_{M_{\Gamma^{3,1}_{0,1,\Set{1,2,3}}}}\left.\left(\frac{i_{\Gamma^{3,1}_{0,1,\Set{1,2,3}}}^*\ev_1^*\lambda}{e^{K\times T}(i_{\Gamma^{3,1}_{0,1,\Set{1,2,3}}}^*\overline{N}_{3,1}) e^T(N_{\Gamma^{3,1}_{0,1,\Set{1,2,3}}})}\right)\right|_{d_p = -5}
		= 0
	\end{equation}
	Summing up yields:
	\begin{equation}
		\begin{split}
			\left<SGW_{0,3}^{\ProjectiveSpace{1},d=1}\right>(\lambda,1,1)
			={}& \sum_{A\subset\Set{1,2,3}} \int_{M_{\Gamma^{3,1}_{0,1,A}}}\left.\left(\frac{i_{\Gamma^{3,1}_{0,1,A}}^*\ev_1^*\lambda}{e^{K\times T}(i_{\Gamma^{3,1}_{0,1,A}}^*\overline{N}_{3,1}) e^T(N_{\Gamma^{3,1}_{0,1,A}})}\right)\right|_{d_p = -5} \\
			={}& - \kappa^{-5}\frac{\tau_0}{4\left(\tau_1 - \tau_0\right)} - \kappa^{-5}\frac{\tau_1}{4\left(\tau_1 - \tau_0\right)} - \kappa^{-5}\frac{\tau_1}{4\left(\tau_1 - \tau_0\right)} \\
				& + \kappa^{-5}\frac{\tau_1}{4\left(\tau_1 - \tau_0\right)} + \kappa^{-5}\frac{\tau_0}{4\left(\tau_1 - \tau_0\right)} + \kappa^{-5}\frac{\tau_0}{4\left(\tau_1 - \tau_0\right)} \\
			={}& -\frac14\kappa^{-5}
		\end{split}
	\end{equation}
\end{ex}

\begin{ex}[Codegree six, \(2d_{3,1}-\deg\alpha=6\)]
	The summands for the fixed point loci \(M_{\Gamma^{3,1}_{0,1,A}}\) are:
	\begin{equation}
		\begin{split}
			\MoveEqLeft
			\int_{M_{\Gamma^{3,1}_{0,1,\emptyset}}}\left.\left(\frac{1}{e^{K\times T}(i_{\Gamma^{3,1}_{0,1,\emptyset}}^*\overline{N}_{3,1}) e^T(N_{\Gamma^{3,1}_{0,1,\emptyset}})}\right)\right|_{d_p = -6} \\
			={} & \int_{\ProjectiveSpace{1}} \frac18\left(\lambda{\left(\tau_0 - \tau_1\right)}^2 + {\left(\tau_0 - \tau_1\right)}^3\right)\frac{\lambda + \tau_1 - \tau_0}{{\left(\tau_1 - \tau_0\right)}^3}
			= 0
		\end{split}
	\end{equation}
	\begin{equation}
	\int_{M_{\Gamma^{3,1}_{0,1,\Set{A_1}}}}\left.\left(\frac{1}{e^{K\times T}(i_{\Gamma^{3,1}_{0,1,\Set{A_1}}}^*\overline{N}_{3,1}) e^T(N_{\Gamma^{3,1}_{0,1,\Set{A_1}}})}\right)\right|_{d_p = -6} \\
		= 0
	\end{equation}
	\begin{equation}
		\int_{M_{\Gamma^{3,1}_{0,1,\Set{A_1,A_2}}}}\left.\left(\frac{1}{e^{K\times T}(i_{\Gamma^{3,1}_{0,1,\Set{A_1,A_2}}}^*\overline{N}_{3,1}) e^T(N_{\Gamma^{3,1}_{0,1,\Set{A_1,A_2}}})}\right)\right|_{d_p = -6} \\
		= 0
	\end{equation}
	\begin{equation}
		\int_{M_{\Gamma^{3,1}_{0,1,\Set{1,2,3}}}}\left.\left(\frac{1}{e^{K\times T}(i_{\Gamma^{3,1}_{0,1,\Set{1,2,3}}}^*\overline{N}_{3,1}) e^T(N_{\Gamma^{3,1}_{0,1,\Set{1,2,3}}})}\right)\right|_{d_p = -6} \\
		= 0
	\end{equation}
	Summing up yields:
	\begin{equation}
		\left<SGW_{0,3}^{\ProjectiveSpace{1},d=1}\right>(1,1,1)
		=0
	\end{equation}
\end{ex}

\begin{ex}[Three-point Super Gromov--Witten invariants of \(\ProjectiveSpace{2}\) of degree one]
	\begin{align}
		\left<SGW_{0,3}^{\ProjectiveSpace{2},d=1}\right>(\Lambda^2, \Lambda^2, \Lambda) &= \frac{1}{\kappa^4}\\
		\left<SGW_{0,3}^{\ProjectiveSpace{2},d=1}\right>(\Lambda^2, \Lambda^2, 1) &= 0&
		\left<SGW_{0,3}^{\ProjectiveSpace{2},d=1}\right>(\Lambda^2, \Lambda, \Lambda) &= \frac{3}{2\kappa^5}\\
		\left<SGW_{0,3}^{\ProjectiveSpace{2},d=1}\right>(\Lambda^2, \Lambda, 1) &= 0&
		\left<SGW_{0,3}^{\ProjectiveSpace{2},d=1}\right>(\Lambda, \Lambda, \Lambda) &= \frac{3}{2\kappa^6}\\
		\left<SGW_{0,3}^{\ProjectiveSpace{2},d=1}\right>(\Lambda^2, 1, 1) &= -\frac{3}{8\kappa^7}&
		\left<SGW_{0,3}^{\ProjectiveSpace{2},d=1}\right>(\Lambda, \Lambda, 1) &= -\frac{3}{8\kappa^7}\\
		\left<SGW_{0,3}^{\ProjectiveSpace{2},d=1}\right>(\Lambda, 1, 1) &= -\frac{3}{8\kappa^7}\\
		\left<SGW_{0,3}^{\ProjectiveSpace{2},d=1}\right>(1, 1, 1) &= 0
	\end{align}
\end{ex}
\begin{ex}[Three-point Super Gromov--Witten invariants of \(\ProjectiveSpace{3}\) of degree one]
	\begin{align}
		\left<SGW_{0,3}^{\ProjectiveSpace{3},d=1}\right>(\Lambda^3, \Lambda^3, \Lambda) &= \frac{1}{\kappa^5}&
		\left<SGW_{0,3}^{\ProjectiveSpace{3},d=1}\right>(\Lambda^3, \Lambda^2, \Lambda^2) &= \frac{1}{\kappa^5}\\
		\left<SGW_{0,3}^{\ProjectiveSpace{3},d=1}\right>(\Lambda^3, \Lambda^3, 1) &= 0&
		\left<SGW_{0,3}^{\ProjectiveSpace{3},d=1}\right>(\Lambda^3, \Lambda^2, \Lambda) &= \frac{2}{\kappa^6}\\
		\left<SGW_{0,3}^{\ProjectiveSpace{3},d=1}\right>(\Lambda^3, \Lambda^2, 1) &= 0&
		\left<SGW_{0,3}^{\ProjectiveSpace{3},d=1}\right>(\Lambda^3, \Lambda, \Lambda) &= \frac{5}{2\kappa^7}\\
		\left<SGW_{0,3}^{\ProjectiveSpace{3},d=1}\right>(\Lambda^2, \Lambda^2, \Lambda) &= \frac{5}{\kappa^7}\\
		\left<SGW_{0,3}^{\ProjectiveSpace{3},d=1}\right>(\Lambda^3, \Lambda, 1) &= 0&
		\left<SGW_{0,3}^{\ProjectiveSpace{3},d=1}\right>(\Lambda^2, \Lambda^2, 1) &= 0\\
		\left<SGW_{0,3}^{\ProjectiveSpace{3},d=1}\right>(\Lambda^2, \Lambda, \Lambda) &= \frac{5}{\kappa^8}\\
		\left<SGW_{0,3}^{\ProjectiveSpace{3},d=1}\right>(\Lambda^2, \Lambda, 1) &= -\frac{5}{8\kappa^9}&
		\left<SGW_{0,3}^{\ProjectiveSpace{3},d=1}\right>(\Lambda, \Lambda, \Lambda) &= \frac{15}{4\kappa^9}\\
		\left<SGW_{0,3}^{\ProjectiveSpace{3},d=1}\right>(\Lambda^2, 1, 1) &= -\frac{5}{4\kappa^{10}}&
		\left<SGW_{0,3}^{\ProjectiveSpace{3},d=1}\right>(\Lambda, \Lambda, 1) &= -\frac{5}{4\kappa^{10}}\\
		\left<SGW_{0,3}^{\ProjectiveSpace{3},d=1}\right>(\Lambda, 1, 1) &= -\frac{35}{32\kappa^{11}}\\
		\left<SGW_{0,3}^{\ProjectiveSpace{3},d=1}\right>(1, 1, 1) &= 0
	\end{align}
\end{ex}
 
\printbibliography

\textsc{Enno Keßler\\
Max-Planck-Institut für Mathematik,
Vivatsgasse 7,
53111 Bonn,
Germany\\[.5em]
Max-Planck-Institut für Mathematik in den Naturwissenschaften,
Inselstraße 22,
04103 Leipzig,
Germany}\\
\texttt{kessler@mis.mpg.de}\\

\textsc{Artan Sheshmani\\
Beijing Institute of Mathematical Sciences and Applications,
A6, Room~205 No.~544,
Hefangkou Village,
Huaibei Town,
Huairou District,
Beijing~101408,
China\\[.5em]
Massachusetts Institute of Technology (MIT),
IAiFi Institute,
182 Memorial Drive,
Cambridge, MA 02139,
USA}\\
\texttt{artan@mit.edu}\\

\textsc{Shing-Tung Yau\\
Yau Mathematical Sciences Center,
Tsinghua University,
Haidian District,
Beijing,
China
}\\
\texttt{styau@tsinghua.edu.cn}
\end{document}